\documentclass[aos]{imsart}
\RequirePackage{amsthm,amsmath,amsfonts,amssymb}
\RequirePackage[numbers]{natbib}

\startlocaldefs
\usepackage[utf8]{inputenc}
\usepackage[english]{babel}
\usepackage[T1]{fontenc}

\usepackage{booktabs}
\usepackage{graphicx} 
\usepackage{tabularx}
\usepackage{xcolor} 
\usepackage{comment}
\usepackage[shortlabels]{enumitem}
\usepackage{mathrsfs}
\newcommand{\mathbbm}[1]{\text{\usefont{U}{bbm}{m}{n}#1}}
\usepackage{hyperref}
\usepackage{amssymb}
\usepackage{algorithm}
\usepackage{algpseudocode}
\usepackage{mathabx}

\usepackage{definitions}
\newtheorem{theorem}{Theorem}

\newtheorem{lemma}{Lemma}
\newtheorem{definition}{Definition}
\theoremstyle{remark}

\endlocaldefs

\begin{document}

\begin{frontmatter}
\title{Federated Nonparametric Hypothesis Testing with Differential Privacy Constraints: Optimal Rates and Adaptive Tests}
\runtitle{Federated Nonparametric Private Testing}

\begin{aug}
    %%%%%%%%%%%%%%%%%%%%%%%%%%%%%%%%%%%%%%%%%%%%%%%
    %% Only one address is permitted per author. %%
    %% Only division, organization and e-mail is %%
    %% included in the address.                  %%
    %% Additional information can be included in %%
    %% the Acknowledgments section if necessary. %%
    %% ORCID can be inserted by command:         %%
    %% \orcid{0000-0000-0000-0000}               %%
    %%%%%%%%%%%%%%%%%%%%%%%%%%%%%%%%%%%%%%%%%%%%%%%
    \author[A]{\fnms{T. Tony}~\snm{Cai}\ead[label=e1]{tcai@wharton.upenn.edu}},
    \author[A]{\fnms{Abhinav}~\snm{Chakraborty}\ead[label=e2]{abch@wharton.upenn.edu}}
    \and
    \author[A]{\fnms{Lasse}~\snm{Vuursteen}\ead[label=e3]{lassev@wharton.upenn.edu}}
    %%%%%%%%%%%%%%%%%%%%%%%%%%%%%%%%%%%%%%%%%%%%%%
    %% Addresses                                %%
    %%%%%%%%%%%%%%%%%%%%%%%%%%%%%%%%%%%%%%%%%%%%%%
    
\address[A]{Department of Statistics and Data Science,
University of Pennsylvania\printead[presep={,\ }]{e1,e2,e3}}
    
\end{aug}

\date{\today}

\begin{abstract}
Federated learning has attracted significant recent attention due to its applicability across a wide range of settings where data is collected and analyzed across disparate locations. In this paper, we study federated nonparametric goodness-of-fit testing in the white-noise-with-drift model under distributed differential privacy (DP) constraints. 

We first establish matching lower and upper bounds, up to a logarithmic factor, on the minimax separation rate. This optimal rate serves as a benchmark for the difficulty of the testing problem, factoring in model characteristics such as the number of observations, noise level, and regularity of the signal class, along with the strictness of the $(\epsilon,\delta)$-DP requirement. The results demonstrate interesting and novel phase transition phenomena. Furthermore, the results reveal an interesting phenomenon that
distributed one-shot protocols with access to shared randomness outperform those without access to shared randomness. We also construct a data-driven testing procedure that possesses the ability to adapt to an unknown regularity parameter over a large collection of function classes with minimal additional cost, all while maintaining adherence to the same set of DP constraints.
	\\[5pt]
	\textbf{Keywords}: Distributed computation; Differential privacy; Federated learning; Nonparametric goodness-of-fit testing, 
\end{abstract}

\begin{keyword}[class=MSC]
    \kwd[Primary ]{62G10}
    \kwd{62C20}
    \kwd[; secondary ]{68P27}
    \end{keyword}
    
    \begin{keyword}
    \kwd{Federated learning}
    \kwd{Differential privacy}
    \kwd{Nonparametric goodness-of-fit testing}
    \kwd{Distributed inference}
    \end{keyword}

\end{frontmatter}

%%%%%%%%%%%%%%%%%%%%%%%%%%%%%%%%%%%%% 
\section{Introduction}
\label{sec:intro}
%%%%%%%%%%%%%%%%%%%%%%%%%%%%%%%%%%%%%

Federated learning is a collaborative distributed machine learning technique designed to address data governance and privacy concerns. It facilitates organizations or groups to collectively train a shared global model without the need to expose raw data externally. Federated learning has garnered increasing attention due to its applicability across a wide range of settings where data is collected and analyzed across disparate locations. This includes scenarios like medical data dispersed among different hospitals, financial customer data stored across various branches or databases, and the utilization of federated learning within user networks in modern technologies such as smartphones or self-driving cars. See, for example, \cite{li2020federated,konevcny2016federated,hard2018federated,beaufays2019federated,nguyen2022deep}. In such contexts, privacy concerns often impede direct data pooling, making the development of efficient statistical inference methods that preserve privacy and harness the collective power of distributed data essential.

In this paper, we investigate federated nonparametric goodness-of-fit testing under distributed differential privacy (DP) constraints. 
Nonparametric hypothesis testing, a fundamental statistical problem, has been extensively studied in conventional settings, with a rich body of classical literature examining theoretically optimal performance. DP, introduced by Dwork et al. (2006), serves as a mathematical guarantee determining whether results or datasets can be deemed ``privacy-preserving" and thus openly published. Many differentially private statistical methods have since been developed. See, for example, \cite{arachchige2019local,dwork2010differential,dwork2017exposed}.  While several other privacy frameworks exist, DP holds a prominent position both theoretically and practically, finding application within industry giants like Google \cite{google_privacy}, Microsoft \cite{ding2017collecting}, Apple \cite{apple2017}, as well as governmental entities such as the US Census Bureau \cite{rodriguezmodernization}.
 
DP may significantly impact the quality of statistical inference, particularly in testing problems where it may diminish statistical power that could be obtained with complete data availability. In the present paper, we quantify the cost of privacy in the canonical nonparametric goodness-of-fit testing setting, delineating the theoretical limits of performance achievable under DP constraints and developing methods that attain optimal performance. 
Contrasting our results with those derived for federated nonparametric estimation under DP  \cite{cai2023private}, we observe that the differences persist and are exacerbated under privacy constraints, highlighting the unique challenges posed by privacy preservation in both testing and estimation contexts.

We first establish the minimax separation rate for the nonparametric goodness-of-fit testing problem and construct optimal tests in the oracle setting where the regularity parameters are known. The minimax separation rate serves as a benchmark for the difficulty of the testing problem. However, the regularity parameters  are typically in applications. A natural question is: Without the knowledge of the regularity parameters, is it possible to construct a test that is as good as when the parameters are  known? This is a question about {\it adaptation}, which has been a major goal in nonparametric statistics.
%In the case of nonparametric models, the model is infinite-dimensional and the performance of a statistical method typically depends on certain model parameters, such as the regularity of the underlying signal. When these parameters are unknown, appropriate statistical techniques require \emph{adaptation} in order achieve optimal performance.  
We construct a data-driven test and show that the proposed method can adapt to unknown regularity parameters with minimal additional cost while adhering the same DP restrictions.

\subsection{Federated privacy-constrained testing}\label{ssec:federated_testing}

We begin by formally introducing the general framework of federated inference under distributed DP constraints. Consider a family of probability measures  $\{P_f\}_{f \in \cF}$ on the measurable space $(\mathcal{X},\mathscr{X})$, parameterized by $f \in \cF$. We consider a setting where $N = mn$ i.i.d. observations are drawn from a distribution $P_f$ and distributed across $m$ servers. Each server $j=1,\dots,m$ holding an equal amount ($n$ many) observations.

Let us denote by $X^{(j)} = (X^{(j)}_i)_{i=1}^{n}$ the $n$ realizations from $P_f$ on the $j$-th server. Based on $X^{(j)}$, each server outputs a (randomized) transcript $Y^{(j)}$ to the central server that satisfies the privacy constraint. The central server, utilizing all transcripts $Y :=(Y^{(1)}, \dots,Y^{(m)})$, decides between a null hypothesis and an alternative hypothesis, through means of a test $T=T(Y)$. Since we are concerned with testing between a null and alternative hypothesis, we shall consider the decision space $\{0,1\}$, where $0$ corresponds to $\textsc{do not reject}$ and $1$ with $\textsc{reject}$. A \emph{test} is then simply to be understood as a statistic taking values in $\{0,1\}$. Figure \ref{fig:Distributed-Testing-Illustration} gives an illustration of a federated $(\epsilon,\delta)$-DP-constrained testing procedure.
\begin{figure}[ht]
    \centering
    \includegraphics[width=0.5\textwidth]{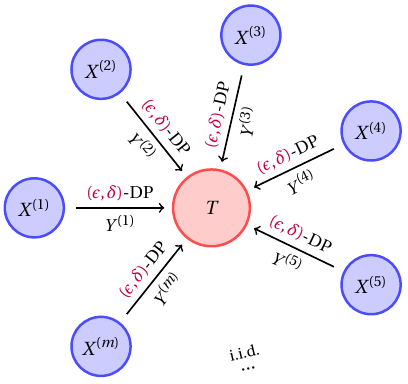}
    \caption{Illustration of federated $(\epsilon,\delta)$-DP-constrained testing.}\label{fig:Distributed-Testing-Illustration}
\end{figure}

The transcript $Y^{(j)}$ satisfies an $(\epsilon,\delta)$-DP constraint, which, loosely speaking, means that the transcript $Y^{(j)}$ cannot differ too much depending on whether a specific individual is in the data set or not.  This is achieved through randomization, which is independent of the data. We will consider two types of sources for randomization; independently among the servers or through a shared source of randomness $U$ (e.g., the same random seed). Formally, shared source of randomness means that the law of the transcript is given by a distribution conditionally on $X^{(j)}$ and $U$, $A \mapsto \P( Y^{(j)} \in A | X^{(j)},U)$, defined on a measurable space $(\cY, \mathscr{Y})$. The presence of shared randomness is a slight, but important extension of the distributed protocols where $Y^{(j)}$ is allowed to be random only through locally generated randomness. See further discussion below and in Section \ref{sec:main_results}. To assure that the source of shared randomness does not erode the notion of privacy, only the local source of randomness is used in the privacy mechanism, i.e. to guarantee privacy. We formalize this as follows. 

We shall call two data sets $x,x' \in \mathcal{X}^{n}$ \emph{neighboring} if they differ in one individual datum. That is, they are at most one apart in \emph{Hamming distance} (see Section \ref{ssec:notations}). A DP constraint is then defined as follows.

% A \emph{test} is then simply to be understood as a measurable taking values in $\{0,1\}$. 
\begin{definition}\label{def:differential_privacy}
 The transcript $Y^{(j)}$ is $(\epsilon,\delta)$-differentially private ($(\epsilon,\delta)$-DP) if for all $A \in \mathscr{Y}^{(j)}$, $u \in \cU$ and neighboring data sets $x,x' \in \mathcal{X}^{n}$ differing in one individual datum it holds that
\begin{equation}\label{eq:defining_DP}
\P \left( Y^{(j)} \in A | X^{(j)} = x, U = u \right) \leq  e^{\epsilon} \P \left( Y^{(j)} \in A | X^{(j)} = x', U = u \right)  + \delta.
\end{equation}
\end{definition}

The above setting is concerned with distributed protocols for scenarios where multiple parties hold sensitive data and each publishes a differentially private summary, without sharing raw data between them. This approach is common in cases like separate studies by different hospitals on the same population, where privacy concerns prevent direct data pooling. We note that, in the above definition, the outcome of the shared source of randomness $U$ does not diminish the privacy guarantee, even when it is publicly available. This is because the shared randomness is not used in the privacy mechanism, but as a means to enhance coordination between the transcripts, which allows the transcripts to be more informative about the underlying signal whilst each transcript is effectively sharing less information about their underlying individual data. 

Formally, having access to shared randomness means that $U$ can be defined on some probability space $(\cU,\mathscr{U}, \P^U)$, such that $U$ is independent of the data (i.e. by taking the appropriate product space for $\P^{X,U}$). Having no access to shared randomness effectively corresponds to considering $U$ to be a degenerate random variable, or $\mathscr{U} = \{ \emptyset, \cU \}$. In order to stream line the notation between these two setups, we shall refer to the triplet $( T, \{\left(\P^{Y^{(j)}|X^{(j)} = x, U=u}\right)_{x \in \cX^n, u \in \cU} \}_{j=1}^m, (\cU, \mathscr{U}, \P^U))$ as a \emph{$(\epsilon,\delta)$-DP shared randomness distributed testing protocol} with $\{\left(\P^{Y^{(j)}|X^{(j)} = x,U=u}\right)_{x \in \cX^n, u \in \cU} \}_{j=1}^m$ satisfying Definition \ref{def:differential_privacy} for general $U$ (i.e. general $(\cU, \mathscr{U}, \P^U)$). The class of such triplets, but with $\mathscr{U} = \{ \emptyset, \cU \}$, shall be referred to as \emph{\emph{$(\epsilon,\delta)$}-DP local randomness distributed testing protocols}. We shall denote these classes as $\mathscr{T}^{(\epsilon,\delta)}_{\texttt{SHR}}$ and $\mathscr{T}^{(\epsilon,\delta)}_{\texttt{LR}}$, respectively, and note that the former is a superset of the latter.

\subsection{Problem formulation}

The white-noise-with-drift model serves as a benchmark model for nonparametric testing and has been extensively studied outside of the DP setting, see \cite{ermakov1990asymptotically,ingster1993asymptotically,lepskii1992asymptotically,spokoiny_adaptive_1996,ingster2003nonparametric}. Furthermore, the problem bares a close relationship with ``classical'' nonparametric goodness-of-fit testing in the sense of \cite{an1933sulla,smirnov,cramer1928composition,von1928statistik} and other nonparametric testing problems through asymptotic equivalence, see Section 1.4 in \cite{ingster_nonparametric_2003} and references therein.

In the distributed setting, the $j=1,\dots,m$ machines each observe $i=1,\dots,n$ i.i.d, $X^{(j)}_i$ taking values in $\cX \subset L_2[0,1]$ and subject to the stochastic differential equation
\begin{equation}\label{eq:snwn_function_model_dynamics}
dX^{(j)}_{t;i} = f(t)dt + \sigma dW_{t;i}^{(j)}
\end{equation}
under $P_f$, with $t \mapsto W^{(1)}_{t;i},\dots,t \mapsto W^{(m)}_{t;i}$ i.i.d. Brownian motions and $f \in L_2[0,1]$ for $i=1,\dots,n$, with $\sigma > 0$ the known noise level for each observation. For notational convenience, we shall use $N=mn$ throughout the paper and will consider asymptotic regimes where $N \to \infty$. We note that, when $m=1$, we recover the classical white-noise-with-drift model.

% signal detection % hypothesis
We consider the canonical signal detection problem, where the goal is to test for the presence or absence of the ``signal component'' $f$. More formally, we consider testing the null hypothesis $H_0: f \equiv 0$ against the alternative hypotheses that
 \begin{equation}\label{eq:alternative_hypothesis}
f \in H_{\rho}^{s,R} \equiv H_{\rho}^{s,R,p,q} := \left\{ f \in \cB^{{s},R}_{p,q} : \|f\|_{L_2} \geq \rho \text{ and } \| f\|_{\cB^{s}_{p,q}} \leq R \right\}.
 \end{equation}
Here, the alternative hypothesis consists of ${s}$-smooth functions in a Besov space, with $\|\cdot\|_{\cB^{s}_{p,q}}$ denoting the Besov-$({s},p,q)$-norm and $\cB^{{s},R}_{p,q} \subset L_2[0,1]$ corresponds to the Besov ball of radius $R$, see Section \ref{sec:appendix_wavelet_properties} in the Supplementary Material \cite{Cai2024FL-NP-Testing-Supplement} for the definitions. Besov spaces are a very rich class of function spaces. They include many traditional smoothness spaces such as H\"{o}lder and Sobolev spaces as special cases. We refer the reader to \cite{triebel1992theory} for a detailed discussion on Besov spaces. 

Using a wavelet transform, the above testing problem is equivalent the observations under the Gaussian sequence model, where each of the $j=1,\dots,m$ machines observes $i=1,\dots,n$ observations $X^{(j)}_i := (X_{lk;i}^{(j)})_{l \geq 1,k=1,\dots,2^l}$
\begin{equation}\label{eq:sequence_model}
X_{lk;i}^{(j)} = f_{lk} + \sigma Z_{lk;i}^{(j)},
\end{equation}
where the $Z_{lk;i}^{(j)}$'s are i.i.d. standard Gaussian. The equivalent hypotheses \eqref{eq:alternative_hypothesis} in the sequence model simply follows by replacing the $L_2[0,1]$-norm with the $\ell_2(\N)$-norm and the Besov space $\cB^{{s},R}_{p,q}$ set to $\{ f \in \ell_2(\N) : \| f\|_{\cB^{s}_{p,q}} < \infty \}$, where the Besov norm on the sequence space $\ell_2(\N)$ is defined as 
\begin{equation}\label{eq:besov_norm}
    \| f\|_{\cB^{s}_{p,q}} :=  \begin{cases}
       \left( \underset{l=1}{\overset{\infty}{\sum}} \left(2^{l({s}+1/2-1/p)} \left\| (f_{lk})_{k=1}^{2^l} \right\|_p\right)^{q}\right)^{1/q} &\text{ for } 1 \leq q < \infty, \\
       \; \underset{l\geq 1}{\sup} \; 2^{l({s}+1/2-1/p)} \left\| (f_{lk})_{k=1}^{2^l} \right\|_p   &\text{ for } q = \infty.
    \end{cases}
 \end{equation} 
In other words, the results for testing under DP derived for the sequence model of \eqref{eq:sequence_model} with hypothesis \eqref{eq:alternative_hypothesis} apply to the model described by \eqref{eq:snwn_function_model_dynamics} also, with the same corresponding hypothesis. 

Given a $\{0,1\}$ valued test $T$, where $T(Y) = 1$ corresponds to rejecting the null hypothesis, we define the testing risk sum of the type I and worst case type II error over the alternative class;
\begin{equation*}
\cR( H_{\rho}^{s,R} , T) = \P_{0} T(Y) + \underset{{f} \in H_{\rho}^{{s},R}}{\sup} \, \P_f T(Y).
\end{equation*}
For the range of values $2 \leq p \leq \infty$, $1 \leq q \leq \infty$, the \emph{minimax separation rate} in the unconstrained case is known to be $\rho \asymp {(\sigma^2/N)}^{\frac{{s}}{2{s} + 1/2}}$ (see e.g. \cite{ingster1993asymptotically}). This means that, for $\rho \gg  {(\sigma^2/N)}^{\frac{{s}}{2{s} + 1/2}}$, there exists a sequence of consistent tests $T \equiv T_N$ such that $\cR( H_{\rho}^{s,R} , T) \to 0$, whilst no such sequence of tests exists whenever $\rho \ll {(\sigma^2/N)}^{\frac{{s}}{2{s} + 1/2}}$. 
 
The minimax separation rate captures how the testing problem becomes easier, or more difficult, for different model characteristics. For distributed $(\epsilon,\delta)$-DP testing protocols, the minimax separation rate depends on the stringency of the privacy requirement, given by $\epsilon,\delta > 0$, as well as the model characteristics $m,n,s$ and $\sigma$. That is, we aim to find $\rho$ as a function of $m,n,s,\sigma,\epsilon,\delta$, such that $ \underset{T \in \mathscr{T}^{(\epsilon,\delta)}}{\inf} \cR( H_{\rho',R}^{s,p,q} , T) $ converges to either $0$ or $1$ depending on whether $\rho' \ll \rho$ or $\rho' \gg \rho$. The class of alternatives under consideration are subsets of the Besov ball $\cB^{{s},R}_{p,q}$, where $2 \leq p < \infty$, $1 \leq q \leq \infty$, which offers a framework for functions in \eqref{eq:snwn_function_model_dynamics} with specific smoothness characteristics. Our results extend easily to the case where $p = \infty$ at the cost of an additional logarithmic factor in the rate and a few additional technicalities in the proofs.

\subsection{Main results and our contribution}\label{ssec:our_contribution}

We quantify the difficulty of the federated testing problem outlined in the previous section in terms of the minimax separation rate. To achieve this, we present constructive distributed $(\epsilon,\delta)$-DP testing protocols that achieve consistent testing for the problem described above for certain values of $\rho$ (Theorem \ref{thm:attainment_nonadaptive} in Section \ref{sec:construction_of_tests}). Additionally, we establish matching minimax lower bounds, up to logarithmic factor, for the testing risk (Theorem \ref{thm:nonasymptotic_testing_lower_bound} in Section \ref{sec:lower-bounds}), providing a lower bound on the performance of distributed $(\epsilon,\delta)$-DP testing protocols.

Our analysis uncovers several novel and intriguing findings, which we briefly highlight here. The performance guarantees for the methods demonstrated in Section \ref{sec:construction_of_tests}, along with the lower bounds established in Section \ref{sec:lower-bounds}, indicate that the distributed $(\epsilon,\delta)$-DP testing problem for the hypotheses given in \eqref{eq:alternative_hypothesis} is governed by the minimax separation rate (up to logarithmic factors) 

\begin{equation}\label{eq:shared_randomness_rate_single_line}
    \rho^2 \asymp \left(\frac{\sigma^2}{mn}\right)^{\frac{2{s}}{2{s}+1/2}} + \left(\frac{\sigma^2}{m n^{3/2} \epsilon \sqrt{1 \wedge n \epsilon^2} } \right)^{\frac{2{s}}{2{s}+1}} \wedge \left( \left(\frac{\sigma^2}{\sqrt{m} n^{} \sqrt{1 \wedge n \epsilon^2}} \right)^{\frac{2{s}}{2{s}+1/2}} + \frac{\sigma^2}{mn^2 \epsilon^2} \right).
\end{equation}

The precise statement is deferred to Theorem \ref{thm:rate_theorem_shared_randomness}. 

The derived rate indicates that the distributed testing problem under privacy constraints undergoes multiple phase transitions, resulting in different regimes where $\epsilon$ affects the detection boundary differently. Specifically, a smaller $\epsilon$, which implies a stronger privacy guarantee, leads to an increased detection threshold. When $\delta$ decreases polynomially with $N$, its impact on the detection boundary is limited to a logarithmic factor, making its effect on the error rate minor compared to that of $\epsilon$.

For $m=1$, our theorems establish the optimal separation rate for nonparametric goodness-of-fit testing in the central DP setting, where all data is available on a single machine. When $\epsilon \lesssim 1/\sqrt{N}$, the privacy constraint affects the rate polynomially. In contrast, for $\epsilon \gtrsim 1/\sqrt{N}$, the rate approximates the classical minimax rate, up to logarithmic factors. Thus, the privacy constraint significantly impacts the rate only when $\epsilon$ is relatively small compared to the total number of observations $N$.

When $n=1$, we establish the optimal separation rate for the testing problem in the local DP setting. Here, $\epsilon$ can be seen to have a pronounced effect on the rate whenever $\epsilon \lesssim 1$.

In the general federated setting, with $m \gg 1$, we see that $m$ and $n$ come into play with different powers in the minimax rate whenever $\epsilon^2 \lesssim \sigma^{\frac{1}{2s+1}} m^{\frac{1}{4s+1}} n^{\frac{1/2-2s}{4s+1}}$. This means that if one distributes $N=mn$ observations across $m$ machines, the task becomes more challenging as the $N$ observations are spread over a greater number of machines, rather than having many observations on a smaller number of machines. This phenomenon is also observed in the comparable estimation setting of nonparametric regression, as recently investigated in \cite{cai2023private}. The phase transitions, however, are not observed in its estimation counterpart. We provide a detailed interpretation of the phase transitions in Section \ref{sec:main_results}. 

Our analysis also reveals that the minimax rate obtained in Theorem \ref{thm:rate_theorem_shared_randomness} becomes worse without access to shared randomness. This is revealed by Theorem \ref{thm:rate_theorem_shared_randomness} in Section \ref{sec:main_results}. For certain values of $\epsilon$, we show that the performance is strictly worse for methods that use only local randomness, and we exhibit optimal local and shared randomness methods for these regimes, respectively, in Sections \ref{ssec:procedure_II} and \ref{ssec:procedure_III}. The shared source of randomness does not reveal any information on the identity of the individuals comprising the sample, even when it is publicly available. This means that the shared randomness does not violate the privacy constraints, and the improvement in the rate is a direct result of the shared randomness. We comment on this further in Section \ref{sec:main_results}.

In many practical applications, the regularity parameter $s$ is unknown. In Section \ref{sec:adaptive-tests}, we extend the methods of Section \ref{sec:construction_of_tests} that attain the minimax rates, up to additional logarithmic factors, when the regularity is unknown. That is, establish that adaptive testing is possible under DP constraints with minimal additional cost in terms of the separation rate. 

The lower bound relies on several technical innovations, which are summarized in Section \ref{sec:lower-bounds}. There, we provide a sketch of the proof and highlight the key innovations. The optimal methods proposed in this paper rely on carefully tailored private test statistics that extend to the adaptive setting with minimal cost due to exponential concentration bounds. We describe the construction of these tests in Sections \ref{sec:construction_of_tests} and \ref{sec:adaptive-tests}.

\subsection{Related Work}\label{ssec:related_work}

The literature on the theoretical properties of DP can be mostly divided into those studying \emph{local} DP or \emph{central} DP. In local DP, the privacy protection is applied at the level of individual data entries or observations, which corresponds to $n=1$ in our setting. This is a stringent form of DP because each item of data is independently given privacy protection. In the other extreme, central DP, only the inference output needs to satisfy the DP constraint (i.e. $m=1$ in our setting), meaning that if the output is a test, only the final decision needs to satisfy a DP constraint. 

Locally differential private estimation has been studied in the context of the many-normal-means model, discrete distributions and parametric models in \cite{duchi2013local,duchi2018minimax,pmlr-v119-acharya20a_context_aware_LDP,min_barg}. The problem of density estimation under local DP constraints has been considered by \cite{duchi2018minimax,sart_density_LDP,kroll_density_at_a_point_LDP,butucea_LDP_adaptation}, of which the latter three works consider adaptation. In the context of hypothesis testing, \cite{pmlr-v48-rogers16,sheffet2018locally,acharya2018differentially,pmlr-v89-acharya19b,berrett2020locally,acharya_IEEE_identity_testing_part_I,acharya_IEEE_III} study testing under local DP for discrete distributions. Nonparametric goodness-of-fit testing under local DP is considered in \cite{dubois:hal-04426780,lam2022minimax}, where in \cite{lam2022minimax}, the authors consider adaptation as well. In \cite{butucea:hal-04425360}, the authors consider estimation of a quadratic functional under local differential privacy constraints, which has connections to goodness-of-fit testing. %In the affirmentioned paper and \cite{dubois:hal-04426780,berrett2020locally}, the authors study interactive versus non-interactive protocols and find a difference in terms of minimax performance between the two. 

Settings in which the full data is assumed to be on a single server (i.e. $m=1$), where a single privacy constraint applies to all the observations, have also been studied for various parametric high-dimensional problems \cite{smith2011privacy,dwork2014analyze,bassily2014private,kamath2019privately,kamath2020private,cai2021cost,narayanan2022private,pmlr-v195-brown23a,cai2024optimal}. In \cite{lalanne2023about}, nonparametric density estimation with known smoothness is considered. When it comes to hypothesis testing under central DP, \cite{canonne2019structure} study simple hypothesis testing. \cite{NEURIPS2018_7de32147} considers uniformity and independence testing in the multinomial model and \cite{NEURIPS2020_private_identity_canonne,pmlr-v178-narayanan22a} study signal detection in the many-normal-means model. 
In \cite{NEURIPS2022_5bc3356e}, hypothesis testing in a  linear regression setting is considered.

Investigations into the more general federated setting have been much more limited, with estimation being considered in \cite{liu2020learning,pmlr-v206-acharya23a_user_level_LDP}, which study estimation for discrete distributions and \cite{levy2021learning,narayanan2022tight} which study mean estimation, \cite{cai2023private} which study nonparametric regression, and \cite{li2024federated} which consider sparse linear regression. In the paper \cite{canonne2023private}, the authors consider discrete distribution testing in a two server setting ($m=2$) with differing DP constraints.

\subsection{Organization of the paper}

The rest of the paper is organized as follows. In Section \ref{sec:main_results}, we present the main results of the paper, for the known smoothness case and the adaptive setting. Next, Section \ref{sec:construction_of_tests} presents the methods that achieve the optimal rates derived in Section \ref{sec:main_results}. In Section \ref{sec:adaptive-tests}, we extend these methods to be adaptive in the case that the smoothness is unknown. In Section \ref{sec:lower-bounds}, we present the lower bound theorems for the testing problem and give a sketch of its proof. Further proofs are deferred to the supplemental material to the article, \cite{Cai2024FL-NP-Testing-Supplement}. 

\subsection{Notation, definitions and assumptions}\label{ssec:notations}

Throughout the paper, we shall write $N := mn$. For two positive sequences $a_k$, $b_k$ we write $a_k\lesssim b_k$ if the inequality $a_k \leq Cb_k$ holds for some universal positive constant $C$. Similarly, we write $a_k\asymp b_k$ if $a_k\lesssim b_k$ and $b_k\lesssim a_k$ hold simultaneously and let $a_k\ll b_k$ denote that $a_k/b_k = o(1)$. 

We use the notations $a\vee b$ and $a\wedge b$ for the maximum and minimum, respectively, between $a$ and $b$. For $k \in \N$, $[k]$ shall denote the set $\{1,\dots,k\}$. Throughout the paper $c$ and $C$ denote universal constants whose value can differ from line to line. The Euclidean norm of a vector $v \in \mathbb{R}^d$ is denoted by $\|v\|_2$. For a matrix $M \in \R^{d \times d}$, the norm $M \mapsto \| M \|$ is the spectral norm and $\text{Tr}(M)$ is its trace. Furthermore, we let $I_d$ denote the $d \times d$ identity matrix. 

Throughout the paper, $\mathrm{d}_H$ is the Hamming distance on $\cX^n$ is defined as $\mathrm{d}_H(x,\breve{x}) := \sum_{i=1}^n \mathbbm{1}\left\{ x_i \neq \breve{x}_i \right\}$ for $x = (x_i)_{i =1}^n, \breve{x} = (\breve{x}_i)_{i =1}^n  \in \cX^n$. Furthermore, for a vector space $\cX$ and $x = (x_i)_{i \in [n]}\in \cX^n$, we shall write $\overline{x}$ for the average $n^{-1} \sum^n_{i=1} x_i$.

%%%%%%%%%%%%%%%%%%%%%%%%%%%%%%%%%%%%%%%%%%%%%%%%%%%%%%%%%%%%%%%%%%%%%%%%%%  
\section{Minimax optimal testing rates under privacy constraints}
\label{sec:main_results}
%%%%%%%%%%%%%%%%%%%%%%%%%%%%%%%%%%%%%%%%%%%%%%%%%%%%%%%%%%%%%%%%%%%%%%%%%%  

In this section, we discuss the main results in detail. We start the discussion with results for the oracle case where the regularity parameter is known in Section \ref{ssec:known_smoothness_main_results}. Section \ref{ssec:adaptation_main_results} describes the main results for when the regularity is not known.

\subsection{Description of the minimax separation rate}\label{ssec:known_smoothness_main_results}

% TODO: thm 1 -> slight interpretation, proven by ub and b
% thm 2 -> differs from rate of thm 1, see table below
% not just in the rate of the regimes, but also when which regime occurs
% then image
% then insightful but difficult to follow discussion?

We first give a precise statement concerning the minimax separation rate shown in \eqref{eq:shared_randomness_rate_single_line}.

\begin{theorem}\label{thm:rate_theorem_shared_randomness}
    Let ${s},R > 0$ be given and consider any sequences of natural numbers $m \equiv m_N$ and $n := N/m$ such that $N = mn \to \infty$, $1/N \ll \sigma \equiv \sigma_N = O(1)$, $\epsilon \equiv \epsilon_N$ in $(N^{-1},1]$ and $\delta \equiv \delta_N \lesssim N^{-(1 + \omega)}$ for any constant $\omega > 0$. Let $\rho$ a sequence of positive numbers satisfying \eqref{eq:shared_randomness_rate_single_line}.

    Then,
    \begin{equation*}
    \underset{T \in \mathscr{T}^{(\epsilon,\delta)}_{\texttt{SHR}}}{\inf} \; \; \cR( H_{\rho M_N}^{{s},R} , T) \to \begin{cases}
    0 \; \text{ for any } M_N^2 \gg \log \log (N) \log^{3/2} (N) \log(1/\delta), \\
    1 \; \text{ for any } M_N \to 0.
    \end{cases} 
    \end{equation*}
\end{theorem}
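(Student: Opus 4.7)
The plan is to establish the theorem by separately proving the upper and lower parts of the minimax separation statement and then combining them. The upper bound (the case $M_N^2 \gg \log\log(N) \log^{3/2}(N) \log(1/\delta)$) will be obtained by exhibiting an explicit $(\epsilon,\delta)$-DP shared-randomness distributed protocol achieving $\cR(H^{s,R}_\rho,T) \to 0$; this is the content of Theorem \ref{thm:attainment_nonadaptive} in Section \ref{sec:construction_of_tests}. The lower bound (the case $M_N \to 0$) will be obtained by a Bayesian reduction showing that no protocol in $\mathscr{T}^{(\epsilon,\delta)}_{\texttt{SHR}}$ can do better, corresponding to Theorem \ref{thm:nonasymptotic_testing_lower_bound} in Section \ref{sec:lower-bounds}.

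For the upper bound, I would pass to the wavelet sequence model \eqref{eq:sequence_model} and work with a truncated squared-norm test statistic computed on wavelet coefficients up to a resolution level $L$ to be optimized. The rate \eqref{eq:shared_randomness_rate_single_line} decomposes into four terms, each governing a distinct regime, and correspondingly the achieving protocol is the one that takes the minimum risk over three sub-protocols (call them Procedures I, II, III, constructed in Sections \ref{ssec:procedure_II} and \ref{ssec:procedure_III}) tuned to the relevant regime. Each sub-protocol privatizes a per-server $U$-statistic or quadratic-form estimator of $\|f\|_{L_2}^2$ by adding appropriately calibrated Gaussian noise (using shared randomness to correlate sign flips or basis rotations across servers so that noise cancels after aggregation, which is the source of the improvement over local randomness). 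Once the optimal $L$ is chosen by balancing bias (from truncation, using the Besov ball control of $\|f\|_{\cB^s_{p,q}} \leq R$) against the variance and privacy-noise components, exponential concentration and a union bound yield the logarithmic price $\log\log N \cdot \log^{3/2} N \cdot \log(1/\delta)$.

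For the lower bound, the approach is to construct a prior $\pi$ on the alternative supported on sparse random wavelet perturbations $f = \rho \sum_{k \in S_L} \xi_k \psi_{Lk}$ at a carefully chosen level $L$ with Rademacher $\xi_k$, and then bound the total variation between the null marginal on the transcripts and the $\pi$-averaged alternative marginal. The classical ingster-type chi-squared argument needs to be strengthened along two axes: first, one must pass through the data-processing inequality for the channels $X^{(j)} \mapsto Y^{(j)}$, which by Definition \ref{def:differential_privacy} contract divergences by a factor that depends on $(\epsilon,\delta)$; second, one must handle the conditioning on the shared randomness $U$, averaging over $U$ at the end. The combination of the standard statistical noise term and the privacy-constrained term is what produces the $1 \wedge n\epsilon^2$ factors and the "min" structure in \eqref{eq:shared_randomness_rate_single_line}.

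The main obstacle I anticipate is the lower bound across the phase transitions: obtaining a single unified argument that is tight in all four regimes (small $\epsilon$ versus large $\epsilon$, and $m=1$ versus $n=1$ versus general federated) requires carefully choosing the prior's resolution $L$ and sparsity as a function of $(m,n,s,\sigma,\epsilon)$, and controlling the interaction between the Gaussian likelihood ratio (which gives the $(\sigma^2/mn)$-type term) and the DP-induced contraction (which gives the $\epsilon$-dependent terms). The shared-randomness conditioning, and in particular showing that $U$ does not help beyond what is captured by \eqref{eq:shared_randomness_rate_single_line}, is the most delicate point; I would condition on $U$ first, apply a DP-adapted Ingster-style inequality per value of $U$, and only then integrate, using that the privacy guarantee holds for every fixed $u \in \cU$.
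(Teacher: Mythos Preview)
Your high-level decomposition into an upper bound (via Theorem \ref{thm:attainment_nonadaptive}) and a lower bound (via Theorem \ref{thm:nonasymptotic_testing_lower_bound}) is correct, and your sketch of the upper bound is broadly aligned with the paper's three-procedure construction. Your description of Procedure III as using shared randomness to apply a common random rotation is accurate in spirit, though the mechanism is a Haar-distributed rotation rather than sign flips.

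The genuine gap is in the lower bound. You propose a Rademacher/two-point prior $f = \rho \sum_{k} \xi_k \psi_{Lk}$ and a generic ``DP-adapted Ingster-style inequality'' obtained by passing a data-processing contraction through the channels. This is not what the paper does, and it is unlikely to deliver the sharp rate \eqref{eq:shared_randomness_rate_single_line} across all regimes. The paper's argument hinges on three ingredients your plan omits. First, the prior is taken \emph{Gaussian}, $\pi = N(0, c_\alpha^{-1/2} d_L^{-1} \rho^2 \bar{\Gamma})$, precisely because Step~3 applies a Brascamp--Lieb-type inequality (Lemma \ref{lem:brascamp_lieb_consequence}) whose extremizers are Gaussian; this is what factorizes the chi-square divergence as $\mathrm{A}_u^\pi \cdot \mathrm{B}_u^\pi$, with $\mathrm{A}_u^\pi$ controlled by the transcript Fisher information matrix $\Xi_u$ of \eqref{eq:def_Xi_uj}. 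A two-point prior does not interface with this machinery. Second, the privacy cost is not captured by a generic divergence contraction: the bound on $\text{Tr}(\Xi_u)$ comes from a score-attack argument (Lemma \ref{lem:data_processing_impure_DP_Fisher_info}), and the bound on $\mathrm{B}_u^\pi$ for $\epsilon \lesssim n^{-1/2}$ comes from a bespoke coupling (Lemmas \ref{lem:coupling_general}--\ref{lem:grand_coupling}) that exploits the Hamming structure of the DP constraint. Third, when $\delta > 0$ the transcript likelihoods and the forward-backward channel $q_u(x_1,x_2)$ can be unbounded, which breaks both the Brascamp--Lieb step and the coupling step; the paper handles this by first replacing the DP kernels with approximating $(\epsilon, O(\delta))$-DP kernels that have bounded densities (Lemmas \ref{lem:obtaining_bounded_kernel_densities} and \ref{lem:portmanteau_lemma}), a preprocessing step your outline does not anticipate.

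Your instinct to condition on $U$ and integrate at the end is correct, but you miss the mechanism that distinguishes shared from local randomness in the lower bound: in the local case the prior covariance $\bar{\Gamma}$ can be chosen \emph{adversarially} to the (degenerate) $\Xi_u$, projecting onto its small-eigenvalue subspace (Lemma \ref{lem:brascamp_lower_bound_summarization}), whereas in the shared case one must take $\bar{\Gamma} = I_{d_L}$. This geometric choice, not the integration over $U$, is what produces the different exponents in \eqref{eq:shared_randomness_rate_single_line} versus \eqref{eq:local_randomness_rate_single_line}.
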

 
The proof of the theorem is given in Section \ref{sec:proofs_main_results} of the Supplementary Material \cite{Cai2024FL-NP-Testing-Supplement}. It is based on a combination of upper and lower bounds, where the lower bound is established in Section \ref{sec:lower-bounds}. The upper bound is given in Section \ref{sec:construction_of_tests}, where we present an $(\epsilon,\delta)$-DP distributed testing protocol that attains the rate in Theorem \ref{thm:rate_theorem_shared_randomness}. These upper and lower bounds are in fact non-asymptotic, meaning that they do not require the assumption that $N \to \infty$.

Theorem \ref{thm:rate_theorem_shared_randomness} shows multiple regime changes, where the distributed testing problem under privacy constraints undergoes a change in the minimax separation rate. Later on in this section, we highlight the different regimes and give an interpretation to each of them.

Theorem \ref{thm:rate_theorem_shared_randomness} considers the minimax rate for the class of distributed protocols with access to shared randomness, $\mathscr{T}^{(\epsilon,\delta)}_{\texttt{SHR}}$. Theorem \ref{thm:rate_theorem_local_randomness} below considers the minimax rate for the (strictly smaller) class of distributed protocols without access to shared randomness, $\mathscr{T}^{(\epsilon,\delta)}_{\texttt{LR}}$. Here, transcripts depend \emph{only} on their local data and possibly a local source of randomness. 

%Here, we also provide plots and a table summarizing the different regimes for both types of protocols; with and without shared randomness. In Section \ref{sec:supp:additional_rates}, we provide an additional overview of all the regimes and their corresponding range of $\epsilon$ in which they occur.

%Theorem \ref{thm:rate_theorem_local_randomness} below describes the minimax separation rate for distributed $(\epsilon,\delta)$-DP testing protocols, as defined in Section \ref{ssec:federated_testing}. 

\begin{theorem}\label{thm:rate_theorem_local_randomness}
        Let ${s},R > 0$ be given and consider any sequences of natural numbers $m \equiv m_N$ and $n := N/m$ such that $N = mn \to \infty$, $1/N \ll \sigma \equiv \sigma_N = O(1)$ and $\epsilon \equiv \epsilon_N$ in $(N^{-1},1]$ and $\delta \equiv \delta_N \lesssim N^{-(1 + \omega)}$ for any constant $\omega > 0$. Let $\rho \equiv \rho_N$ a sequence of positive numbers satisfying \begin{equation}\label{eq:local_randomness_rate_single_line}
            \rho^2 \asymp  \left(\frac{\sigma^{2}}{mn}\right)^{\frac{2{s}}{2{s}+1/2}} + \left(\frac{\sigma^2}{m n^{2} \epsilon^2} \right)^{\frac{2{s}}{2{s}+3/2}} \wedge \left( \left(\frac{\sigma^2}{\sqrt{m} n^{} \sqrt{1 \wedge n \epsilon^2}} \right)^{\frac{2{s}}{2{s}+1/2}} + \left( \frac{\sigma^2}{mn^2 \epsilon^2} \right) \right).
            \end{equation}
        Then,
        \begin{equation*}
        \underset{T \in \mathscr{T}^{(\epsilon,\delta)}_{\texttt{LR}}}{\inf} \; \; \cR( H_{\rho M_N}^{{s},R} , T) \to \begin{cases}
        0 \; \text{ for any } M_N^2 \gg \log \log (N)  \log^{3/2} (N) \log(1/\delta), \\
        1 \; \text{ for any } M_N \to 0.
        \end{cases} 
        \end{equation*}
        \end{theorem}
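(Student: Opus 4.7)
The plan is to mirror the two-sided strategy used for Theorem~\ref{thm:rate_theorem_shared_randomness}: combine a matching upper bound achieved by an explicit $(\epsilon,\delta)$-DP local-randomness protocol with a minimax lower bound restricted to the class $\mathscr{T}^{(\epsilon,\delta)}_{\texttt{LR}}$. The first and third terms on the right-hand side of \eqref{eq:local_randomness_rate_single_line} coincide with the corresponding terms in \eqref{eq:shared_randomness_rate_single_line}, so the work concentrates on (i) producing a local-randomness test matching the middle term $(\sigma^2/(mn^2\epsilon^2))^{2s/(2s+3/2)}$ and (ii) sharpening the lower bound so that this worse exponent becomes unavoidable without shared randomness.

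For the upper bound I would take $T_{\texttt{LR}} := T_1 \vee T_2 \vee T_3$, where each $T_k$ is tuned to one of the three terms of \eqref{eq:local_randomness_rate_single_line}. The statistic $T_1$ is the pooled non-private wavelet $\chi^2$-test truncated at resolution $2^{L_1} \asymp (mn/\sigma^2)^{1/(2s+1/2)}$, which is automatically $(\epsilon,\delta)$-DP in the regime where the sampling noise dominates the privacy noise. The statistic $T_3$ is the shared-randomness-irrelevant test already used for the third min-term in Theorem~\ref{thm:rate_theorem_shared_randomness}. The new ingredient is $T_2$: on each server $j$, form the level-$L_2$ empirical wavelet $\chi^2$-type quantity $U^{(j)} := \|\bar X^{(j)}_{L_2}\|_2^2 - \sigma^2 2^{L_2}/n$, clip it to a data-independent envelope of order $\rho_j^2$ using a smooth truncation, and release $\tilde U^{(j)} = \mathrm{clip}(U^{(j)}) + \xi^{(j)}$ with $\xi^{(j)}$ Laplace or Gaussian noise calibrated to the envelope to satisfy $(\epsilon,\delta)$-DP with $\xi^{(j)}$ drawn from local randomness only. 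Aggregating $m^{-1}\sum_j \tilde U^{(j)}$ and balancing the three error contributions (sampling variance $\sigma^2 2^{L_2}/(mn^2)$, clipping bias, and privacy variance of order $(2^{L_2}/n)^2/(m\epsilon^2)$) forces the choice $2^{L_2} \asymp \rho^{-1/s}$ that yields exponent $2s/(2s+3/2)$. Concentration inequalities (Hanson--Wright for $U^{(j)}$, sub-exponential tails for the noise) deliver the $\log$-factor penalty in the statement.

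For the lower bound I would follow the two-point/prior-based approach from Section~\ref{sec:lower-bounds} but restrict to $\mathscr{T}^{(\epsilon,\delta)}_{\texttt{LR}}$. Under local randomness the joint law of $Y = (Y^{(1)},\dots,Y^{(m)})$ factorizes as $\P^{Y|X}_f = \bigotimes_{j=1}^m \P^{Y^{(j)}|X^{(j)}}_f$, so any $f$-divergence between $\P^Y_0$ and the mixture $\P^Y_\pi$ induced by a symmetric prior $\pi$ on a $2^L$-dimensional wavelet slice tensorizes across machines. Applying the Karwa--Vadhan / Duchi--Jordan--Wainwright style bound for $(\epsilon,\delta)$-DP channels gives, per machine, a contribution of order $(n\epsilon^2 \wedge 1) \cdot \chi^2(\P^{X^{(j)}}_\pi, \P^{X^{(j)}}_0)$ rather than a contribution in which coordinated projections can save a factor of $\sqrt{2^L}$; this is precisely where the shared-randomness gain is lost. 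Choosing $\pi$ as the uniform $\pm \rho\, 2^{-L/2}$ prior on a Besov-ball-feasible slice and inserting the classical $\chi^2$ expansion, the total divergence is bounded by $m \cdot (n\epsilon^2\wedge 1) \cdot (n\rho^2/\sigma^2)^2 \cdot 2^L$, which combined with the $L_2$-constraint $\rho^2 \asymp 2^{-2sL}$ and the Besov constraint on $L$ yields the exponents in \eqref{eq:local_randomness_rate_single_line} after optimizing $L$.

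The hardest step is isolating the extra $\sqrt{2^L}$ cost that distinguishes local from shared randomness, i.e.\ showing that no clever local-randomness construction can recover the better exponent $2s/(2s+1)$ of Theorem~\ref{thm:rate_theorem_shared_randomness}. This requires a divergence inequality that is simultaneously (a) tight at the per-machine level under $(\epsilon,\delta)$-DP with $\delta$ polynomially small, (b) correctly tensorizing across $m$ independent transcripts, and (c) robust to the prior $\pi$ living on a $2^L$-dimensional space rather than a one-dimensional projection. I expect a mutual-information chain-rule argument combined with the $(\epsilon,\delta)$-strong data-processing inequality to deliver this, with the $\log(1/\delta)$ factor in the statement arising from the standard reduction from $(\epsilon,\delta)$-DP to pure $\epsilon$-DP on a high-probability event. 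Once this divergence bound is in place, the remaining optimization over $L$ and the matching with the upper bound are straightforward.
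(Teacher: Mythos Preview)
Your proposal has the right two-sided structure, but contains genuine gaps on both sides.

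\textbf{Upper bound.} The claim that the ``pooled non-private wavelet $\chi^2$-test'' is ``automatically $(\epsilon,\delta)$-DP'' in any regime is false; a non-private statistic is never DP. In the paper the first term of \eqref{eq:local_randomness_rate_single_line} is not achieved by a separate non-private test but is subsumed by the privacy-constrained procedures once $\epsilon$ is large enough. More importantly, your $T_2$ is too naive: hard (or smooth) truncation of $U^{(j)}=\|\bar X^{(j)}_{L_2}\|_2^2-\sigma^2 2^{L_2}/n$ to an envelope of size $\tau$ yields global sensitivity $2\tau$, so Gaussian/Laplace noise of order $\tau/\epsilon$ is required. To detect a signal of size $\|f_L\|_2^2$ one needs $\tau\gtrsim \|f_L\|_2^2$, making the noise too large except when $\sqrt{m}\epsilon\gg 1$. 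The paper's $T_{\text{I}}$ (Section~\ref{ssec:procedure_I}) bypasses this via a different mechanism: restricting to a high-probability set $\cC_{L;\tau}$ on which the \emph{local} sensitivity is only $D_\tau\asymp \log(N)\sqrt{\tau/(n\sqrt{d_L})}$, and then taking a McShane--Whitney Lipschitz extension to globalize this small sensitivity. A grid of clipping levels $\tau\in\mathrm{T}_L$ is then needed because the correct $\tau$ depends on the unknown $\|f_L\|_2^2$. Without these ingredients your balancing calculation for $T_2$ does not go through.

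\textbf{Lower bound.} You locate the local/shared-randomness gap in the factorization $\P^Y=\bigotimes_j\P^{Y^{(j)}|X^{(j)}}$, but this factorization holds in \emph{both} settings (for shared randomness, conditionally on $U$), so tensorization alone cannot explain the different exponents. In the paper the gap arises elsewhere: after the Brascamp--Lieb step (Lemma~\ref{lem:brascamp_lieb_consequence}), the chi-square divergence is controlled by the Fisher-information-type matrix $\Xi=\sum_j\Xi_u^j$. When $U$ is degenerate the prior covariance $\bar\Gamma$ may be chosen \emph{adversarially after seeing $\Xi$}, concentrating on the eigenspace of small eigenvalues; this yields the bound $\mathrm{A}^\pi\le \exp(C\rho^4\,\text{Tr}(\Xi)^2/d_L^3)$ instead of $\exp(C\rho^4\|\Xi\|\text{Tr}(\Xi)/d_L^2)$ (Lemma~\ref{lem:brascamp_lower_bound_summarization}). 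This extra factor of $d_L^{-1}$ is exactly the $\sqrt{2^L}$ you are looking for. A mutual-information chain rule with a \emph{fixed} prior, as you propose, will at best recover the shared-randomness bound and cannot produce the sharper local-randomness exponent $2s/(2s+3/2)$.
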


The proof of Theorem \ref{thm:rate_theorem_shared_randomness} is given in Section \ref{sec:proofs_main_results} of the supplemental material \cite{Cai2024FL-NP-Testing-Supplement}. The theorem shows that, depending on the value of $\epsilon$, the minimax rate for protocols that do not have access to shared randomness is strictly worse than those for protocols that do have access to shared randomness. 

To more easily compare the two theorems, we provide a table in Table \ref{tab:rate_table} below, where we separate six ``regimes'' to aid interpretability below. Each of the regimes correspond to the dominating term in the minimax separation rates of Theorems \ref{thm:rate_theorem_shared_randomness} and \ref{thm:rate_theorem_local_randomness}. Which term dominates depends on the value of $\epsilon$, in comparison to $n,m,\sigma$, $s$ and the availability of shared randomness. 

\begin{table}[h!]
    \noindent\resizebox{\textwidth}{!}{
        \begin{tabular}{lcccccc}
      
        \textbf{} & \textbf{Regime 1} & \textbf{Regime 2} & \textbf{Regime 3} & \textbf{Regime 4} & \textbf{Regime 5} & \textbf{Regime 6} \\
        \hline 
        Shared $U$ & \(\left(\frac{\sigma^2}{mn} \right)^{\frac{2s}{2s+1/2}}\) & \(\left( \frac{\sigma^2}{m n^{3/2} \epsilon}  \right)^{\frac{2s}{2s+1}}\) & \(\left( \frac{\sigma^2}{m n^{2} \epsilon^2} \right)^{\frac{2s}{2s+1}}\) & \(\left( \frac{\sigma^2}{\sqrt{m} n^{}} \right)^{\frac{2s}{2s+1/2}}\) & \(\left( \frac{\sigma^2}{\sqrt{m} n^{3/2} \epsilon}  \right)^{\frac{2s}{2s+1/2}}\) & \( \frac{\sigma^2}{mn^2 \epsilon^2} \) \\
        %$\mathscr{U} = \{ \emptyset, \cU \}$ 
        Local only& \(\left(\frac{\sigma^2}{mn}\right)^{\frac{2s}{2s+1/2}}\) & \(\left(\frac{\sigma^2}{m n^{2} \epsilon^2} \right)^{\frac{2s}{2s+3/2}}\) & \(\left( \frac{\sigma^2}{m n^{2} \epsilon^2} \right)^{\frac{2s}{2s+3/2}}\) & \(\left( \frac{\sigma^2}{\sqrt{m} n^{}} \right)^{\frac{2s}{2s+1/2}}\) & \(\left( \frac{\sigma^2}{\sqrt{m} n^{3/2} \epsilon}  \right)^{\frac{2s}{2s+1/2}}\) & \( \frac{\sigma^2}{mn^2 \epsilon^2} \) 
        %& & & & & & \\
        %\(s \leq 1/4\) & & & & \((\sqrt{m} n^{} )^{-\frac{2s}{2s+1/2}}\) & \((\sqrt{m} n^{3/2} \epsilon  )^{-\frac{2s}{2s+1/2}}\) & \((mn^2 \epsilon^2)^{-1}\)  \\
        %General $\mathscr{U}$ 
 
        \end{tabular}
        }
    \caption{The minimax separation rates for the testing problem under privacy constraints, for both the local randomness and shared randomness settings. The rates are given up to logarithmic factors. The regimes are defined by the values of $\epsilon$ and the model characteristics $m,n,\sigma,s$. }\label{tab:rate_table}    
  \end{table}
  
The rates in Regimes 4, 5 and 6 are the same for both types of protocols, and these rates are attained by the same testing protocol for each of the classes, which does not require shared randomness. We shall refer to Regime 4, 5 and 6 as the ``low privacy-budget'' regimes, as these rates occur for relatively small values of $\epsilon$. 

We shall refer to Regimes 1, 2 and 3 as the ``high privacy-budget'' regimes, as these rates occur for relatively large values of $\epsilon$. These rates are achieved by a different protocol, for the classes of protocols with and without shared randomness, respectively. These protocols are given in Sections \ref{ssec:procedure_II} and \ref{ssec:procedure_III}. Proving the tighter lower bound in case of the class of protocols with access to local randomness only, requires a different technique to that of the class of shared randomness protocols, which we outline in Section \ref{sec:lower-bounds}. 

The improvement in the rate for the shared randomness protocols, compared to the local randomness protocols, are visible for Regime 2 and Regime 3, but also the values of $\epsilon$ for which the different regimes occur are different for the two types of protocols. The improvement in the rate is loosely speaking a consequence of the improved coordination between the servers made possible by shared randomization. We exhibit a distributed $(\epsilon,\delta)$-DP shared randomness protocol attaining the above rate in Section \ref{ssec:procedure_III}. 

In case of access to shared randomness, the high privacy-budget regime occurs whenever $ \epsilon \gtrsim \sigma^{-\frac{2}{4s+1}} m^{-\frac{2{s}}{4{s}+1}} n^{\frac{1/2-2{s}}{4{s}+1}}$ and $\epsilon \geq n^{-1/2}$, or $\sigma^{-\frac{1}{2s}} m^{-\frac{1}{2}} n^{\frac{1-2{s}}{4{s}}} \leq \epsilon < n^{-1/2}$. In the case of local randomness only, the high privacy-budget regimes occur for larger values of $\epsilon$, namely $\epsilon \gtrsim \sigma^{-\frac{2}{4s+1}} m^{\frac{1}{4s+1}} n^{\frac{1/2-2s}{4s+1}}$ if $\epsilon \geq n^{-1/2}$ or $ \epsilon \gtrsim \sigma^{-\frac{4}{4s-1}} m^{-\frac{1}{2}} n^{\frac{5/2-2s}{4s-1}}$ for $\epsilon \lesssim n^{-1/2}$, whenever $s > 1/4$. When $s \leq 1/4$ and no shared randomness is available, we are always in the low privacy-budget regime for the range of $N^{-1} \lesssim \sigma \lesssim 1$ and $N^{-1} < \epsilon \lesssim 1$ considered.

The testing protocols that attains the rate in the high privacy-budget regimes are given in Section \ref{ssec:procedure_II} and \ref{ssec:procedure_III}. These protocols bear some resemblance with the estimation strategy in \cite{cai2023private}, where transcripts constitute noisy, lower dimensional approximations of the original data.  %$\epsilon \ll \sigma^{-\frac{2}{4s+1}} m^{\frac{1}{4s+1}} n^{\frac{1/2-2s}{4s+1}}$ in the high privacy-budget case, 
As $\epsilon$ increases, the dimensionality of these approximations increases and the rate improves. In Regime 2 and 3, the minimax rate is at least a polynomial factor of $\epsilon$ larger than the unconstrained, non-private minimax separation rate, which is attained in Regime 1 (up to a logarithmic factor). %The benefit of shared randomness is prevelent when comparing the rates in Regime 2 and 3.  %The shared randomness 

The observation that shared randomness can improve performance in our problem has been noted in other contexts involving distributed privacy and communication constraints, see for example \cite{acharya_IEEE_identity_testing_part_I,acharya_IEEE_identity_testing_part_II,acharya_IEEE_III,szabo2022optimal_IEEE,szabo2023distributedtesting}.  \cite{dubois:hal-04426780,butucea:hal-04425360} studies interactive versus non-interactive protocols and finds a difference in terms of minimax performance between the two in the local differential privacy setting. Interestingly, when $n=1$ (i.e. in the local differential privacy setting), we find the similar minimax rates for nonparametric goodness-of-fit testing in the high privacy-budget regimes, for the shared randomness and local randomness protocols, as they do for interactive and non-interactive protocols, whenever $\epsilon$ is in the high-budget regime. Although they study a different model, observations from smooth densities; it is interesting to see that the same rates seem to be attainable without sequential interaction, by using shared randomness instead. We note here that, when sequential- or interactive protocols are allowed, shared randomness can be employed in particular. In real applications without interaction, one should always use shared randomness if at all possible.

In the low privacy-budget case, i.e. Regimes 4, 5 and 6, the minimax rate for both local and shared randomness protocols coincides. We note that the regimes occur at different values of $\epsilon$ for the two types of protocols, however. 
Within the low privacy-budget range, we find essentially three different regimes. When $n^{-1/2} \leq \epsilon < \sigma^{-\frac{2}{4s-1}} m^{\frac{1/4-s}{4s+1}} n^{\frac{1/2-2s}{4s+1}}$, the rate is given by $(\frac{\sigma^2}{\sqrt{m} n^{}} )^{\frac{2{s}}{2{s}+1/2}}$. What is remarkable here, is that whilst the rate is polynomially worse in $m$ than the unconstraint rate, the rate is otherwise independent of $\epsilon$. This regime essentially corresponds to a setting where, even though a high privacy-budget strategy is not feasible, the desired level of privacy is achieved ``for free" with the locally optimal test statistic.

We exhibit an $(\epsilon,\delta)$-DP distributed testing protocol that attains this rate in Section \ref{ssec:procedure_I}. This strategy can roughly be described as first computing a locally optimal private test statistic -- a test statistic that would result in the optimal private test using just the local data -- and then averaging these private test statistics; essentially combining the power of the local tests. That this strategy performs well when the privacy constraint is sufficiently stringent can intuitively be explained as that it is easier to retain privacy when only (a private version of) a single real valued local test statistic is shared, rather than a (private approximation of) the original data.

When $\epsilon \lesssim n^{-1/2}$ within the low privacy-budget range, $\epsilon$ affects the rate polynomially. For the smallest values of $\epsilon$, i.e. $\epsilon \lesssim \sigma^{\frac{1}{2s+1}} m^{-\frac{1}{2}} n^{-\frac{1+s}{2s+1}}$, the rate is given by $\frac{\sigma^2}{mn^2 \epsilon^2}$. Strikingly, the regularity parameter does not appear in the rate in this regime. This phenomenon has the following explanation: for such small values of $\epsilon$, signals of size $\frac{\sigma^2}{mn^2 \epsilon^2}$ are of larger order than the local estimation rate of $(\frac{\sigma^2}{n})^{\frac{2{s}}{2{s}+1/2}}$. Consequently, signal can locally be estimated with high accuracy, and the bottleneck is purely the privacy constraint, not the high-dimensional nature of the problem.

In the case of central DP (i.e. $m=1$), only the low privacy-budget regime is observed. In this case, our results show that the non-private rate is attainable (up to logarithmic factors) for $\epsilon \gtrsim 1/ \sqrt{N}$. This is in contrast to the local DP setting, where both the high- and low privacy budget regimes are observed (depending on the values of $\sigma$ and $s$). Whenever $m$ is larger than say polynomial in $N$ in the low privacy-budget range, (i.e. $m \asymp N^\omega$ for some $\omega > 0$) the unconstrained minimax rate cannot be reached. 

To further illustrate the difference between these classes of protocols, we provide two plots in Figure \ref{fig:nonpar_testing_large_n}. The plots show the relationship between the minimax testing rate $\rho$ and $\epsilon$ for fixed values of $m,n,\sigma$ and four different choices for the regularity $s$. The regimes correspond to the six regimes in Table \ref{tab:rate_table}. 
The plots show that the shared randomness setting strictly improves the rate for certain values of $\epsilon$, and that the values of $\epsilon$ for which the different regimes occur are different for the two types of protocols.
A full case-wise breakdown of when each of the regimes occur is given in Section \ref{sec:supp:additional_rates} of the Supplementary Material \cite{Cai2024FL-NP-Testing-Supplement}. Below, we give an interpretation for each of the regimes.

\begin{figure}[ht]
    \centering
    \includegraphics[width=0.8\textwidth]{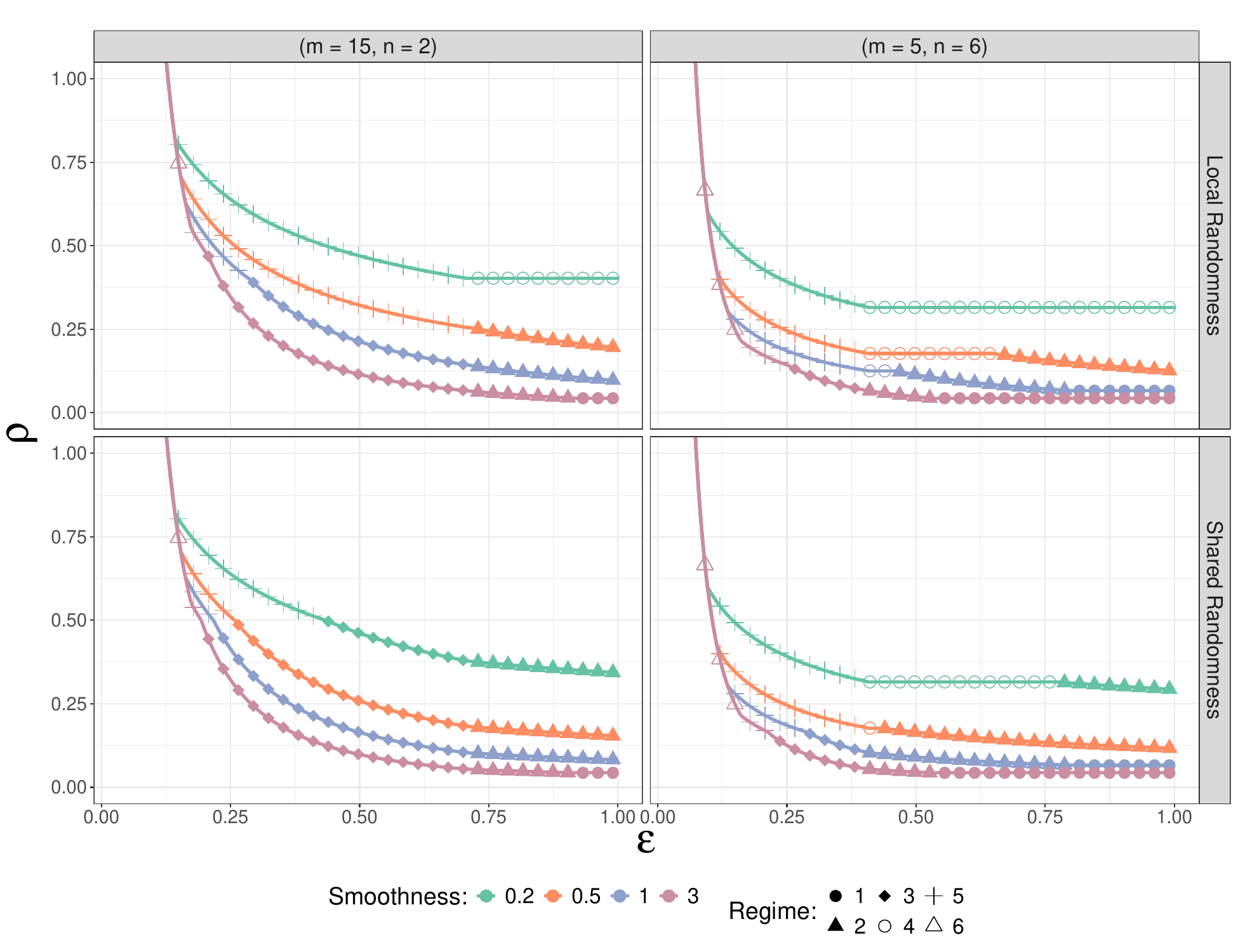}
    \caption{The relationship of the minimax testing rate $\rho$ and $\epsilon$, given by \eqref{eq:local_randomness_rate_single_line} and \eqref{eq:shared_randomness_rate_single_line}, for $(n,m)=(5,5)$ in the left column and $(n,m)=(2,15)$ in the right column, $\sigma=1$ and smoothness levels $s=1/5$, $s=1/2$, $s=1$ and $s=3$. The panels on the first row correspond to distributed $(\epsilon,\delta)$-DP (local randomness only) protocols (i.e. \eqref{eq:local_randomness_rate_single_line}), the bottom row corresponds to distributed $(\epsilon,\delta)$-DP protocols with shared randomness (i.e. \eqref{eq:shared_randomness_rate_single_line}). The regimes correspond to the six regimes (e.g. different rates) in Table \ref{tab:rate_table}.}\label{fig:nonpar_testing_large_n}
\end{figure}

What constitute ``moderate'' or ``large'' values, depends on the size of $m$ relative to $n$, as can be seen when comparing Figure \ref{fig:nonpar_testing_large_n}, which compares $N=30$ observations distributed either between $m=15$ servers with $2$ observations each, and $m=5$ servers with $n=6$ observations each. It can be seen that, as the local sample $n$ is larger compared to the number of times the total number of data points $N$ is divided $m$, the cost of privacy is less. This underlines the idea that, in large samples, it is easier to retain privacy.

When $\epsilon$ becomes ``very small'' (smaller than a threshold depending on $s$, $m$ and $n$), the smoothness starts to matter less and less, up to the point where the difficulty of the problem is no different for (very) different regularity levels. These scenarios correspond to settings where the privacy requirement underlying the problem is so stringent, that it effectively becomes the bottleneck of the testing problem.

\subsection{Adaptation}\label{ssec:adaptation_main_results}

In the previous section, we derived the minimax separation rate for the nonparametric distributed testing problem. However, the proposed tests constructed in Section \ref{sec:construction_of_tests} require knowledge of the regularity parameter $s$ of the underlying $f$. Typically, the regularity of the function is unknown in practice, necessitating the use of data-driven methods to find the best adaptive testing strategies.

Given that the regularity of the underlying signal class is unknown, it makes sense to consider the minimax testing risk 
\begin{equation*}
\underset{s \in [s_{\min} , s_{\max}]}{\sup}  \cR \left(  H^{s,R}_{ M_{N,s} \rho_s}, T \right),
\end{equation*}
for certain predetermined values $0 < s_{\min} < s_{\max} < \infty$. Here, we consider separation rates $\rho_s$ depending on the underlying smoothness. In the case that the true underlying smoothness is $s = s_{\min}$, the separation rate is relatively larger than when (for example) $s = s_{\max}$. In the case that the true smoothness $s$ is larger than $s_{\min}$, we would like to attain the smaller of the two rates $\rho_{s}$. 

In the non-privacy constraint setting, adaptation for the above risk can be achieved with only a minor additional cost in the separation rate (a $\log\log N$ factor). See for example Theorem 2.3 in \cite{spokoiny_adaptive_1996} or Section 7 in \cite{ingster_nonparametric_2003}. Theorem \ref{thm:adaptation_rate} below shows that also under privacy constraint, the optimal private rate can be attained by a protocol that is adaptive to the regularity parameter $s$, with minimal additional cost; at most a logarithmic factor. 

\begin{theorem}\label{thm:adaptation_rate}
    Let $0< s_{\min} < s_{\max} < \infty $, $R > 0$ be given and consider any sequences of natural numbers $m \equiv m_N$ and $n := N/m$ such that $N = mn \to \infty$, $1/N \ll \sigma \equiv \sigma_N = O(1)$, $\epsilon \equiv \epsilon_N$ in $(N^{-1},1]$ and $\delta \equiv \delta_N \lesssim N^{-(1 + \omega)}$ for any constant $\omega > 0$. 

    If $\rho$ a sequence of positive numbers satisfies \eqref{eq:shared_randomness_rate_single_line}, there exists a sequence of distributed $(\epsilon,\delta)$-DP testing protocols $T_N$ such that
    \begin{equation*}
        \underset{s \in [s_{\min} , s_{\max}]}{\sup}  \; \; \cR( H_{\rho M_N}^{{s},R} , T_N) \to \begin{cases}
        0 \; \text{ for any } M_N^2 \gg \log \log (N) \log^{5/2} (N) \log(1/\delta) \\
        1 \; \text{ for any } M_N \to 0.
        \end{cases} 
        \end{equation*}
    Furthermore, whenever $\rho$ satisfies \eqref{eq:local_randomness_rate_single_line}, there exists a sequence of distributed $(\epsilon,\delta)$-DP testing protocols $T_N$ using only local randomness such that the above display holds as well.
\end{theorem}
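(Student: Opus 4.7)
The plan is to import the classical Bonferroni/Spokoiny adaptation strategy to the federated differential privacy setting, using the non-adaptive private tests of Theorems \ref{thm:rate_theorem_shared_randomness} and \ref{thm:rate_theorem_local_randomness} as building blocks. I first construct a grid $s_1 < s_2 < \cdots < s_K$ of candidate smoothness values in $[s_{\min},s_{\max}]$ with spacing $O(1/\log N)$, so that $K \asymp \log N$. The exponents appearing in \eqref{eq:shared_randomness_rate_single_line} and \eqref{eq:local_randomness_rate_single_line} are Lipschitz-continuous in $s$, so perturbing $s$ by $O(1/\log N)$ changes the rate by at most a factor $N^{O(1/\log N)} = O(1)$; consequently, for every $s \in [s_{\min},s_{\max}]$ there is a grid point $s_k \leq s$ with $\rho_{s_k} \leq C \rho_s$ for a universal constant $C$. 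Combined with the Besov embedding $\cB^{s,R}_{p,q} \hookrightarrow \cB^{s_k,R'}_{p,q}$ (valid for $s_k \leq s$, with $R' \asymp R$), any alternative $f \in H^{s,R}_{M_N \rho_s}$ is also an alternative in $H^{s_k,R'}_{c M_N \rho_{s_k}}$ for some $c > 0$, so a test calibrated to smoothness $s_k$ suffices to detect it.

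For each $k$, I instantiate the non-adaptive protocol $T_{N,k}$ from Theorem \ref{thm:rate_theorem_shared_randomness} (respectively Theorem \ref{thm:rate_theorem_local_randomness}) calibrated to smoothness $s_k$ and define the adaptive test $T_N^{\mathrm{adapt}} := \max_{1 \leq k \leq K} T_{N,k}$, rejecting whenever any component test rejects. Two requirements must be coordinated: overall type I error control and preservation of the $(\epsilon,\delta)$-DP constraint. The first is handled by tightening each component to individual level $\alpha/K$ via the union bound, inflating component critical values by a factor at most $\sqrt{\log K} = O(\sqrt{\log\log N})$, which is absorbed into the existing $\log\log N$ factor. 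The second is handled by splitting the privacy budget: each component is constructed to be $(\epsilon',\delta')$-DP with $\epsilon' \asymp \epsilon/\sqrt{K \log(1/\delta)}$ and $\delta' \asymp \delta/K$, so that by the advanced composition theorem the composition across $k = 1,\dots,K$ is $(\epsilon,\delta)$-DP. Substituting the reduced budget $\epsilon'$ into the non-adaptive rate formulas and tracking the polynomial dependence of $\rho_{s_k}^2$ on $\epsilon$ degrades $M_N^2$ by a factor logarithmic in $N$; a careful audit of exponents produces the claimed $\log \log(N)\log^{5/2}(N)\log(1/\delta)$ scaling. For the type II bound, any $f \in H^{s,R}_{M_N \rho_s}$ embeds into $H^{s_k,R'}_{c M_N \rho_{s_k}}$ and is thus rejected by $T_{N,k}$ with high probability. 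For the local-randomness statement, the same scheme is used with the local-randomness building blocks of Theorem \ref{thm:rate_theorem_local_randomness}; composition preserves locality because the noise on each server is independently generated.

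The main obstacle is the privacy-budget bookkeeping: running $K = \Theta(\log N)$ private procedures on overlapping data while incurring only a logarithmic inflation of the separation rate. Naive composition would cost a factor $K$ in $\epsilon$ and hence a polynomial factor in the rate, so advanced composition is essential, and one must verify that the non-adaptive rate formulas of Theorems \ref{thm:rate_theorem_shared_randomness} and \ref{thm:rate_theorem_local_randomness} remain valid at the reduced per-component budget $\epsilon'$. The delicate point here is that substituting $\epsilon \mapsto \epsilon'$ could in principle cross a phase-transition boundary of Table \ref{tab:rate_table}, moving the component tests into a different regime; this is handled by a case analysis that isolates the regime in which the target $\epsilon$ lives and verifies that, within that regime, the polynomial dependence of $\rho_s^2$ on $\epsilon$ produces only a polylogarithmic loss. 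A secondary subtlety is the radius inflation $R \mapsto R'$ in the Besov embedding across neighbouring grid points, but since $R'/R$ remains bounded this only affects constants in the type II analysis and does not alter the rate.
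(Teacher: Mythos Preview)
Your high-level strategy---an $O(\log N)$ grid, Bonferroni level control, and per-component privacy budget splitting---is exactly what the paper does. The gap is in the privacy accounting. Black-box advanced composition forces $\epsilon' \asymp \epsilon/\sqrt{K\log(1/\delta)}$, so $\epsilon'^2$ drops by a factor $K\log(1/\delta)\asymp \log(N)\log(1/\delta)$, not merely $\log N$. In regimes where $\rho^2\propto 1/\epsilon^2$ (e.g.\ Regime~6 of Table~\ref{tab:rate_table}) this yields $M_N^2 \gg \log\log(N)\,\log^{5/2}(N)\,\log^2(1/\delta)$, one $\log(1/\delta)\asymp\log N$ factor worse than the theorem states; your ``careful audit of exponents'' therefore does not close at the claimed rate.

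The paper avoids this loss by \emph{not} treating the non-adaptive tests as black boxes. Because every transcript is produced by a Gaussian mechanism, releasing the full collection across all $L\in\mathcal{S}$ is itself a single Gaussian mechanism whose $L_2$-sensitivity is $\sqrt{|\mathcal{S}|}$ times that of a single release; the noise scale $\gamma$ is simply inflated by $\sqrt{|\mathcal{S}|}$ (see \eqref{eq:definition_full_test_lips_ext_privacy_adaptive}--\eqref{eq:test_pub_coin_privacy_impure_DP_adaptive}), giving $\epsilon'=\epsilon/\sqrt{|\mathcal{S}|}$ with no extra $\log(1/\delta)$ overhead. Equivalently, compose in zCDP or R\'enyi-DP rather than via the Dwork--Rothblum--Vadhan bound. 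The paper also routes each grid point to the correct test type by partitioning $\mathcal{S}$ into $\mathcal{S}^{\texttt{LOW}}$ and $\mathcal{S}^{\texttt{HIGH}}$ and applying $T_{\text{I}}$ versus $T_{\text{II}}/T_{\text{III}}$ accordingly; since the underlying Lemmas~\ref{lem:supp:lips_test_up_rate}, \ref{lem:protocol_II_priv_coin_bound}, \ref{lem:procedure_III_test_up_rate} are already parameterized by $|\mathcal{S}|$, the phase-transition case analysis you flag becomes unnecessary.
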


We construct such adaptive distributed $(\epsilon,\delta)$-DP testing protocols in Section \ref{sec:adaptive-tests} and their resulting performance proofs the above theorem. The adaptive methods can be seen as extensions of the methods exhibited in Section \ref{sec:construction_of_tests} for when the smoothness is known. The adaptive methods can essentially be seen as a multiple testing extension of the known smoothness methods, testing along a grid of smoothness levels between $s_{\min}$ and $s_{\max}$. The strain on the privacy budget stemming from conducting multiple testing procedures is limited, due to the fact that the cardinality of this grid is order $\log(N)$. The Type I error control is assured by a Bonferroni correction, which leverages the exponential bounds on the Type I error of the individual ``known smoothness tests''.

%%%%%%%%%%%%%%%%%%%%%%%%%%%%%%%%%%%%%%%%%%%%%%%%%%%%%%%%%%%%%%%%%%%%%%%%%%%%%%%%%%%%%%%%%%%%%%%%%%%%%%%%%%%%%%%
\section{Optimal differentially private testing procedures}\label{sec:construction_of_tests}
%%%%%%%%%%%%%%%%%%%%%%%%%%%%%%%%%%%%%%%%%%%%%%%%%%%%%%%%%%%%%%%%%%%%%%%%%%%%%%%%%%%%%%%%%%%%%%%%%%%%%%%%%%%%%%%

In this section, we construct distributed $(\epsilon,\delta)$-DP testing procedures that attain the minimax separation rates derived in Section \ref{sec:main_results}. 

The testing procedures are constructed in three steps. First, in Section \ref{ssec:procedure_I}, we construct a distributed $(\epsilon,\delta)$-DP testing procedure that uses only local randomness and that is optimal in the low privacy-budget regime described in the previous section. We refer to this procedure as $T_{\text{I}}$. Second, we construct two distributed $(\epsilon,\delta)$-DP testing procedures that use local randomness and shared randomness, respectively, and that are optimal in their respective high privacy-budget regimes. We refer to these procedures as $T_{\text{II}}$ and $T_{\text{III}}$ and describe them in Sections \ref{ssec:procedure_II} and \ref{ssec:procedure_III}, respectively.

The testing procedures differ in terms of the testing strategy. In the low privacy-budget case where $T_{\text{I}}$ is optimal, the testing strategy can be seen to consist of first computing a locally optimal private test statistic in each machine; that is, a test statistic that would result in the optimal private test using just the local data. The locally optimal test statistic is based on the squared Euclidean norm of the truncated observation. To deal with the nonlinearity of the Euclidean norm, the strategy appropriately restricts the domain of the clipped locally optimal test statistic, after which we employ a Lipschitz-extension to obtain a test statistic that is well-defined on the sample space and more robust to outliers than the Euclidean norm itself. The noisy version of this test statistic is locally optimal under privacy constraints, in the sense that a corresponding (strict) p-value test attains the lower bound rate (up to a logarithmic factor) as established by Theorem \ref{thm:rate_theorem_local_randomness} for the case where $m=1$. When $m>1$, the final test statistic is obtained by averaging the locally optimal private test statistics.

In the large $\epsilon$ regime, instead of computing a locally optimal test statistic, both $T^{}_{\text{II}}$ and $T^{}_{\text{III}}$ are based on truncated, clipped and noisy versions of the local observations. The key difference between the two is that the latter uses the same random rotation of the local observations, which is made possible by the availability of shared randomness.

Together, the methods prove Theorem \ref{thm:attainment_nonadaptive} below, which forms the ``upper bound'' part of the minimax separation rate described by Theorems \ref{thm:rate_theorem_local_randomness} and \ref{thm:rate_theorem_shared_randomness}. Unlike the formulation of the latter theorems, we note that the result is not asymptotic.

\begin{theorem}\label{thm:attainment_nonadaptive}
    Let ${s},R > 0$ be given. For all $\alpha \in (0,1)$, there exists a constant $C_\alpha > 0$ such that if
    \begin{equation}\label{eq:attainment_nonadaptive_local_randomness}
        \rho^2 \geq C_\alpha \left(\frac{\sigma^{2}}{mn}\right)^{\frac{2{s}}{2{s}+1/2}} + \left(\frac{\sigma^2}{m n^{2} \epsilon^2} \right)^{\frac{2{s}}{2{s}+3/2}} \wedge \left( \left(\frac{\sigma^2}{\sqrt{m} n^{} \sqrt{1 \wedge n \epsilon^2}} \right)^{\frac{2{s}}{2{s}+1/2}} + \left( \frac{\sigma^2}{mn^2 \epsilon^2} \right) \right),
    \end{equation}
    there exists a distributed $(\epsilon,\delta)$-DP testing protocol $T \equiv T_{m,n,s,\sigma}$ such that
    \begin{equation}
     \; \; \cR( H_{\rho M_N}^{{s},R} , T) \leq \alpha,
    \end{equation}
    for all natural numbers $m,N$ and $n = N/m$, $\sigma \in [1/N,\sigma_{\max}]$, $\epsilon \in (N^{-1},1]$, $\delta \leq N^{-(1 + \omega)}$ for any constant $\omega > 0$, $\sigma_{\max} > 0$ and a nonnegative sequence $M_N^2 \gtrsim \log \log (N) \log^{3/2} (N) \log(1/\delta)$.

    Similarly, for any $\alpha \in (0,1)$, there exists a constant $C_\alpha > 0$ such that if
    \begin{equation}\label{eq:attainment_nonadaptive_shared_randomness}
        \rho^2 \geq C_\alpha \left(\frac{\sigma^{2}}{mn}\right)^{\frac{2{s}}{2{s}+1/2}} + \left(\frac{\sigma^2}{m n^{3/2} \epsilon \sqrt{1 \wedge n \epsilon^2} } \right)^{\frac{2{s}}{2{s}+1}} \wedge \left( \left(\frac{\sigma^2}{\sqrt{m} n^{}} \right)^{\frac{2{s}}{2{s}+1/2}} + \left( \frac{\sigma^2}{mn^2 \epsilon^2} \right) \right),
    \end{equation}
    we have that there exists a distributed $(\epsilon,\delta)$-DP shared randomness testing protocol $T \equiv T_{m,n,s,\sigma}$ such that 
    \begin{equation}
     \; \; \cR( H_{\rho M_N}^{{s},R} , T) \leq \alpha,
    \end{equation}
    for all natural numbers $m,N$ and $n = N/m$, $\sigma \in [1/N,\sigma_{\max}]$, $\epsilon \in (N^{-1},1]$, $\delta \leq N^{-(1 + \omega)}$ for any constant $\omega > 0$ and a nonnegative sequence $M_N^2 \gtrsim \log \log (N) \log^{3/2} (N) \log(1/\delta)$.
\end{theorem}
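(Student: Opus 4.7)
\bigskip

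\noindent\textbf{Proof plan for Theorem \ref{thm:attainment_nonadaptive}.}

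The plan is to construct, for each of the two claimed upper bounds, a single distributed protocol obtained as the \emph{disjunction} (i.e.\ reject if at least one rejects) of two constituent tests: a ``low privacy-budget'' test $T_{\text{I}}$ built from locally private test statistics, combined either with a ``local-randomness high privacy-budget'' test $T_{\text{II}}$ or with a ``shared-randomness high privacy-budget'' test $T_{\text{III}}$. Since the stated upper-bound rate in \eqref{eq:attainment_nonadaptive_local_randomness} (resp.\ \eqref{eq:attainment_nonadaptive_shared_randomness}) is the minimum of the rate attained by $T_{\text{I}}$ and the rate attained by $T_{\text{II}}$ (resp.\ $T_{\text{III}}$), it suffices to prove that each constituent test controls type I error at level $\alpha/4$ and drives the type II error over $H_\rho^{s,R}$ to zero whenever $\rho^2$ exceeds a constant multiple of the rate associated with that test. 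We will split the privacy budget as $(\epsilon/2,\delta/2)$ across the two constituent tests so that by basic composition the overall protocol is $(\epsilon,\delta)$-DP; replacing $\epsilon$ by $\epsilon/2$ only alters the constant $C_\alpha$.

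For $T_{\text{I}}$, the plan is as follows. Pass to the wavelet sequence model, pick a resolution cutoff $L_{\text{I}}=L_{\text{I}}(s,n,\epsilon,m)$ balancing approximation bias against private variance, and on machine $j$ form the centered quadratic
\begin{equation*}
    S^{(j)} \;=\; \sum_{l\le L_{\text{I}}}\sum_{k=1}^{2^l}\Bigl(\,(\overline{X}^{(j)}_{lk})^2 - \tfrac{\sigma^2}{n}\Bigr),
\end{equation*}
which, under $P_f$, has expectation equal to the truncated squared norm of $f$. To make $S^{(j)}$ amenable to the Gaussian (or Laplace) mechanism, I restrict to the event where each $|\overline{X}^{(j)}_{lk}|$ lies in a clipping window of width comparable to $\sigma/\sqrt{n}$ up to a $\sqrt{\log N}$ factor and employ a Lipschitz extension of $S^{(j)}$ off of this window, so that changing one of the $n$ datapoints perturbs the resulting $\widetilde S^{(j)}$ by at most $\Delta_{\text{I}} \lesssim \sigma^2 L_{\text{I}}\log(N)/n^{3/2}$. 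Adding calibrated Gaussian noise and averaging across machines yields a statistic whose standard deviation under $H_0$ can be read off directly, while under $H_\rho^{s,R}$ the Besov constraint controls the truncation bias. The resulting rejection region achieves the $T_{\text{I}}$-rate \smash{$(\sigma^2/\sqrt{m}n\sqrt{1\wedge n\epsilon^2})^{2s/(2s+1/2)}+\sigma^2/(mn^2\epsilon^2)$} by equating the signal-detection threshold to the noise-plus-variance floor at the optimal $L_{\text{I}}$; Bernstein-type concentration for chi-square sums and a Gaussian tail bound furnish the exponential error control.

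For $T_{\text{II}}$ and $T_{\text{III}}$, the construction is instead based on privatizing the local means $\overline{X}^{(j)}_{lk}$ directly up to resolution $L_{\text{II}}$ or $L_{\text{III}}$, and then forming the centered squared norm of the aggregated private vector. In $T_{\text{II}}$, each coordinate is coordinate-wise clipped at height $\sim \sigma\sqrt{\log N}/\sqrt n$ and Gaussian noise of scale $\sqrt{d}\cdot(\text{clip})/(n\epsilon)$ is added (where $d = 2^{L_{\text{II}}+1}$), yielding per-server $\ell_2$-sensitivity controlled via the $\ell_2$-Gaussian mechanism. In $T_{\text{III}}$, shared randomness is used to draw a single uniform orthogonal matrix $U\in O(d)$ which is applied to each $\overline{X}^{(j)}_{1:L_{\text{III}}}$ before clipping; since $U$ spreads the signal evenly, a standard Johnson--Lindenstrauss / Gaussian-rotation argument shows that each rotated coordinate has magnitude $O(\sigma/\sqrt{nd})$ up to $\sqrt{\log N}$ factors with probability $1-N^{-c}$, which allows clipping at a level smaller by a factor $\sqrt{d}$ than in $T_{\text{II}}$ and hence noise of scale $\sqrt{d}$ times smaller. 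Rotating back and unbiasedly estimating $\|f\|^2$ via the centered squared norm, one picks $L_{\text{II}}, L_{\text{III}}$ to optimize the bias-variance tradeoff and reads off the rates in \eqref{eq:attainment_nonadaptive_local_randomness}, \eqref{eq:attainment_nonadaptive_shared_randomness}. Privacy of each transcript follows from the Gaussian mechanism applied to the clipped vector, and, crucially, the shared $U$ enters only through post-processing conditional on the data, so it does not enter the privacy accounting.

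The main obstacle I expect is controlling $T_{\text{I}}$ rigorously: the quadratic statistic $S^{(j)}$ is unbounded and nonlinear, so naive clipping induces bias that, if not carefully handled, breaks the matching of the upper bound to the lower bound. The remedy — restricting to a high-probability clipping event and extending Lipschitz-ly off it — must be executed so that (i) the Lipschitz constant $\Delta_{\text{I}}$ matches the $\sqrt{n}$-gain from $U$-statistic-type centering, (ii) the probability of leaving the window is $o(N^{-c})$ uniformly over $f \in H^{s,R}_\rho$ (using moment bounds on wavelet coefficients under the Besov constraint), and (iii) the induced bias is of smaller order than the signal-detection threshold $\rho^2$. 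Everything else, namely the privacy certifications (Gaussian mechanism plus simple composition), the type II analysis (triangle inequality in $\ell_2$ for the bias and standard Gaussian/$\chi^2$ concentration for the variance), and the final union over the two constituent tests with $\alpha/4$ levels, is routine once the sensitivity bookkeeping is in place.
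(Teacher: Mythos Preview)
Your high-level architecture (disjunction $T_{\text{I}}\vee T_{\text{II}}$ or $T_{\text{I}}\vee T_{\text{III}}$, split privacy budget, choose $L$ by bias--variance balance) matches the paper. However, two of the three constituent constructions have genuine gaps.

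\textbf{$T_{\text{I}}$: the sensitivity argument does not go through.} Restricting to $\{|\overline{X}^{(j)}_{lk}|\le c\}$ with $c\asymp \sigma\sqrt{\log N}/\sqrt n$ does \emph{not} yield the Hamming--Lipschitz constant you claim. If $x,x'\in\cC$ differ in one datum, all you know is $|\bar x_{lk}-\bar x'_{lk}|\le 2c$ (the individual difference is unbounded), so the quadratic changes by $O(dc^2)=O(d_L\sigma^2\log N/n)$, not $O(\sigma^2 L\log N/n^{3/2})$; this is the wrong dependence on $d_L$ and $n$ and leads to a strictly suboptimal rate. Moreover, under the alternative the window $c\asymp\sigma\sqrt{\log N}/\sqrt n$ need not contain $|f_{lk}|$, so the event does not have high probability uniformly over $H_\rho^{s,R}$. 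The paper handles both issues by (i) clipping the \emph{full} centered statistic at a threshold $\tau$ calibrated to the (unknown) signal size, (ii) defining the constraint set $\cC_{L;\tau}$ via bounds on the norms of \emph{all subsets} of size $\le K_\tau$ (not just the full average), which is what actually forces a change of one datum to move the statistic by only $D_\tau\asymp \log N/\sqrt n$, and (iii) running a Bonferroni-corrected multiple test over a geometric grid of $\tau$'s so that some $\tau^*$ sandwiches the true signal size. None of these three ingredients is present in your plan, and each is needed for the sharp $T_{\text{I}}$ rate.

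\textbf{$T_{\text{III}}$: the mechanism is misidentified.} A random rotation does not shrink per-coordinate magnitude: $(U_LX_{L;i}^{(j)})_{lk}$ is still $N((U_Lf_L)_{lk},\sigma^2)$, so the noise scale is $\sigma$, not $\sigma/\sqrt{nd}$, and clipping at the level you propose would destroy the data. The paper's gain from shared randomness is \emph{dimension reduction}: after the common rotation, each server transmits only the first $K_L\asymp n\epsilon^2\wedge d_L$ coordinates (clipped at the same level $\tau\asymp\sqrt{\log N}$ as in $T_{\text{II}}$). This cuts the $\ell_2$-sensitivity from $\tau\sqrt{d_L}$ to $\tau\sqrt{K_L}$ while, by the rotation, the first $K_L$ coordinates still carry a fraction $K_L/d_L$ of $\|f_L\|_2^2$; the resulting squared-sum statistic over $K_L$ coordinates is what yields the $(\sigma^2/(mn^{3/2}\epsilon\sqrt{1\wedge n\epsilon^2}))^{2s/(2s+1)}$ rate. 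Your ``rotate, clip smaller, rotate back'' scheme does not reproduce this and would not attain the stated bound.

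Your $T_{\text{II}}$ is essentially fine: whether one splits coordinates across servers (as the paper does) or has every server send all $d_L$ clipped-individual coordinates, the aggregated chi-square statistic achieves the same $d_L^{3/2}/(mn^2\epsilon^2)$ threshold up to constants; just note that the paper clips individual observations $(X^{(j)}_i)_{lk}$ at level $\asymp\sigma\sqrt{\log N}$, not the means.
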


The proof of the theorem follows directly from the guarantees proven for each of the three testing protocols; we defer it to Section \ref{sec:supp:proofs_optimal_testing_strategies} in the supplement. Before giving the detailed construction of the three tests, we introduce some common notation. Let $\Pi_L$ denote the projection of elements $\R^\N$ onto the first $d_L := \sum_{l = 1}^{L} 2^l$ coordinates, where the elements as ordered and indexed as follows;
\begin{equation*}
\Pi_L x = \left( x_{11},\dots,x_{12}, x_{21},\dots,x_{14},\dots,  x_{L1},\dots,x_{L2^L},0,0,0,\dots\right).
\end{equation*}
We shall also use the notation $d_L := \sum_{l=1}^L 2^l$ and let $X^{(j)}_{L;i}$ denote vector in $\R^{d_L}$ formed by the first $d_L$ coordinates of $\Pi_L X^{(j)}_{i}$ and let $X^{(j)}_L = (X^{(j)}_{L;i})_{i \in [n]}$. Furthermore, we recall that for $v = (v_1,\dots,v_n) \in \cX^n$ for a vector space $\cX$, $\bar{v}$ denotes the vector space average $n^{-1} \sum_{i=1}^n v_i$. 

In order to obtain statistics with (uniformly) bounded sensitivity it is useful to bound quantities between certain thresholds. Formally, for $a,b,x \in \R$ with $a<b$, let $[ x ]_a^b$ denote \emph{$x$ clipped between $a$ and $b$}, that is
\begin{equation}\label{eq:def_clipping}
 [ x ]_{a}^b := \begin{cases}
 b \; \text{ if } x > b, \\ 
 x \; \text{ if } a \leq x \leq b \\ 
 a \; \text{ otherwise.}
 \end{cases}
\end{equation}
The distributed privacy protocols under consideration in this paper can be seen as noisy versions of statistics of the data. Roughly put, the ``amount'' of noise added depends on the \emph{sensitivity} of the statistics. This brings us to the concept of sensitivity. Formally, consider a metric $\mathrm{d}$ on a set $\cY$. Given $n$ elements $x=(x_1,\dots,x_n)$ in a sample space $\cX$, the $\mathrm{d}$-\emph{sensitivity at $x$} of a map $S: \cX^n \to \cY$ is
\begin{equation*}
\Delta_S(x) := \underset{\breve{x} \in \cX^n : \mathrm{d}_H(x,\breve{x}) \leq 1}{\sup} \mathrm{d} \left( S(x), S(\breve{x})\right),
\end{equation*}
where $\mathrm{d}_H$ is the Hamming distance on $\cX^n$ (see Section \ref{ssec:notations} for a definition). The $\mathrm{d}$-\emph{sensitivity} of $S$ is defined as $\Delta_S := \sup_x \Delta_S(x)$. In this paper, the main noise mechanism is the \emph{Gaussian mechanism}. The Gaussian mechanism yields $(\epsilon,\delta)$-differentially private transcripts for statistics that have bounded $L_2$-sensitivity, with the noise variance scaling with the $L_2$-sensitivity. See \cite{dwork2014algorithmic} for a thorough treatment.
We remark that for the rates in Regime 3 up until 6 in Table \ref{tab:rate_table}, $(\epsilon,0)$-DP can be attained by employing a Laplace mechanism instead. That is, for the values of $\epsilon$ for which Regime 3 up until 6 in Table \ref{tab:rate_table} are optimal, the test statistics in the sections have matching $L_1$- and $L_2$-sensitivity, so the Gaussian mechanism can be replaced by the Laplace mechanism instead in these regimes.

\subsection{Private testing procedure I: low privacy-budget strategy}\label{ssec:procedure_I}

In the classical setting without privacy constraints (and $m=1$), a rate optimal test for the hypotheses of \eqref{eq:alternative_hypothesis} is given by
\begin{equation}\label{eq:classically_optimal_test}
\mathbbm{1}\left\{ S^{(j)}_{L_{{}}}  > \kappa_{\alpha} \right\}, \text{ where } S^{(j)}_{L_{{}}} := \frac{1}{\sqrt{ d_{L_{}}}} \left( \left\| \sigma^{-1} \sqrt{n} \overline{X^{(j)}_{L_{}}} \right\|_2^2 - d_{L_{}} \right), 
\end{equation}
where $d_{L_{}} := \sum_{l=1}^{L_{}} 2^l$ and the rate optimal choice of $L$ is $L_{*} = \left \lceil \frac{1}{2s + 1/2} \log_2 (N) \right \rceil$. Under the null hypothesis, $S^{(j)}_{L_{*}}$ is Chi-square distributed degrees of freedom. Under the alternative hypothesis, the test statistic picks up a positive ``bias'' as $\| \sigma^{-1} \sqrt{n} \overline{X^{(j)}_{L_{*}}} \|^2 \sim \chi^2_{L_{*}}(\|\Pi_{L_{*}} f\|_2^2)$ under $\P_f$, which could surpass the critical value $\kappa_\alpha$ if $\sigma^{-2} n \|\Pi_{L_{*}} f\|_2^2$ is large enough.  Consequently, the level of the test is controlled by setting $\kappa_{\alpha}$ appropriately large. For a proof of its rate optimality, see e.g. \cite{gine_mathematical_2016}.

As is commonly the case for superlinear functions, the test statistic $S_{L;\tau}^{(j)}$ has poor sensitivity uniformly over the sample space, meaning that a change in just one datum can result in a large change in the test statistic. This means that it forms a poor candidate to base a privacy preserving transcript on. For example, one would need to add a substantial amount of noise guarantee DP for the statistic. To remedy this, we follow a similar strategy as proposed in \cite{NEURIPS2020_private_identity_canonne} and improved upon by \cite{pmlr-v178-narayanan22a}. We construct a clipped and symmetrized version of the test statistic above, which has small sensitivity on a set $\cC_{L;\tau}$, in which $X^{(j)}$ takes values with high probability. We define the test statistic explicitly on $\cC_{L;\tau}$ only. By a version of the McShane--Whitney--Extension Theorem, we obtain a test statistic with the same sensitivity that is defined on the entire sample space. 

Consider for $\tau > 0$, $L \in \N$, $d_L := \sum^L_{l=1} 2^l$ and $V^{(j)}_{L;\tau} \sim \chi^2_{d_L}$ independent of $X^{(j)}$ the random map from $(\R^{d_L})^n$ to $\R$ defined by
\begin{equation}\label{eq:clipped_Lips_stat_rescaled}
\tilde{S}_{L;\tau}^{(j)}(x) = \left[\frac{1}{\sqrt{d_L}} \left(\left\| \sigma^{-1} \sqrt{n} \overline{ x} \right\|^2_2 - {V^{(j)}_{L;\tau}} \right)\right]_{-\tau}^{\tau}.
\end{equation}  
For any $\tau$, this test statistic $\tilde{S}_{L;\tau}^{(j)}(X^{(j)}_{L})$ can be seen to have mean zero and bounded variance under the null hypothesis, by similar reasoning as for the test statistic in \eqref{eq:classically_optimal_test} (see the proof of Lemma \ref{lem:typeI_typeII_error_control_Lipschitz_ext_test_privacy} for details). 

Loosely speaking, the test statistic $\tilde{S}_{L;\tau}^{(j)}(X^{(j)}_L)$ retains the signal as long as $\tau > 0$ is chosen appropriately in comparison to the signal size (i.e. $\|\Pi_{L} f\|_2^2$) and has good sensitivity for ``likely'' values of $X^{(j)}$ under $\P_f$, but not uniformly over the sample space. We make the latter statement precise as follows.

Let $K_\tau = \lceil 2\tau D^{-1}_\tau \rceil$ and consider the set $\cC_{L;\tau} = \cA_{L;\tau} \cap \cB_{L;\tau}$, where
\begin{align}\label{eq:def_C1_C2}
\cA_{L;\tau} &= \left\{ (x_i) \in (\R^\infty)^n :  \bigg|\| \sigma^{-1} \textstyle \sum_{i \in \cJ} \Pi_L x_i \|_2^2 - kd_L \bigg| \leq \frac{1}{8} k D_\tau n \sqrt{d_L} \;\; \forall \cJ \subset [n], |\cJ| = k \leq K_\tau \right\}, \\
\cB_{L;\tau} &= \left\{ (x_i) \in (\R^\infty)^n : \left| \langle \sigma^{-1} \Pi_L x_i, \sigma^{-1} \textstyle \sum_{k \neq i} \Pi_L x_k \rangle \right| \leq  \frac{1}{8} k D_\tau n \sqrt{d_L},  \;\; \forall i=1,\dots,n \right\}. \nonumber
\end{align}
Lemma \ref{lem:concentration_on_C} in the supplement shows that $X^{(j)}$ concentrates on $\cC_{L;\tau}$ when the underlying signal is, roughly speaking, not too large compared to $\tau$ (in particular under the null hypothesis).

It can be shown that, on the set $\cC_{L;\tau}$, $x \mapsto S^{(j)}(x)$ is $D_\tau$-Lipschitz with respect to the Hamming distance, see Lemma \ref{lem:Lipschitz_constant_on_C} in the supplement. Lemma \ref{lem:Lipschitz_extension_theorem} in the supplement shows that there exists a measurable function ${S}_{L;\tau}^{(j)} : (\R^{d_L})^n \to \R$, $D_\tau$-Lipschitz with respect to the Hamming distance, such that ${S}_{L;\tau}^{(j)}(X^{(j)}_L) = \tilde{S}_{L;\tau}^{(j)}(X^{(j)}_L)$ whenever $X^{(j)} \in \cC_{L;\tau}$. Lemma \ref{lem:Lipschitz_extension_theorem} is essentially the construction of McShane \cite{mcshane1934extension} for obtaining a Lipschitz extension with respect to the Hamming distance, but our lemma verifies in addition the Borel measurability of the resulting map.

The Lipschitz constant upper bounds the sensitivity of a test statistic that is Lipschitz continuous with respect to the Hamming distance. Specifically, we have that 
\begin{equation*}
\Delta_{S^{(j)}} = \underset{x,\breve{x} \in \ell_2(\N)^n : \mathrm{d}_H(x,\breve{x}) \leq 1}{\sup} \left| S^{(j)}(x) -  S^{(j)}(\breve{x})\right| \leq D_\tau.
%\left|  S^{(j)}(x) -  S^{(j)}(x') \right| \leq D_\tau \mathrm{d}_H (x,x')
\end{equation*}
Using the Gaussian mechanism, the transcripts
\begin{equation}\label{eq:transcript_private_Euclidean_norm_lips_ext_for_one_clipping}
Y^{(j)}_{L;\tau} = \gamma_\tau \breve{S}_{L;\tau}^{(j)}(X^{(j)}_L) + W^{(j)}_\tau, \quad \text{where } W^{(j)}_\tau \sim N(0,1) \text{ independent for } j \in [m],
\end{equation}
$\gamma_\tau = {\epsilon}/({D_\tau \sqrt{2 \mathfrak{c} \log (2 / \delta)}})$ and $\tau > 0$, are $(\epsilon/\sqrt{\mathfrak{c}},\delta)$-differentially private for any $\epsilon > 0$ (see e.g. \cite{dwork2014algorithmic}). These transcripts are mean zero and have bounded variance under the null hypothesis, so a test of the form
\begin{equation}\label{eq:Lipschitz_ext_test_privacy_for_one_clipping}
\varphi_\tau := \mathbbm{1} \left\{ \frac{1}{\sqrt{m}} \underset{j=1}{\overset{m}{\sum}} Y^{(j)}_{L;\tau} \geq \kappa (\gamma_\tau \vee 1)  \right\}
\end{equation}
has an arbitrarily small level for large enough $\kappa > 0$ (see Lemma \ref{lem:typeI_typeII_error_control_Lipschitz_ext_test_privacy_more_general} in the supplement). Furthermore, the lemma below shows that, if the signal size is large enough in the $\sum_{l=1}^{L} 2^l$ first coordinates, the above test enjoys a small Type II error probability as well.

\begin{lemma}\label{lem:typeI_typeII_error_control_Lipschitz_ext_test_privacy}
Consider the test $\varphi_\tau$ as defined by \eqref{eq:Lipschitz_ext_test_privacy_for_one_clipping}. If 
\begin{equation}\label{eq:tau_sandwich}
    \tau/4 \leq \frac{n\| f_L \|_2^2}{\log(N) \sqrt{2 {\mathfrak{c}} \log(2/\delta)} \sigma^2 \sqrt{d}}  \leq \tau/2 
\end{equation}
and
\begin{equation}\label{eq:lem:tau_specific_minimal_signal_size}
 \| \Pi_L f \|_2^2 \geq C_\alpha  \kappa \log^{} (N) \sqrt{{\mathfrak{c}} \log(1/\delta)} \left( \frac{\sqrt{2^L}}{\sigma^2 \sqrt{N} \sqrt{n} (\sqrt{n} \epsilon \wedge 1)} \right) \bigvee \left( \frac{1}{\sigma^2 N n\epsilon^2} \right)
\end{equation}
for $C_\alpha > 0$ large enough, it holds that $\P_f(1-\varphi_\tau) \leq \alpha$. 
\end{lemma}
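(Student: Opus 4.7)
[Proof plan for Lemma \ref{lem:typeI_typeII_error_control_Lipschitz_ext_test_privacy}]
The plan is to decompose the Type II error analysis into two high-probability events and then verify that on their intersection the test statistic exceeds the critical value $\kappa(\gamma_\tau \vee 1)$. Throughout, write $\mu := \frac{n\|\Pi_L f\|_2^2}{\sigma^2\sqrt{d_L}}$, so that by \eqref{eq:tau_sandwich} we have $\mu \asymp \tau\log(N)\sqrt{\mathfrak{c}\log(2/\delta)}$ and in particular $\mu \geq 2\tau$ for $N$ large enough. The key conceptual point is that under $\P_f$ the unclipped statistic overshoots the clipping threshold by a logarithmic factor, so after clipping it saturates at $+\tau$.

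First, I would define the event $A_1 = \{X^{(j)}_L \in \cC_{L;\tau} \text{ for all } j \in [m]\}$, on which $\breve{S}^{(j)}_{L;\tau}(X^{(j)}_L) = \tilde{S}^{(j)}_{L;\tau}(X^{(j)}_L)$ by the Lipschitz extension construction. Lemma \ref{lem:concentration_on_C} in the supplement should give $\P_f(A_1^c) \leq \alpha/3$ under condition \eqref{eq:tau_sandwich}, which ensures that the signal is not too large relative to $\tau$; this is precisely what \eqref{eq:tau_sandwich} guarantees by pinning $\tau$ to be of order $\mu/[\log(N)\sqrt{\mathfrak{c}\log(2/\delta)}]$. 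Next, define $A_2 = \{\tilde{S}^{(j)}_{L;\tau}(X^{(j)}_L) = \tau \text{ for all } j\}$. Under $\P_f$, $\|\sigma^{-1}\sqrt{n}\overline{X^{(j)}_L}\|_2^2 \sim \chi^2_{d_L}(n\|\Pi_L f\|_2^2/\sigma^2)$ and $V^{(j)}_{L;\tau} \sim \chi^2_{d_L}$ independently, so their difference rescaled by $1/\sqrt{d_L}$ has mean $\mu$ and sub-exponential fluctuations of order $1 + \sqrt{\mu/\sqrt{d_L}}$. Using a standard Bernstein-type concentration bound for (noncentral) chi-square differences and the fact that $\mu \gg \tau$ by \eqref{eq:tau_sandwich}, the unclipped statistic exceeds $\tau$ for every $j$ simultaneously with probability $\geq 1 - \alpha/3$, via a union bound over $j=1,\dots,m$ which contributes the $\log N$ factor already absorbed in $\mu$.

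On $A_1 \cap A_2$, the transcripts satisfy $Y^{(j)}_{L;\tau} = \gamma_\tau \tau + W^{(j)}_\tau$, so
\begin{equation*}
\frac{1}{\sqrt{m}}\sum_{j=1}^m Y^{(j)}_{L;\tau} = \sqrt{m}\,\gamma_\tau \tau + \frac{1}{\sqrt{m}}\sum_{j=1}^m W^{(j)}_\tau,
\end{equation*}
where the Gaussian average is $N(0,1)$ and hence bounded by $\sqrt{2\log(6/\alpha)}$ on an event $A_3$ of probability $\geq 1 - \alpha/3$. The task then reduces to showing
\begin{equation*}
\sqrt{m}\,\gamma_\tau \tau \;\geq\; \kappa(\gamma_\tau \vee 1) + \sqrt{2\log(6/\alpha)}.
\end{equation*}
Here I would substitute $\gamma_\tau = \epsilon/(D_\tau\sqrt{2\mathfrak{c}\log(2/\delta)})$ together with the bound $D_\tau$ supplied by Lemma \ref{lem:Lipschitz_constant_on_C} (which on $\cC_{L;\tau}$ scales like $\tau/n$ up to absolute constants, as can be checked from the inner-product bounds in \eqref{eq:def_C1_C2} and the one-coordinate sensitivity computation for $x\mapsto\|\sigma^{-1}\sqrt n\bar x\|_2^2$). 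The inequality then splits into two cases: when $\gamma_\tau \geq 1$ the condition becomes $\sqrt{m}\tau \gtrsim \kappa$, and when $\gamma_\tau<1$ it becomes $\sqrt{m}\gamma_\tau\tau \gtrsim \kappa$. Translating both inequalities back through the identity $\tau \asymp \mu/[\log(N)\sqrt{\mathfrak{c}\log(1/\delta)}]$ and $\mu \propto \|\Pi_L f\|_2^2$ gives exactly the signal-size lower bound \eqref{eq:lem:tau_specific_minimal_signal_size}, with the maximum of the two terms there corresponding to the two cases $\gamma_\tau \gtreqless 1$ (the $\sqrt{2^L}/[\sigma^2\sqrt N\sqrt n(\sqrt n\epsilon \wedge 1)]$ term for the high privacy-budget case and $1/(\sigma^2 Nn\epsilon^2)$ for the very stringent privacy case). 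A union bound $\P_f(1-\varphi_\tau) \leq \P_f(A_1^c) + \P_f(A_2^c) + \P_f(A_3^c) \leq \alpha$ concludes.

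The main obstacle will be Step 2, verifying that $\tilde{S}^{(j)}_{L;\tau}$ saturates at $+\tau$ uniformly over $j$ with the desired probability: the noncentral chi-square has variance $2d_L + 4n\|\Pi_Lf\|_2^2/\sigma^2$, which after rescaling by $1/\sqrt{d_L}$ can be comparable to $\mu$ itself, so a crude Chebyshev bound is insufficient. I would handle this via the Laurent--Massart inequality applied separately to the central chi-square $V^{(j)}_{L;\tau}$ and to the noncentral $\|\sigma^{-1}\sqrt n\overline{X^{(j)}_L}\|_2^2$, after which the gap $\mu - \tau \gtrsim \tau\log(N)\sqrt{\mathfrak{c}\log(1/\delta)}$ provided by \eqref{eq:tau_sandwich} comfortably absorbs both the deviation and the union-bound $\log m$ factor. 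A secondary technical point is verifying the constant in the bound on $D_\tau$ so that the passage from $\gamma_\tau\tau$ to the two regimes in \eqref{eq:lem:tau_specific_minimal_signal_size} is tight; this is essentially bookkeeping once the formula from Lemma \ref{lem:Lipschitz_constant_on_C} is in hand.
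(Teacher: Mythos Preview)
Your proposal has a genuine gap that stems from misreading the sandwich condition. You conclude from \eqref{eq:tau_sandwich} that $\mu := n\|\Pi_L f\|_2^2/(\sigma^2\sqrt{d_L}) \gg \tau$ and build the whole argument around saturation of the clipped statistic at $+\tau$. But the hypothesis of Lemma~\ref{lem:concentration_on_C}, which you invoke to control $A_1$, is precisely $\mu \leq \tau/2$: the set $\cC_{L;\tau}$ is calibrated so that $X^{(j)}$ concentrates on it only when the signal is \emph{moderate} relative to $\tau$. In the regime $\mu \gg \tau$, the noncentral chi-square means in the definition of $\cA_{L;\tau}$ are shifted by amounts exceeding the allowed deviation, so $\P_f(X^{(j)}\in\cC_{L;\tau})$ is not close to one and you lose the identification $\breve S_{L;\tau}^{(j)} = \tilde S_{L;\tau}^{(j)}$. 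Without that, the Lipschitz extension is uncontrolled and the rest of the argument collapses. (The log factor in the denominator of \eqref{eq:tau_sandwich} as printed in the main text appears to be a typo; the version actually proved in the supplement reads $\tau/4 \leq \mu \leq \tau/2$, and the whole construction is designed for that regime.)

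The paper's route is different and avoids saturation entirely. With $\mu\in[\tau/4,\tau/2]$, one first uses Lemma~\ref{lem:concentration_on_C} to reduce to $\tilde S_{L;\tau}^{(j)}$, then applies Lemma~\ref{lem:clipping_preserves_mean_under_alt} (a mean-preservation bound for clipping a variable symmetric about a positive mean in $[\tau/4,\tau/2]$) to get $\E_f[\gamma_\tau \tilde S_{L;\tau}^{(j)}] \gtrsim \gamma_\tau\mu$, bounds the variance by the unclipped variance via Lemma~\ref{lem:folklore_clipping_reduces_variance}, and finishes with Chebyshev. A second, smaller issue: your claim that $D_\tau \asymp \tau/n$ is incorrect. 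The formula is $D_\tau = \tilde\kappa_\alpha\log(N)\,(\sqrt{n\sqrt{d_L}\tau}\vee\sqrt{nd_L})/(n\sqrt{d_L})$, which scales like $\log(N)\sqrt{\tau/(n\sqrt{d_L})}$ in the relevant regime; this is what produces the two branches of \eqref{eq:lem:tau_specific_minimal_signal_size} when one unwinds $\gamma_\tau = \epsilon/(D_\tau\sqrt{2\mathfrak c\log(2/\delta)})$.
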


A proof of the above lemma is given in Section \ref{ssec:supp:procedure_I} of the supplement. The above test is calibrated for the detection of signals size between $\tau/4$ and $\tau/2$. In order to detect signals of any size larger than the right-hand side of \eqref{eq:lem:tau_specific_minimal_signal_size}, we follow what is essentially a multiple testing procedure. For large signals, we need a larger clipping to detect them, as well as a larger set $\cC_{L;\tau}$ to assure that the data is in $\cC_{L;\tau}$ with high probability, as larger signals increase the probability of ``outliers'' from the perspective of the sensitivity of the $L_2$-norm. 

It turns out that a sufficient range of clipping thresholds to consider (for detecting the signals $f \in \cB^{{s},R}_{p,q}$ under consideration in Lemma \ref{lem:lips_test_up_rate}) is given by
\begin{equation}\label{eq:collection_of_threshold_lips_test}
\tau \in \mathrm{T}_L := \left\{ 2^{-k+2} \frac{ n (1-2^{-s})^{2 - 2/q} R^2}{\sigma^{2} \sqrt{2^{L}}}  : k=1,\dots,\lceil1+2\log_2( N R / \sigma )\rceil \right\}.
\end{equation}
The $(\epsilon, \delta)$-differentially private testing procedure $T_{\text{I}}$ is now constructed as follows. For each $\tau \in \mathrm{T}_L$, the machine transfers \eqref{eq:transcript_private_Euclidean_norm_lips_ext_for_one_clipping} with $\mathfrak{c} = {|\mathrm{T}_L|}$. By the independence of the Gaussian noise added in \eqref{eq:transcript_private_Euclidean_norm_lips_ext_for_one_clipping} for each $\tau \in \mathrm{T}_L$, the transcript $Y^{(j)} = \{ Y^{(j)}_{L;\tau} : \tau \in \mathrm{T}_L \}$ is $(\epsilon,\delta)$-differentially private (see e.g. Theorem A.1 in \cite{dwork2014algorithmic}). 

The test
\begin{equation}\label{eq:definition_full_test_lips_ext_privacy}
T_{\text{I}} :=  \mathbbm{1} \left\{ \underset{\tau \in \mathrm{T}_L}{\max} \, \frac{1}{\sqrt{m}} \underset{j=1}{\overset{m}{\sum}} Y^{(j)}_{L;\tau} \geq \kappa_\alpha \left( \frac{\epsilon}{D_\tau \sqrt{2 |\mathrm{T}_L| \log (2 / \delta)}} \vee 1 \right) \sqrt{\log |\mathrm{T}_L|}   \right\}
\end{equation}
then satisfies $\P_0 T_{\text{I}} \leq \alpha$ via a union bound and sub-exponential tail bound, we defer the reader to the proof of Lemma \ref{lem:lips_test_up_rate} for details. Furthermore, for $f \in \cB^{{s},R}_{p,q}$, we have $\|\Pi_L f\|_2\leq \|f\|_2 \lesssim R$. If $f$ in addition satisfies \eqref{eq:lem:tau_specific_minimal_signal_size}, there exists $\tau^* \in \mathrm{T}_L$ such that \eqref{eq:tau_sandwich} is satisfied and consequently
\begin{equation*}
\P_f (1 - T_{\text{I}}) \leq \P_f (1 - \varphi_{\tau^*}) \leq \alpha/2.
\end{equation*}
The optimal choice of $L$ depends on the regularity level of the signal $f$, balancing the approximation error $\| f - \Pi_L f\|_2^2$ and the right-hand side of \eqref{eq:lem:tau_specific_minimal_signal_size}, for which we defer the details to Section \ref{ssec:supp:procedure_I} in the supplement. To summarize, we have obtained the following lemma.

\begin{lemma}\label{lem:lips_test_up_rate}
For all $R>0$, $\alpha \in (0,1)$ there exists $\kappa_\alpha > 0$ and $C_\alpha > 0$ such that the test $T_{\text{I}}$ defined in \eqref{eq:definition_full_test_lips_ext_privacy} satisfies $\P_0 T_{\text{I}} \leq \alpha$. Furthermore, if $f \in \cB^{{s},R}_{p,q}$ is such that for some $L$ and $M_{N,\delta,\tau} =  \log^{} (N)  \sqrt{ \log \log(N R/\sigma)\log(N R/\sigma) \log(1/\delta)}$,
\begin{equation*}
\| \Pi_L f \|_2^2 \geq C_\alpha M_{N,\delta,\tau} \left( \frac{\sqrt{2^L}}{\sigma^2 \sqrt{N} \sqrt{n} (\sqrt{n} \epsilon \wedge 1)} \right) \bigvee \left( \frac{1}{\sigma^2 N n\epsilon^2} \right),
\end{equation*}
we have that $\P_f (1 - T_{\text{I}}) \leq \alpha$.
\end{lemma}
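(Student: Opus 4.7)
The overall plan is to treat $T_{\text{I}}$ as a (privacy-budget-split) multiple testing procedure indexed by the grid $\mathrm{T}_L$, deducing both error bounds from the single-$\tau$ guarantee of Lemma~\ref{lem:typeI_typeII_error_control_Lipschitz_ext_test_privacy} combined with a union bound / grid-existence argument. First I would verify that the full transcript $\{Y^{(j)}_{L;\tau} : \tau \in \mathrm{T}_L\}$ is indeed $(\epsilon,\delta)$-DP: the Gaussian mechanism used at each $\tau$ yields an $(\epsilon/\sqrt{|\mathrm{T}_L|},\delta/|\mathrm{T}_L|)$-DP transcript because $\mathfrak{c} = |\mathrm{T}_L|$, and advanced composition (Theorem~A.1 in \cite{dwork2014algorithmic}) then delivers the claimed $(\epsilon,\delta)$-DP for the joint transcript across the grid.

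For the Type~I control, I would argue that under $H_0$ the clipped statistic $\tilde S^{(j)}_{L;\tau}(X^{(j)}_L)$ is bounded in $[-\tau,\tau]$ and centered (since $V^{(j)}_{L;\tau} \sim \chi^2_{d_L}$ matches the mean of $\|\sigma^{-1}\sqrt{n}\,\overline{X^{(j)}_L}\|_2^2$ under the null), so each $Y^{(j)}_{L;\tau}$ is the sum of a bounded mean-zero variable and an independent $N(0,1)$ variable, hence sub-Gaussian with variance at most $1+\tau^2$. After summing over $j$ and rescaling by $1/\sqrt m$, $m^{-1/2}\sum_j Y^{(j)}_{L;\tau}$ is mean-zero and sub-Gaussian. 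A standard Gaussian tail bound plus a union bound over the at-most-$|\mathrm{T}_L|$ values of $\tau$ in the grid then shows that the critical value $\kappa_\alpha(\gamma_\tau\vee 1)\sqrt{\log|\mathrm{T}_L|}$ is exceeded under $\P_0$ with probability at most $\alpha$, provided $\kappa_\alpha$ is chosen large enough. The factor $\gamma_\tau\vee 1$ correctly accounts for the noise scale: when $\gamma_\tau \leq 1$ the Gaussian noise dominates, otherwise the signal term dominates.

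For the Type~II bound, the essential claim is that under the stated lower bound on $\|\Pi_L f\|_2^2$, there exists some $\tau^*\in\mathrm{T}_L$ satisfying the sandwich condition \eqref{eq:tau_sandwich} of Lemma~\ref{lem:typeI_typeII_error_control_Lipschitz_ext_test_privacy}. Since elements of $\mathrm{T}_L$ form a geometric sequence with ratio $1/2$, any positive target value contained in $[\min \mathrm{T}_L/2,\max \mathrm{T}_L]$ admits a grid point within a factor of $2$, which yields the desired sandwich. I would verify: (i) the target $\frac{n\|f_L\|_2^2}{\log(N)\sqrt{2|\mathrm{T}_L|\log(2/\delta)}\,\sigma^2\sqrt{d_L}}$ is at most $\max\mathrm{T}_L/4$, using $\|f_L\|_2\leq \|f\|_2\lesssim R$ (which follows from $f\in \cB^{s,R}_{p,q}$ and the Besov embedding into $L_2$); and (ii) the target is at least $\min\mathrm{T}_L/4$, which follows from the assumed lower bound on $\|\Pi_L f\|_2^2$ together with the design of the bottom of the grid (of order $n/(N^2\sqrt{d_L})$). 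With such a $\tau^*$, Lemma~\ref{lem:typeI_typeII_error_control_Lipschitz_ext_test_privacy} delivers $\P_f(1-\varphi_{\tau^*})\leq \alpha$, and since $T_{\text{I}}\geq \varphi_{\tau^*}$ by construction of the max, the conclusion follows. The factor $M_{N,\delta,\tau}$ in the final statement absorbs the logarithmic overhead $\log(N)\sqrt{\log|\mathrm{T}_L|\log(1/\delta)}$ produced by (a) the per-$\tau$ privacy bookkeeping $\mathfrak{c}=|\mathrm{T}_L| \asymp \log(NR/\sigma)$, (b) the union bound across $\mathrm{T}_L$, and (c) the loss-of-a-factor-$2$ from discretizing the sandwich condition on the grid.

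The main obstacle I anticipate is the bookkeeping for the bottom of the grid: one must make sure the grid is long enough to catch even the smallest detectable $\|\Pi_L f\|_2^2$ (governed by the Laplace-dominated term $1/(\sigma^2 N n \epsilon^2)$ in the signal condition), but not so long that $|\mathrm{T}_L|$ blows the privacy budget or the union bound. Writing $|\mathrm{T}_L| \asymp \log(NR/\sigma)$ and tracking this constant carefully through $\gamma_{\tau}$, the sub-Gaussian tail, and the sandwich window is delicate; the rest of the argument essentially reduces to a clean application of Lemma~\ref{lem:typeI_typeII_error_control_Lipschitz_ext_test_privacy} and the composition theorem.
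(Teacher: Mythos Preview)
Your Type~II argument is essentially what the paper does: find $\tau^*\in\mathrm T_L$ hitting the sandwich \eqref{eq:tau_sandwich} via the geometric grid and the Besov bound $\|f_L\|_2\lesssim R$, then invoke Lemma~\ref{lem:typeI_typeII_error_control_Lipschitz_ext_test_privacy} and $T_{\text I}\geq\varphi_{\tau^*}$.

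Your Type~I argument, however, has a genuine gap. First, the transcript is built from the Lipschitz \emph{extension} $\breve S_{L;\tau}^{(j)}$, not from the clipped statistic $\tilde S_{L;\tau}^{(j)}$; these coincide only on the random set $\cC_{L;\tau}$. You cannot assert that $\breve S_{L;\tau}^{(j)}$ is centered under $\P_0$ without first restricting to $\{X^{(j)}\in\cC_{L;\tau}\}$ (the paper does exactly this split, controlling $\P_0(\exists\, j,\tau: X^{(j)}\notin\cC_{L;\tau})$ via Lemma~\ref{lem:concentration_on_C}). Second, and more seriously, your sub-Gaussian bound from mere boundedness is too weak. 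The scaled signal part $\gamma_\tau \tilde S_{L;\tau}^{(j)}$ lies in $[-\gamma_\tau\tau,\gamma_\tau\tau]$, so boundedness gives a sub-Gaussian parameter of order $\gamma_\tau\tau$, and after the union bound you would need $\gamma_\tau\tau\lesssim(\gamma_\tau\vee1)$ uniformly over $\tau\in\mathrm T_L$. This fails: for the largest $\tau\asymp nR^2/\sqrt{d_L}$ one checks from the definition of $D_\tau$ that $\gamma_\tau\tau\asymp \epsilon\sqrt{n\sqrt{d_L}\tau}/(\log N\cdot\sqrt{\mathfrak c\log(1/\delta)})$ grows like $\sqrt\tau$, so neither $\tau\lesssim1$ (when $\gamma_\tau\geq1$) nor $\gamma_\tau\tau\lesssim1$ (when $\gamma_\tau<1$) holds in general. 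The threshold $\kappa_\alpha(\gamma_\tau\vee1)\sqrt{\log|\mathrm T_L|}$ therefore does not absorb your variance.

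The paper's fix is to exploit the structure underneath the clipping: under $\P_0$ the unclipped statistic $d_L^{-1/2}(\|\sigma^{-1}\sqrt n\,\overline{X^{(j)}_L}\|_2^2-V^{(j)}_{L;\tau})$ is a centered difference of two independent $\chi^2_{d_L}$ variables, hence sub-exponential with parameter $O(1)$ \emph{independent of} $\tau$. Since clipping a symmetric random variable preserves sub-exponentiality (Lemma~\ref{lem:clipping_preserves_tails_exponential}), $\tilde S_{L;\tau}^{(j)}$ has a $\tau$-free $O(1)$ sub-exponential norm, so $\gamma_\tau\tilde S_{L;\tau}^{(j)}+W^{(j)}_\tau$ has sub-exponential norm $O(\gamma_\tau\vee1)$, and Bernstein plus the union bound over $\mathrm T_L$ goes through at exactly the stated threshold. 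You should route your Type~I argument through this $\tau$-independent tail bound rather than through boundedness.
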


% TODO rename this:
\subsection{Private testing procedure II: high privacy-budget strategy}\label{ssec:procedure_II}

In the high-privacy budget regime, we construct a testing procedure that consists essentially of two steps. In the first step, the data is truncated, clipped and averaged over the coordinates, after which Gaussian noise is added to obtain a private summary of the original data. Then, as a second step, the transcripts are averaged, and based on this average, a test statistic that is reminiscent of a chi-square test is computed in the central server. This is in contrast to the strategy of the previous section, where each server computes a (private version of) a chi-square test statistic.

The approach taken here is to divide the servers equally over the first $d_L$ coordinates (i.e. as uniformly as possible), where we recall the notation $d_L := \sum_{l=1}^L 2^l$. That is to say, for $L,K_L \in \N$, we partition the coordinates $\{ 1,\ldots,d_L \}$ into approximately $d_L/K_L$ sets of size $K_L$. The servers are then equally divided over each of these partitions and communicate the sum of the clipped $X^{(j)}_{L;i}$'s coefficients corresponding to their partition, were we also recall that the notation $X^{(j)}_{L;i}$ denotes the vector in $\R^{d_L}$ formed by the first $d_L$ coordinates of $\Pi_L X^{(j)}_{i}$.

More formally, take $K_L =  \lceil n \epsilon^2 \wedge d_L \rceil$ and consider sets $\cJ_{lk;L} \subset [m]$ for indexes $(l,k) \in \{ l=1,\dots, L, k = 1,\dots,2^l \} =:I_L$, such that $| \cJ_{lk;L} | = \lceil \frac{m K_L}{d_L} \rceil$ and each $j \in \{1,\dots,m\}$ is in $\cJ_{lk;L}$ for at least $K_L$ different indexes $k \in \{1,\dots,d_L\}$. For $(l,k) \in I_L$, $j \in \cJ_{lk;L}$, generate the transcripts according to
\begin{equation}\label{eq:transcript_impure_DP_coordinate_wise}
Y^{(j)}_{lk;L}|X^{(j)} \equiv Y^{(j)}_{lk;L}(X^{(j)})  = \gamma_L \, \sum_{i=1}^n \, [ \sigma^{-1} (X_i^{(j)})_{lk} ]_{-\tau}^\tau + W^{(j)}_{lk}
\end{equation}
with $\gamma_L = {\epsilon}/({2\sqrt{2 K_L \log(2/\delta)}\tau})$, $\tau = \tilde{\kappa}_\alpha \sqrt{\log(N/\sigma)}$ and $(W^{(j)}_{lk})_{j \in [m],(l,k) \in I_L}$ i.i.d. standard Gaussian noise. 

Since  $x \mapsto  \sum^n_{i=1}  [ \sigma (x_i^{(j)})_{lk} ]_{-\tau}^\tau$ has sensitivity bounded by $2 \tau$, for $k=1,\dots,K$, releasing 
\begin{equation*}
Y^{(j)}_L(X^{(j)})=(Y^{(j)}_{L,l_1 k_1}(X^{(j)}),\dots,Y^{(j)}_{L,l_{K_L}k_{K_L}}(X^{(j)}))
\end{equation*}
satisfies $(\epsilon,\delta)$-DP, see Lemma \ref{lem:L_2-sensitivity_private_coin_bound} in the supplement for details. 

If the privacy budget were of no concern, submitting the above transcripts with $2^L \asymp N^{1/(2s+1/2)}$ would be sufficient to construct a test statistic that attains the unconstrained rate of $\rho^2 \asymp N^{-2s/(2s+1/2)}$. Under (more stringent) privacy constraints, however, the optimal number of coordinates to be transmitted should depend on the privacy budget. Whenever $\epsilon \lesssim 1/\sqrt{n}$, it turns out that submitting just one coordinate is in fact rate optimal. Sending more than one coordinate leads to worse rates as the noise overpowers the benefit of having a higher dimensional transcript. As $\epsilon$ increases, the optimal number of coordinates to be transmitted increases as well. Whenever $\epsilon \gtrsim \sigma^{-\frac{2}{4s+1}} m^{\frac{1}{4s+1}} n^{\frac{1/2-2s}{4s+1}}$, the optimal number of coordinates to be transmitted is $2^L \asymp N^{1/(2s+1/2)}$.

The test
\begin{equation}\label{eq:test_impure_DP_coordinate_wise_strat}
T_{\text{II}} = \mathbbm{1}\left\{ \frac{1}{\sqrt{d_L}} \underset{(l,k) \in I_L}{\overset{}{\sum}} \left[ \left( \frac{1}{\sqrt{|\cJ_{lk;L}|}}  \underset{j \in \cJ_{lk;L}}{\overset{}{\sum}} Y^{(j)}_{lk;L}  \right)^2 -  \frac{n\epsilon^2}{4{K_L}\tau^2} -  1 \right] \geq \kappa_\alpha \left(\frac{n\epsilon^2}{4{K_L}\tau^2} \vee 1\right) \right\}
\end{equation}
satisfies $\P_0 T_{\text{II}} \leq \alpha$ by Lemma \ref{lem:protocol_II_priv_coin_bound} in the supplement whenever $\tilde{\kappa}_\alpha > 0$ and $\kappa_\alpha > 0$ are chosen large enough. 

The power that the test attains depends on the signal size up until resolution level $L$, i.e. $\|\Pi_L f \|_2$. Specifically, the test Type II error $\P_f (1 - T_{\text{II}}) \leq \alpha$ whenever
\begin{equation}\label{eq:f_condition_1coord}
    \| \Pi_L f \|_2^2 \geq C_\alpha \frac{ \log \log (N) \log(N) \log(1/\delta) 2^{(3/2)L}}{ m n^2 \epsilon^2}.
\end{equation}
The optimal choice of $L$ for is determined by the trade-off between the approximation error $\| f - \Pi_L f \|_2^2$ and the right-hand side of \eqref{eq:f_condition_1coord}. The proof of the following lemma is given in Section \ref{ssec:supp:procedure_II} of the supplement.

\begin{lemma}\label{lem:rate_priv_coin_privacy_pure_DP}
Take $\alpha \in (0,1)$. Suppose $f$ satisfies \eqref{eq:f_condition_1coord} and that $\epsilon \geq \frac{2^{L+1}}{\sqrt{mn}}$ for some $L \in \N$. Then, the distributed $(\epsilon,\delta)$-DP testing protocol $T_{\text{II}}$ of level $\alpha$ has Type II error $\P_f (1 - T) \leq \alpha$ for a large enough constant $C_\alpha > 0$ and $\tilde{\kappa}_\alpha > 0$, depending only on $\alpha$.
\end{lemma}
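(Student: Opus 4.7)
The plan is to bound the Type II error of $T_{\text{II}}$ by controlling the mean and variance of the test statistic
$$T^\star := \frac{1}{\sqrt{d_L}} \sum_{(l,k) \in I_L} \left[ Z_{lk}^2 - \frac{n\epsilon^2}{4K_L \tau^2} - 1 \right], \qquad Z_{lk} := \frac{1}{\sqrt{|\cJ_{lk;L}|}} \sum_{j \in \cJ_{lk;L}} Y^{(j)}_{lk;L},$$
under $P_f$ and then applying a concentration inequality. Since each $\sigma^{-1}(X^{(j)}_i)_{lk}$ is $N(\sigma^{-1} f_{lk}, 1)$, the privacy noises $W^{(j)}_{lk}$ are independent standard Gaussians, and $Y^{(j)}_{lk;L}$ depends on $X^{(j)}$ only through its single coordinate indexed by $(l,k)$, the variables $\{Z_{lk}\}_{(l,k)\in I_L}$ are mutually independent. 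Writing $\mu_{lk} := E_f[Z_{lk}]$ and $v_{lk} := \mathrm{Var}_f(Z_{lk})$, a Gaussian-tail analysis of the clipping $[\cdot]_{-\tau}^\tau$ at $\tau = \tilde{\kappa}_\alpha \sqrt{\log(N/\sigma)}$ shows that, whenever $|\sigma^{-1} f_{lk}| \leq \tau/2$ uniformly in $(l,k)$,
$$\mu_{lk} = \sqrt{|\cJ_{lk;L}|}\,\gamma_L n\,\sigma^{-1} f_{lk} + O(N^{-c}), \qquad v_{lk} = \gamma_L^2 n + 1 + O(N^{-c}).$$
The hypothesis $\epsilon \geq 2^{L+1}/\sqrt{mn}$ ensures $mK_L \geq d_L$, hence $|\cJ_{lk;L}| \asymp mK_L/d_L$.

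Substituting $\gamma_L^2 = \epsilon^2/(8K_L\log(2/\delta)\tau^2)$ yields
$$E_f[T^\star] \geq \frac{1}{\sqrt{d_L}}\sum_{(l,k)} \mu_{lk}^2 - (\text{null offset}) \;\asymp\; \frac{mn^2\epsilon^2}{d_L^{3/2}\log(1/\delta)\tau^2\sigma^2}\,\|\Pi_L f\|_2^2.$$
By independence of $\{Z_{lk}\}$ and the identity $\mathrm{Var}(Z_{lk}^2) = 2v_{lk}^2 + 4\mu_{lk}^2 v_{lk}$ for Gaussian $Z_{lk}$, the variance obeys
$$\mathrm{Var}_f(T^\star) \leq \frac{1}{d_L}\sum_{(l,k)}\left[2 v_{lk}^2 + 4 \mu_{lk}^2 v_{lk}\right] \lesssim v_{lk}^2 + d_L^{-1/2} v_{lk}\, E_f[T^\star].$$
Comparing this with the rejection threshold $\kappa_\alpha(\tfrac{n\epsilon^2}{4K_L\tau^2}\vee 1)$, the signal condition \eqref{eq:f_condition_1coord} with $C_\alpha$ sufficiently large makes $E_f[T^\star]$ exceed the threshold plus an arbitrary fixed multiple of $\sqrt{\mathrm{Var}_f(T^\star)}$, so a Chebyshev inequality (or a sharper sub-exponential deviation bound exploiting that each $Z_{lk}^2$ is a squared Gaussian plus a bounded perturbation) yields $P_f(1 - T_{\text{II}}) \leq \alpha$.

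The main technical obstacle is making the clipping analysis rigorous and uniform over $f \in \cB^{s,R}_{p,q}$. The embedding $\cB^{s}_{p,q} \hookrightarrow \ell_\infty$ for $p \geq 2$ and $s > 0$ gives $\|\Pi_L f\|_\infty \lesssim R$, but the ratio $R/\sigma$ can be polynomial in $N$ since $\sigma$ is only assumed to satisfy $1/N \ll \sigma \leq \sigma_{\max}$, so one cannot guarantee a priori that every $|\sigma^{-1}f_{lk}| \leq \tau/2$. I would resolve this by a dichotomy: either $\max_{(l,k) \in I_L}|\sigma^{-1}f_{lk}| \leq \tau/2$, in which case the clipping corrections are $O(N^{-c})$ and the analysis above applies; or there exists $(l^\star,k^\star)$ with $|\sigma^{-1}f_{l^\star k^\star}| > \tau/2$, in which case $\|\Pi_L f\|_2^2 \geq \sigma^2\tau^2/4 \gtrsim \sigma^2\log(N/\sigma)$ is already so large that the single term $Z_{l^\star k^\star}^2$ pushes $T^\star$ above the threshold with overwhelming probability. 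Careful bookkeeping of the logarithmic factors $\tau^2$, $\log(1/\delta)$, and $\log\log(N)$ through the signal condition then recovers exactly \eqref{eq:f_condition_1coord} with an absolute constant $C_\alpha$ depending only on $\alpha$.
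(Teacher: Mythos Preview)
Your approach is essentially the same as the paper's: compute the mean and variance of the test statistic under $P_f$, show the mean exceeds the threshold under the signal condition~\eqref{eq:f_condition_1coord}, and finish with Chebyshev. The paper's proof (Lemma~\ref{lem:protocol_II_priv_coin_bound} in the supplement, of which Lemma~\ref{lem:rate_priv_coin_privacy_pure_DP} is the $|\cS|=1$ special case) follows this route exactly, with the same independence structure for the $Z_{lk}$ and the same variance computation.

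The one substantive difference is in the treatment of the clipping. Rather than estimating $O(N^{-c})$ corrections to $\mu_{lk}$ and $v_{lk}$, the paper simply intersects with the high-probability event
\[
\Big\{ \max_{i,j,(l,k)} |\sigma^{-1}(X^{(j)}_i)_{lk}| \leq \tau \Big\},
\]
established via a union bound and a Gaussian maximum inequality (Lemma~\ref{lem:gaussian_maximum}). On this event the clipping is a no-op, so the unclipped Gaussian calculations apply verbatim and no correction terms enter at all. This is cleaner than tracking $O(N^{-c})$ errors through the mean and variance. To justify the event, the paper uses the Besov-ball bound $\max_{(l,k)}|f_{lk}| \leq \|f\|_2 \lesssim R$ (Lemma~\ref{lem:L_2_to_besov_bound_on_f}) together with $\tau = \tilde\kappa_\alpha\sqrt{\log(N/\sigma)}$ and a large choice of $\tilde\kappa_\alpha$.

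Your concern about the ratio $R/\sigma$ being polynomial in $N$ is well taken; the paper's write-up is not fully explicit on this point. Your dichotomy is a reasonable fix, but the second branch needs more than what you sketch: a single large $Z_{l^\star k^\star}^2$ does give a positive contribution of order $mn^2\epsilon^2/(d_L^{3/2}\log(1/\delta))$, but clipping at the remaining coordinates can make $\mathrm{Var}_f(Z_{lk}) < \gamma_L^2 n + 1$, so $E_f[Z_{lk}^2] - \nu_{\epsilon,L}$ may be \emph{negative} for those $(l,k)$, and the aggregate negative bias $\sqrt{d_L}\,\gamma_L^2 n$ can exceed the threshold in some regimes. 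You would need to argue either that enough coordinates have $|\sigma^{-1}f_{lk}|$ large (so their positive contributions dominate), or to bound the negative bias more carefully. A simpler repair---closer in spirit to the paper---is to enlarge $\tau$ to $\tilde\kappa_\alpha(R/\sigma + \sqrt{\log N})$, which restores the no-clipping event uniformly and only costs an extra factor depending on $R,\sigma$ in the final rate constants.
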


\subsection{Private testing procedure III: high privacy-budget shared randomness strategy}\label{ssec:procedure_III}

In this section, we construct a testing procedure that is based on the same principles as the one in the previous section, but with the difference that the servers a source of randomness. The transcripts are still based on the clipped and averaged coordinates of the truncated data, but instead of dividing the servers across the coordinates, we apply the same random rotation across the servers.

Next, we describe the testing procedure in detail. Consider for $L \in \N$ the quantities $d_L = \sum_{l=1}^L 2^l$ and $K_L = \lceil n \epsilon^2 \wedge d_L \rceil$ and let $U_L$ denote a random rotation uniformly drawn (i.e. from the Haar measure) on the group of random orthonormal ${d_L \times d_L}$-matrices.  

For $I_L:= \{ (l,k) : l=1,\dots, \lceil \log_2(K_L) \rceil, \, k=1,\dots,2^l \}$, $(l,k) \in I_L$ and $j=1,\dots,m$, generate the transcripts according to
\begin{equation}\label{eq:transcripts_pub_coin_privacy_impure_DP}
 Y^{(j)}_{lk;L} | (X^{(j)},U) =  \gamma_L \, \underset{i=1}{\overset{n}{\sum}}  [ (U X_{L;i}^{(j)})_{lk} ]^\tau_{-\tau} + W^{(j)}_{lk},
 \end{equation}
with $\gamma_L = \frac{\epsilon}{2\sqrt{2K_L \log(2/\delta) \log(N)}\tau}$, $\tau = \tilde{\kappa}_\alpha \sqrt{\log(N/\sigma)}$, $\tilde{\kappa}_\alpha > 0$ and $(W^{(j)}_l)_{j,l}$ i.i.d. centered standard Gaussian noise. By an application of Lemma \ref{lem:transcript_privacy_impure_DP_shared_randomness}, the transcript $Y^{(j)}_L := (Y^{(j)}_{lk;L})_{(l,k)\in I_L}$ is $(\epsilon,\delta)$-differentially private. 

In the shared randomness strategy above, we essentially only send the first $\sum_{l=1}^{\lceil \log_2(K_L) \rceil} 2^l$ coordinates. The random rotation $U_L$ ensures that, roughly speaking, a sufficient amount of the signal is present in these first coordinates, with high probability. 

We then construct the test
\begin{equation}\label{eq:test_pub_coin_privacy_impure_DP}
T_{\text{III}} = \mathbbm{1}\left\{ \frac{1}{\sqrt{K_L}} \underset{(l,k) \in I_L}{\overset{}{\sum}} \left[ \left( \frac{1}{\sqrt{m}}  \underset{j =1}{\overset{m}{\sum}} Y^{(j)}_{lk;L}  \right)^2 -  n\gamma^2_L -  1 \right] \geq \kappa_\alpha \left(n \gamma^2_L \vee 1\right) \right\},
\end{equation}
 which satisfies $\P_0 \varphi \leq \alpha/2$ by Lemma \ref{lem:procedure_III_test_up_rate} in the supplement, for $\kappa_\alpha > 0$ large enough. The lemma below is proven in Section \ref{sec:supp:proofs_optimal_testing_strategies} of the supplement, and yields that the Type II error of the test satisfies $\P_f (1 - T_{\text{III}}) \leq \alpha$ whenever the coordinates up to resolution level $L$ are of sufficient size. The optimal value for $L$ depends on the truncation level $s$, and is chosen by balancing the approximation error $\| f - \Pi_L f \|_2^2$ and the right-hand side of \eqref{eq:f_condition_pub_multicoord}, we defer the reader to Section \ref{ssec:supp:procedure_III} of the supplement for details.

\begin{lemma}\label{lem:rate_pub_coin_privacy_impure_DP} 
    The testing protocol $T_{\text{III}}$, with level $\alpha$ and has corresponding Type II error probability $\P_f (1 - T_{\text{III}}) \leq \alpha$ whenever
    \begin{equation}\label{eq:f_condition_pub_multicoord}
    \| \Pi_L f \|_2^2 \geq C_\alpha\frac{2^L \log(1/\delta) \log(N)}{m n^{} \sqrt{n \epsilon^2 \wedge 2^{L}} \sqrt{n \epsilon^2 \wedge 1}} 
    \end{equation}
    for constant $C_\alpha > 0$ and $\tilde{\kappa}_\alpha >0$ depending only on $\alpha$.
    \end{lemma}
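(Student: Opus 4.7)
Privacy follows directly from Lemma~\ref{lem:transcript_privacy_impure_DP_shared_randomness}: the vector of clipped coordinate sums has $L_2$-sensitivity at most $2\tau\sqrt{K_L}$ uniformly in the realization of $U$, which matches the Gaussian noise scale encoded by $\gamma_L$. The level bound $\P_0 T_{\text{III}} \leq \alpha/2$ is supplied by Lemma~\ref{lem:procedure_III_test_up_rate}. All the work is in the Type II error, which I would control conditionally on $U$, on a high-probability event where clipping is inactive and the Haar-random rotation spreads the signal uniformly across the coordinates $I_L$. The choice $\tau = \tilde\kappa_\alpha\sqrt{\log(N/\sigma)}$ together with a Gaussian tail union bound show that clipping is inactive with probability $1 - N^{-c}$, provided $\max_{(l,k)\in I_L}|(Uf_L)_{lk}| \lesssim \tau$; this last inequality holds on a high-probability event $\mathcal{E}_U$ over Haar-random $U$, since $Uf_L/\|\Pi_L f\|_2$ is uniform on $S^{d_L-1}$ with $\|\Pi_L f\|_2 \lesssim R$. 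On this event, rotational invariance of the Gaussian noise yields
\[
M_{lk} := \frac{1}{\sqrt{m}}\sum_{j=1}^m Y^{(j)}_{lk;L} = \gamma_L n\sqrt{m}\,(Uf_L)_{lk} + \sqrt{n\gamma_L^2\sigma^2 + 1}\,\zeta_{lk}, \qquad \zeta_{lk} \overset{iid}{\sim} N(0,1).
\]

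Next, I would invoke a Johnson--Lindenstrauss / L\'evy-type spherical concentration bound to obtain that, with probability at least $1 - e^{-cK_L}$,
\[
\sum_{(l,k)\in I_L}(Uf_L)_{lk}^2 \;\geq\; c\,\frac{K_L}{d_L}\,\|\Pi_L f\|_2^2.
\]
Restricted to the intersection of this event with $\mathcal{E}_U$, the statistic $S := K_L^{-1/2}\sum_{(l,k)\in I_L}(M_{lk}^2 - n\gamma_L^2 - 1)$ has conditional mean $\mathbb{E}[S\mid U] \gtrsim \gamma_L^2 m n^2\sqrt{K_L}\,\|\Pi_L f\|_2^2/d_L$ (absorbing the discrepancy $n\gamma_L^2\sigma^2 - n\gamma_L^2 = O(n\gamma_L^2)$ into the constant via $\sigma \lesssim 1$), while a Hanson--Wright or noncentral chi-square deviation bound controls the fluctuation $|S - \mathbb{E}[S\mid U]| \lesssim n\gamma_L^2 \vee 1$ with probability $1 - \alpha/4$.

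Assembling the pieces, $T_{\text{III}} = 1$ on the good event whenever $\|\Pi_L f\|_2^2 \gtrsim d_L(n\gamma_L^2 \vee 1)/(\gamma_L^2 m n^2\sqrt{K_L})$. Substituting $\gamma_L^2 \asymp \epsilon^2/(K_L\log(1/\delta)\log^2 N)$ and case-splitting on whether $n\epsilon^2$ exceeds $2^L$ (which governs $K_L = n\epsilon^2 \wedge 2^L$) and whether $n\epsilon^2$ exceeds $1$ (which governs $n\gamma_L^2 \vee 1$) reproduces exactly the factor $\sqrt{n\epsilon^2 \wedge 2^L}\,\sqrt{n\epsilon^2 \wedge 1}$ in the denominator of \eqref{eq:f_condition_pub_multicoord}. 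The main obstacle I anticipate is the coupling of two concentration events for the same Haar-random $U$: controlling the sup-norm $\max|(Uf_L)_{lk}|$ to keep clipping inactive, and lower-bounding $\sum_{(l,k)\in I_L}(Uf_L)_{lk}^2$ to retain signal power. Both bounds are individually standard, but ensuring they hold jointly requires a concentration-of-measure argument on $S^{d_L - 1}$ sharp enough for both functionals simultaneously.
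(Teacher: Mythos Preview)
Your proposal is correct and reaches the same conclusion, but it diverges from the paper's argument in how the Haar rotation $U$ is handled. The paper never conditions on a good-$U$ event; instead it takes the \emph{unconditional} expectation, using the symmetry identity $\E^U(U f_L)_{lk}^2=\|f_L\|_2^2/d_L$ to obtain the signal term $\gamma_L^2 m n^2\sqrt{K_L}\|f_L\|_2^2/d_L$, and then applies Chebyshev with the (conditional-on-$U$) variance of $V_{lk}$. This sidesteps the Johnson--Lindenstrauss step entirely and is more elementary. Your conditional route via JL also works, but buys nothing extra here since Chebyshev-level control is all that is needed.

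Two remarks on your write-up. First, your ``main obstacle'' is a non-issue: the sup-norm bound $\max_{(l,k)}|(Uf_L)_{lk}|\le \|Uf_L\|_2=\|f_L\|_2\lesssim R$ holds \emph{deterministically} for every orthogonal $U$, so once $\tilde\kappa_\alpha$ is chosen large enough that $\tau=\tilde\kappa_\alpha\sqrt{\log(N/\sigma)}\ge 2R$, the signal part of the clipping event is automatic and there is nothing to couple with the energy lower bound. Second, your JL probability ``$1-e^{-cK_L}$'' is only a fixed constant when $K_L=\lceil n\epsilon^2\wedge d_L\rceil=O(1)$, i.e.\ in the regime $n\epsilon^2\lesssim 1$; for arbitrary $\alpha$ you cannot push the failure probability below $e^{-c}$. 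What you actually need (and what is true) is the weaker statement: for every $\alpha$ there exists $c_\alpha>0$ with $\Pr\bigl(\sum_{(l,k)\in I_L}(Uf_L)_{lk}^2\ge c_\alpha K_L d_L^{-1}\|f_L\|_2^2\bigr)\ge 1-\alpha/8$, which follows from the $\mathrm{Beta}(K_L/2,(d_L-K_L)/2)$ law of the normalized projected energy. The paper's averaging approach avoids this case distinction altogether.
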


%%%%%%%%%%%%%%%%%%%%%%%%%%%%%%%%%%%%%%%%%%%%%%%%%%%%%%%%%%%%%%%%%%%%%%%%%%
\section{Adaptive tests under DP constraints}
\label{sec:adaptive-tests}
%%%%%%%%%%%%%%%%%%%%%%%%%%%%%%%%%%%%%%%%%%%%%%%%%%%%%%%%%%%%%%%%%%%%%%%%%%

In the previous section we have derived methods that match (up to logarithmic factors) the theoretical lower bound established in Section \ref{sec:main_results}. The proposed tests, however, depend on the regularity parameter $s$ of the functional parameter of interest $f$. 

In this section we derive an distributed $(\epsilon,\delta)$-DP testing protocol that adapts to the regularity when it is unknown. This method attains the optimal rate of Theorem \ref{thm:adaptation_rate}, and consequently proves the aforementioned theorem.

The adaptive procedure builds on the tests constructed in Section \ref{sec:construction_of_tests} and combines them using essentially a multiple testing strategy. Roughly speaking, the method consists of taking approximately a $1/\log N$-mesh-size grid in the regularity interval $[s_{\min},s_{\max}]$, constructing optimal tests for each of the grid points and combining them using a type of Bonferroni's correction. By design, the tests constructed in Section \ref{sec:construction_of_tests} are based on sub-exponential private test statistics, which allows a combination of the test statistics with a Bonferroni correction of the order of $\log \log N$. 

Combining $\log N$ many $(\epsilon',\delta)$-differentially private transcripts using Gaussian mechanisms, results in a $(\epsilon,\delta)$-differentially private protocol, with $\epsilon = \epsilon' \sqrt{\log N}$. This means that the erosion of the privacy budget by conducting a test for each grid-point is limited to a logarithmic factor, means the method greatly improves over the potentially polynomially worse rate of a non-adaptive method.

The detailed adaptive testing procedures are given as follows. Let $\rho_s$ equal the right-hand side of \eqref{eq:local_randomness_rate_single_line} in case there is access to local randomness only, or the right-hand side of \eqref{eq:shared_randomness_rate_single_line} in case shared randomness is available. Let ${L_{s}}=\lfloor s^{-1}\log_2 (1/\rho_s)\rfloor \vee 1$ and define furthermore $\mathcal{S}:=\{L_{s_{\min}},\dots,L_{s_{\max}}\}$ such that $L_s\in \cS$ for all $s\in[s_{\min},s_{\max}]$. Furthermore, we note that the resulting ``collection of resolution levels'' satisfies $|\mathcal{S}| \leq C_{s_{\max}} \log N$ for some constant $C_{s_{\max}} > 0$ depending only on $s_{\max}$. 

Consider the first the case without access to shared randomness. We partition the collection of resolution levels $\mathcal{S}$, depending on the model characteristics, as follows. 
\begin{equation}\label{eq:partition_of_resolution_levels_local_randomness}
    \cS^{\texttt{LOW}}_{\texttt{LR}} = 
        \left\{ L \in \cS : 2^L \leq \epsilon \sqrt{mn} (1  + \sqrt{n}\mathbbm{1}_{\{\sqrt{n} \epsilon > 1\}})  \right\}, \quad \cS^{\texttt{HIGH}}_{\texttt{LR}} = \cS \setminus \cS^{\texttt{LOW}}_{\texttt{LR}}.
\end{equation}
If the true regularity $s_0$ is such that $L_{s_0} \in \cS^{\texttt{LOW}}$, the low privacy-budget test of Section \ref{ssec:procedure_I} (with $L = L_{s_0}$) is a rate optimal strategy. If $L_{s_0} \in \cS^{\texttt{HIGH}}$, the high privacy-budget test of Section \ref{ssec:procedure_II} is rate optimal.

% TODO: rescale epsilon
For the case of shared randomness, the phase transitions occur for different values of $s\in[s_{\min},s_{\max}]$, or their respective resolution levels $L_s$. So in this case, we partition the collection of resolution levels as 
\begin{equation}\label{eq:partition_of_resolution_levels_shared_randomness}
    \cS^{\texttt{LOW}}_{\texttt{SHR}} = 
        \left\{ L \in \cS : 2^L \leq \epsilon^2 {mn}  \right\}, \quad \cS^{\texttt{HIGH}}_{\texttt{SHR}} = \cS \setminus \cS^{\texttt{LOW}}_{\texttt{SHR}}.
\end{equation}
Consider some $\cS' \subset \cS$. The ``adaptive version'' of the low privacy-budget test defined in \eqref{eq:definition_full_test_lips_ext_privacy} takes the form
\begin{equation}\label{eq:definition_full_test_lips_ext_privacy_adaptive}
T_{\text{I}}^S :=  \mathbbm{1} \left\{ \underset{L \in \cS',\, \tau \in \mathrm{T}_L}{\max} \; \; \frac{1}{\sqrt{m} \left( \gamma_L \vee 1 \right) \sqrt{\log |\mathrm{T}_L||\cS'|}} \; \underset{j=1}{\overset{m}{\sum}} Y^{(j)}_{L;\tau} \geq \kappa_\alpha    \right\},
\end{equation}
where $\mathrm{T}_L$ is as defined in \eqref{eq:collection_of_threshold_lips_test} and $Y^{(j)}_L = \{ Y^{(j)}_{L;\tau} : \tau \in \mathrm{T}_L \}$ is generated according to \eqref{eq:transcript_private_Euclidean_norm_lips_ext_for_one_clipping} for $L \in S$  with \begin{equation*}
    \gamma_\tau = \frac{\epsilon}{2 D_\tau \sqrt{ |\mathrm{T}_L| |\cS'| \log (4 / \delta)}}.
    \end{equation*}
The above choice of $\epsilon$ yields that $(Y^{(j)}_L)_{L \in S}$ is $(\epsilon/2,\delta/2)$-DP due to the Gaussian mechanism. The enlargement of the critical region, which is now effectively rescaled by $\sqrt{\log |\mathrm{T}|_L|\cS'|}$ instead of $\sqrt{\log |\mathrm{T}|_L}$, accounts for the potentially larger set of test statistics over which the maximum is taken. In the case of having access only to local sources of randomness, we set $\cS' = \cS^{\texttt{LOW}}_{\texttt{LR}}$. If $\cS^{\texttt{LOW}}_{\texttt{LR}}$ is empty, we set $T_{\text{I}} = 0$ instead, which forms an $(0,0)$-differentially private protocol.

% TODO: rescale epsilon
In the case of having access to local sources of randomness only; if $\cS^{\texttt{HIGH}}_{\texttt{LR}}$ is non-empty, the adaptive version of the high privacy-budget test defined in \eqref{eq:test_impure_DP_coordinate_wise_strat} is given by
\begin{equation}\label{eq:test_impure_DP_coordinate_wise_strat_adaptive}
    T_{\text{II}} = \mathbbm{1}\left\{ \underset{L \in \cS^{\texttt{HIGH}}_{\texttt{LR}}}{\max} \; \frac{1}{\sqrt{d_L} \left(\eta_L \vee 1\right)} \underset{(l,k) \in I_L}{\overset{}{\sum}} \left[ \left( \frac{1}{\sqrt{|\cJ_{lk;L}|}}  \underset{j \in \cJ_{lk;L}}{\overset{}{\sum}} Y^{(j)}_{lk;L}  \right)^2 -  \eta_L -  1 \right] \geq \kappa_\alpha  \sqrt{\log |\cS^{\texttt{HIGH}}|} \right\},
\end{equation}
where the transcripts are generated according to \eqref{eq:transcript_impure_DP_coordinate_wise} for $L \in \cS^{\texttt{HIGH}}$, with $\gamma_L = {\epsilon}/({4 \sqrt{ |\cS^{\texttt{HIGH}}| K_L \log(4/\delta)}\tau})$, $\eta_L = \frac{n\epsilon^2}{4{K_L}\tau^2}$, $\tau = \tilde{\kappa}_\alpha \sqrt{\log(N/\sigma)}$. Due to the Gaussian mechanism, the transcripts satisfy an $(\epsilon/2,\delta/2)$-DP constraint. As before, if $\cS^{\texttt{HIGH}}$ is empty, set $T_{\text{II}} = 0$ instead.

In the case of having access to local randomness only, the adaptive testing procedure then consists of computing the tests $T_{\text{I}}^{\cS^{\texttt{LOW}}}$ and $T_{\text{II}}^{}$, for which the released transcripts satisfy $(\epsilon,\delta)$-DP. The final test is then given by  
\begin{equation}\label{eq:final_adaptive_test_local_randomness}
    T = T_{\text{I}}^{\cS^{\texttt{LOW}}} \vee T_{\text{II}}.
\end{equation}
In Section \ref{sec:supp:proofs_optimal_testing_strategies} in the supplement, it is shown that this test is adaptive and rate optimal (up to logarithmic factors), proving the first part of Theorem \ref{thm:adaptation_rate}.

In case of shared randomness, the adaptive version of the high privacy-budget test defined in \eqref{eq:test_pub_coin_privacy_impure_DP} is given by 
\begin{equation}\label{eq:test_pub_coin_privacy_impure_DP_adaptive}
    T_{\text{III}} = \mathbbm{1}\left\{ \underset{L \in S^{\texttt{HIGH}}_{\texttt{SHR}}}{\max} \; \frac{1}{\sqrt{K_L} \left(n \gamma_{L}^2 \vee 1\right)} \underset{(l,k) \in I_L}{\overset{}{\sum}} \left[ \left( \frac{1}{\sqrt{m}}  \underset{j =1}{\overset{m}{\sum}} Y^{(j)}_{lk;L}  \right)^2 -  n\gamma^2_L -  1 \right] \geq \kappa_\alpha \sqrt{\log |\cS|}  \right\},
\end{equation}
where the transcripts are generated according to \eqref{eq:transcripts_pub_coin_privacy_impure_DP} for $L \in S^{\texttt{HIGH}}_{\texttt{SHR}}$, $\gamma_L = \frac{\epsilon}{4\sqrt{K_L |S^{\texttt{HIGH}}_{\texttt{SHR}}| \log(4/\delta) \log(N)}\tau}$, $\tau = \tilde{\kappa}_\alpha \sqrt{\log(N/\sigma)}$. By similar reasoning as earlier, the transcripts $\{Y^{(j)}_L : L \in \S^{\texttt{HIGH}}_{\texttt{SHR}} \}$ are $(\epsilon/2,\delta/2)$-DP. If $S^{\texttt{HIGH}}_{\texttt{SHR}}$ is empty, we set $T_{\text{III}} = 0$ instead. 

The adaptive testing procedure in the case of shared randomness then consists of computing the tests $T_{\text{I}}^{S^{\texttt{LOW}}}$ and $T_{\text{III}}$, for which the released transcripts satisfy $(\epsilon,\delta)$-DP. The final test is then given by
\begin{equation}\label{eq:final_adaptive_test_shared_randomness}
    T = T_{\text{I}}^{S^{\texttt{LOW}}_{\texttt{SHR}}} \vee T_{\text{III}}.
\end{equation}

In the supplement's Section \ref{sec:supp:proofs_optimal_testing_strategies}, we prove that this test is adaptive, attaining the optimal rate for shared randomness protocols (up to logarithmic factors), giving us the second statement Theorem \ref{thm:adaptation_rate}.

% TODO: $\kappa_\alpha$ large enough

%%%%%%%%%%%%%%%%%%%%%%%%%%%%%%%%%%%%%
\section{The minimax private testing lower bound}
\label{sec:lower-bounds}
%%%%%%%%%%%%%%%%%%%%%%%%%%%%%%%%%%%%%

% Single lower bound theorem
In this section, we present a single theorem outlining the lower bound for the detection threshold for distributed testing protocols that adhere to DP constraints, with and without the use of shared randomness. The theorem directly yields the ``lower bound part'' of Theorems \ref{thm:rate_theorem_local_randomness} and \ref{thm:rate_theorem_shared_randomness} presented in Section \ref{sec:main_results}. In conjunction with Theorem \ref{thm:attainment_nonadaptive}, the theorem shows that the tests constructed in Section \ref{sec:construction_of_tests} are rate optimal up to logarithmic factors.

\begin{theorem}\label{thm:nonasymptotic_testing_lower_bound}
    Let ${s},R > 0$ be given. For all $\alpha \in (0,1)$, there exists a constant $c_\alpha > 0$ such that if
    \begin{equation}\label{eq:nonasymptotic_testing_lower_bound_local_randomness}
        \rho^2 \leq c_\alpha \left(\frac{\sigma^{2}}{mn}\right)^{\frac{2{s}}{2{s}+1/2}} + \left(\frac{\sigma^2}{m n^{2} \epsilon^2} \right)^{\frac{2{s}}{2{s}+3/2}} \wedge \left( \left(\frac{\sigma^2}{\sqrt{m} n^{} \sqrt{1 \wedge n \epsilon^2}} \right)^{\frac{2{s}}{2{s}+1/2}} + \left( \frac{\sigma^2}{mn^2 \epsilon^2} \right) \right),
    \end{equation}
    it holds that
    \begin{equation} 
     \underset{T \in \mathscr{T}^{(\epsilon,\delta)} }{\inf} \;\; \cR( H_{\rho}^{{s},R} , T) > 1 - \alpha,
    \end{equation}
    for all natural numbers $m,N$ and $n = N/m$, $\sigma > 0$, $\epsilon \in (N^{-1},1]$ and $\delta \leq N^{-(1 + \omega)}$ for any constant $\omega > 0$.

    Similarly, for any $\alpha \in (0,1)$, there exists a constant $c_\alpha > 0$ such that if
    \begin{equation}\label{eq:nonasymptotic_testing_lower_bound_shared_randomness}
        \rho^2 \leq c_\alpha \left(\frac{\sigma^{2}}{mn}\right)^{\frac{2{s}}{2{s}+1/2}} + \left(\frac{\sigma^2}{m n^{3/2} \epsilon \sqrt{1 \wedge n \epsilon^2} } \right)^{\frac{2{s}}{2{s}+1}} \wedge \left( \left(\frac{\sigma^2}{\sqrt{m} n^{}} \right)^{\frac{2{s}}{2{s}+1/2}} + \left( \frac{\sigma^2}{mn^2 \epsilon^2} \right) \right),
    \end{equation}
    we have that there exists a distributed $(\epsilon,\delta)$-DP shared randomness testing protocol $T \equiv T_{m,n,s,\sigma}$ such that 
    \begin{equation}\label{eq:nonasymptotic_testing_lower_bound} 
        \underset{T \in \mathscr{T}^{(\epsilon,\delta)}_{\texttt{SHR}} }{\inf} \; \; \cR( H_{\rho}^{{s},R} , T) > 1 - \alpha,
    \end{equation}
    for all natural numbers $m,N$ and $n = N/m$, $\sigma > 0$, $\epsilon \in (N^{-1},1]$ and $\delta \leq N^{-(1 + \omega)}$ for any constant $\omega > 0$.
\end{theorem}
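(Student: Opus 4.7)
The plan is to follow a Le Cam--Ingster style mixture argument tailored to the distributed $(\epsilon,\delta)$-DP setting. For any distributed protocol producing transcripts $Y$, the testing risk obeys $\cR(H^{s,R}_\rho, T) \geq 1 - \mathrm{TV}(\P_0^Y, \P_\pi^Y)$ for any prior $\pi$ supported on $H^{s,R}_\rho$, so the task reduces to constructing, for each non-trivial term in the lower-bound rate, a prior on the Besov ball for which the transcript-level total variation (or equivalently $\chi^2$-divergence) is bounded away from its maximum whenever $\rho$ is below the claimed threshold. Since the minimax rate is a maximum of several terms, we will obtain it by a union of several priors, each tuned to the regime for which the corresponding term dominates.

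Each prior takes the standard Ingster form: pick a resolution level $L$, and draw $f = \mu \sum_{k=1}^{2^L} \xi_k \psi_{L k}$ with $\xi_k$ i.i.d. Rademacher, where $\mu$ is calibrated so that $\|f\|_{L_2} \asymp \rho$ and the Besov constraint $\|f\|_{\cB^s_{p,q}} \leq R$ holds, requiring $\mu^2 2^L \asymp \rho^2$ and $\mu \lesssim 2^{-L(s+1/2)}$. The three scales for $L$ correspond to (i) the unconstrained term, obtained at $2^L \asymp N^{1/(2s+1/2)}$, (ii) a ``local-test'' scale matched to $\sqrt{m}n$ effective noise, and (iii) a ``DP-multi-coordinate'' scale where privacy noise dominates and the optimal resolution differs between the shared- and local-randomness settings.

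The transcript-level distance is controlled differently in each regime. For the unconstrained term, since any $(\epsilon,\delta)$-DP protocol is in particular a post-processing of the full sample of size $N = mn$, the classical Gaussian-sequence $\chi^2$ bound $\chi^2(\P_0, \P_\pi) \leq \prod_k \cosh(n\mu^2/\sigma^2) - 1$ suffices. For the DP-constrained terms, the key ingredient is a group-privacy/coupling inequality of the form $\mathrm{KL}(\P_0^{Y^{(j)}} \| \P_f^{Y^{(j)}}) \lesssim n^2 \epsilon^2 \mathrm{TV}^2(P_0, P_f) + n\delta/\epsilon$ applied per-machine, combined with tensorization over $j=1,\dots,m$. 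Under the Rademacher prior, averaging over $\xi$ inside the mixture yields a chi-squared bound at level $L$ that, together with the Besov constraint on $\mu$, recovers the shared-randomness rate \eqref{eq:nonasymptotic_testing_lower_bound_shared_randomness}. In the shared-randomness case, conditioning on $U$ preserves the conditional product structure of transcripts, so the same tensorization applies inside the conditional expectation.

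The principal obstacle is the strictly sharper rate in the \emph{local}-randomness case. Here the refinement is that, since each $Y^{(j)}$ depends only on $X^{(j)}$ and a private local seed, the transcripts are genuinely independent under $\P_0$ and \emph{conditionally} independent under $\P_f$ given $f$. This unlocks a Hellinger/chi-squared tensorization à la Duchi--Jordan--Wainwright, in which the per-machine Ingster-type bound can be tightened by a further factor of $\epsilon$ beyond the naive group-privacy KL bound, producing the improved exponent $\tfrac{2s}{2s+3/2}$ in \eqref{eq:nonasymptotic_testing_lower_bound_local_randomness}. Establishing this refined ``private Ingster'' inequality uniformly over Rademacher-type priors on Besov balls, conditioning carefully on the good event where the realized $f$ lies inside $H^{s,R}_\rho$, and then stitching the three prior-based bounds into a single lower-bound statement valid for all allowed $(m,n,\sigma,\epsilon,\delta)$, is the technical heart of the proof. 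The $\sigma^2/(mn^2\epsilon^2)$ mean-estimation-style term arises from a separate two-point argument at a single coordinate, which is combined by taking maxima.
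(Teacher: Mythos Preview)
Your proposal has the right high-level skeleton (Bayes risk reduction, Ingster-type prior, per-machine privacy contraction, tensorization), but the mechanism you invoke for the \emph{local-versus-shared randomness gap} is not the one that actually drives the result, and I do not see how your stated argument would produce the correct exponent $\tfrac{2s}{2s+3/2}$.

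You attribute the sharper local-randomness lower bound to a ``Hellinger/chi-squared tensorization \`a la Duchi--Jordan--Wainwright'' that gains an extra factor of $\epsilon$ because transcripts are independent. But shared-randomness transcripts are \emph{also} conditionally independent given $U$, so tensorization applies identically in both cases; independence alone cannot separate the two rates. The paper's mechanism is different and genuinely geometric: with a centered \emph{Gaussian} prior $\pi=N(0,\varrho^2\bar\Gamma)$, a Brascamp--Lieb inequality reduces the chi-square divergence to a factor $\mathrm{A}_u^\pi$ depending on the transcript Fisher information $\Xi_u=\sum_j\Xi_u^j$ and a factor $\mathrm{B}_u^\pi$ of local likelihood ratios. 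In the local-randomness case $U$ is degenerate, so $\Xi_u$ is \emph{known} when the prior is chosen; one then takes $\bar\Gamma$ to project onto the $\lceil d_L/2\rceil$ smallest eigendirections of $\Xi_u$, replacing the bound $\|\Xi_u\|\,\mathrm{Tr}(\Xi_u)/d_L^2$ (shared) by $\mathrm{Tr}(\Xi_u)^2/d_L^3$ (local). Combined with the score-attack bound $\mathrm{Tr}(\Xi_u^j)\lesssim n^2\epsilon^2\wedge nd_L$, this is what yields the different exponents. Your Rademacher prior does not obviously admit this adversarial-covariance trick, and your per-machine $\mathrm{KL}\lesssim n^2\epsilon^2\,\mathrm{TV}^2+n\delta/\epsilon$ bound is too coarse to recover the $\mathrm{A}_u/\mathrm{B}_u$ split.

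Two further ingredients you are missing: (i) for $\delta>0$ the transcript likelihoods can be unbounded, so the paper first replaces $K^j$ by approximating $(\epsilon,O(\delta))$-DP kernels with bounded forward--backward channel and bounded local likelihood ratios before any divergence computation; (ii) the bound on $\mathrm{B}_u^\pi$ in the regime $\epsilon\le n^{-1/2}$ requires a nontrivial coupling argument (constructing a joint law of $X^{(j)}\sim P_0$ and $\tilde X^{(j)}\sim P_\pi$ with small Hamming distance, then iterating the DP guarantee), not just group privacy. Without these, the $\sqrt{d_L}/(\sqrt{m}n\sqrt{n\epsilon^2\wedge 1})$ and $1/(mn^2\epsilon^2)$ terms do not fall out.
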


The theorem states that, whenever the signal-to-noise ratio $\rho$ is below a certain threshold times the minimax separation rate, no distributed testing protocol can achieve a combined Type I and Type II error rate below $\alpha$. Its proof is lengthy and involves a combination of various techniques. We defer the full details of the proof to Section \ref{sec:proofs_lower_bound} of the supplement, but provide an overview of the main steps below.

For Steps 1, 2 and 3, there is no distinction between local and shared randomness. We use the same notation for distributed protocols in these steps, but simply assume $U$ is degenerate in the case of local randomness.
\begin{enumerate}[Step 1:]
    \item 
    The first step is standard in minimax testing analysis: we lower bound the testing risk by a Bayes risk,
    \begin{equation}\label{eq:lb_proof_many-normal_bayes_risk_lb}
        \underset{T \in \mathscr{T}}{\inf} \, \cR(H_\rho,T) \geq \underset{T \in \mathscr{T}}{\inf} \, \underset{\pi}{\sup} \, \left( \P_0(T(Y) = 1) + \int \P_{f} ( T(Y) = 0) d\pi(f) - \pi(H_\rho^c) \right),
    \end{equation}
    where $\mathscr{T}$ denotes either the class of local randomness or shared randomness $(\epsilon,\delta)$-DP protocols. This inequality allows the prior $\pi$ to be chosen adversarially to the distribution of the transcripts. This turns out to be crucial in the context of local randomness protocols, as is further highlighted in Step 4. The specific prior distribution is chosen to be a centered Gaussian distribution, with a finite rank covariance, where the rank is of the order $2^{L}$, for some $L \in \N$. This covariance is constructed in a way that it puts most of its mass in the dimensions in which the privacy protocol is the least informative, whilst at the same time it assures that the probability mass outside of the alternative hypothesis $\pi(H_\rho^c)$ is small. The particular choice for a Gaussian prior (instead of e.g. the two point prior in \cite{ingster2003nonparametric}) is motivated by Step 3. 
   \item
   In this step, we approximate the distribution of the transcripts with another distribution that results in approximately the same testing risk, but has two particular favorable properties for our purposes. 
    \begin{itemize}
        \item Whenever the distribution of a transcript satisfies an $(\epsilon,\delta)$-DP constraint with $\delta > 0$, the transcript's density can be unbounded on a set of small probability mass (proportional to $\delta$). Consequently,  the local likelihoods of the transcripts can have erratic behavior in the tails. To remedy this, we consider approximations to the transcripts that have bounded likelihoods, that differ only on a set that is negligible in terms of its impact on the testing risk. Furthermore, these approximating transcripts satisfy a $(\epsilon,2\delta)$-DP privacy constraint. These bounded likelihoods enable the argument of Step 5.
        \item Similarly, whenever $\delta > 0$, the distribution of the data conditionally on the transcript potentially has an unbounded density. For the argument employed in Step 3, we require a uniform abound on the density of the distribution of $X|Y$. We mitigate this by approximating the distribution of the transcripts by a distribution that induces a bounded density for the data conditionally on the transcript. This approximation of the original transcript satisfies a $(\epsilon,3\delta)$-DP constraint.
    \end{itemize}    
   Furthermore, we show that both approximations can be done in a way that the approximating transcript distribution is $(\epsilon,6\delta)$-DP. 

   \item By standard arguments, on can further lower bound the testing risk in \eqref{eq:lb_proof_many-normal_bayes_risk_lb} for a particular transcript distribution $\P^{Y|X,U} = \bigotimes^m_{j=1} \P^{Y^{(j)}|X^{(j)},U}$ and prior distribution $\pi$ by a quantity depending on the chi-square divergence between $\P_\pi^{Y|U=u}$ and $\P_0^{Y|U=u}$;
   \begin{equation}\label{eq:chi-sq-div-lb}
    1 -   \left( \sqrt{(1/2) \int \E_0^{Y|U=u}  \left(\left( \frac{d\P_\pi^{Y|U=u}}{d\P_0^{Y|U=u}} \right)^2 - 1 \right)^2 d\P^U(u) } + \pi(H_\rho^c)  \right).
    \end{equation}
    %(we defer the argument to Section \ref{ssec:supp:step3} in the supplement). 
    The likelihood ratio of the transcripts depends on the privacy protocol, and is difficult to analyze directly. We employ the technique developed in \cite{szabo2023distributedtesting}. Specifically, Lemma 10.1 in \cite{szabo2023distributedtesting}, which states, roughly speaking, that the inequality
   \begin{equation}\label{eq:brascamplieb-ineq}
    \E_0^{{Y}|U=u} \left( \frac{d\P_\pi^{Y|U=u}}{d\P_0^{Y|U=u}} \right)^2  \leq {G} \underset{j=1}{\overset{m}{\Pi}} \E_0^{{Y^{(j)}}|U=u} \left( \frac{d\P_\pi^{Y^{(j)}|U=u}}{d\P_0^{Y^{(j)}|U=u}} \right)^2 
   \end{equation}
    holds for a finite constant $0<{G}<\infty$ and equality with the smallest possible ${G}$ is attained whenever the conditional distribution of the data given the transcripts is Gaussian in an appropriate sense (we defer the details here to Section \ref{ssec:supp:step3} in the supplement). This result is a type of Brascamp-Lieb inequality \cite{brascamp1976best,lieb_gaussian_1990}. There is an existing literature on Brascamp-Lieb inequality in relation to information theoretical problems, in relation to mutual information \cite{carlen_subadditivity_2008,liu_brascamp-lieb_2016,liu_smoothing_2016}, in addition to the communication constraint testing problem in \cite{szabo2023distributedtesting}. That \eqref{eq:brascamplieb-ineq} has a ``Gaussian maximizer'' allows tractable analysis of the chi-square divergence in \eqref{eq:chi-sq-div-lb}, yielding that the latter display is further lower bounded by
    \begin{equation}
        1 - \sqrt{(1/2) \int \left( \mathrm{A}_u^\pi \mathrm{B}_u^\pi - 1 \right) d\P^U(u) } + \pi(H_\rho^c),
    \end{equation}
    where 
    \begin{equation}\label{eq:Au_Bu_defined}
        \mathrm{A}_u^\pi := \int e^{ f^\top \sum_{j=1}^{m} \Xi^j_u g} d(\pi \times  \pi)(f,g), \quad \mathrm{B}_u^\pi := \underset{j=1}{\overset{m}{\Pi}} \E_0^{{Y^{(j)}}|U=u} \left( \frac{d\P_\pi^{Y^{(j)}|U=u}}{d\P_0^{Y^{(j)}|U=u}} \right),
    \end{equation}
    where $\Xi^j_u$ denotes the covariance of (a subset of) the data $X^{(j)}_L$ (defined as in \eqref{eq:def_Xi_uj}) conditionally on the transcript $Y^{(j)}$ and $U=u$;
    \begin{equation}\label{eq:def_Xi_uj}
        \Xi_u^{j} := \E_0^{Y^{(j)}|U=u} \E_0\left[  \underset{i=1}{\overset{n}{\sum}} \sigma^{-1} {X}^{(j)}_{L;i} \bigg| Y^{(j)}, U=u \right] \E_0\left[  \underset{i=1}{\overset{n}{\sum}} \sigma^{-1} {X}^{(j)}_{L;i} \bigg| Y^{(j)}, U=u \right]^\top.
    \end{equation}
    Whilst the quantities $\mathrm{A}_u$ and $\mathrm{B}_u^\pi$ are still not fully tractable, sharp bounds for both are possible and form the content of Steps 4 and 5, respectively.

   \item As remarked earlier, class of local randomness protocols is a strictly smaller class. To attain the sharper (i.e. larger) lower bound for local randomness protocols, we exploit the fact that Step 2 allows us to choose the prior adversarially to the distribution of the transcripts. In particular, since $U$ is degenerate in the case of local randomness only, this means that the covariance of $\pi$ to be more diffuse in the directions in which $\Xi^j_u$ is the smallest. When considering shared randomness protocols, $U$ is not degenerate, and the lower bound follows by taking the covariance of $\pi$ to be an order $2^L$-rank approximation of the identity map on $\ell_2(\N)$. The bounds for $\mathrm{A}_u^\pi$ are
   \begin{equation}
    \mathrm{A}_u^\pi = \exp\left({ C  \frac{\rho^4}{c_\alpha 2^{3L}}  \text{Tr} \left( \Xi_u  \right)^2 }\right) \quad \text{ and } \quad \mathrm{A}_u^\pi \leq \exp\left({ C  \frac{\rho^4}{c_\alpha 2^{2L}} \|\Xi_u\| \text{Tr} \left( \Xi_u \right) }\right)
   \end{equation}
   for local randomness protocols and shared randomness protocols, respectively. 
   \item So far, Steps 1-4 have not used the fact that the transcripts are necessarily less informative than the original data, as a consequence of the transcripts being $(\epsilon,\delta)$-DP. In this step, we exploit the privacy constraint to argue that $\mathrm{A}_u^\pi$ and $\mathrm{B}_u^\pi$ are small at the detection boundary for $\rho$. 
   
   In order to capture the information loss due to privacy in $\mathrm{A}_u^\pi$, it suffices to bound the trace and operator norm of $\Xi_u$. The quantity $\Xi_u$ can be seen as the Fisher information of the finite dimensional submodel spanned by the covariance of $\pi$. This quantity, loosely speaking, captures how much information the transcript contains on the original data. In order to analyze $\Xi_u$, we rely on a ``score attack'' type of technique, as employed in \cite{cai2023score,cai2023private}. 
   
   The quantity $\mathrm{B}_u^\pi$ corresponds to the (product of) the local likelihoods of the transcripts. Whenever $\epsilon \geq 1/\sqrt{n}$, it suffices to consider the trivial bound 
   \begin{equation}
    \E_0^{{Y^{(j)}}|U=u} \left( \frac{d\P_\pi^{Y^{(j)}|U=u}}{d\P_0^{Y^{(j)}|U=u}} \right) \leq \E_0^{{X^{(j)}}|U=u} \left( \frac{d\P_\pi^{X^{(j)}}}{d\P_0^{X^{(j)}}} \right),
   \end{equation}
   and further bounding the right-hand side without privacy specific arguments. Whenever $\epsilon < 1/\sqrt{n}$, more sophisticated methods are need to capture the effect of privacy. Our argument uses a coupling method, which, combined with the fact that the likelihoods of the transcripts are bounded in our construction, allows us to obtain a sharp bound for $\mathrm{B}_u^\pi$. After obtaining the bounds in terms of the rank $2^L$ and $\rho$, the proof is finished by choosing $L$ such that the second and third term in \eqref{eq:chi-sq-div-lb} are balanced (minimizing their sum).
\end{enumerate}

%%%%%%%%%%%%%%%%%%%%%%%%%%%%%%%%%%%%%%%%%%%%%%%%%%%%%%%%%%%%%%%%%%%%%%%%%%
\section{Discussion}\label{sec:discussion}
%%%%%%%%%%%%%%%%%%%%%%%%%%%%%%%%%%%%%%%%%%%%%%%%%%%%%%%%%%%%%%%%%%%%%%%%%%

The findings in this paper highlight the trade-off between statistical accuracy and privacy in federated goodness-of-fit testing under differential privacy (DP) constraints. We characterize the problem in terms of the minimax separation rate, which quantifies the difficulty of the testing problem based on the regularity of the underlying function, the sample size, the degree of data distribution, and the stringency of the DP constraint. The minimax separation rate varies depending on whether the distributed testing protocol has access to local or shared randomness. Furthermore, we construct data-driven adaptive testing procedures that achieve the same optimal performance, up to logarithmic factors, even when the regularity of the functional parameter is unknown.

One possible extension of this work is to consider a more general distribution of the privacy budget across the servers. Our current analysis supports differing budgets to the extent that $\epsilon_j \asymp \epsilon_k$, $\delta_j \asymp \delta_k$, and $n_j \asymp n_k$. However, one could explore more heterogeneous settings where severs differ significantly in their differential privacy constraints and number of observations. Although this would complicate the presentation of results, the techniques developed in this paper could, in principle, be extended to such settings.

Another interesting direction is to consider multiple testing problems, where the goal is to test multiple hypotheses simultaneously. We anticipate that the framework, insights, and theoretical results provided in the current paper will serve as valuable resources for future studies in this domain.

Regarding adaptation, not much is known about the cost of privacy outside the local DP setting (i.e., one observation per server; $n=1$ in our context). Interestingly, the cost of adaptation is minimal in the privacy setting considered in this paper. It remains an open question whether this minimal cost is a general phenomenon, whether it can be characterized exactly, or whether the cost of adaptation is more severe in other settings. We leave these questions for future research.

\begin{supplement}
    \stitle{Supplementary Material to ``Federated Nonparametric Hypothesis Testing with Differential Privacy Constraints: Optimal Rates and Adaptive Tests''}
    \sdescription{In this supplement, we present the detailed proofs for the  main results in the paper ``Federated Nonparametric Hypothesis Testing with Differential Privacy Constraints: Optimal Rates and Adaptive Tests''.}
    \end{supplement}

\begin{funding}
The research was supported in part by NIH grants R01-GM123056 and R01-GM129781.    
\end{funding}

%% if your bibliography is in bibtex format, uncomment commands:
\bibliographystyle{imsart-number} % Style BST file (imsart-number.bst or imsart-nameyear.bst)
\bibliography{references}       % Bibliography file (usually '*.bib')

% \bibliographystyle{acm}
% \bibliography{references}

\newpage

\pagebreak 

\begin{frontmatter}
    
\title{Supplementary Material to ``Federated Nonparametric Hypothesis Testing with Differential Privacy Constraints: Optimal Rates and Adaptive Tests''}
%OPTIMAL FEDERATED LEARNING FOR NONPARAMETRIC REGRESSION WITH HETEROGENEOUS DISTRIBUTED DIFFERENTIAL PRIVACY CONSTRAINTS
%\title{A sample article title with some additional note\thanksref{T1}}
\runtitle{Federated Nonparametric Private Testing}
%\thankstext{T1}{A sample of additional note to the title.}

%%%%%%%%%%%%%%%%%%%%%%%%%%%%%%%%%%%%%%%%%%%%%%%%%%%%%%%%%%%%%%%%%%%%%%%%%%
%\section{Supplement}\label{sec:supplement}
%%%%%%%%%%%%%%%%%%%%%%%%%%%%%%%%%%%%%%%%%%%%%%%%%%%%%%%%%%%%%%%%%%%%%%%%%%
\begin{aug}
%%%%%%%%%%%%%%%%%%%%%%%%%%%%%%%%%%%%%%%%%%%%%%%
%% Only one address is permitted per author. %%
%% Only division, organization and e-mail is %%
%% included in the address.                  %%
%% Additional information can be included in %%
%% the Acknowledgments section if necessary. %%
%% ORCID can be inserted by command:         %%
%% \orcid{0000-0000-0000-0000}               %%
%%%%%%%%%%%%%%%%%%%%%%%%%%%%%%%%%%%%%%%%%%%%%%%
\author[A]{\fnms{T. Tony}~\snm{Cai}\ead[label=e1]{tcai@wharton.upenn.edu}},
\author[A]{\fnms{Abhinav}~\snm{Chakraborty}\ead[label=e2]{abch@wharton.upenn.edu}}
\and
\author[A]{\fnms{Lasse}~\snm{Vuursteen}\ead[label=e3]{lassev@wharton.upenn.edu}}
%%%%%%%%%%%%%%%%%%%%%%%%%%%%%%%%%%%%%%%%%%%%%%
%% Addresses                                %%
%%%%%%%%%%%%%%%%%%%%%%%%%%%%%%%%%%%%%%%%%%%%%% 
\address[A]{Department of Statistics and Data Science,
University of Pennsylvania\printead[presep={,\ }]{e1,e2,e3}}
\end{aug}

\begin{abstract}

In this supplement, we present the detailed proofs for the  main results in the paper ``Federated Nonparametric Hypothesis Testing with Differential Privacy Constraints: Optimal Rates and Adaptive Tests''.

\end{abstract}

\begin{keyword}[class=MSC2020]
    \kwd[Primary ]{62G10}
    \kwd{62C20}
    \kwd[; secondary ]{68P27}
\end{keyword}

    \begin{keyword}
    \kwd{Federated learning}
    \kwd{Differential privacy}
    \kwd{Nonparametric goodness-of-fit testing}
    \kwd{Distributed inference}
    \end{keyword}

\end{frontmatter}

%%%%%%%%%%%%%%%%%%%%%%%%%%%%%%%%%%%%%%%%%%%%%%
%%%% Main text entry area:
\setcounter{page}{1}
\setcounter{section}{0}
\setcounter{equation}{0}
\renewcommand{\theequation}{S.\arabic{equation}}
\renewcommand{\thesection}{\Alph{section}}

% %%%%%%%%%%%%%%%%%%%%%%%%%%%%%%%%%%%%%%%%%%%%%%
% %% Please use \tableofcontents for articles %%
% %% with 50 pages and more                   %%
% %%%%%%%%%%%%%%%%%%%%%%%%%%%%%%%%%%%%%%%%%%%%%%
%  \tableofcontents

%%%%%%%%%%%%%%%%%%%%%%%%%%%%%%%%%%%%%%%%%%%%%%
%% Supplementary Material, including data   %%
%% sets and code, should be provided in     %%
%% {supplement} environment with title      %%
%% and short description. It cannot be      %%
%% available exclusively as external link.  %%
%% All Supplementary Material must be       %%
%% available to the reader on Project       %%
%% Euclid with the published article.       %%
%%%%%%%%%%%%%%%%%%%%%%%%%%%%%%%%%%%%%%%%%%%%%%
% \begin{supplement}
    % \stitle{Supplement to ``Federated Nonparametric Hypothesis Testing with Differential Privacy Constraints: Optimal Rates and Adaptive Tests"}
    % \sdescription{In the supplement to this paper, we present the detailed proofs for the main theorems in the paper ``Federated Nonparametric Hypothesis Testing with Differential Privacy Constraints: Optimal Rates and Adaptive Tests''.}

    \section{Proof of the lower bound Theorem \ref{thm:nonasymptotic_testing_lower_bound}}\label{sec:proofs_lower_bound}

    In this section, we provide a proof for Theorem \ref{thm:nonasymptotic_testing_lower_bound}. The proof is divided into several steps, following the outline provided in Section \ref{sec:lower-bounds}. 
    
    It is convenient to introduce the following notation. In the notation that follows, we shall consider a local randomness protocol to consist of a triplet \[( T, \{\left(\P^{Y^{(j)}|X^{(j)} = x, U=u}\right)_{x \in \cX^n,u\in \cU} \}_{j=1}^m, (\cU,\mathscr{U},\P^U)),\] where the probability space $(\cU,\mathscr{U},\P^U)$ has the trivial sigma-algebra; $\mathscr{U} = \{ \emptyset, \cU\}$. This allows streamlining the argument for both lower bounds of Theorem \ref{thm:nonasymptotic_testing_lower_bound} into a single argument. Given an $(\epsilon,\delta)$-DP distributed protocol triplet \[( T, \{\left(\P^{Y^{(j)}|X^{(j)} = x, U=u}\right)_{x \in \cX^n,u\in \cU} \}_{j=1}^m, (\cU,\mathscr{U},\P^U)),\] we shall use the notation $P_f = \P_f^{X^{(j)}}$. For the Markov kernel \[(A,(x,u)) \mapsto \P^{Y^{(j)}|(X^{(j)},U)=(x,u)}(A),\] we shall use the shorthand $(A,(x,u)) \mapsto K^j(A|(x,u))$. If the transcript $Y^{(j)}$ is $(\epsilon,\delta)$-DP, the Markov kernel satisfies
    \begin{align*}\label{eq:privacy_constraint}
     K^j(A&|x_1,\dots,x_i,\dots,x_n,u) \leq e^{\epsilon}  K^j(A|x_1,\dots,x_i',\dots,x_n,u) + \delta \\
     &\text{ for all } A \in \mathscr{Y}, \; x_i',x_1,\dots,x_i,\dots,x_n \in \cX, \; i \in \{1,\dots,n\}. \nonumber
    \end{align*}
    
    When the parameter underlying the true distribution of the data $F$ is drawn from a prior distribution on $\ell_2(\N)$, we obtain that the distributed testing protocol satisfies the Markov chain 
    \begin{equation} \label{def: dist:test:MC}
         { \quad F} \qquad 
         \begin{matrix}
        \put(-10,-4){\vector(3,1){20}} &\quad(X^{(1)},U) &\put(0,5){\vector(1,0){20}}  &\qquad Y^{(1)}\quad&\put(-10,4){\vector(3,-1){20}} \\
        \put(-10,3){\vector(1,0){20}}&\quad\vdots &\put(0,4){\vector(1,0){20}}  &\qquad\vdots  \quad&\put(-10,4){\vector(1,0){20}} \\
        \put(-10,12){\vector(3,-1){20}}&\quad(X^{(m)},U) & \put(0,5){\vector(1,0){20}}  &\qquad Y^{(m)}\quad&\put(-10,5){\vector(3,1){20}}
        \end{matrix}  \qquad
         T,
        \end{equation}
    with $\P^{Y^{(j)}|U=u}_f = P_f K^j(\cdot|X^{(j)},u)$ for all $j \in [m]$. Let $K_u = \bigotimes_{j=1}^m K^j(\cdot|\cdot,u)$ denote the product conditional distribution with $U=u$, such that distribution of the collection of transcripts conditionally on $U=u$ then satisfies $\P^{Y|U=u}_f = P^m_f K_u()$.
    
    If the transcript $Y^{(j)}$ is $(\epsilon,\delta)$-DP and generated from $K^j_u, u \in \cU$, this translates to
    \begin{align}\label{eq:privacy_constraint}
     K^j(A&|x_1,\dots,x_i,\dots,x_n,u) \leq e^{\epsilon}  K^j(A|x_1,\dots,x_i',\dots,x_n,u) + \delta \\
     &\text{ for all } A \in \mathscr{Y}, \; x_i',x_1,\dots,x_i,\dots,x_n \in \cX, \; i \in \{1,\dots,n\}. \nonumber
    \end{align}
    
    \subsection{Step 1: Lower bounding the testing risk by the Bayes risk}
    
    As a first step, we lower bound the testing risk by the Bayes risk. Following the notation introduced above, let $T\equiv ( T, \{{K}^j \}_{j=1}^m, (\cU,\mathscr{U},\P^U))$ be an $(\epsilon,\delta)$-DP distributed testing protocol and let $
    {K}_u = \bigotimes_{j=1}^m {K}^j(\cdot|\cdot,u)$. The testing risk for $T$ can be written as 
    \begin{equation}
        \cR( H_{\rho}^{{s},R} , T) = \int P_0^m K_u T d\P^U(u) + \underset{f \in H_{\rho}^{s,R}}{\sup} \int P_{f}^m K_u(1-T) d\P^U(u).
    \end{equation}
    Consider $L \in \N$, $d_L = \sum_{l=1}^L 2^l$ and consider $\pi = N(0,c_\alpha^{-1/2} d^{-1}_L \rho^2 \bar{\Gamma})$ for a symmetric idempotent matrix $\bar{\Gamma} \in \R^{d_L \times d_L}$. Consider also the linear operator $\Psi_L : \R^{d_L} \to \ell_2(\N)$ defined by $\Psi_L \tilde{f} = f$ for
    \begin{equation}\label{eq : inverse wavelet transform}
    f_{lk} = \begin{cases}
        \tilde{f}_{lk} & \text{if } l \leq L, \\
        0 & \text{if } l > L,
    \end{cases}
    \end{equation}
    for $\tilde{f}=(\tilde{f}_{11},\dots,\tilde{f}_{L 2^L}) \in \R^{d_L}$. Since $\Psi_L$ is measurable, any probability distribution $\pi_L$ on $\R^{2^L}$, $\pi_L \circ \Psi^{-1}_L$ defines a probability measure on the Borel sigma algebra of $\ell_2(\N)$.
    
    Using that $0 \leq T \leq 1$, the above testing risk is lower bounded by the Bayes risk 
    \begin{equation}\label{eq:privacy_bayes_risk_breve-pre-pi-bound}
    \int \left( P_0^{m}{K}_u T  + \int  P_{f}^{m} {K}_u ( 1 - T) d\pi \circ \Psi^{-1}_L(f) \right) d\P^U(u) - \pi \circ \Psi^{-1}_L \left( (H_{\rho}^{s,R})^c \right).
    \end{equation}
    We highlight here that $\bar{\Gamma}$ can depend on $\{ {K}^j \}_{j=1}^m$. 
    
    To lower bound the testing risk further, it suffices to show that the prior $\pi$ has little mass outside of $H_\rho^{s,R}$. This follows by a standard Gaussian concentration argument, provided in Lemma \ref{lem:limited_gaussian_mass} below, from which it follows that, for $c_\alpha > 0$ small enough, it holds that $\pi \circ \Psi^{-1} ((H_\rho^{s,R})^c) \leq \alpha/4$. This yields that \eqref{eq:privacy_bayes_risk_breve-pre-pi-bound} is lower bounded by
    \begin{equation}\label{eq:privacy_bayes_risk_breve}
        \int \left( P_0^{m}{K}_u T  + \int  P_{f}^{m} {K}_u ( 1 - T) d\pi \circ \Psi^{-1}_L(f) \right) d\P^U(u) - \alpha/4.
        \end{equation}
    
    \begin{lemma}\label{lem:limited_gaussian_mass}
        Suppose $\rho \lesssim c_\alpha 2^{-Ls}$. Then, for any $c_\alpha > 0$ small enough, it holds that $\pi \circ \Psi^{-1}_L((H_\rho^{s,R})) \leq \alpha/4$.
    \end{lemma}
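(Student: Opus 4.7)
The plan is to bound $\pi \circ \Psi_L^{-1}((H_\rho^{s,R})^c)$ by a union bound over the two events defining the complement of $H_\rho^{s,R}$,
\begin{equation*}
    \pi \circ \Psi_L^{-1}\bigl((H_\rho^{s,R})^c\bigr) \leq \P_\pi\bigl(\|f\|_{L_2} < \rho\bigr) + \P_\pi\bigl(\|f\|_{\cB^s_{p,q}} > R\bigr),
\end{equation*}
and then show each term is at most $\alpha/8$ by Gaussian concentration. Under $\pi = N(0, c_\alpha^{-1/2} d_L^{-1} \rho^2 \bar{\Gamma})$ with $\bar{\Gamma}$ a symmetric idempotent matrix of rank $r$, the quantity $\|f\|_{2}^{2}$ is a scaled chi-square with $r$ degrees of freedom and mean $c_\alpha^{-1/2} d_L^{-1} \rho^2 \mathrm{Tr}(\bar{\Gamma})$. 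Assuming that the construction in Step~4 ensures $\mathrm{Tr}(\bar{\Gamma}) \asymp d_L$, this mean is of order $c_\alpha^{-1/2} \rho^2 \gg \rho^2$ when $c_\alpha$ is small. A one-sided Laurent--Massart inequality then shows that the threshold $\rho^2$ falls a fraction $1 - c_\alpha^{1/2}$ below the mean, giving an exponentially small lower-tail probability.

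For the Besov-norm term, the key step is to control $\E_\pi \|f\|_{\cB^s_{p,q}}$. Since $\bar{\Gamma}$ is idempotent, its diagonal entries lie in $[0,1]$, so each coordinate of $f$ has variance at most $c_\alpha^{-1/2} d_L^{-1} \rho^2$. Standard Gaussian $L^p$ moment bounds for $p \in [2,\infty)$ give $\E \|(f_{lk})_{k=1}^{2^l}\|_p \lesssim 2^{l/p} c_\alpha^{-1/4} d_L^{-1/2} \rho$. Plugging this into the definition of the Besov norm in \eqref{eq:besov_norm} and using $d_L \asymp 2^L$, the scale-weighted sum is dominated by the highest level $l = L$, yielding
\begin{equation*}
    \E_\pi \|f\|_{\cB^s_{p,q}} \lesssim c_\alpha^{-1/4}\, 2^{Ls}\, \rho.
\end{equation*}
Under the hypothesis $\rho \lesssim c_\alpha 2^{-Ls}$, this is $\lesssim c_\alpha^{3/4}$, which is $\leq R/2$ once $c_\alpha$ is chosen small enough in terms of $R$.

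Given control of the mean, concentration follows from the Borell--TIS inequality: the map $f \mapsto \|f\|_{\cB^s_{p,q}}$ is a norm on the finite-dimensional range of $\Psi_L$, hence Lipschitz with respect to $\|\cdot\|_2$, and so Lipschitz with respect to the Gaussian standard deviation scale $c_\alpha^{-1/4} d_L^{-1/2} \rho$. An exponential tail bound from the mean then yields $\P_\pi(\|f\|_{\cB^s_{p,q}} > R) \leq \alpha/8$ for $c_\alpha$ sufficiently small, completing the union bound.

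The main obstacle is handling the Besov norm calculation uniformly across the parameter range $2 \leq p < \infty$, $1 \leq q \leq \infty$, since both the expected Besov norm and the Lipschitz constant depend on $p,q$. The critical simplification is that $f$ is supported on scales $l \leq L$, so the norm comparisons on $\mathrm{Range}(\Psi_L)$ are controlled by a finite level budget and the scale-weighted summation is dominated by its highest level. This matches precisely the $2^{-Ls}$ scale appearing in the hypothesis on $\rho$, and ensures the two bounds combine to give $\pi \circ \Psi_L^{-1}\bigl((H_\rho^{s,R})^c\bigr) \leq \alpha/4$ for all sufficiently small $c_\alpha$.
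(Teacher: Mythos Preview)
Your proposal is correct and follows essentially the same route as the paper: both split $(H_\rho^{s,R})^c$ into $\{\|f\|_2 < \rho\}$ and $\{\|f\|_{\cB^s_{p,q}} > R\}$, handle the first by chi-square lower-tail concentration (the paper uses its Lemma~\ref{lem : Chernoff-Hoeffding bound chisq}, you cite Laurent--Massart), and handle the second by showing that the relevant Gaussian scale is $c_\alpha^{-1/4}\,2^{Ls}\rho \lesssim c_\alpha^{3/4}$. The only substantive difference is in the Besov tail: the paper bounds $\E\|Z\|_p^p$ and applies Markov's inequality, whereas you compute $\E_\pi\|f\|_{\cB^s_{p,q}}$ and invoke Borell--TIS. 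Your route is slightly cleaner and gives an exponential tail, but you should make the Lipschitz constant explicit: on $\mathrm{Range}(\Psi_L)$ one has $\|v\|_{\cB^s_{p,q}} \le C_q\, 2^{L(s+1/2-1/p)}\|v\|_2$ for $p\ge 2$, so the effective Lipschitz scale of $Z\mapsto \|\bar\Gamma^{1/2}Z\|_{\cB^s_{p,q}}$ is $c_\alpha^{-1/4}d_L^{-1/2}\rho\cdot 2^{L(s+1/2-1/p)} \lesssim c_\alpha^{3/4}$, which is exactly what drives the exponential bound.
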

    \begin{proof}
        Let $f \in \ell_2(\N)$. It holds that $f \in H_\rho^{s,R}$ if and only if $\|f\|_{2}^2 \geq \rho^2$ and $\|f\|_{\cB^s_{p,q}} \leq R$. For the first of these events, we have that
        \begin{equation}\label{eq:concentration_condition_many_normal_means}
            \pi \circ \Psi^{-1}_L(f : \|f\|_2^2 \geq \rho^2 ) = \text{Pr}\left(  Z^\top \bar{\Gamma} Z  \geq \sqrt{c_\alpha} d_L \right),
        \end{equation}
        where $Z \sim N(0,I_{d_L})$. Using that $\bar{\Gamma}$ is idempotent, the right-hand side of the above display can be made arbitrarily small for small enough choice of $c_\alpha > 0$, by Lemma \ref{lem : Chernoff-Hoeffding bound chisq}. 
    
        To assure that $f \sim \pi \circ \Psi^{-1}_L$ concentrates on a Besov ball, we recall the definition of the Besov norm as given in \eqref{eq:besov_norm}. We have that 
        \begin{align*}
            \pi \circ \Psi^{-1}_L(f : \| f\|_{\cB^{s}_{p,q}} \leq R )  &= \pi \circ \Psi^{-1}_L \left(   2^{L({s}+1/2-1/p)} \left\| (f_{Lk})_{k=1}^{2^l} \right\|_p \leq R \right) \\
            &= \text{Pr}\left(   2^{-L/p} \left\| Z \right\|_p \leq C R / c_{\alpha}^{3/4} \right)
        \end{align*}
    where $Z \sim N(0,I_{2^L})$ and $C > 0$ a constant. The right-hand side of the above display can be made arbitrarily small for small enough choice of $c_\alpha > 0$, following from the fact that $Z$ is a standard normal vector $\E \|Z\|_p^p \lesssim 2^{L}$, where the constant depends on $p$ (see e.g. Proposition 2.5.2 in \cite{vershynin_high-dimensional_2018}) and Markov's inequality. 
    \end{proof}
    
    The larger $L$, the larger the effective dimension of the signal under the alternative hypothesis. Setting $L$ such that $2^L \asymp \rho^{-1/s}$ means that the requirement of the above lemma are satisfied and consequently the Gaussian prior most of it its mass in the alternative hypothesis. Recalling that $d_L \asymp 2^L$, the condition \eqref{eq:nonasymptotic_testing_lower_bound_local_randomness} in the case of local randomness protocols can be written as
    \begin{equation}\label{eq:nonasymptotic_testing_lower_bound_local_randomness_dL}
        \rho^2  \lesssim c_\alpha \sigma^2 \left( \frac{d_L^{3/2}}{ m n  (n\epsilon^2 \wedge d_L)} \bigwedge \left( \frac{\sqrt{d_L}}{\sqrt{m}n \sqrt{n \epsilon^2 \wedge 1}} \bigvee \frac{1}{mn^2 \epsilon^2} \right) \right),
        \end{equation}
    and in the case of shared randomness protocols, 
    \eqref{eq:nonasymptotic_testing_lower_bound_shared_randomness} can be written as
    \begin{equation}\label{eq:nonasymptotic_testing_lower_bound_shared_randomness_dL}
    \rho^2 \lesssim  c_\alpha  \sigma^2 \left( \frac{d_L}{ m n \sqrt{n \epsilon^2 \wedge 1} \sqrt{n\epsilon^2 \wedge d_L }} \bigwedge \left( \frac{\sqrt{d_L}}{\sqrt{m}n \sqrt{n \epsilon^2 \wedge 1} } \bigvee \frac{1}{mn^2 \epsilon^2} \right) \right).
    \end{equation}
    
    \subsection{Step 2: Approximating the distribution of the transcripts}
    
    % TODO: add correct reference to Lemma
    In Step 3, we aim to use the Brascamp-Lieb type inequality Lemma 10.1 of \cite{szabo2023distributedtesting}, which we restate as Lemma \ref{lem:brascamp_lower_bound_summarization} below. However, the \emph{forward-backward channel} corresponding to ${K}_u$, $(x_1,x_2) \mapsto {q}_u(x_1,x_2)$, defined as 
    \begin{equation}\label{eq:forward_backward_kernel_def}
    {q}_u(x_1,x_2) := \int \frac{d {K}(\cdot|x_1,u) }{ d\P_0^{Y|U=u} } (y) \frac{d {K}(\cdot|x_2,u) }{ d\P_0^{Y|U=u}} (y) d\P^{Y|U=u}_0(y),
    \end{equation}
    is possibly unbounded when $\delta > 0$. To overcome this, we use Lemma \ref{lem:obtaining_bounded_kernel_densities} below to construct $(\epsilon,3\delta)$-DP Markov kernels $\{ \tilde{K}^j \}_{j=1}^m$ such that the corresponding forward-backward channel $\tilde{q}_u(x_1,x_2)$ is bounded. The construction of $\{ \tilde{K}^j \}_{j=1}^m$ is such that the total variation distance between $\tilde{K}^j$ and $\tilde{K}^j$ is small.
    
    \begin{lemma}\label{lem:obtaining_bounded_kernel_densities}
        For any $\alpha \in (0,1)$ and $(\epsilon,\delta)$-DP collection of Markov kernels $\{K^j\}_{j=1}^m$, there exists a collection of $(\epsilon,3\delta)$-DP kernels $\{\tilde{K}^j\}_{j=1}^m$ such that for some fixed constant $C > 0$,
        \begin{equation}\label{eq:uniform_bound_kernel_density}
        \underset{x \in \cX^{n} }{\sup} \; \frac{d\tilde{K}^j(\cdot|x)}{dP_0 \tilde{K}^j(\cdot|X^{(j)})} (y) < C, \; P_0 \tilde{K}^j(\cdot|X^{(j)})\text{-almost surely},
        \end{equation}
        whilst
        \begin{equation*}
        \| P_f (K^j(\cdot|X^{(j)}) - \tilde{K}^j(\cdot,X^{(j)})) \|_{\mathrm{TV}} \leq \frac{\alpha}{2m}.
        \end{equation*}
        \end{lemma}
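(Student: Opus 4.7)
The plan is to truncate $K^j$ to a region where its Radon--Nikodym derivative against the null marginal is uniformly bounded, and to redistribute the excess mass back into this region via a reference measure. Fix $j$, let $P_0^\star := P_0 K^j(\cdot \mid X^{(j)})$, and let $R^j(y \mid x) := dK^j(\cdot \mid x)/dP_0^\star(y)$. For a constant $C = C(\alpha) > 0$ to be chosen, define the ``good set''
\[ G^j_C := \bigl\{ y : \operatorname*{ess\,sup}_{x \in \cX^n} R^j(y \mid x) \leq C \bigr\}, \]
interpreting the essential supremum through a countable dense family of $x$'s to preserve measurability. Let $\nu^j$ be $P_0^\star$ restricted to $G^j_C$ and renormalized, and set
\[ \tilde K^j(A \mid x) := K^j(A \cap G^j_C \mid x) + K^j((G^j_C)^c \mid x)\, \nu^j(A). \]
Since $P_0 \tilde K^j(\cdot \mid X^{(j)})$ is supported on $G^j_C$ and agrees with $\nu^j$ up to a bounded Radon--Nikodym factor, the density $d\tilde K^j(\cdot \mid x)/dP_0 \tilde K^j(\cdot \mid X^{(j)})$ is bounded pointwise by a fixed constant, establishing \eqref{eq:uniform_bound_kernel_density}.

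For the privacy claim, fix neighboring $x, x'$ and a measurable $A$. Splitting $\tilde K^j(A \mid x)$ into its two summands and applying the $(\epsilon,\delta)$-DP guarantee of $K^j$ separately to $A \cap G^j_C$ and to $(G^j_C)^c$, one obtains
\[ \tilde K^j(A \mid x) \leq e^\epsilon \tilde K^j(A \mid x') + \delta + \delta\, \nu^j(A) \leq e^\epsilon \tilde K^j(A \mid x') + 2\delta. \]
The remaining $\delta$ of slack accommodates the measurable-selection step in the definition of $G^j_C$ (replacing an uncountable supremum with a countable one can enlarge $(G^j_C)^c$ by a set of $P_0^\star$-mass at most $\delta$), yielding $(\epsilon,3\delta)$-DP overall.

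For the total variation bound, since $\tilde K^j$ and $K^j$ agree on $G^j_C$,
\[ \bigl\| P_f K^j(\cdot \mid X^{(j)}) - P_f \tilde K^j(\cdot \mid X^{(j)}) \bigr\|_{\mathrm{TV}} \leq P_f K^j\bigl( (G^j_C)^c \mid X^{(j)} \bigr). \]
Under $P_0$, Markov's inequality applied to $\operatorname*{ess\,sup}_x R^j(y \mid x)$ gives $P_0^\star((G^j_C)^c) \leq 1/C$, so taking $C \gtrsim m/\alpha$ controls the null side. Transferring to general $f$ (as needed, since the prior $\pi$ in Step 1 is supported on signals of small $L_2$-norm) follows from Cauchy--Schwarz together with standard chi-square control $\chi^2(P_f^n, P_0^n) \lesssim n\|f\|_2^2/\sigma^2$, whose magnitude stays bounded at the minimax threshold; this inflates the required $C$ only by a constant.

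The main obstacle is ensuring that the density bound is \emph{uniform in $x$} without incurring a group-privacy cost of $n\epsilon$ in the DP constant: this forces the truncation to be defined by an essential supremum over all neighboring configurations simultaneously, rather than per $x$. The technical subtlety is the measurable-selection argument needed to realize the uncountable supremum on a Polish space while keeping the exceptional set under control, which is where the third $\delta$ of slack is absorbed.
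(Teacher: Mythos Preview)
Your Markov step is where the argument breaks. You claim $P_0^\star((G_C^j)^c) \leq 1/C$ by applying Markov's inequality to $\operatorname*{ess\,sup}_x R^j(y\mid x)$, but Markov would require $\int \operatorname*{ess\,sup}_x R^j(y\mid x)\, dP_0^\star(y)$ to be bounded by $1$. All you know is that $\int R^j(y\mid x)\, dP_0^\star(y)=1$ for each \emph{fixed} $x$; the pointwise supremum over an uncountable family of densities can have arbitrarily large (even infinite) integral, and nothing in the $(\epsilon,\delta)$-DP hypothesis prevents this when $\delta>0$. Attempting to salvage it via group privacy would cost a factor $e^{n\epsilon}$, which is useless here. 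Consequently you have no control on $P_0^\star((G_C^j)^c)$, and hence none on the total variation distance.

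The paper sidesteps this by letting the truncation set depend on $x$: it takes $A_x^M=\{y: R^j(y\mid x)\leq M\}$ and sets $\tilde K^j(\cdot\mid x)=K^j(\cdot\cap A_x^M\mid x)/K^j(A_x^M\mid x)$. Markov then gives $K^j((A_x^M)^c\mid x)\leq 1/M$ directly for every $x$, so the TV bound $\|P_f(K^j-\tilde K^j)\|_{\mathrm{TV}}\leq 2/M$ holds uniformly in $f$ with no change-of-measure argument. The uniform density bound \eqref{eq:uniform_bound_kernel_density} still follows because $\tilde K^j(\cdot\mid x)$ is supported on $A_x^M$: for $y\notin A_x^M$ the density vanishes, and for $y\in A_x^M$ it is at most $2M$. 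The price is that the DP verification is slightly more delicate (one has to pass through $A_{x'}^M$ when comparing $\tilde K^j(\cdot\mid x)$ to $\tilde K^j(\cdot\mid x')$), which is where the third $\delta$ actually enters---not from any measurable-selection issue.
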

        \begin{proof}
        For any $x \in \cX^n$ and set $A \in \mathcal{\mathscr{Y}}^{(j)}$, we have that
        \begin{equation*}
        K^j(A|x) = \int_A \frac{dK^j(\cdot|x)}{dP_0 K^j(\cdot|X^{(j)})} (y) d P_0 K^j(y|X^{(j)}) \leq 1,
        \end{equation*}
        where we note that by Lemma \ref{lem:bracamp_densities_exist}, the density of the integrand exists. So, by Markov's inequality, there exists a set $A_x^M \in \mathcal{\mathscr{Y}}^{(j)}$ such that
        \begin{equation*}
        \frac{dK^j(\cdot|x)}{dP_0 K^j(\cdot|X^{(j)})} (y) \leq M \; \text{ on } A_x^M,
        \end{equation*}
        whilst 
        \begin{equation}\label{eq:refer_once123}
        K^j\left((A_x^M)^c|x\right) \leq 1/M. 
        \end{equation}
        Define for all $x \in \cX$,
        \begin{equation*}%\label{eq:refer_once123123}
            \tilde{K}^j(B | x) := \frac{ K^j( B \cap A_{x}^M  |x)}{ K^j( A_{x}^M  | x )},
            \end{equation*}
        which can be rewritten as
        \begin{equation}\label{eq:refer_once123123}
        K^j \left(B \cap A_x^M | x \right) + K^j \left( (A_x^M)^c | x \right) \frac{ K^j( B \cap A_{x}^M  |x)}{ K^j( A_{x}^M  | x )}.
        \end{equation}
        Then, $\tilde{K}^j$ is $(\epsilon, 3\delta)$-DP whenever $M>4\delta^{-1}$; for any $x, x' \in \cX^n$ that are Hamming distance $1$-apart and $B \in \mathscr{Y}^{(j)}$,
        \begin{align*}
        \tilde{K}^j(B | x) &\leq K^j \left( B | x \right) + K^j \left( (A_x^M)^c | x \right) \frac{ K^j \left( B \cap A_{x}^M  | x \right)}{ K^j \left( A_{x}^M  | x \right)} \\
        &= K^j \left( B \cap A_{x'}^M | x \right) + K^j \left( B \cap (A_{x'}^M)^c | x \right) + K^j \left( (A_x^M)^c | x \right) \frac{ K^j \left( B \cap A_{x}^M  | x \right)}{ K^j \left( A_{x}^M  | x \right)} \\ 
        &\leq e^\epsilon K^j \left( B \cap A_{x'}^M | x' \right) + e^\epsilon K^j \left( B \cap (A_{x'}^M)^c | x' \right) + 2\delta + \frac{1}{M} \\
        &\leq e^\epsilon \tilde{K}^j \left( B | x' \right) + (1+e^\epsilon)M^{-1} + 2\delta,
        \end{align*}
        where the second to last inequality follows by \eqref{eq:refer_once123} and the last inequality follows by simply adding the nonnegative second term in \eqref{eq:refer_once123123}. Furthermore, its Radon-Nikodym derivative satisfies 
        \begin{align*}
        \frac{d\tilde{K}^j(\cdot|x)}{dP_0 \tilde{K}^j(\cdot|X^{(j)})} (y) &\leq 2 \mathbbm{1}_{A_x^M} \frac{dK^j(\cdot|x)}{dP_0 {K}^j(\cdot|X^{(j)})} (y) \leq 2M
        \end{align*}
        $P_0 \tilde{K}^j(\cdot|X^{(j)})$-almost surely. Moreover, it holds for any $f \in \ell_2(\N)$ that
        \begin{align*}
        \| P_f^n (K^j(\cdot|X^{(j)}) - \tilde{K}^j(\cdot,X^{(j)})) \|_{\mathrm{TV}} &\leq \int \| K^j(\cdot|x) - \tilde{K}^j(\cdot|x) \|_{\mathrm{TV}} dP_f^n(x) \\
        &\leq 2 \int | K^j\left((A_x^M)^c|x \right) | dP_f^n(x) \leq \frac{2}{M}.
        \end{align*}
        Since a choice $M > \delta^{-1} \vee 2m/\alpha$ yields the bound uniformly in $x \in \cX^n$, the result follows.
        \end{proof}
    
    By standard arguments (see also Lemma \ref{lem:privacy_testing_risk_approximation}), the first term in \eqref{eq:privacy_bayes_risk_breve} can be lower bounded as follows;
    \begin{align*}
        P_0^{m}{K}_u T &= P_0^{m}\tilde{K}_u T + P_0^{m}{K}_u T - P_0^{m}\tilde{K}_u T  \\
        &\geq  P_0^{m}\tilde{K}_u T - \sum_{j=1}^m \| P_0 \tilde{K}^j(\cdot|X^{(j)}) - P_0 {K}^j(\cdot|X^{(j)}) \|_{\mathrm{TV}}.
    \end{align*}
    The same argument on the second term yields that, we obtain that \eqref{eq:privacy_bayes_risk_breve} is lower bounded by
        \begin{equation}\label{eq:sup:approx-bayes-risk-I}
            \int \left( P_0^{m}\tilde{K}_u T  + \int  P_{f}^{m} \tilde{K}_u ( 1 - T) d\pi \circ \Psi^{-1}_L(f) \right) d\P^U(u) - 3\alpha/8,
    \end{equation}
    for an $(\epsilon,3\delta)$-DP distributed testing protocol $\tilde{T} = ( T, \{\tilde{K}^j \}_{j=1}^m, (\cU,\mathscr{U},\P^U))$.
    
    Another issue suffered by $(\epsilon,\delta)$-DP Markov kernels with $\delta > 0$, is that one has very poor control over the higher moments of the local likelihoods 
    \begin{equation*}
    {\cL}^{j}_{\pi,u}(y) := \frac{dP_\pi \tilde{K}^j(\cdot|X^{(j)},u)}{dP_0 \tilde{K}^j(\cdot|X^{(j)},u)} (y),
    \end{equation*}
    where $P_\pi(A) := \int P_f(A) d\pi \circ \Psi^{-1}_L(f)$, which are required to sufficiently bound the corresponding quantity $\mathrm{B}_u$ defined in \eqref{eq:sup:Bu_defined} in Step 5. Using similar ideas as in the proof of Lemma \ref{lem:obtaining_bounded_kernel_densities}, we can construct approximating kernels $\{ \breve{K}^j \}_{j=1}^m$ such that the likelihoods $\cL^{j}_{\pi,u}$ are bounded. This is the content of the following lemma.
    
    \begin{lemma}\label{lem:portmanteau_lemma}
        Let $\alpha \in (0,1)$, $\pi = N(0, c_\alpha^{-1/2} d^{-1/2}_L \rho^2 \bar{\Gamma})$ for an arbitrary positive semidefinite $\bar{\Gamma}$ and let $\{K^j\}_{j=1}^m$ correspond to a $(\epsilon,\delta)$-DP distributed protocol $T$ (i.e. $K^j$ satisfies \eqref{eq:privacy_constraint}). Furthermore, assume that $\epsilon \leq 1/\sqrt{n}$ and define for $j=1,\dots,m$ the events
        \begin{equation*}
        A_{j,u} := \left\{ y : |\cL_{\pi,u}^j (y) - 1| \leq \frac{4 m^{1/2}}{{\alpha}} \right\}
        \end{equation*}
        and define
       \begin{equation*}
       \tilde{K}^j(B |x,u) := K^j(B \cap A_{j,u} | x,u) + K^j( A_{j,u}^c | x,u) \frac{P_0K^j( B \cap A_{j,u}| X^{(j)},u) }{P_0K^j( A_{j,u}| X^{(j)},u)}.
       \end{equation*}
       Suppose in addition that $\delta \leq c_\alpha / m$. Then,
       \begin{enumerate}[label=(\alph*)]
           \item The collection $\{\tilde{K}^j\}_{j=1}^m$ are $(\epsilon,2\delta)$-DP Markov kernels.
           \item It holds $P_0\tilde{K}^j(\cdot|X^{(j)},u)$-a.s. that
           \begin{equation*}
           \tilde{\cL}_{\pi,u}^{j}(y)  := \int \frac{d\tilde{K}^j(y|x,u)}{dP_0 \tilde{K}^j(y|X^{(j)},u)} dP_\pi^n (x)
           \end{equation*}
           satisfies
       \begin{equation}\label{eq:privacy_portmanteau_lemma_toshow_b}
        | \tilde{\cL}_{\pi,u}^j(y) - 1| \leq \frac{5 m^{1/2}}{{\alpha}}.
       \end{equation}
           %$\tilde{L}_\pi^j \leq \frac{2 \sqrt{m}}{\sqrt{\alpha}} \sqrt{\E_0 L_\pi (Y^{(j)})^2} + 2$% has bounded likelihood, $P_0\tilde{K}^j$-a.s.
           \item If $\rho$ satisfies \eqref{eq:nonasymptotic_testing_lower_bound_local_randomness} or \eqref{eq:nonasymptotic_testing_lower_bound_shared_randomness} with $c_\alpha>0$ small enough, it holds that % testing risk
           \begin{align*}
           %P_0^{m}K T  + \int P_{f}^{m} K ( 1 - T) d\pi(f) \geq  P_0^{m}\tilde{K}T + \int P_{f}^{m} \tilde{K} ( 1 - T) d\pi(f) - \alpha,
           P_0^{m}K T  + \int P_{f}^{m} K ( 1 - T) d\pi \circ \Psi^{-1}_L(f) \geq  P_0^{m}\tilde{K}T + \int P_{f}^{m} \tilde{K} ( 1 - T) d\pi \circ \Psi^{-1}_L(f) - \alpha,
           \end{align*}
           where $\tilde{K}$ is the product kernel corresponding to $\{\tilde{K}^j\}_{j=1}^m$.
           \item If $K^j$ satisfies \eqref{eq:uniform_bound_kernel_density}, then $\tilde{K}^j$ satisfies the same bound for some constant $C > 0$.
       \end{enumerate}
       \end{lemma}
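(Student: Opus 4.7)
The plan is to exploit the convex-combination structure of $\tilde K^j$: letting
\[
Q^j_u(B) := \frac{P_0 K^j(B \cap A_{j,u}|X^{(j)},u)}{P_0 K^j(A_{j,u}|X^{(j)},u)},
\]
which is a probability measure on $A_{j,u}$ that does \emph{not} depend on $x$, the construction in the lemma can be rewritten as
\[
\tilde K^j(B|x,u) = K^j(B \cap A_{j,u}|x,u) + K^j(A_{j,u}^c|x,u)\, Q^j_u(B \cap A_{j,u}).
\]
All four claims reduce to elementary computations using this decomposition, the defining property of $A_{j,u}$, and a small chi-square divergence $V_j := \chi^2\bigl(P_\pi K^j(\cdot|X^{(j)},u), P_0 K^j(\cdot|X^{(j)},u)\bigr)$, which by the data-processing inequality is dominated by $\chi^2(P^n_\pi, P^n_0)$ and thereby controlled by the range imposed on $\rho$.

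For (a), the $(\epsilon,\delta)$-DP guarantee is applied separately to the two events $B \cap A_{j,u}$ and $A_{j,u}^c$ (both $x$-free). Recombining with $Q^j_u(B \cap A_{j,u}) \leq 1$ produces an additive slack of at most $2\delta$. Claim (d) follows from observing $P_0 \tilde K^j(\cdot|X^{(j)},u) = Q^j_u$, so on $A_{j,u}$,
\[
\frac{d\tilde K^j(\cdot|x,u)}{dP_0 \tilde K^j(\cdot|X^{(j)},u)}(y) = p_0^j \frac{dK^j(\cdot|x,u)}{dP_0 K^j(\cdot|X^{(j)},u)}(y) + K^j(A_{j,u}^c|x,u),
\]
where $p_0^j := P_0 K^j(A_{j,u}|X^{(j)},u) \leq 1$; any uniform bound $C$ on the original density inflates to at most $C+1$.

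For (b), the same Radon-Nikodym computation applied with $P_\pi$ in place of a point mass gives, on $A_{j,u}$,
\[
\tilde{\cL}^j_{\pi,u}(y) = p_0^j\, \cL^j_{\pi,u}(y) + (1 - p_\pi^j),
\qquad p_\pi^j := P_\pi K^j(A_{j,u}|X^{(j)},u).
\]
Thus $\tilde{\cL}^j_{\pi,u}(y) - 1 = p_0^j\bigl(\cL^j_{\pi,u}(y) - 1\bigr) - (p_\pi^j - p_0^j)$. The first term is bounded by $4m^{1/2}/\alpha$ on $A_{j,u}$ (since $p_0^j\le 1$); Cauchy-Schwarz controls the second via
\[
|p_\pi^j - p_0^j| = \Bigl| \int \mathbf{1}_{A_{j,u}}(\cL^j_{\pi,u} - 1)\, dP_0 K^j \Bigr| \leq \sqrt{V_j}.
\]
The desired $5m^{1/2}/\alpha$ bound therefore reduces to $\sqrt{V_j} \leq m^{1/2}/\alpha$. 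In the regime $\epsilon \leq 1/\sqrt n$ and $\delta \leq c_\alpha/m$, the chi-square $\chi^2(P^n_\pi, P^n_0)$ can be computed from the centered Gaussian form of $\pi$ and made $O(c_\alpha)$ by the condition on $\rho$; data-processing transfers the bound to $V_j$.

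For (c), the mixture representation gives the per-machine total-variation bound $\|K^j(\cdot|x,u) - \tilde K^j(\cdot|x,u)\|_{\mathrm{TV}} \leq K^j(A_{j,u}^c|x,u)$, whence
\[
\bigl|P_0^m K\, T - P_0^m \tilde K\, T\bigr| \leq \sum_{j=1}^m P_0 K^j(A_{j,u}^c|X^{(j)},u),
\]
with an analogous inequality for the second term after integrating against $\pi$. Chebyshev yields $P_0 K^j(A_{j,u}^c|X^{(j)},u) \leq V_j \alpha^2/(16m)$, and Cauchy-Schwarz combined with this Chebyshev estimate gives $P_\pi K^j(A_{j,u}^c|X^{(j)},u) \leq \alpha\sqrt{V_j(1+V_j)/(16m)}$. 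Summing over the $m$ machines produces a total deviation of order $\alpha\sqrt{V_j m}$, which is at most $\alpha$ whenever $V_j \lesssim c_\alpha/m$. The main obstacle throughout is thus the per-machine chi-square bound $V_j \lesssim c_\alpha/m$; it hinges on the Gaussian prior structure together with the smallness of $\rho$ in \eqref{eq:nonasymptotic_testing_lower_bound_local_randomness_dL}-\eqref{eq:nonasymptotic_testing_lower_bound_shared_randomness_dL}, and it is precisely this bound that links the choice of $c_\alpha$ in those conditions (and of $\delta \leq c_\alpha/m$) to the target error level $\alpha$.
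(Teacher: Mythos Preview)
Your arguments for (a), (b), and (d) are correct and essentially match the paper's; in fact your explicit formula $\tilde{\cL}^j_{\pi,u}(y) = p_0^j\,\cL^j_{\pi,u}(y) + (1-p_\pi^j)$ on $A_{j,u}$ is cleaner than the paper's route to (b), and note that the trivial bound $|p_\pi^j - p_0^j|\le 1\le m^{1/2}/\alpha$ already suffices there, so you do not even need $V_j$.

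The genuine gap is in (c). Your argument requires $V_j := \chi^2(P_\pi K^j, P_0 K^j) \lesssim 1/m$, which you propose to obtain from data processing, $V_j \le \chi^2(P_\pi^n, P_0^n)$. But in the regime $\epsilon \le 1/\sqrt n$ this upper bound is useless: the Gaussian-chaos computation gives $\chi^2(P_\pi^n,P_0^n)\asymp n^2\rho^4/(c_\alpha\sigma^4 d_L)$, and plugging in the admissible values of $\rho$ from \eqref{eq:nonasymptotic_testing_lower_bound_local_randomness_dL}--\eqref{eq:nonasymptotic_testing_lower_bound_shared_randomness_dL} yields a quantity of order $c_\alpha/(mn\epsilon^2)$ or $c_\alpha/(m^2n^2\epsilon^4 d_L)$, which for small $\epsilon$ (recall $\epsilon$ is only assumed $\ge N^{-1}$) is far larger than $1/m$ and may even diverge. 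Data processing therefore cannot deliver the required $1/m$ scaling; this is precisely the regime in which the paper remarks that ``more sophisticated methods are needed to capture the effect of privacy.''

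The paper closes this gap with a privacy-specific coupling (Lemma~\ref{lem:grand_coupling} together with Lemma~\ref{lem:coupling_general}): one couples $P_\pi^n$ and $P_0^n$ so that the two samples differ in a $\mathrm{Bin}(n,p)$ number of coordinates with $np\epsilon \lesssim c_\alpha^{1/4}/\sqrt m$, and then uses the $(\epsilon,\delta)$-DP property of $K^j$ to conclude that $P_\pi K^j(A)\le (1+c_\alpha^{1/4}m^{-1/2})P_0K^j(A)+\delta+c_\alpha m^{-3/2}$ for every measurable $A$. Applying this with $A=A_{j,u}^c$ and combining with the identity $(P_\pi-P_0)K^j(A_{j,u}^c)=\int_{A_{j,u}^c}(\cL^j-1)\,dP_0K^j\ge (4m^{1/2}/\alpha)\,P_0K^j(A_{j,u}^c)$ (valid because $\cL^j\ge 0$ forces $\cL^j-1>4m^{1/2}/\alpha$ on $A_{j,u}^c$) yields $P_0K^j(A_{j,u}^c)\lesssim c_\alpha^{1/4}\alpha/m$, and a second application of the coupling gives the same bound for $P_\pi K^j(A_{j,u}^c)$. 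This DP-specific step, not data processing, is what produces the $1/m$ factor needed for (c).
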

    
    We prove this lemma at the end of this section. The lemma above, in combination with the same argument as before (e.g. Lemma \ref{lem:privacy_testing_risk_approximation}) allows us to replace the $(\epsilon,3\delta)$-DP Markov kernel $\tilde{K}^j$ of \eqref{eq:sup:approx-bayes-risk-I} with an $(\epsilon,6\delta)$-DP Markov kernel $\breve{K}^j$, whose transcripts have bounded likelihoods in the sense of \eqref{eq:privacy_portmanteau_lemma_toshow_b}. Furthermore, by part \emph{(c)} of the lemma, the Bayes risk of \eqref{eq:sup:approx-bayes-risk-I} is further lower bounded by
    \begin{equation}\label{eq:sup:approx-bayes-risk-II}
        \int \left( P_0^{m}\breve{K}_u T  + \int  P_{f}^{m} \breve{K}_u ( 1 - T) d\pi \circ \Psi^{-1}_L(f) \right) d\P^U(u) - \alpha/2,
    \end{equation}
    where the $\breve{K}^j$'s satisfy \eqref{eq:uniform_bound_kernel_density}.
    
    \subsubsection{Proof of Lemma \ref{lem:portmanteau_lemma}}
    \begin{proof}
    The first statement follows by Lemma \ref{lem:privacy_markov_kernel_construction_lemma} below. For the second statement, we first note that by Lemma \ref{lem:grand_coupling} proven in Section \ref{ssec:step5}, it holds that
    \begin{align*}
    \frac{c_\alpha^{1/4}}{m^{1/2}}  + \delta + \frac{c_\alpha}{m^{3/2}} &\geq (P_\pi-P_0) K^j\left( \{ |\cL^{j}_{\pi,u} - 1| \geq 4m^{1/2}/\alpha \} | X^{(j)},u\right)  \\
    &= P_0K^j\left((\cL^j_{\pi,u} - 1) \mathbbm{1}\{ |\cL^{j}_{\pi,u} - 1| \geq 4m^{1/2}/\alpha \} | X^{(j)},u \right) \\
    &\geq  4\frac{m^{1/2}}{\alpha} P_0K^j\left(  |\cL^{j}_{\pi,u} - 1| \geq 4m^{1/2}/\alpha  | X^{(j)},u \right),
    \end{align*}
    where the second inequality follows from the fact that $m^{1/2}/\alpha \geq 1$ and $\cL^{j}_{\pi,u} \geq 0$. Using that $\delta \leq c_\alpha / m$, we obtain that
    \begin{equation}\label{eq:privacy_good_tail_kernel_portmanteau_lemma_Aj_small_support}
    P_0K^j( A_{j,u}^c| X^{(j)},u) \leq  \frac{{\alpha(c_\alpha^{1/4} + c_\alpha(1+m^{-1}))}}{4{m}^{}} := \eta_\alpha.
    \end{equation}
    Since $K^j(B | x,u) \leq \tilde{K}^j(B |x,u)$ for all measurable $B \subset A_{j,u}$ and $P_0 \tilde{K}^j(\cdot |X^{(j)},u)$ has no support outside of $A_{j,u}$, it holds that 
    \begin{equation*}
    \frac{d{K}^j(\cdot|x,u)}{dP_0 \tilde{K}^j(\cdot|X^{(j)},u)} (y) \leq \frac{d{K}^j(\cdot|x,u)}{dP_0{K}^j(\cdot|X^{(j)},u)} (y),
    \end{equation*}
    for all $y \in A_{j,u}$ (and hence $P_0\tilde{K}^j(\cdot|X^{(j)},u)$-a.s.). Similarly, we have for $P_\pi$-a.s. all $x$'s that
    \begin{equation*}
    \frac{K^j( A_{j,u}^c | x,u)}{P_0K^j( A_{j,u}| X^{(j)},u)} \frac{dP_0K^j( \cdot \cap A_{j,u}| X^{(j)},u)}{dP_0 \tilde{K}^j(\cdot|X^{(j)},u)} (y) \leq \frac{K^j( A_{j,u}^c | x,u)}{P_0K^j( A_{j,u}| X^{(j)},u)}  \leq \frac{1}{1 - \eta_\alpha},
    \end{equation*}
    using that $K^j \leq 1$ and $P_0K^j( A_{j,u}| X^{(j)},u) \geq 1 - \eta_\alpha$. By standard arguments and the above two statements, it follows that 
    \begin{align*}
    \int \frac{d\tilde{K}^j(y|x,u)}{dP_0 \tilde{K}^j(y|X^{(j)},u)} dP_\pi^n (x) &\leq \mathbbm{1}_{A_{j,u}}(y) \int \frac{d{K}^j(y|x,u)}{dP_0 {K}^j(y|X^{(j)},u)} dP_\pi^n (x) + \frac{1}{1 - \eta_\alpha} \\
    &= \mathbbm{1}_{A_{j,u}}(y) {\cL}_{\pi,u}^j(y) + \frac{1}{1 - \eta_\alpha}.
    \end{align*}
    Applying the definition of the event $A_{j,u}$ and using that $\alpha \leq 1$, we obtain that for $c_\alpha > 0$ small enough
    \begin{align*}
    \tilde{\cL}_{\pi,u}^j - 1 \leq \frac{4 m^{1/2}}{{\alpha}} + \frac{1}{1 - \eta_\alpha} - 1 \leq \frac{5 m^{1/2}}{{\alpha}}.
    \end{align*}
    Using that $\tilde{\cL}_{\pi,u}^j - 1 \geq - 1$, we obtain \eqref{eq:privacy_portmanteau_lemma_toshow_b}, proving statement \emph{(b)}.
    
    For the third statement, we will aim to apply Lemma \ref{lem:privacy_testing_risk_approximation}. By the construction of $\tilde{K}^j$ and the triangle inequality,
    \begin{align*}
    \| P_0 K^j(\cdot|X^{(j)},u) - P_0 \tilde{K}^j(\cdot|X^{(j)},u) \|_{\mathrm{TV}} &\leq 2 \left\| P_0 K^j(\cdot \cap A_{j,u}^c |X^{(j)},u) \right\|_{\mathrm{TV}}.
    \end{align*}
    The latter is bounded by ${{\alpha}}/({2{m}^{}})$ (see \eqref{eq:privacy_good_tail_kernel_portmanteau_lemma_Aj_small_support}). Similarly,
    \begin{align*}
    \| P_\pi K^j(\cdot|X^{(j)},u) - P_\pi \tilde{K}^j(\cdot|X^{(j)}) \|_{\mathrm{TV}} &\leq 2 P_\pi K^j(A_{j,u}^c |X^{(j)},u).
    \end{align*}
    By Lemma \ref{lem:grand_coupling}, 
    \begin{align*}
    P_\pi K^j(A_{j,u}^c |X^{(j)},u) \leq \left(1+c_\alpha^{1/4} m^{-1/2}\right) P_0 K^j(A_j^c |X^{(j)},u)  + \delta  + \frac{c_\alpha}{m^{3/2}}.
    \end{align*}
    Again using \eqref{eq:privacy_good_tail_kernel_portmanteau_lemma_Aj_small_support} and the fact that $\delta \leq c_\alpha / m$ yield that the latter is also bounded by $\alpha/4m$ for $c_\alpha > 0$ small enough. The condition  and small enough choice of $c_\alpha > 0$ yields that the conditions of Lemma \ref{lem:privacy_testing_risk_approximation} and the conclusion of (c) follows. Finally, if $K^j$ satisfies \eqref{eq:uniform_bound_kernel_density}, the last statement follows directly by the construction of $\tilde{K}^j$.
    \end{proof}
    
    We finish the section by providing the two technical lemmas mentioned in the earlier proofs above. We omit the presence of the shared randomness $U$ in the statement of the lemmas, as it is of no consequence to the arguments below.
    
    \begin{lemma}\label{lem:privacy_markov_kernel_construction_lemma}
        Let $K$ be a Markov kernel from $(\cX,\mathscr{X})^n$ to $(\cY,\mathscr{Y})$ satisfying an $(\epsilon,\delta)$-DP constraint (i.e. \eqref{eq:privacy_constraint}) and define for a $A \in \mathscr{Y}$ and a probability measure $\mu$ on $\mathscr{Y}$ 
        \begin{equation*}
        \tilde{K}(B|x) := K(B \cap A | x) + K(A^c | x) \mu(B), \text{ for } x \in \cX, \, B \in \mathscr{Y}.
        \end{equation*}
        Then, $\tilde{K}$ is a Markov kernel $(\cX,\mathscr{X})$ to $(\cY,\mathscr{Y})$ satisfying an $(\epsilon,2\delta)$-DP constraint.
       \end{lemma}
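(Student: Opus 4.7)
The plan is to verify the two claims of the lemma separately: first that $\tilde{K}$ is indeed a Markov kernel, and then that it satisfies the $(\epsilon,2\delta)$-DP inequality. The kernel property is essentially bookkeeping, and the DP claim reduces to applying the DP bound on $K$ to each of the two summands defining $\tilde{K}$.

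For the Markov kernel property, I would first check that for each fixed $x \in \cX^n$, the map $B \mapsto \tilde{K}(B|x)$ is a probability measure on $\mathscr{Y}$. Nonnegativity and countable additivity are inherited from $K(\cdot \cap A|x)$ and $\mu$. For total mass, $\tilde{K}(\cY|x) = K(A|x) + K(A^c|x)\mu(\cY) = K(A|x) + K(A^c|x) = 1$. Measurability in $x$ of $B \mapsto \tilde{K}(B|x)$ for each fixed $B$ follows because $x \mapsto K(B \cap A|x)$ and $x \mapsto K(A^c|x)$ are measurable (as $K$ is a Markov kernel) and $\mu(B)$ is a constant in $x$.

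For the DP property, let $x,x' \in \cX^n$ be neighboring datasets (i.e. $\mathrm{d}_H(x,x') \leq 1$) and let $B \in \mathscr{Y}$. Applying the $(\epsilon,\delta)$-DP inequality \eqref{eq:privacy_constraint} to the sets $B \cap A$ and $A^c$ individually yields
\begin{align*}
\tilde{K}(B|x) &= K(B \cap A | x) + K(A^c | x) \mu(B) \\
&\leq \bigl( e^\epsilon K(B \cap A | x') + \delta \bigr) + \bigl( e^\epsilon K(A^c | x') + \delta \bigr) \mu(B) \\
&= e^\epsilon \bigl( K(B \cap A | x') + K(A^c | x') \mu(B) \bigr) + \delta + \delta \mu(B) \\
&\leq e^\epsilon \tilde{K}(B | x') + 2\delta,
\end{align*}
where the final inequality uses $\mu(B) \leq 1$. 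This gives the desired $(\epsilon,2\delta)$-DP constraint, completing the proof.

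There is no real obstacle here; the only subtle point is recognizing that one must apply the DP inequality twice (once per summand), each time picking up an additive $\delta$, which is precisely what produces the $2\delta$ in the conclusion. The construction is essentially a ``smearing'' of the part of $K(\cdot|x)$ living outside $A$ onto a fixed reference measure $\mu$, which preserves the privacy guarantee up to this factor of two in the additive slack.
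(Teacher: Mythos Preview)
Your proposal is correct and follows essentially the same approach as the paper: verify the Markov kernel property via $\tilde{K}(\cY|x)=K(A|x)+K(A^c|x)=1$, then apply the $(\epsilon,\delta)$-DP inequality separately to each summand $K(B\cap A|x)$ and $K(A^c|x)$, collecting two additive $\delta$ terms and bounding $\mu(B)\leq 1$. Your write-up is in fact slightly more explicit about measurability and countable additivity than the paper's version.
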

       \begin{proof}
       First, $\tilde{K}$ can be seen to be a Markov kernel, as the necessary measurability assumptions hold by construction and 
       \begin{equation*}
       \tilde{K}(\cY|x) = K(\cY \cap A | x) + K(A^c | x) = 1,
       \end{equation*}
       where it is used that $\mu$ is a probability measure. Furthermore, for arbitrary $B$ and $x,x' \in \cX^n$ such that $d_H(x,x') \leq 1$, it holds that
       \begin{align*}
       \tilde{K}(B|x) &\leq e^\epsilon K(B \cap A | x') + \delta + e^\epsilon K(A^c | x') \mu(B) + \mu(B) \delta \\
       &\leq e^\epsilon \tilde{K}(B | x') + 2 \delta.
       \end{align*}
    \end{proof}
       
    \begin{lemma}\label{lem:privacy_testing_risk_approximation}
        Let $\alpha \in (0,1)$ be given. Let $( T, \{K^j \}_{j=1}^m)$ be a distributed testing protocol and suppose that there exist kernels $\{ \tilde{K}^j\}_{j=1}^m$ such that for $j=1,\dots,m$,
        \begin{equation*}
        \| P_0 (K^j(\cdot|X^{(j)},u) - \tilde{K}^j(\cdot|X^{(j)},u)) \|_{\mathrm{TV}} \leq \frac{\alpha}{2m} \;\; \P^U\text{-a.s}
        \end{equation*}
        and
        \begin{equation*}
        \| P_\pi (K^j(\cdot|X^{(j)},u) - \tilde{K}^j(\cdot|X^{(j)},u)) \|_{\mathrm{TV}} \leq \frac{\alpha}{2m}, \;\; \P^U\text{-a.s}
        \end{equation*}
        Then, 
        \begin{align*}
        \P^U P_0^{m}&K( T(Y) | X,U)  + \P^U \int P_{f}^{m} K ( 1 - T(Y)| X,U)) d\pi(f) \geq \\ &\P^U P_0^{m}\tilde{K}( T(Y) | X,U)  + \P^U \int P_{f}^{m} \tilde{K} ( 1 - T(Y)| X,U)) d\pi(f) - \alpha,
        \end{align*}
        for the same collection of distributions.
        \end{lemma}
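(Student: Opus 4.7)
The plan is to bound the null and alternative contributions to the discrepancy separately, each by $\alpha/2$, exploiting that the kernels $K$ and $\tilde{K}$ have a product structure across servers. Because $T, 1-T \in [0,1]$, for any probability measures $\mu,\nu$ on $\cY^m$ we have $|\mu(T) - \nu(T)| \leq \|\mu - \nu\|_{\mathrm{TV}}$, so the problem reduces to bounding total variation distances between the induced transcript distributions.

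For the null contribution, observe that for each $u \in \cU$ both $P_0^m K(\cdot|U=u)$ and $P_0^m \tilde{K}(\cdot|U=u)$ are product measures on $\cY^m$, since under $P_0^m$ the $X^{(j)}$ are i.i.d.\ and the kernels act independently across servers conditional on $U$. Sub-additivity of total variation for product measures combined with the first hypothesis yields
\[
\|P_0^m K(\cdot|U=u) - P_0^m \tilde{K}(\cdot|U=u)\|_{\mathrm{TV}} \leq \sum_{j=1}^m \|P_0 K^j(\cdot|X^{(j)},u) - P_0 \tilde{K}^j(\cdot|X^{(j)},u)\|_{\mathrm{TV}} \leq \alpha/2,
\]
and integrating against $\P^U$ preserves this bound.

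For the alternative contribution, set $\bar\mu := \int P_f^m \, d\pi(f)$; Fubini gives $\int [P_f^m K - P_f^m \tilde{K}](1-T) \, d\pi(f) = [\bar\mu K - \bar\mu \tilde{K}](1-T)$, whose absolute value is at most $\|\bar\mu K - \bar\mu \tilde{K}\|_{\mathrm{TV}}$. The mixture $\bar\mu$ is not a product measure, but conditioning on $f$ restores the product structure: by joint convexity of total variation followed by fiberwise sub-additivity,
\[
\|\bar\mu K - \bar\mu \tilde{K}\|_{\mathrm{TV}} \leq \int \|P_f^m K - P_f^m \tilde{K}\|_{\mathrm{TV}} \, d\pi(f) \leq \sum_{j=1}^m \int \|P_f K^j - P_f \tilde{K}^j\|_{\mathrm{TV}} \, d\pi(f).
\]
Applying convexity of TV in its first argument once more, together with Fubini, bounds the last expression by $\sum_j \int \|K^j(\cdot|x,u) - \tilde{K}^j(\cdot|x,u)\|_{\mathrm{TV}} \, dP_\pi(x)$. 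The construction of $\tilde{K}^j$ in Lemma~\ref{lem:portmanteau_lemma} ensures that this stronger integrated quantity is at most $\alpha/(2m)$ for each $j$, and it dominates the marginal $\|P_\pi K^j - P_\pi \tilde{K}^j\|_{\mathrm{TV}}$ appearing in the hypothesis. Summing over $j$ yields $\alpha/2$; combining with the null bound completes the proof. No serious obstacle arises; the only care needed is handling the non-product mixture $\bar\mu$, which is resolved by conditioning on $f$ to restore the fiberwise product structure.
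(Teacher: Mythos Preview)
Your approach mirrors the paper's—reduce to total variation and exploit sub-additivity across the $m$ servers—but you handle the alternative term more carefully. The paper applies its product-TV bound (Lemma~\ref{lem:TV_distance_product_measures}) directly to $\|P_\pi^m K - P_\pi^m\tilde K\|_{\mathrm{TV}}$, implicitly treating the transcript law under the mixture $\int P_f^m\,d\pi(f)$ as the product $\bigotimes_j P_\pi K^j$. As you correctly observe, the mixture is not a product, and in fact the lemma as literally stated admits counterexamples: with $m=2$, $K^j$ the identity kernel, $\tilde K^j$ a constant kernel, and a symmetric two-point prior, one can make both marginal hypotheses equal to zero while the joint transcript laws under $K$ and $\tilde K$ differ by $1/2$ in total variation. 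Your fix—condition on $f$ first to recover the product structure, apply sub-additivity fiberwise, then integrate—is the right repair, and you correctly note that what is really needed is the stronger integrated (or pointwise) bound rather than the marginal one in the hypothesis; both applications (Lemmas~\ref{lem:obtaining_bounded_kernel_densities} and~\ref{lem:portmanteau_lemma}) do supply this stronger bound. One minor simplification: you can stop at $\sum_j\int\|P_f K^j - P_f\tilde K^j\|_{\mathrm{TV}}\,d\pi(f)$ without the final convexity step to $\int\|K^j(\cdot|x,u)-\tilde K^j(\cdot|x,u)\|_{\mathrm{TV}}\,dP_\pi(x)$, since Lemma~\ref{lem:obtaining_bounded_kernel_densities} already provides a bound uniform in $f$.
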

        \begin{proof}
        % TODO: deze display goed
        We omit the dependence of $u$ in the proof, as it is of no consequence to the arguments below. Using standard arguments, 
        \begin{align*}
        P_0^{m}K(T(Y) = 1|X) + \int P_{f}^{m} K ( T(Y) = 0|X) d\pi(f) &\geq \\
        P_0^{m}\tilde{K}(T(Y) = 1|X) + \int P_{f}^{m} \tilde{K} ( T(Y) = 0|X) d\pi(f) - &\| P_0^{m} (K(\cdot|X) - \tilde{K}(\cdot|X)) \|_{\mathrm{TV}} \\ &- \| P_\pi^{m} (K(\cdot|X) - \tilde{K}(\cdot|X)) \|_{\mathrm{TV}}.   
        \end{align*}
        Standard arguments (see e.g. Lemma \ref{lem:TV_distance_product_measures} in this supplement),
        \begin{equation*}
        \| P_\pi^{m} (K(\cdot|X) - \tilde{K}(\cdot|X)) \|_{\mathrm{TV}} \leq \underset{j=1}{\overset{m}{\sum}}  \| P_\pi (K^j(\cdot|X^{(j)}) - \tilde{K}^j(\cdot|X^{(j)})) \|_{\mathrm{TV}}.
        \end{equation*}
        By applying the same lemma to $\| P_0^{m} (K(\cdot|X) - \tilde{K}(\cdot|X)) \|_{\mathrm{TV}}$, combined with what is assumed in this lemma, we obtain the result.
        \end{proof}

    \subsection{Step 3: Bounding the Chi-square divergence using the Brascamp-Lieb inequality}\label{ssec:supp:step3}
    
    We proceed with lower bounding \eqref{eq:sup:approx-bayes-risk-II}, where in a slight abuse of notation, we shall denote $\breve{K}^j$ by $K^j$ for the remainder of the section. 
    
    To lower bound the Bayes risk, in light of the Neyman-Pearson lemma, it should suffice to show that $\cL^{Y|U=u}_\pi(Y)$ is close to $1$ with high probability. This is made precise below by showing that the right-hand side of \eqref{eq:sup:approx-bayes-risk-II} is further bounded from below by
    \begin{equation}\label{eq : sup inf public coin divergence lb}
    1 -  \left( \sqrt{(1/2) \int \E_0^{Y|U=u}  \left(\cL^{Y|U=u}_\pi(Y) - 1 \right)^2 d\P^U(u) } - \alpha/2  \right).
    \end{equation}
    To see this, note that any $T : \cY^m \to \{0,1\}$, we can write $A_T = T^{-1}( \{ 0 \} )$ and note that
    \begin{align*}
        P_0^{m}\tilde{K}_u T(Y)  + P_\pi^{m}\tilde{K}_u ( 1 - T(Y) ) &= 1 - \left(P_0^{m}\tilde{K}_u \left(Y \in A_T \right) - P_\pi^{m}\tilde{K}_u \left(Y \in A_T \right)\right).
    \end{align*}
    We obtain that
    \begin{align*}\label{bound_lecam}
        \int \left( P_0^{m}\tilde{K}_u T  + \int  P_{f}^{m} \tilde{K}_u ( 1 - T) d\pi \circ \Psi^{-1}(f) \right) d\P^U(u)  & \\
    \geq 1 - \underset{A}{\sup} \, | \int P_0^{m}\tilde{K}_u (A) - P_\pi^{m}\tilde{K}_u (A) d\P^U(u) |.&
    \end{align*}
    We find by Jensen's inequality that
    \begin{equation*}
     \|\P_{0}^Y - \P_{\pi}^Y \|_{\mathrm{TV}} \leq \int \|\P_{0}^{Y|U=u} - \P_{\pi}^{Y|U=u} \|_{\mathrm{TV}} d\P^U(u).
    \end{equation*}
    Combining the above with Pinsker's second inequality a standard bound for the Kullback-Leibler divergence (see Lemma \ref{lem:chi_sq_div_bounds_KL}), we obtain \eqref{eq : sup inf public coin divergence lb}.

    This brings us to a crucial part of the proof; the application of Lemma 10.1 in \cite{szabo2023distributedtesting}, which we restate as Lemma \ref{lem:brascamp_lieb_consequence} below. We first introduce some notation.  
    
    We note that for $f \in \R^{d_L}$, it holds that 
    \begin{equation*}
        \frac{dP_{\Psi f}}{dP_0}(X^{(j)}_i) \overset{d}{=} \frac{dN\left({f}, \sigma^2 I_{d_L}\right)}{dN\left(0, \sigma^2 I_{d_L}\right)} =: \mathscr{L}_f^{ji}(X^{(j)}_i),% \quad \text{ under } \P_0^{X}.
    \end{equation*}
    where the equality in distribution is true under $\P_0^{X}$.
    
    Denote the ``local'' and ``global'' likelihoods of the data as
    \begin{equation*}
    \mathscr{L}_f^j(X^{(j)})= {\prod}_{i=1}^n \mathscr{L}_f^{ji}(X^{(j)}_i), \; \; \; \mathscr{L}_f(X) := {\prod}_{j=1}^m \mathscr{L}_f^j(X^{(j)}),
    \end{equation*}
    and the mixture likelihoods as  
    \begin{equation*}
    \mathscr{L}_\pi^j(X) = \int \mathscr{L}_f(X^{(j)}) d \pi (f) \; \text{ and } \; \mathscr{L}_\pi(X) = \int \mathscr{L}_f(X) d \pi (f).
    \end{equation*}

    In view of the Markov chain structure, the probability measure $d\P_\pi(x,u,y)$ disintegrates as $d\P^{Y|(X,U)=(x,u)}d\P^X_f(x)d\P^U(u)d\pi(f)$. Using this, $\E_0^{Y|U=u}  \left(L^{Y|U=u}_\pi(Y)\right)^2$ can be seen to equal
    \begin{equation}\label{eq : conditional exp. form of chi-sq div}
      \E_0^{Y|U=u} \E_0\left[ \mathscr{L}_\pi(X) \bigg| Y, U=u \right]^2 = \int \left(  \int \mathscr{L}_\pi(x) \frac{dK(\cdot|x,u)}{d\P^{Y|U=u}_0} (y) d\P_0^X(x) \right)^2 d\P^{Y|U=u}_0(y),
    \end{equation}
    where it is used that $K(\cdot | x, u) \ll \P^{Y|U=u}_0(\cdot)$, $\P^{(X,U)}_f$-almost surely (Lemma \ref{lem:bracamp_densities_exist}). Using Fubini's theorem (``decoupling'' in $X$), we can write the above display as
    \begin{equation}\label{eq : numerator key ratio q notation}
    \int \mathscr{L}_\pi(x_1) \mathscr{L}_\pi(x_2) q_u(x_1,x_2) d(\P_0^X \times \P_0^X) (x_1,x_2),
    \end{equation}
    where 
    \begin{equation}\label{eq:forward_backward_kernel_def}
    q_u(x_1,x_2) := \int \frac{d K(\cdot|x_1,u) }{ d\P_0^{Y|U=u} } (y) \frac{d K(\cdot|x_2,u) }{ d\P_0^{Y|U=u}} (y) d\P^{Y|U=u}_0(y).
    \end{equation}
    Since $K(\cdot|x,u)$ and $\P^{Y|U=u}_0$ are product measures on $\cY =  \cY^m$, we can write $q_u(x_1,x_2) = \Pi_{j=1}^m q_u^j(x_1,x_2)$ where 
    \begin{equation}\label{eq:marginal_forward_backward_kernel_def}
    q_u^j(x_1,x_2) =  \int   \frac{K^j(y^{j}|x_1^{j},u) K^j(y^{j}|x_2^{j},u) }{ \P_0^{Y^{j}|U=u}(y^{j}) } d\P^{Y^{(j)}|U=u}_0(y).
    \end{equation}
    The map $(x_1,x_2)\mapsto q_u(x_1,x_2)$ can be seen as capturing the dependence between the original data $X$ and a random variable $X'$ with conditional distribution
    \begin{equation}\label{eq:backward_forward_channel_Xprime}
     X'|X=x \sim \int d\P^{X|(Y,U)=(y,u)}_0 d\P^{Y|(X,U)=(x,u)},
    \end{equation}
    which is sometimes referred to as the ``forward-backward channel'', stemming from the fact that $X \to Y \to X'$ forms a Markov chain. An easy computation using the law of total expectation shows that the covariance of $q_u(x_1,x_2) d(P_0 \times P_0) (x_1,x_2)$,
    \begin{equation}\label{eq : q cov condition}
    \int \left( \begin{matrix}
    x_1 \\
    x_2
    \end{matrix}\right)
     \left( \begin{matrix}
    x_1^\top & x_2^\top
    \end{matrix}\right)
     q_u(x_1,x_2) d(P_0 \times P_0) (x_1,x_2)  \in \R^{2mnd_L \times 2mnd_L},
    \end{equation}
      is equal to $\Sigma_u := \text{Diag}\left(\Sigma^{11}_u,\dots,\Sigma^{1n}_u,\dots,\Sigma^{m1}_u,\dots,\Sigma^{mn}_u\right) \in \R^{2mnd_L \times 2mnd_L}$ for
    \begin{equation*}
    \Sigma^{ji} := \sigma^{2} \left(\begin{matrix}
         I_{d_L} & \Xi_u^{ji} \\
    \Xi_u^{ji} &   I_{d_L}
    \end{matrix}\right),
    \end{equation*}
    with
    \begin{equation*}
    \Xi_u^{ji} := \E_0^{Y^{(j)}|U=u} \E_0\left[  \sigma^{-1} {X}^{(j)}_{L;i} \bigg| Y^{(j)}, U=u \right] \E_0\left[ \sigma^{-1} {X}^{(j)}_{L;i} \bigg| Y^{(j)}, U=u \right]^\top.
    \end{equation*} 
    Define also
    \begin{equation}\label{eq:def_Xi_u_j}
    \Xi_u^{j} := \E_0^{Y^{(j)}|U=u} \E_0\left[ \sigma^{-1} \underset{i=1}{\overset{n}{\sum}}  {X}^{(j)}_{L;i} \bigg| Y^{(j)}, U=u \right] \E_0\left[ \sigma^{-1} \underset{i=1}{\overset{n}{\sum}}  {X}^{(j)}_{L;i} \bigg| Y^{(j)}, U=u \right]^\top.
    \end{equation} 
    We are now ready to state the lemma that forms the crux of our distributed testing lower bound proof.
    
    \begin{lemma}\label{lem:brascamp_lieb_consequence}
    Suppose that $(x_1,x_2) \mapsto q_u(x_1,x_2)$ is bounded and that $\pi$ is a centered Gaussian distribution on $\R^d$. Then,
        \begin{equation}\label{eq : gaussian upper bound ratio lb prf}
    \frac{\int \mathscr{L}_\pi(x_1) \mathscr{L}_\pi(x_2) q_u(x_1,x_2) d(\P_0^X \times \P_0^X) (x_1,x_2)}{\underset{j=1}{\overset{m}{\Pi}} \int \mathscr{L}^{j}_\pi(x_1^{j}) \mathscr{L}^{j}_\pi(x_2^{j}) q_u^j(x_1^{j},x_2^{j}) d(\P_0^{X^{(j)}} \times \P_0^{X^{(j)}}) (x_1^{j},x_2^{j})}
    \end{equation}
    is bounded above by
    \begin{equation*}
     \frac{\int \mathscr{L}_\pi(x_1) \mathscr{L}_\pi(x_2) dN(0,\Sigma)(x_1,x_2)}{\underset{j=1}{\overset{m}{\Pi}} \int \mathscr{L}^{j}_\pi(x_1^{j}) \mathscr{L}^{j}_\pi(x_2^{j})  dN(0,\Sigma^j) (x_1^{j},x_2^{j})}.
    \end{equation*}
    \end{lemma}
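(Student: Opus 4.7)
The plan is to recognize the quantity in \eqref{eq : gaussian upper bound ratio lb prf} as an instance of the Brascamp--Lieb-type functional treated in Lemma 10.1 of \cite{szabo2023distributedtesting} and to invoke the Gaussian-maximizer principle proved there. The starting point is to interpret $q_u(x_1,x_2)\,d(\P_0^X\times\P_0^X)(x_1,x_2)$ as the joint law of $(X,X')$ conditional on $U=u$, where $X'$ is drawn from the forward-backward channel \eqref{eq:backward_forward_channel_Xprime}. By the computation in \eqref{eq : q cov condition}, this joint law has marginals $\P_0^X$ (standard Gaussian) and covariance $\Sigma$; boundedness of $q_u$, which is inherited from the uniform bound \eqref{eq:uniform_bound_kernel_density} produced in Step 2 via $q_u(x_1,x_2)\leq \prod_j \sup_{x} (dK^j(\cdot|x,u)/d\P_0^{Y^{(j)}|U=u})$, ensures that every integral in sight is finite and that Fubini applies without restriction.

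The first manipulation is to expand the mixture likelihoods via Fubini,
\begin{equation*}
\mathscr{L}_\pi(x_1)\mathscr{L}_\pi(x_2) \;=\; \int\!\!\int \mathscr{L}_f(x_1)\,\mathscr{L}_g(x_2)\, d(\pi\times\pi)(f,g),
\end{equation*}
and to notice that, for each fixed $(f,g)$, the integrand $\mathscr{L}_f(x_1)\mathscr{L}_g(x_2)$ is log-affine in $(x_1,x_2)$. Integrating a log-affine function against the centered Gaussian product $\pi\times\pi$ yields a log-quadratic function of $(x_1,x_2)$; exchanging the order of integration therefore casts both the numerator and the denominator of \eqref{eq : gaussian upper bound ratio lb prf} as double integrals over $(f,g)\in(\R^{d_L})^2$ of moment generating functions of affine statistics under $q_u\cdot(\P_0^X\times\P_0^X)$ and under its per-server factors $q_u^j\cdot(\P_0^{X^{(j)}}\times\P_0^{X^{(j)}})$, respectively.

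The crux of the argument is then a direct application of the cited lemma of \cite{szabo2023distributedtesting}: among all joint laws on $\R^{mnd_L}\times\R^{mnd_L}$ that (i) factorize as a product across the $m$ servers, (ii) have marginals equal to $\P_0^X$, (iii) have per-server cross-covariance $\sigma^2\Xi_u^{ji}$, and (iv) have density bounded with respect to $\P_0^X\times\P_0^X$, the ratio appearing in \eqref{eq : gaussian upper bound ratio lb prf}, for any nonnegative log-quadratic integrand coming from a centered Gaussian prior, is maximized by the centered Gaussian $N(0,\Sigma)$ with the matched covariance. The product structure $q_u=\prod_j q_u^j$ is immediate from $K_u=\bigotimes_{j=1}^m K^j(\cdot|\cdot,u)$, and the covariance matching is exactly \eqref{eq : q cov condition}, so after invoking the lemma the right-hand side of the claimed inequality appears with $dN(0,\Sigma)$ in the numerator and $dN(0,\Sigma^j)$ in each factor of the denominator.

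The main obstacle is purely bookkeeping: verifying that the hypotheses of Lemma 10.1 of \cite{szabo2023distributedtesting} are met in our precise setting and that the log-quadratic integrand $\mathscr{L}_\pi(x_1)\mathscr{L}_\pi(x_2)$ falls within the class treated there. Concretely, one has to check that the boundedness of $q_u$ inherited from \eqref{eq:uniform_bound_kernel_density} is what is required for the Gaussian-maximizer argument to go through (absent this boundedness the supremum can diverge, which is why Step 2 of the proof of Theorem~\ref{thm:nonasymptotic_testing_lower_bound} was needed), and that the quadratic form determining $\mathscr{L}_\pi$ under the Gaussian prior $\pi$ is positive semidefinite with spectrum small enough for the Gaussian integrals on the right-hand side to converge. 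Both of these are routine once the explicit Gaussian expression for $\mathscr{L}_\pi$ in terms of $\Gamma$ is written down; no new analytic inequality is required beyond the cited Brascamp--Lieb lemma.
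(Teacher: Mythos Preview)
Your proposal is correct and takes exactly the same approach as the paper: the paper does not give an independent proof of this lemma but simply refers to Lemma 10.1 in \cite{szabo2023distributedtesting}, noting that the Gaussian prior and conjugacy with the model enable the techniques of \cite{lieb_gaussian_1990}. Your additional bookkeeping (verifying boundedness of $q_u$, the product factorization, and the covariance match in \eqref{eq : q cov condition}) is a reasonable elaboration of the citation, but no new argument beyond the invoked lemma is needed.
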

    
    The lemma has the following interpretation: the ratio of the second moment of the Bayes factor of the ``global Bayesian hypothesis test'' that of the product of second moments of the ``local Bayes factors'', is maximized over the class of forward-backward channel with covariance $\Sigma$ when the forward-backward channel is Gaussian. 
    %(for instance it decreases mutual information between $S_1$ and $S_2$), the ratio 
    
    There is an existing literature on Brascamp-Lieb inequality in relation to information theoretical problems, in relation to mutual information \cite{carlen_subadditivity_2008,liu_brascamp-lieb_2016,liu_smoothing_2016}. For a proof, we refer to Lemma 10.1 in \cite{szabo2023distributedtesting}, which uses that the prior $\pi$ is Gaussian, exploiting the conjugacy between the prior and the model which enables the use of techniques from \cite{lieb_gaussian_1990}. 
    
    The main consequence of the above lemma is analytic expressions, which can be used to upper bound the chi-square divergence in \eqref{eq : sup inf public coin divergence lb} which is the content of Lemma \ref{lem:testing_problem_AB_breakdown} below. 
    
    \begin{lemma}\label{lem:testing_problem_AB_breakdown}
    Define
    \begin{equation}\label{eq:Au_defined}
    \mathrm{A}_u^\pi := \int e^{ f^\top \sum_{j=1}^{m} \Xi^j_u g} d(\pi \times  \pi)(f,g)
    \end{equation}
    and
    \begin{equation}\label{eq:sup:Bu_defined}
    \mathrm{B}_u^\pi := \underset{j=1}{\overset{m}{\Pi}} \E_0^{{Y^{(j)}}|U=u} \E_0 \left[ \mathscr{L}_\pi \left({X}^{{(j)}}\right) \bigg| Y^{{(j)}}, U=u \right]^2.
    \end{equation}
    If $(x_1,x_2) \mapsto q_u(x_1,x_2)$ is bounded and if $\pi$ is a centered Gaussian distribution on $\R^{d_L}$, it holds that
    \begin{equation*}
     \E_0^{Y|U=u}  \left(L^{Y|U=u}_\pi(Y)\right)^2 \leq  \mathrm{A}_u^\pi \cdot \mathrm{B}_u^\pi.
    \end{equation*}
    \end{lemma}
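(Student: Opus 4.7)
The plan is to apply Lemma \ref{lem:brascamp_lieb_consequence} to the representation \eqref{eq : numerator key ratio q notation} of the chi-square moment, then recognize the product of local integrals as $\mathrm{B}_u^\pi$ and evaluate the Gaussian-maximizer ratio as (at most) $\mathrm{A}_u^\pi$. By \eqref{eq : conditional exp. form of chi-sq div}--\eqref{eq : numerator key ratio q notation}, the left-hand side of the lemma equals $\int \mathscr{L}_\pi(x_1)\mathscr{L}_\pi(x_2) q_u(x_1,x_2)\,d(\P_0^X\times\P_0^X)(x_1,x_2)$. Since $q_u$ is bounded and $\pi$ is centered Gaussian by hypothesis, Lemma \ref{lem:brascamp_lieb_consequence} supplies
\[
\int \mathscr{L}_\pi\mathscr{L}_\pi\, q_u\,d(\P_0^X\times\P_0^X) \;\leq\; \prod_{j=1}^m \left[\int \mathscr{L}_\pi^j\mathscr{L}_\pi^j\, q_u^j\,d(\P_0^{X^{(j)}}\times\P_0^{X^{(j)}})\right]\cdot\frac{\int \mathscr{L}_\pi\mathscr{L}_\pi\,dN(0,\Sigma)}{\prod_{j=1}^m \int \mathscr{L}_\pi^j\mathscr{L}_\pi^j\,dN(0,\Sigma^j)}.
\]
Applying the single-server version of the Fubini argument that produced \eqref{eq : numerator key ratio q notation} rewrites each bracketed factor as $\E_0^{Y^{(j)}|U=u}\E_0[\mathscr{L}_\pi(X^{(j)})\mid Y^{(j)},U=u]^2$, so the bracketed product is precisely $\mathrm{B}_u^\pi$.

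To handle the Gaussian ratio, I would use Fubini to pull $d(\pi\times\pi)$ outside $dN(0,\Sigma)$ and evaluate the inner Gaussian integral $\int \mathscr{L}_f(x_1)\mathscr{L}_g(x_2)\,dN(0,\Sigma)$ via the centered-Gaussian moment-generating-function identity at $v=(\sigma^{-2}f,\ldots,\sigma^{-2}f,\sigma^{-2}g,\ldots,\sigma^{-2}g)\in\R^{2mnd_L}$. The marginal blocks of $\Sigma$ are $\sigma^2 I_{d_L}$ and its cross-covariance blocks are $\sigma^2\Xi_u^{ji,jk}:=\E_0^{Y^{(j)}|U=u}\bigl[\E_0[X_{L;i}^{(j)}\mid Y^{(j)},U=u]\E_0[X_{L;k}^{(j)}\mid Y^{(j)},U=u]^\top\bigr]$, so $\tfrac12 v^\top\Sigma v$ produces $\tfrac12 nm\sigma^{-2}(\|f\|^2+\|g\|^2) + \sigma^{-2}\sum_j f^\top\bigl(\sum_{i,k}\Xi_u^{ji,jk}\bigr)g$. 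The first summand cancels the $\exp(-nm\sigma^{-2}(\|f\|^2+\|g\|^2)/2)$ normalization inside $\mathscr{L}_f\mathscr{L}_g$, and the inner sum collapses to $\sum_j\Xi_u^j$ via the $(\sum_i)(\sum_k)$-expansion of the definition of $\Xi_u^j$, yielding $\exp(f^\top\sum_j\Xi_u^j g)$. Integrating against $(\pi\times\pi)$ then recognizes $\mathrm{A}_u^\pi$. The same MGF computation carried out for a single server yields $\int\exp(f^\top\Xi_u^j g)\,d(\pi\times\pi)(f,g)$ for the $j$-th denominator factor; since $\Xi_u^j$ is positive semidefinite and $\pi=N(0,\Gamma)$ is centered, the standard Gaussian identity $\int e^{f^\top Ag}\,d(\pi\times\pi)=\det(I-A\Gamma A\Gamma)^{-1/2}\geq 1$ (valid whenever $I-A\Gamma A\Gamma\succ 0$, which follows from the boundedness of $q_u$) shows each denominator factor is at least one. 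Combining the three displays gives LHS $\leq \mathrm{A}_u^\pi\cdot\mathrm{B}_u^\pi$, as required.

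The principal technical obstacle is the MGF bookkeeping in the second paragraph. The cross-covariance blocks of $\Sigma$ couple distinct observation indices $i\neq k$ on the same server, and it is precisely these off-diagonal-in-$i$ contributions that aggregate into $\Xi_u^j=\sum_{i,k}\Xi_u^{ji,jk}$ rather than the strictly smaller $\sum_i\Xi_u^{ji}$; carrying out the quadratic $v^\top\Sigma v$ with the full (not merely block-diagonal-in-$i$) structure is essential to extract the correct sum $\sum_{j=1}^m\Xi_u^j$ in the exponent. A secondary subtlety is verifying convergence of the Gaussian integrals (equivalently, $I-\Xi_u^j\Gamma\Xi_u^j\Gamma\succ 0$) uniformly over the range of $\rho$ of interest, which one obtains from the boundedness of $q_u$ combined with the smallness of the prior covariance $\Gamma = c_\alpha^{-1/2} d_L^{-1}\rho^2\bar\Gamma$ in the regime where $c_\alpha$ is taken small.
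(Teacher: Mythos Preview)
Your approach is essentially the paper's: apply the Brascamp--Lieb lemma, identify the product of local second moments as $\mathrm{B}_u^\pi$, compute the Gaussian numerator via the MGF to get $\mathrm{A}_u^\pi$, and bound the Gaussian denominator below by $1$. The only difference is in that last step. The paper uses Jensen's inequality,
\[
\int e^{f^\top\Xi_u^j g}\,d(\pi\times\pi)(f,g)\;\geq\;\exp\Bigl(\int f^\top\Xi_u^j g\,d(\pi\times\pi)(f,g)\Bigr)=e^0=1,
\]
valid because $\pi$ is centered; this sidesteps any convergence question. Your determinant identity instead requires the spectral condition $\|\Gamma^{1/2}\Xi_u^j\Gamma^{1/2}\|<1$, and your justification that this ``follows from the boundedness of $q_u$'' is not correct as stated. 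The clean fix is to observe that the conclusion is vacuous when $\mathrm{A}_u^\pi=\infty$, and when $\mathrm{A}_u^\pi<\infty$ one has $\|\Gamma^{1/2}(\sum_j\Xi_u^j)\Gamma^{1/2}\|<1$, which (since $0\preceq\Xi_u^j\preceq\sum_l\Xi_u^l$) forces $\|\Gamma^{1/2}\Xi_u^j\Gamma^{1/2}\|<1$ for every $j$. Your explicit bookkeeping of the cross-$i$ blocks of $\Sigma^j$ is correct and is precisely what produces $\Xi_u^j=\sum_{i,k}\Xi_u^{ji,jk}$ in the exponent; the paper's block-diagonal-in-$(j,i)$ display for $\Sigma$ obscures this point.
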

    
    The above lemma describes how the variance of the Bayes factor given $U$ is bounded by two factors. One factor depends on the Fisher information of the transcript's likelihood at $f=0$ given $U=u$; $\Xi_u := \sum_{j=1}^{m} \Xi^j_u$. In this sense, $\mathrm{A}_u^\pi$ captures how well the transcript allows for ``estimation'' of $f$. The second factor can be seen as the $m$-fold product of the local Bayes factors, capturing essentially the power of combining the locally most powerful test statistics; the likelihood ratios.
    \begin{proof}[Proof of Lemma \ref{lem:testing_problem_AB_breakdown}]
    We start by noting that $\mathrm{B}_u^\pi$ is equal to the denominator of \eqref{eq : gaussian upper bound ratio lb prf}. By Lemma \ref{lem:brascamp_lieb_consequence},
    \begin{equation*}
     \E_0^{Y|U=u}  \left(L^{Y|U=u}_\pi(Y)\right)^2 \leq \frac{\int \mathscr{L}_\pi(x_1) \mathscr{L}_\pi(x_2) dN(0,\Sigma)(x_1,x_2)}{\underset{j=1}{\overset{m}{\Pi}} \int \mathscr{L}^{j}_\pi(x_1^{j}) \mathscr{L}^{j}_\pi(x_2^{j})  dN(0,\Sigma^j) (x_1^{j},x_2^{j})} \cdot \mathrm{B}_u^\pi.
    \end{equation*}
    By the block diagonal matrix structure of $\Sigma$, the denominator in the first factor of the right-hand side satisfies
    \begin{align*}
    \underset{j=1}{\overset{m}{\Pi}} \int \mathscr{L}^{j}_\pi(x_1^{j}) \mathscr{L}^{j}_\pi(x_2^{j}) dN(0,\Sigma^j) (x_1^j,x_2^j)
    &= \underset{j=1}{\overset{m}{\Pi}}  \int e^{  \frac{1}{2} \left(\|\sqrt{\Sigma^j} \left({f},  {g}\right) \|_2^2-  \|\left({f},  {g}\right)\|_2^2 \right)} d (\pi \times\pi)\left({f}, {g}\right)\\
    &= \underset{j=1}{\overset{m}{\Pi}} \int e^{  f^\top  \Xi^j_u g} d(\pi \times  \pi)(f,g)\\
    &\geq \underset{j=1}{\overset{m}{\Pi}}e^{  \int f^\top  \Xi^j_u g  \,d(\pi \times  \pi)(f,g)}=1.
    \end{align*}
    Through the expression for the moment generating function of the Gaussian, the numerator of $\mathrm{A}_u^\pi$ is equal to
    \begin{equation*}%\label{eq : numerator after brascamp lieb}
    \int \mathscr{L}_\pi(x_1) \mathscr{L}_\pi(x_2) dN(0,\Sigma) (x_1,x_2)=\int e^{ f^\top \sum_{j=1}^{m} \Xi^j_u g} d(\pi \times  \pi)(f,g).
    \end{equation*}
    \end{proof}
    
    \subsection{Step 4: Adversarially choosing the prior based on shared or local randomness}
    
    Suppose that for some constant $c > 0$,
    \begin{equation}\label{eq : to check before 6.2.2}
     { \varrho^2} \| \sqrt{\bar{\Gamma}}^\top \Xi_u \sqrt{\bar{\Gamma}} \| \leq c.
    \end{equation}
    If $\bar{\Gamma} \in \R^{d_L \times d_L}$ is symmetric, idempotent with rank proportional to $d_L$ and $\pi = N(0,\varrho^2 \bar{\Gamma})$, standard results for the Gaussian chaos, e.g. Lemma 6.2.2 in \cite{vershynin_high-dimensional_2018} combined with \eqref{eq : to check before 6.2.2} and the fact that $\|\sqrt{\bar{\Gamma}}\| \leq 1$, yield that
    \begin{equation*}
    \mathrm{A}_u^\pi \leq \exp \left( { C  \sigma^{-2} { \varrho^4}  \text{Tr} \left((\sqrt{\bar{\Gamma}}^\top \Xi_u \sqrt{\bar{\Gamma}})^2 \right) } \right),
    \end{equation*}
    for a constant $C>0$ depending only on $c$. As a final step of the testing risk lower bound technique, we use essentially a geometric argument to sharpen this bound in case the distributed protocol does not enjoy shared randomness. The $d_L \times d_L$ matrix $\Xi_u := \sum_{j=1}^m \Xi_u^j$ geometrically captures how well $Y$ allows to ``reconstruct'' the compressed sample $X$. When $U$ is degenerate, $\Xi_u$ is ``known'' to the prior, and $\bar{\Gamma}$ can be chosen to exploit ``direction'' in which $\Xi_u$ contains the least information. We finish this section by proving the lemma below, which makes this notion precise.
    
    \begin{lemma}\label{lem:brascamp_lower_bound_summarization}
    Let $\alpha \in (0,1)$ and suppose that the map $(x_1,x_2) \mapsto q_u(x_1,x_2)$ defined in \eqref{eq:forward_backward_kernel_def} is bounded for all distributed testing protocols in $\mathscr{T}$. Let $\pi = N(0,\varrho^2 \bar{\Gamma})$, with $\varrho :=  \frac{\rho}{c_\alpha^{1/4} d^{1/2}_L}$ and $\bar{\Gamma} \in \R^{d_L \times d_L}$ is symmetric, idempotent with rank proportional to $d_L$. Assume that $\rho$ is such that $\varrho^2 \| \Xi_u \| \leq c$ $\P^U$-a.s. for some constant $c>0$. It then holds that
    \begin{equation}\label{eq:summarization_lemma_public_coin}
    \mathrm{A}_u^\pi \leq \exp\left({ C \sigma^{-2} \frac{\rho^4}{c_\alpha d^2_L} \|\Xi_u\| \text{Tr} \left( \Xi_u \right) }\right),
    \end{equation}
    for some fixed constant $C>0$ depending only on $c>0$. Furthermore, if $U$ is degenerate and $ \frac{2\rho^2}{\sqrt{c_\alpha} d^2} \text{Tr} ( \Xi ) \leq c$, it holds that 
    \begin{equation}\label{eq:summarization_lemma_private_coin}
    \mathrm{A}_u^\pi \leq \exp\left({ C  \sigma^{-2} \frac{\rho^4}{c_\alpha d^3_L}  \text{Tr} \left( \Xi_u  \right)^2 }\right),
    \end{equation}
    for some fixed constant $C>0$ depending only on $c>0$. %Lastly, for $H_\rho$ as defined in \eqref{}, it holds that for $c_\alpha > 0$ small enough, $\pi(H_\rho^c) \leq \alpha/2$.
    \end{lemma}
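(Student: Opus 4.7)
The plan is to express $\mathrm{A}_u^\pi$ as the moment generating function of a Gaussian chaos of order two, reduce the resulting log-determinant to a trace, and then exploit the freedom in choosing $\bar{\Gamma}$ in two different ways depending on whether we are in the shared or local randomness case. Writing $f = \varrho\bar{\Gamma}^{1/2}\tilde f$ and $g = \varrho\bar{\Gamma}^{1/2}\tilde g$ with $\tilde f,\tilde g \sim N(0,I_{d_L})$ independent, and setting $B_u := \bar{\Gamma}^{1/2}\Xi_u\bar{\Gamma}^{1/2}$ (symmetric), we get $f^\top\Xi_u g = \varrho^2\tilde f^\top B_u \tilde g$. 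Conditioning on $\tilde g$ turns the inner expectation into the MGF of a centered Gaussian, and taking expectation over $\tilde g$ via the quadratic-form MGF yields
\[
\mathrm{A}_u^\pi = \det\!\left(I_{d_L} - \varrho^4 B_u^2\right)^{-1/2},
\]
which is finite as long as $\varrho^2\|B_u\| < 1$. Since $\bar{\Gamma}$ is an orthogonal projector and hence $\|\bar{\Gamma}^{1/2}\|\leq 1$, the standing hypothesis $\varrho^2\|\Xi_u\| \leq c$ (or its adversarial analogue in the second case) forces $\varrho^2\|B_u\| \leq c$.

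Next I would apply the inequality $-\log(1-t) \leq t/(1-c^2)$ termwise to the eigenvalues of $\varrho^4 B_u^2$, which are all at most $c^2$, giving
\[
\mathrm{A}_u^\pi \leq \exp\!\left(\tfrac{\varrho^4}{2(1-c^2)}\,\operatorname{Tr}(B_u^2)\right).
\]
The remainder of the argument is to bound $\operatorname{Tr}(B_u^2)$ in two different ways by exploiting that $\bar{\Gamma}$ is allowed to depend on the kernels $\{K^j\}$ (and so on $\Xi_u$ whenever $U$ is degenerate).

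For the shared-randomness inequality \eqref{eq:summarization_lemma_public_coin}, the prior cannot depend on the realization of $U$, so the safe choice is a $u$-independent projector such as $\bar{\Gamma} = I_{d_L}$. Then $B_u = \Xi_u$ and $\operatorname{Tr}(B_u^2) = \operatorname{Tr}(\Xi_u^2) \leq \|\Xi_u\|\operatorname{Tr}(\Xi_u)$. Substituting $\varrho^4 = \rho^4/(c_\alpha d_L^2)$ recovers the claimed bound.

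For the local-randomness inequality \eqref{eq:summarization_lemma_private_coin}, the degeneracy of $U$ means $\Xi_u = \Xi$ is determined by the kernels, so $\bar{\Gamma}$ can be chosen adversarially relative to $\Xi$. Let $\lambda_1\geq\cdots\geq\lambda_{d_L}\geq 0$ denote the eigenvalues of $\Xi$ with eigenvectors $v_1,\ldots,v_{d_L}$, and take $\bar{\Gamma}$ to be the projector onto $\mathrm{span}\{v_{\lceil d_L/2\rceil+1},\ldots,v_{d_L}\}$, which has rank proportional to $d_L$ as required. Since $\bar{\Gamma}$ and $\Xi$ commute, $B = \bar{\Gamma}\Xi$ and $\operatorname{Tr}(B^2) = \sum_{i > d_L/2}\lambda_i^2$. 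A pigeonhole argument using $\sum_{i\leq d_L/2}\lambda_i \leq \operatorname{Tr}(\Xi)$ gives $\lambda_{\lceil d_L/2\rceil+1} \leq 2\operatorname{Tr}(\Xi)/d_L$, whence
\[
\operatorname{Tr}(B^2) \leq \tfrac{d_L}{2}\Bigl(\tfrac{2\operatorname{Tr}(\Xi)}{d_L}\Bigr)^2 \leq \tfrac{C\,\operatorname{Tr}(\Xi)^2}{d_L},
\]
and the extra assumption $\tfrac{2\rho^2}{\sqrt{c_\alpha}\,d_L^2}\operatorname{Tr}(\Xi) \leq c$ is precisely what guarantees $\varrho^2\|B\| \leq c$ for this projector, since $\|B\|$ is bounded by the same pigeonhole quantity. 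Combining with $\varrho^4 = \rho^4/(c_\alpha d_L^2)$ yields \eqref{eq:summarization_lemma_private_coin}. The substantive idea is this adversarial alignment of $\bar{\Gamma}$ with the ``low-information'' eigendirections of $\Xi$, turning an $\|\Xi\|\operatorname{Tr}(\Xi)$ bound into the strictly sharper $\operatorname{Tr}(\Xi)^2/d_L$ bound; the Gaussian chaos computation and eigenvalue pigeonhole are otherwise mechanical.
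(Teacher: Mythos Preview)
Your proposal is correct and follows essentially the same approach as the paper: the same choice $\bar{\Gamma}=I_{d_L}$ for the shared-randomness bound, the same adversarial projector onto the bottom half of the eigenspace of $\Xi$ for the local-randomness bound, and the same pigeonhole estimate $\lambda_{\lceil d_L/2\rceil}\le 2\operatorname{Tr}(\Xi)/d_L$. The only cosmetic difference is that you compute the Gaussian-chaos MGF explicitly via the determinant formula $\det(I-\varrho^4 B_u^2)^{-1/2}$ and then bound $-\log(1-t)$ eigenvalue-by-eigenvalue, whereas the paper invokes Lemma~6.2.2 of Vershynin as a black box; both routes yield the same $\exp\bigl(C\varrho^4\operatorname{Tr}(B_u^2)\bigr)$ bound.
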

    \begin{proof}
    In case of shared randomness (i.e. $U$ not being degenerate), simply taking $\bar{\Gamma} = I_{d_L}$, noting that $\text{Tr}( \Xi^2_u) = \|\Xi_u\| \, \text{Tr} (\Xi_u)$ and following the argument earlier in the section yields \eqref{eq:summarization_lemma_public_coin}. 
    
    Now assume $U$ is degenerate and write $\Xi_u = \Xi$. The matrix $\Xi$ is positive definite and symmetric, therefore it possesses a spectral decomposition $V^\top \text{Diag}(\xi_1,\dots,\xi_{d_L}) V$. Without loss of generality, assume that $\xi_1 \geq \xi_2 \geq \cdots \geq \xi_{d_L}$ with corresponding eigenvectors $V = \left(\begin{matrix} v_1 & \cdots & v_{d_L} \end{matrix}\right)$. Let $\check{V}$ denote the ${d_L} \times \lceil {d_L}/2 \rceil$ matrix $\left(\begin{matrix} v_{\lfloor {d_L}/2 \rfloor+1} & \cdots & v_{d_L} \end{matrix}\right)$. The choice of prior may depend on $\Xi$, to see this, note the order of the supremum and infimum in \eqref{eq : sup inf public coin divergence lb} and the fact that $\Xi$ solely depends on the choice of kernel. To that extent, set $\bar{\Gamma} = \check{V} \check{V}^\top$. It holds that
    \begin{align*}
        \text{Tr}(\check{V} \check{V}^\top) &= \underset{i=1}{\overset{d_L}{\sum}}  \underset{k=\lfloor {d_L}/2 \rfloor+1}{\overset{d_L}{\sum}} (v_k)_i^2 = \lceil {d_L}/2 \rceil.
    \end{align*}
    The choice $\bar{\Gamma}$ is thus seen to satisfy the conditions of symmetry and positive definiteness and is idempotent with rank $\lceil {d_L}/2 \rceil$. 
    
    Since the eigenvalues are decreasingly ordered,
    \begin{equation*}
    \xi_{\lfloor {d_L}/2 \rfloor} \leq \frac{2}{{d_L}} \underset{i=1}{\overset{\lfloor {d_L}/2 \rfloor}{\sum}} \xi_i \leq \frac{2}{{d_L}} \text{Tr} ( \Xi ).
    \end{equation*}
    By orthogonality of the columns of $V$, $\check{V}^\top \Xi \check{V} = \text{Diag}(\xi_{\lfloor d/2 \rfloor+1},\dots,\xi_{d_L})$. The condition of \eqref{eq : to check before 6.2.2} reduces to
    \begin{align*}
    \varrho^2 \|\sqrt{\bar{\Gamma}}^\top \Xi_u \sqrt{\bar{\Gamma}} \| &\leq  \varrho^2 \xi_{\lfloor {d_L}/2 \rfloor} \leq 2 \frac{\rho^2}{\sqrt{c_\alpha} {d_L}^2} \text{Tr} ( \Xi ).
    \end{align*}
    
    Furthermore,
    \begin{align*}
    \text{Tr} \big( (\sqrt{\bar{\Gamma}}^\top \Xi_u \sqrt{\bar{\Gamma}} )^2 \big) =\text{Tr} \left( (\check{V}^\top \Xi \check{V})^2 \right) = \underset{i=\lfloor {d_L}/2 \rfloor+1}{\overset{{d_L}}{\sum}} \xi_i^2 \leq {d_L} \xi_{\lfloor {d_L}/2 \rfloor}^2 \leq \frac{4}{{d_L}} \text{Tr} ( \Xi )^2,
    \end{align*}
    which implies in turn that
    \begin{equation*}
    \varrho^4 \text{Tr} \left( (\check{V}^\top \Xi \check{V})^2 \right) \leq 4 \frac{\rho^4}{c_\alpha {d_L}^3} \text{Tr} ( \Xi )^2.
     \end{equation*}
    \end{proof}

    \subsection{Step 5: Capturing the cost of privacy in trace of $\Xi_u$ and the local Bayes factors}\label{ssec:step5}
    
    The cost of privacy is captured through bounds on $\mathrm{A}_u^\pi$ and $\mathrm{B}_u^\pi$. These bounds specifically use the fact that the Markov kernels that underlie these quantities are $(\epsilon,6\delta)$-differentially private. 
    
    We start with the bound on $\mathrm{A}_u^\pi$, for which we proceed by a data processing argument for the matrix $\Xi_u$ under the $(\epsilon,6\delta)$-DP constraint. This comes in the guise of Lemma \ref{lem:data_processing_impure_DP_Fisher_info} below. Its proof is deferred to the end of the section.
    
    \begin{lemma}\label{lem:data_processing_impure_DP_Fisher_info}
    Let $0 < \epsilon \leq 1$ and let $Y^{(j)}$ be a transcript generated by an $(\epsilon,\delta)$-DP constraint distributed protocol, with $0 < \epsilon \leq 1$ and $0 \leq \delta \leq  \left( \left({n}{{d_L}}^{-1} \wedge n^{1/2}{d_L}^{-1/2}  \right) \epsilon^2 \right)^{1+\omega}$ for some $\omega>0$.  The matrix $\Xi_u^j$ as defined in \eqref{eq:def_Xi_u_j} satisfies
    \begin{equation*}
    \text{Tr}\left( \Xi_u^j \right) \leq (C n^2 \epsilon^2) \wedge (nd_L)
    \end{equation*}
    for a fixed constant $C > 0$. Furthermore, it holds that $\Xi_u^j \leq  n I_{d_L}$.
    \end{lemma}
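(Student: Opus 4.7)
The plan is to prove the two claimed bounds separately: first the operator inequality $\Xi_u^j \leq n I_{d_L}$ (which by taking the trace gives $\mathrm{Tr}(\Xi_u^j)\leq n d_L$), and then the sharper trace bound $\mathrm{Tr}(\Xi_u^j)\leq C n^2\epsilon^2$, which is the one that genuinely uses the privacy constraint. Combining these yields the stated $(Cn^2\epsilon^2)\wedge(n d_L)$ bound.

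\emph{Step 1 (operator bound, no privacy used).} Set $S^{(j)} := \sigma^{-1}\sum_{i=1}^n X^{(j)}_{L;i}$, so under $P_0$ the summands are i.i.d.\ $N(0,\sigma^2 I_{d_L})$ and $S^{(j)} \sim N(0,n I_{d_L})$. The law of total covariance gives
\begin{equation*}
n I_{d_L} \;=\; \mathrm{Cov}_0(S^{(j)}) \;=\; \mathrm{Cov}_0\!\big(\E_0[S^{(j)}\mid Y^{(j)},U=u]\big) \;+\; \E_0\!\big[\mathrm{Cov}_0(S^{(j)}\mid Y^{(j)},U=u)\big],
\end{equation*}
and since $\E_0 S^{(j)}=0$, the first summand on the right-hand side is precisely $\Xi_u^j$. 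Positivity of the second summand yields $\Xi_u^j \leq n I_{d_L}$.

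\emph{Step 2 (trace bound via score attack).} For the DP-dependent bound I follow the score-attack template of \cite{cai2023score,cai2023private}. Writing $M(Y^{(j)}) := \E_0[S^{(j)}\mid Y^{(j)},U=u]$, the tower property yields
\begin{equation*}
\mathrm{Tr}(\Xi_u^j) \;=\; \E_0\|M(Y^{(j)})\|_2^2 \;=\; \E_0\!\big[M(Y^{(j)})^\top S^{(j)}\big] \;=\; \sigma^{-1}\sum_{i=1}^n \E_0\!\big[M(Y^{(j)})^\top X^{(j)}_{L;i}\big].
\end{equation*}
For each $i$, I would introduce an independent copy $\widetilde X^{(j)}_{L;i}$ of $X^{(j)}_{L;i}$ and let $\widetilde Y^{(j),i}$ denote the transcript generated from the dataset in which $X^{(j)}_{L;i}$ is replaced by $\widetilde X^{(j)}_{L;i}$. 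By construction $M(\widetilde Y^{(j),i})$ is independent of $X^{(j)}_{L;i}$, so $\E_0[M(\widetilde Y^{(j),i})^\top X^{(j)}_{L;i}]=0$, and the $i$-th summand reduces to $\E_0[(M(Y^{(j)})-M(\widetilde Y^{(j),i}))^\top X^{(j)}_{L;i}]$. The $(\epsilon,\delta)$-DP property quantitatively controls the law of $Y^{(j)}$ relative to that of $\widetilde Y^{(j),i}$, and combined with a truncation of $X^{(j)}_{L;i}$ at an appropriate Gaussian-tail scale, Cauchy--Schwarz then yields a per-$i$ contribution of order $\sigma\epsilon$. Summing over $i$ and multiplying by $\sigma^{-1}$ gives the claimed $C n^2\epsilon^2$ bound.

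\emph{Main obstacle.} The delicate point is converting the $(\epsilon,\delta)$-DP constraint into a sharp $L^2$-bound on $M(Y^{(j)})-M(\widetilde Y^{(j),i})$ when $M$ is a posterior mean of an unbounded Gaussian quantity, since naive DP/coupling arguments couple $\mathrm{Tr}(\Xi_u^j)$ to itself and yield only loose bounds. The precise assumption $\delta \lesssim ((n d_L^{-1}\wedge n^{1/2}d_L^{-1/2})\epsilon^2)^{1+\omega}$ is calibrated so that truncating $X^{(j)}_{L;i}$ at a scale matched to the sensitivity of $M$ contributes only a negligible residual, while leaving the DP-based comparison between $Y^{(j)}$ and $\widetilde Y^{(j),i}$ tight enough to deliver the $C n^2\epsilon^2$ bound rather than a polynomially worse one.
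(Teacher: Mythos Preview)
Your Step~1 is correct and matches the paper (the law of total covariance is equivalent to the $L_2$-contraction of conditional expectation used there). The skeleton of Step~2 --- writing $\mathrm{Tr}(\Xi_u^j)=\sigma^{-1}\sum_i \E_0[M(Y^{(j)})^\top X^{(j)}_{L;i}]$ and comparing with an independent-copy version via DP --- is also the right score-attack framework, and by exchangeability your $\langle M(\widetilde Y^{(j),i}),X^{(j)}_{L;i}\rangle$ has the same law as the paper's $\breve G_i=\langle M(Y^{(j)}),\breve X^{(j)}_{L;i}\rangle$.

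The gap is in how you extract the $n^2\epsilon^2$ bound. Your arithmetic does not close: a per-$i$ contribution of order $\sigma\epsilon$, summed over $i$ and multiplied by $\sigma^{-1}$, gives $n\epsilon$, not $n^2\epsilon^2$. More importantly, the route you sketch --- Cauchy--Schwarz on $(M(Y^{(j)})-M(\widetilde Y^{(j),i}))^\top X^{(j)}_{L;i}$ followed by a DP bound on $\|M(Y^{(j)})-M(\widetilde Y^{(j),i})\|_2$ --- does not work: DP controls distributions, not this $L_2$-difference, and any coupling argument here either fails (unbounded $M$) or loses the crucial $\epsilon$ factor. The paper instead writes $G_i=\langle M(Y^{(j)}),X^{(j)}_{L;i}\rangle$, splits $\E G_i=\E G_i^+-\E G_i^-$ by the layer-cake formula, applies DP at the level of tail probabilities $\P(G_i^\pm\geq t)\lessgtr e^{\pm\epsilon}\P(\breve G_i^\pm\geq t)\pm\delta$ on a truncated range $[0,T]$, and uses $\E\breve G_i=0$ to obtain
\[
\E G_i \;\lesssim\; \epsilon\,\E|\breve G_i| \;+\; T\delta \;+\; (\text{tails beyond }T).
\]
Only now is Cauchy--Schwarz applied, to the benign term $\E|\breve G_i|\leq \sqrt{\mathrm{Tr}(\Xi_u^j)}$ (since $\breve G_i\mid Y^{(j)}\sim N(0,\|M(Y^{(j)})\|_2^2)$). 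Summing over $i$ gives the \emph{self-referential} inequality $\mathrm{Tr}(\Xi_u^j)\lesssim n\epsilon\sqrt{\mathrm{Tr}(\Xi_u^j)}+n(T\delta+\text{tails})$; solving it yields $\mathrm{Tr}(\Xi_u^j)\lesssim n^2\epsilon^2$ once the residual is $O(n\epsilon)$. Thus the self-coupling you flag as an obstacle is actually the mechanism that produces the quadratic $n^2\epsilon^2$. Finally, the truncation is on the scalar $G_i$ at level $T\asymp (d_L\vee\sqrt{nd_L})\log(1/\delta)$ (not on the vector $X^{(j)}_{L;i}$), with the tail beyond $T$ controlled by showing $G_i$ is $\sqrt{nd_L}$-sub-exponential; the assumption on $\delta$ is exactly what makes $T\delta\lesssim \epsilon$.
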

    
    %By Lemma \ref{lem:data_processing_fisher_information}, $\Xi_u^j \leq n I_d$. 
    The lemma implies in particular that $\| \Xi_u^j \| \leq (C n^2 \epsilon^2) \wedge n$, as $\Xi_u^j$ is symmetric and positive definite. Combining this with \eqref{eq:nonasymptotic_testing_lower_bound_shared_randomness_dL} and the triangle inequality, we obtain
    \begin{equation}\label{eq : to check before 6.2.2 public coin}
     { \varrho^2} \|  \Xi_u  \| \leq { \varrho^2} \underset{j=1}{\overset{m}{\sum}}  \|  \Xi_u^j  \| \leq \frac{m\left((C n^2 \epsilon^2) \wedge n\right) \rho^2}{\sigma^{2} \sqrt{c_\alpha} d_L} \leq C\sqrt{c_\alpha}.
    \end{equation}
    Similarly, \eqref{eq:nonasymptotic_testing_lower_bound_local_randomness_dL} yields
    \begin{equation}\label{eq : to check before 6.2.2 private coin}
        \frac{2 \rho^2}{\sqrt{c_\alpha} d^2_L} \text{Tr}\left( \Xi_u^j \right) \leq \frac{m\left((C n^2 \epsilon^2) \wedge (n d_L) \right) \rho^2}{\sigma^{2}\sqrt{c_\alpha} d^2_L} \leq C\sqrt{c_\alpha} / \sqrt{d_L}.
    \end{equation}
    The last two displays together finish the verification of the conditions of Lemma \ref{lem:brascamp_lower_bound_summarization}. The above data processing inequalities for $\Xi_u^j$ and bounds on $\rho^2$ also yield a bound on $\mathrm{A}_u^\pi$ as defined in Lemma \ref{lem:brascamp_lower_bound_summarization}. In case of shared randomness protocols, using \eqref{eq:summarization_lemma_public_coin}, \eqref{eq : to check before 6.2.2 public coin}, Lemma \ref{lem:data_processing_impure_DP_Fisher_info} and \eqref{eq:nonasymptotic_testing_lower_bound_shared_randomness_dL}, we obtain
    \begin{equation*}
    \mathrm{A}_u^\pi \leq \exp \left( C^2 c_\alpha \right).
    \end{equation*}
    In case of local randomness protocols, combining \eqref{eq:summarization_lemma_private_coin} with \eqref{eq : to check before 6.2.2 private coin} and \eqref{eq:nonasymptotic_testing_lower_bound_local_randomness_dL} yields the above bound on $\mathrm{A}_u^\pi$.
    
    Next, we turn to $\mathrm{B}_u^\pi$. The first bound, given in Lemma \ref{lem:Bu_bound_bandwidth_constraints}, does not use privacy at all. This bound is only tight whenever $\epsilon \gtrsim 1/\sqrt{n}$ (as a bound on $\mathrm{B}_u^\pi$), in which case it corresponds to the regime where majority voting bares no privacy cost.
    
    \begin{lemma}\label{lem:Bu_bound_bandwidth_constraints}
    Consider $\mathrm{B}_u^\pi$ as in \eqref{eq:sup:Bu_defined} with $\pi = N(0,\rho^2 c_\alpha^{-1/2} d^{-1}_L \bar{\Gamma})$ with $\bar{\Gamma}$ idempotent. It holds that
    \begin{equation*}
    \mathrm{B}_u^\pi \leq \exp \left(C  \frac{mn^2 \rho^{4}}{c_\alpha \sigma^2 d_L}  \right).
    \end{equation*}
    \end{lemma}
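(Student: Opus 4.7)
The plan is to exploit that this lemma makes no use of the privacy structure of $K^j$ at all; the bound is a pure second-moment computation for the mixture likelihood in the Gaussian sequence model, followed by a Gaussian chaos MGF evaluation.

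First, for each fixed $j$, apply conditional Jensen's inequality to the convex function $z \mapsto z^2$ to get
\begin{equation*}
\E_0\!\left[ \mathscr{L}_\pi(X^{(j)}) \mid Y^{(j)}, U = u\right]^2 \leq \E_0\!\left[ \mathscr{L}_\pi(X^{(j)})^2 \mid Y^{(j)}, U = u\right].
\end{equation*}
Since $\mathscr{L}_\pi(X^{(j)})$ is a function of $X^{(j)}$ only and $X^{(j)}$ is independent of $U$ under $\P_0$, the tower property collapses the outer $\E_0^{Y^{(j)}|U=u}$ on the right-hand side to the plain expectation $\E_0 \mathscr{L}_\pi(X^{(j)})^2$. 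Taking the product over $j$ yields $\mathrm{B}_u^\pi \leq \prod_{j=1}^m \E_0 \mathscr{L}_\pi(X^{(j)})^2$, so the job reduces to bounding this single-server second moment, uniformly in $j$.

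Next, write $\mathscr{L}_\pi(X^{(j)}) = \int \mathscr{L}_f(X^{(j)}) \, d\pi(f)$, square and swap with Fubini, so that
\begin{equation*}
\E_0 \mathscr{L}_\pi(X^{(j)})^2 = \iint \E_0\!\left[ \mathscr{L}_f(X^{(j)}) \mathscr{L}_g(X^{(j)}) \right] d\pi(f) \, d\pi(g).
\end{equation*}
In the Gaussian sequence model, $\mathscr{L}_f(X^{(j)}) = \exp( \sigma^{-2} f^\top \sum_{i=1}^n X^{(j)}_{L;i} - n \|f\|_2^2/(2\sigma^2))$, and since $\sum_i X^{(j)}_{L;i} \sim N(0, n\sigma^2 I_{d_L})$ under $\P_0$, a direct Gaussian MGF computation gives $\E_0 \mathscr{L}_f(X^{(j)}) \mathscr{L}_g(X^{(j)}) = \exp( n f^\top g / \sigma^2)$. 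With $\pi = N(0, \varrho^2 \bar{\Gamma})$ and $\varrho^2 = \rho^2/(\sqrt{c_\alpha}\,d_L)$, and using that $\bar{\Gamma}$ idempotent implies $\bar{\Gamma}^{1/2} = \bar{\Gamma}$, substitute $f = \varrho \bar{\Gamma} Z_1$, $g = \varrho \bar{\Gamma} Z_2$ with $Z_1, Z_2$ independent standard Gaussians in $\R^{d_L}$; this reduces the double integral to $\E \exp( (n \varrho^2 / \sigma^2) Z_1^\top \bar{\Gamma} Z_2)$.

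Finally, integrate out $Z_1$ conditionally on $Z_2$, producing a chi-square MGF
\begin{equation*}
\E \exp\!\left( \tfrac{1}{2} (n \varrho^2 / \sigma^2)^2 \, Z_2^\top \bar{\Gamma} Z_2 \right) = \bigl(1 - (n\varrho^2/\sigma^2)^2\bigr)^{-r/2},
\end{equation*}
where $r = \text{rank}(\bar{\Gamma}) \leq d_L$ (the nonzero eigenvalues of $\bar{\Gamma}$ all equal $1$). The only technical wrinkle is checking that $(n\varrho^2/\sigma^2)^2 \leq 1/2$ so the MGF is finite and satisfies $-\log(1-t) \leq Ct$; this follows from the assumed upper bound on $\rho^2$ together with the choice $\varrho^2 = \rho^2/(\sqrt{c_\alpha} d_L)$ for $c_\alpha$ sufficiently small, since $n\rho^2$ is controlled by $c_\alpha \sigma^2 d_L$ up to constants in all regimes of interest. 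Using $r \leq d_L$ gives a per-server bound of the form $\exp(C n^2 \rho^4/(c_\alpha d_L \sigma^4))$, and multiplying over the $m$ servers yields the stated exponential bound. No serious obstacle arises; the main subtlety is bookkeeping the verification of the MGF's domain of validity using the hypothesis on $\rho$.
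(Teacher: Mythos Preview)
Your proposal is correct and follows essentially the same route as the paper: contract the conditional expectation to pass to $\E_0 \mathscr{L}_\pi(X^{(j)})^2$, expand the mixture as a double integral, compute $\E_0 \mathscr{L}_f \mathscr{L}_g = \exp(n f^\top g/\sigma^2)$, and then bound the resulting Gaussian chaos $\E\exp\bigl((n\varrho^2/\sigma^2)\,Z_1^\top \bar{\Gamma} Z_2\bigr)$. The only cosmetic difference is that the paper invokes a ready-made Gaussian chaos MGF bound (Lemma~6.2.2 in \cite{vershynin_high-dimensional_2018}) while you integrate out $Z_1$ by hand and finish with a chi-square MGF; both need the same smallness condition $n\varrho^2/\sigma^2 \leq c$ and deliver the same per-server exponent $C n^2\rho^4/(c_\alpha \sigma^4 d_L)$.
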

    \begin{proof}
    Since conditional expectation contracts the $L_2(\P_0^Y)$-norm, 
    \begin{equation*}
    \underset{j=1}{\overset{m}{\Pi}} \E_0^{{Y^{(j)}}|U=u} \E_0 \left[ \mathscr{L}_\pi \left({X}^{{(j)}}\right) \bigg| Y^{{(j)}}, U=u \right]^2 \leq \underset{j=1}{\overset{m}{\Pi}} \E_0^{X^{(j)}} \left[ \mathscr{L}_\pi \left({X}^{(j)}\right)^2 \right].
    \end{equation*}
    We now proceed to bound the first factor {in the product} on the right-hand side of the display above, which for a positive semi-definite choice of $\bar{\Gamma}$ equals
    \begin{equation*}
     \int \E_0^{X^{(j)}}  \exp \left( (\sqrt{\bar{\Gamma}}(f+g))^\top \sum_{i=1}^n  {X}^{{(j)}}_{L;i} - \frac{n \sigma^2}{2} \left\|\sqrt{\bar{\Gamma}}f\right\|_2^2 - \frac{n\sigma^2}{2} \left\|\sqrt{\bar{\Gamma}}g\right\|_2^2 \right) dN\left(0, \frac{\rho^2}{\sqrt{c_\alpha}d_L}  I_{2d_L}\right) (f,g).
    \end{equation*}
    By direct computation involving the moment generating function of the normal distribution, the latter display equals
    \begin{equation*}
     \int \exp \left( \frac{n \rho^2}{\sigma^2 \sqrt{c_\alpha}d_L} z^\top \bar{\Gamma} z'  \right) dN(0,I_{2d_L}) (z,z'). 
    \end{equation*}
    We aim to apply the moment generating function of the Gaussian chaos, e.g. Lemma 6.2.2 in \cite{vershynin_high-dimensional_2018} to the above display. Using that $\rho^2$ satisfies \eqref{eq:nonasymptotic_testing_lower_bound_local_randomness_dL} or \eqref{eq:nonasymptotic_testing_lower_bound_shared_randomness_dL} and since $\|\bar{\Gamma}\| = 1$ by the fact that $\bar{\Gamma}$ is idempotent, $ \frac{ n \rho^2}{\sigma^2 c_\alpha^{1/2} {d_L}} \leq \sqrt{c_\alpha}$ with $c_\alpha >0$ chosen small enough, the aforementioned result yields that there exists a constant $C>0$
    \begin{equation}\label{eq : factorized final bound}
     \underset{j=1}{\overset{m}{\Pi}} \E_0^{X^{(j)}|U=u} \left[ \mathscr{L}_\pi \left({X}^{(j)}\right)^2 \right] \leq \exp \left(C c_\alpha^{-1} \frac{mn^2 \rho^{4}}{\sigma^4 d_L}  \right),
    \end{equation}
    where $C > 0$ is universal.
    \end{proof}
    
    Whenever $\epsilon \leq n^{-1/2}$, a much more involved data processing argument is needed than the one above. In the argument that follows, we will use the bound on $\cL_{\pi,u}^j$ obtained through Lemma \ref{lem:portmanteau_lemma} in Step 2. The data processing argument leads to the bound of Lemma \ref{lem:product_of_likelihoods_privacy_bound_impure_DP} below. Its proof is based on coupling arguments, where the two different couplings constructed result in the different rates observed in the condition of the theorem.
    
    \begin{lemma}\label{lem:product_of_likelihoods_privacy_bound_impure_DP}
    Let $\pi = N(0,d^{-1}_L \rho^2 \bar{\Gamma})$, with $\bar{\Gamma} \in \R^{d_L \times d_L}$ a symmetric idempotent matrix, 
    \begin{equation*}
     \rho^2 \leq \sigma^{2} c_\alpha ({d^{1/2}_L}/({\sqrt{m}n^{\frac{3}{2}} \epsilon}) \vee {1}/{(mn^2\epsilon^2)})
    \end{equation*}  
    and $\{K^j\}_{j=1}^m$ correspond to a $(\epsilon,\delta)$-DP distributed protocol with transcripts $Y^{(j)}$ such that $0 < \epsilon \leq 1$, $\delta \lesssim c_\alpha (m^{-1} \wedge \epsilon)$ and
    % \begin{equation*}
    %  | {L}_{\pi,u}^j(y) - 1| \leq \frac{2 m^{1/2}}{\sqrt{\alpha}} \sqrt{\E_0^{Y^{(j)}|U=u} (L_{\pi,u}^j (Y^{(j)}) - 1)^2} \;\;\;P_0 K^j(\cdot|X^{(j)},u)\text{-a.s.}
    % \end{equation*}
    \begin{equation*}
     | {\cL}_{\pi,u}^j(y) - 1| \leq \frac{5 m^{1/2}}{{\alpha}}  \;\;\;P_0 K^j(\cdot|X^{(j)},u)\text{-a.s.}
    \end{equation*}
    Then, there exists a universal constant $C> 0$ such that 
    \begin{equation}\label{eq:Bu_bound_exp_times_const}
    \mathrm{B}^\pi_u \leq e^{C \sqrt{c_\alpha}}.
    \end{equation}
    \end{lemma}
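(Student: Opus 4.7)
The plan is to reduce the bound on $\mathrm{B}_u^\pi$ to a bound on a sum of per-machine chi-squared divergences and then factorize each term via Cauchy--Schwarz. Writing $\chi_j^2 := \chi^2(\P_\pi^{Y^{(j)}|U=u} \,\|\, \P_0^{Y^{(j)}|U=u})$, the identity $\E_0^{Y^{(j)}|U=u}[\cL_{\pi,u}^j]^2 = 1 + \chi_j^2$ together with $1+x \leq e^x$ gives $\mathrm{B}_u^\pi \leq \exp(\sum_{j=1}^m \chi_j^2)$, so it suffices to show $\sum_j \chi_j^2 \lesssim \sqrt{c_\alpha}$.

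The factorization step starts from the identity $\cL_{\pi,u}^j(y) - 1 = \int \left[ dK^j(\cdot|x,u)/d\P_0^{Y^{(j)}|U=u}(y) - 1 \right]\left(\Lambda_\pi^j(x) - 1\right) dP_0^n(x)$, where $\Lambda_\pi^j = dP_\pi^{X^{(j)}}/dP_0^{X^{(j)}}$ (both centering terms integrate to zero under $P_0^n$). Cauchy--Schwarz against $dP_0^n$ followed by Fubini against $d\P_0^{Y^{(j)}|U=u}$ yields the factorization
\[
\chi_j^2 \;\leq\; \chi^2\!\left( P_\pi^{X^{(j)}} \,\|\, P_0^{X^{(j)}} \right) \cdot \E_{X \sim P_0^n}\, \chi^2\!\left( K^j(\cdot|X,u) \,\|\, \P_0^{Y^{(j)}|U=u} \right).
\]
For the first factor, I would use conjugacy between the Gaussian prior and Gaussian model: the moment generating function of the Gaussian chaos $f^\top S/\sigma^2$ under $\pi \otimes \pi$ gives the exact value $|I - (nV_\pi/\sigma^2)^2|^{-1/2} - 1 \lesssim n^2\rho^4/(d_L\sigma^4)$, while Jensen's inequality over $\pi$ yields the alternative bound $\E_\pi \chi^2(P_f^n \,\|\, P_0^n) \leq n\rho^2/\sigma^2$. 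Taking the minimum gives $\chi^2(P_\pi^{X^{(j)}} \,\|\, P_0^{X^{(j)}}) \lesssim \min(n\rho^2/\sigma^2,\, n^2\rho^4/(d_L\sigma^4))$.

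For the second factor, which plays the role of an information capacity of the privatized channel, I would use the uniform likelihood bound $dK^j(\cdot|x,u)/d\P_0^{Y^{(j)}|U=u} \leq C$ inherited from the construction in Lemmas~\ref{lem:obtaining_bounded_kernel_densities} and \ref{lem:portmanteau_lemma}, combined with the Efron--Stein variance inequality applied over the $n$ independent coordinates of $X \sim P_0^n$. The $(\epsilon,\delta)$-DP constraint bounds each per-coordinate difference via a coupling between $K^j(\cdot|X,u)$ and $K^j(\cdot|X^{(i)},u)$: on a ``good'' event of high probability the density ratio is at most $e^\epsilon$, giving a per-coordinate squared difference of order $\epsilon^2 \cdot k^j(y|X,u)^2$; the exception event, of mass at most $\delta$, is absorbed using the uniform density bound together with the hypothesis $\delta \lesssim c_\alpha(m^{-1} \wedge \epsilon)$. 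Integrating against $p_0^{Y^{(j)}|U=u}(y)dy$ yields $\E_{X \sim P_0^n} \chi^2(K^j(\cdot|X,u) \,\|\, \P_0^{Y^{(j)}|U=u}) \lesssim n\epsilon^2$.

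Combining the two factors per machine and summing over $j$,
\[
\sum_{j=1}^m \chi_j^2 \;\lesssim\; \min\!\left( \frac{m n^2 \epsilon^2 \rho^2}{\sigma^2},\; \frac{m n^3 \epsilon^2 \rho^4}{d_L \sigma^4} \right),
\]
so under either clause of the hypothesis $\rho^2 \leq \sigma^2 c_\alpha(d_L^{1/2}/(\sqrt{m}n^{3/2}\epsilon) \vee 1/(mn^2\epsilon^2))$, the corresponding term is $\leq C\sqrt{c_\alpha}$, giving $\mathrm{B}_u^\pi \leq e^{C\sqrt{c_\alpha}}$. The main obstacle lies in the information-capacity bound: the pure-DP case admits a pointwise density ratio bound $e^\epsilon$ making the Efron--Stein step straightforward, but for approximate DP one must carefully couple the two adjacent-input kernels so they agree outside a set of mass at most $\delta$, and the uniform density bound inherited from the earlier construction is essential to absorb the resulting $\delta$-mass contribution into the main $n\epsilon^2$ term under the assumed scaling of $\delta$.
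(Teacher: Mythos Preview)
Your factorization $\chi_j^2 \le \chi^2(P_\pi^{X^{(j)}}\|P_0^{X^{(j)}})\cdot \E_{X}\chi^2(K^j(\cdot|X,u)\|\P_0^{Y^{(j)}|U=u})$ is valid, but both factors are problematic and the approach cannot close.

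First, your ``Jensen'' bound $\E_\pi\chi^2(P_f^n\|P_0^n)\le n\rho^2/\sigma^2$ is simply false: $\chi^2(P_f^n\|P_0^n)=e^{n\|f\|_2^2/\sigma^2}-1\ge n\|f\|_2^2/\sigma^2$, so Jensen goes the other way. Only the Gaussian–chaos computation $\chi^2(P_\pi^{X^{(j)}}\|P_0^{X^{(j)}})=\det(I-(n\rho^2/(d_L\sigma^2))^2\bar\Gamma)^{-1/2}-1$ is available, and this requires $n\rho^2/(d_L\sigma^2)<1$. Under the second clause $\rho^2\le c_\alpha\sigma^2/(mn^2\epsilon^2)$ with $mn\epsilon^2 d_L<c_\alpha$, this fails and the first factor is infinite. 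This is exactly the regime handled by Case~2 of the coupling in Lemma~\ref{lem:grand_coupling}, which your argument never touches.

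Second, the channel–capacity bound $\E_X\chi^2(K^j(\cdot|X,u)\|\P_0^{Y^{(j)}|U=u})\lesssim n\epsilon^2$ does not hold for $(\epsilon,\delta)$-DP with merely $\delta\lesssim c_\alpha\epsilon$. The Efron--Stein step yields per-coordinate terms $\int(k^j(y|x)-k^j(y|x'))^2\,d\P_0(y)$. Decomposing via $r_\pm:=[k^j(y|x)-e^\epsilon k^j(y|x')]_\pm$ gives a good-set contribution $\lesssim\epsilon^2\int k^j(y|x)^2 d\P_0$ plus a residual $\int r_\pm^2\,d\P_0\le C\int r_\pm\,d\P_0\le C\delta$, where $C$ is the uniform density bound from Lemma~\ref{lem:obtaining_bounded_kernel_densities}. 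But that bound is $C\asymp\max(\delta^{-1},m/\alpha)$ (see the choice $M>\delta^{-1}\vee 2m/\alpha$ in its proof), so $C\delta\asymp 1$ and the residual is not absorbed. A concrete obstruction: take $n=1$, a mechanism that outputs pure noise with probability $1-\delta$ and reveals $X$ with probability $\delta$; this is $(0,\delta)$-DP with $\E_X\chi^2(K(\cdot|X)\|P_0K)\asymp\delta$, which for $\delta\asymp c_\alpha\epsilon$ is $\asymp\epsilon$, not $\epsilon^2$. Plugging $I_j\asymp n\epsilon$ into your final display gives $\sum_j\chi_j^2\lesssim c_\alpha^3/(n\epsilon)$, which diverges.

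The paper avoids both issues by not factorizing at all. It writes $\chi_j^2=\E_\pi^{Y^{(j)}|U=u}V_\pi$ with $V_\pi=\cL_{\pi,u}^j-1$, uses the bounded-likelihood hypothesis to truncate the layer-cake integral at $T=5m^{1/2}/\alpha$, and then invokes the coupling Lemma~\ref{lem:grand_coupling} to compare $P_\pi K^j(A)$ with $P_0 K^j(A)$ directly: a joint law of $(X,\tilde X)$ with marginals $P_0^n,P_\pi^n$ and small Hamming distance lets the $(\epsilon,\delta)$-DP constraint transfer measures of events, yielding a \emph{multiplicative} $(1+c_\alpha^{1/4}m^{-1/2})$ comparison plus additive $\delta+c_\alpha m^{-3/2}$. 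Combined with $\E_\pi V_\pi=\E_0 V_\pi^2$ and Cauchy--Schwarz on $\E_0|V_\pi|$, this produces a self-referential inequality forcing $\E_0 V_\pi^2\lesssim c_\alpha^{1/2}/m$. The two separate couplings in Lemma~\ref{lem:grand_coupling} are precisely what handle the two clauses of the $\rho^2$ hypothesis; your Cauchy--Schwarz split cannot replicate this.
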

    
    Combining the lemma above with the bound $\mathrm{B}^\pi_u \leq \exp(C \frac{mn^2\rho^4}{\sigma^2 d_L})$ (which follows from Lemma \ref{lem:Bu_bound_bandwidth_constraints}), we obtain that \eqref{eq:Bu_bound_exp_times_const} holds whenever $\rho$ satisfies \eqref{eq:nonasymptotic_testing_lower_bound_local_randomness_dL} or \eqref{eq:nonasymptotic_testing_lower_bound_shared_randomness_dL}. Combining this with the bounds on $\mathrm{A}_u^\pi$ derived earlier and considering \eqref{eq : sup inf public coin divergence lb} lower bounds the testing risk, we obtain that 
    % the minimax testing risk is lower bounded by
    \begin{equation*} 
        \cR( H_{\rho}^{{s},R} , T) > 1 -\alpha
    \end{equation*} 
    for $c_\alpha > 0$ small enough, from which the result of Theorem \ref{thm:nonasymptotic_testing_lower_bound} follows. To complete the proof, we provide the proofs of Lemmas \ref{lem:data_processing_impure_DP_Fisher_info} and \ref{lem:product_of_likelihoods_privacy_bound_impure_DP} in the following subsection.
    
    \subsubsection{Proof of Lemma \ref{lem:product_of_likelihoods_privacy_bound_impure_DP}}\label{ssec:grand_coupling_lemma_proof}
    
    Before providing the proof of Lemma \ref{lem:product_of_likelihoods_privacy_bound_impure_DP}, we first develop additional tools. 
    
    First off is the following general coupling lemma that will be used in the proof of Lemma \ref{lem:product_of_likelihoods_privacy_bound_impure_DP}. The lemma is in essence Lemma 6.1 in \cite{karwa2017finite}, but its proof might be easier to verify and is provided for completeness. 
    
    \begin{lemma}\label{lem:coupling_general}
    Consider random variables $X_1,\ldots,X_n \overset{\text{i.i.d.}}{\sim} P_1$ and $\tilde{X}_1,\ldots,\tilde{X}_n \overset{\text{i.i.d.}}{\sim}  P_2$ defined on the same space. Write $X=(X_1,\ldots,X_n)$, $\tilde{X}=(\tilde{X}_1,\ldots,\tilde{X}_n)$ and let $K$ be a Markov kernel between the sample space of $X$ (equivalently $\tilde{X}$) and an arbitrary target space, satisfying an $(\epsilon,\delta)$-DP constraint (i.e. \eqref{eq:privacy_constraint}) with $\epsilon \leq 1$.
    
    Suppose that there exists a coupling $\P$ of $(\tilde{X},{X})$ such that $\P^{\tilde{X}} = P_1^n$, $\P^{X} = P_2^n$ and
    \begin{equation*}
    D_i := \mathbbm{1}\left\{\tilde{X}_i \neq {X}_i \right\} \sim \text{Ber}\left(p\right), \text{ i.i.d. for } i=1,\dots,n, \; p  \in [0,1]
    \end{equation*}
    under $\P$. 
    
    Then, it holds that
    \begin{equation}\label{eq:forward-reverse-equation}
    P_1^n K\left(A |\tilde{X}\right) \leq e^{4 \epsilon n p_{}} P_2^n K\left(A |{X}\right) + 2 \delta e^{ 4n p \epsilon}.
    \end{equation}
    \end{lemma}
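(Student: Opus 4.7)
The plan is to combine the standard group-privacy telescoping with a threshold-plus-Chernoff argument that exploits the binomial structure of $D=\sum_{i=1}^n D_i$ under the coupling.

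First, I would establish a group-privacy bound. Iterating the basic $(\epsilon,\delta)$-DP inequality along a Hamming path of length $k$ gives $K(A|x)\leq e^{k\epsilon}K(A|x')+\delta(e^{k\epsilon}-1)/(e^\epsilon-1)$. For $\epsilon\leq 1$ one has $e^\epsilon-1\geq \epsilon$ and $e^{k\epsilon}-1\leq k\epsilon e^{k\epsilon}$, so
$$K(A|x)\leq e^{k\epsilon}K(A|x')+k\delta e^{k\epsilon}$$
whenever $d_H(x,x')\leq k$. Applying this pointwise with $x=\tilde{X}$, $x'=X$, $k=D$, and then integrating against $\P$ (whose marginals are $P_1^n$ and $P_2^n$) yields
$$P_1^n K(A|\tilde{X})\leq \E_\P[e^{D\epsilon}K(A|X)]+\delta\,\E_\P[D\,e^{D\epsilon}].$$

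The third step, which is the main obstacle, is to decouple $e^{D\epsilon}$ from $K(A|X)$: although the $D_i$ are i.i.d.\ Bernoulli, the coupling may make $D$ statistically dependent on $X$, so a direct factorisation of expectations is unavailable. I would split at the threshold $\tau:=4np$. On $\{D\leq \tau\}$, the bound $e^{D\epsilon}\leq e^{4np\epsilon}$ gives
$$\E_\P[e^{D\epsilon}K(A|X)\mathbbm{1}_{D\leq \tau}]\leq e^{4np\epsilon}P_2^n K(A|X).$$
On $\{D>\tau\}$, I would use $K(A|X)\leq 1$ together with the Chernoff estimate
$$\E_\P[e^{D\epsilon}\mathbbm{1}_{D>\tau}]\leq \E[e^{tD}]e^{-(t-\epsilon)\tau}\leq e^{np(e^t-1)-4np(t-\epsilon)},$$
valid for any $t>\epsilon$ since $\E[e^{tD}]=(1-p+pe^t)^n\leq e^{np(e^t-1)}$ by independence of the $D_i$. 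Optimising $t$ (e.g.\ $t=\log 4$ when $\epsilon\leq 1$) makes this contribution exponentially small in $np$, so it can be absorbed into the $2\delta e^{4np\epsilon}$ error term. The binomial MGF also controls the $\delta$-term, via $\E[D e^{D\epsilon}]=np\,e^\epsilon(1-p+pe^\epsilon)^{n-1}\leq np\,e\cdot e^{2np\epsilon}$, after which a second threshold split absorbs the residual $np$ factor into the exponent.

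Collecting these bounds delivers the claimed inequality $P_1^n K(A|\tilde{X})\leq e^{4np\epsilon}P_2^n K(A|X)+2\delta e^{4np\epsilon}$. The hard part is really the decoupling in step three: without an a priori independence of $D$ from $X$, the i.i.d.\ Bernoulli structure of the $D_i$'s has to be leveraged quantitatively through the binomial MGF, and the factor $4$ in the exponent (as well as the constant $2$ in front of $\delta$) is the price paid for the threshold-plus-Chernoff argument replacing a naive factorisation.
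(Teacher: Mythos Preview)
Your approach has a genuine gap in the decoupling step. After the threshold split, the tail contribution $\E_\P[e^{D\epsilon}\mathbbm{1}_{D>4np}]$ carries \emph{no} factor of $\delta$: when $\delta=0$ the lemma asserts $P_1^nK(A\mid\tilde X)\le e^{4np\epsilon}P_2^nK(A\mid X)$ with no additive error, yet your bound still leaves this tail as an additive remainder that cannot be absorbed anywhere (recall $P_2^nK(A\mid X)$ can be arbitrarily small, so the tail cannot be hidden in the main term either). Worse, the Chernoff bound you propose does not even make the tail small for all $\epsilon\le 1$: optimising $t$ in $e^{np(e^t-1)-4np(t-\epsilon)}$ gives $t=\log 4$ and exponent $np\bigl(3-4\log 4+4\epsilon\bigr)$, which is \emph{positive} once $\epsilon>\log 4-3/4\approx 0.636$, so for such $\epsilon$ the tail grows exponentially in $np$ rather than decaying.

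The paper avoids the decoupling problem altogether by a different route. It conditions on $S=\sum_i D_i$ and proves, via an exchangeability argument over the i.i.d.\ Bernoulli pattern $(D_1,\dots,D_n)$, the one-step relation $\E[K(A\mid\tilde X)\mid S=k]\le e^{\epsilon}\,\E[K(A\mid\tilde X)\mid S=k-1]+\delta$. Iterating down to $S=0$, where $\tilde X=X$ and hence $\E[K(A\mid\tilde X)\mid S=0]=\E[K(A\mid X)\mid S=0]$, produces a common anchor. The same argument applied to $X$ yields a \emph{lower} bound $\E[K(A\mid X)\mid S=k]\ge e^{-\epsilon k}\,\E[K(A\mid X)\mid S=0]-\delta k$. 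Combining the two directions and averaging in $S$ produces the multiplicative factor $\E e^{\epsilon S}/\E e^{-\epsilon S}\le e^{4np\epsilon}$, which depends only on the law of $S$ and never requires factoring $e^{D\epsilon}$ against $K(A\mid X)$. The two-sided conditioning on $S$, with $S=0$ as an anchor, is precisely the idea missing from your argument.
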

    \begin{proof}
    Let $\E$ denote expectation with respect to $\P_{}$ and write $D = (D_i)_{i \in [n]}$, $S := \sum_{i=1}^n D_i$. We start by noting that
    \begin{equation}\label{eq:ch1_if_Dis0_tildeX_is_X}
    \E \left[ K(A | \tilde{X}) | S = 0 \right] = \E \left[ K(A | {X}) | S = 0 \right].
    \end{equation}
    Next, we show that for all $ k \in [n]$,
    \begin{align}
    e^{-\epsilon}\E \left[ K(A | \tilde{X}) | S = k - 1 \right] - \delta \leq
    \E \left[ K(A | \tilde{X}) | S = k \right] 
     \leq e^{\epsilon}\E \left[ K(A | \tilde{X}) | S = k - 1 \right] + \delta. \label{eq:iterative_equation_conditional_D_privacy}
    \end{align}
    Write $v_{(-i)} = (v_i)_{[n]\setminus \{i\}}$ for a vector $v \in \R^n$. Let $k \in [n]$ be given and let $\cV_k$ denote the set of $v \in \{0,1\}^n$'s such that $\sum_{i=1}^n v_i = k$. Using the definition of DP, the integrand in the conditional expectation satisfies
    \begin{equation}\label{eq:privacy_lb_ub}
    e^{-\epsilon} K(A | \tilde{X}_1,\dots,\breve{X}_{i}, \dots,\tilde{X}_n) - \delta \leq K(A | \tilde{X}) \leq e^\epsilon K(A | \tilde{X}_1,\dots,\breve{X}_{i}, \dots,\tilde{X}_n) + \delta,
    \end{equation}
    for any random variable $\breve{X}_{i}$ taking values in the sample space of $\tilde{X}_i$. In particular, if $v_i = 1$ it holds that
    \begin{align*}
    \E^{ } \left[   K(A | x_1,\dots,{X}_{i}, \dots,x_n) \big| D_{i} = v_i, \tilde{X}_{(-i)}=x_{(-i)}, D_{(-i)}=v_{(-i)} \right] \leq& \\e^\epsilon  \E^{} \left[  K(A | x_1,\dots,{X}_{i}, \dots,x_n) \big|  D_{i} = 0, \tilde{X}_{(-i)}=x_{(-i)}, D_{(-i)}=v_{(-i)} \right] + \delta,&
    \end{align*}
    for all $x$ in the sample space of $\tilde{X}$. It follows by the law of total probability that
    \begin{align*}
    \E \left[ K(A | \tilde{X}) | D = v \right] \leq e^{\epsilon}\E \left[ K(A | \tilde{X}) | D_i = 0,\, D_k = v_k \text{ for } k \in [n]\setminus\{i\} \right] + \delta,
    \end{align*}
    for all $i \in [n]$. The event $\left\{ D = v \right\}$ equals the event $\left\{ D = v, \, S_{} = k  \right\}$, and similarly it holds that
    \begin{equation*}
    \left\{ D_k = v_k \text{ for } k \in [n]\setminus\{i\}, \, D_i = 0 \right\} = \left\{ D_k = v_k \text{ for } k \in [n]\setminus\{i\}, \, D_i = 0, \, S = k-1 \right\}.
    \end{equation*}
    Consider now the sets
    \begin{align*}
    \cV_{k-1}(v) &:= \left\{ v' \in \cV_{k-1} \, : \,  v_k = v_k' \text{ except for one } i \in [n] \right\} \text{ for } v \in \cV_k,\\
    \cV_{k}(v') &:= \left\{ v \in \cV_{k} \, : \,  v_k = v_k' \text{ except for one } i \in [n] \right\} \text{ for } v' \in \cV_{k-1}.
    \end{align*}
    By what we have derived so far, it holds that any $v \in \cV_k$ and $v' \in \cV_{k-1}(v)$,
    \begin{align*}
    \E \left[ K(A | \tilde{X}) | D = v ,\, S = k \right] \leq e^{\epsilon}\E \left[ K(A | \tilde{X}) | D = v', \, S = k - 1 \right] + \delta.
    \end{align*}
    Consider $\{I_k(v) : v \in \cV_k\}$ independent random variables (on a possibly enlarged probability space) taking values in $[n]$ such that $\P( I_k(v) = i ) = 1/k$ whenever $v_i = 1$. Combining the above with the total law of probability we find that
    \begin{align*}
    \E \left[ K(A | \tilde{X}) | S = k \right] &=\\
    \frac{1}{{n\choose k}} \underset{v \in \cV_k}{\overset{}{\sum}}  \E \left[ K(A | \tilde{X}) | D = v,\, S = k \right] &\leq \\
    e^{\epsilon} \frac{1}{{n\choose k}} \underset{v \in \cV_k}{\overset{}{\sum}}  \E \left[ K(A | \tilde{X}) | D_{I(v)} = 0,\, D_{-I(v)} = v_{-I(v)},\, S = k - 1 \right] + \delta &= \\
    e^{\epsilon} \frac{1}{{n\choose k}} \frac{1}{k} \underset{v \in \cV_k}{\overset{}{\sum}} \underset{v' \in \cV_{k-1}(v)}{\overset{}{\sum}}   \E \left[ K(A | \tilde{X}) | D = v' ,\, S = k - 1 \right] + \delta &= \\
    e^{\epsilon} \frac{1}{{n\choose k}} \frac{1}{k}  \underset{v' \in \cV_{k-1}}{\overset{}{\sum}} \underset{v \in \cV_k(v')}{\overset{}{\sum}}   \E \left[ K(A | \tilde{X}) | D = v' ,\, S = k - 1 \right] + \delta &= \\
    e^{\epsilon} \frac{1}{{n\choose k - 1}} \underset{v' \in \cV_{k - 1}}{\overset{}{\sum}} \E \left[ K(A | \tilde{X}) | D = v', \, S = k - 1 \right] + \delta&=\\
    e^{\epsilon}\E \left[ K(A | \tilde{X}) | S = k - 1 \right] + \delta,
    \end{align*}
    where it is used that $|\cV_k| = {n \choose k}$,
    \begin{equation*}
    \P( D_1 = v_1,\dots,D_n=v_n | S = k) = \P( D_1 = \tilde{v}_1,\dots,D_n=\tilde{v}_n | S = k) 
    \end{equation*}
    for all $v=(v_i)_{i \in [n]},\tilde{v}=(\tilde{v}_i)_{i \in [n]} \in \cV_k$ and for any $v' \in \cV_{k-1}$ there are $n- k + 1$ ways to obtain $v \in \cV_k$ such that $v_k = v_k'$ except for one $i \in [n]$.
    
    By applying the privacy lower bound of \eqref{eq:privacy_lb_ub} and repeating the same steps, we also find that
    \begin{equation*}
    e^{-\epsilon}\E \left[ K(A | \tilde{X}) | S = k - 1 \right] - \delta \leq
    \E \left[ K(A | \tilde{X}) |  S = k \right].
    \end{equation*}
    This proves \eqref{eq:iterative_equation_conditional_D_privacy}, which, applying iteratively, results in the bound 
    \begin{align}
    e^{- \epsilon k} \E \left[ K(A | \tilde{X}) | S = 0 \right] - \delta k  \leq
    \E \left[ K(A | \tilde{X}) | S = k \right] \leq e^{\epsilon k} \E \left[ K(A | \tilde{X}) | S = 0 \right] + \delta k e^{\epsilon k}, \label{eq:iteratively_applied_equation_conditional_D_privacy}
    \end{align}
    for $k=0,1,\dots,n$. By symmetry of the argument, the same inequalities hold for ${X}$ in place of $\tilde{X}$. Using the above inequalities, we can bound
    \begin{equation*}
    P_1 K(A |\tilde{X}) = \E K(A |\tilde{X}) = \E^S  \E \left[ K(A | \tilde{X}) | S \right],
    \end{equation*}
    by
    \begin{equation*}
    \E^S e^{S \epsilon} \E \left[ K(A | \tilde{X}) | S = 0 \right] + \delta \E S e^{S \epsilon}. \label{eq:privacy_kernel_iteration_ub}
    \end{equation*}
    Similarly, applying \eqref{eq:iterative_equation_conditional_D_privacy} with ${X}$ in place of $\tilde{X}$, we find
    \begin{align}
    P_2 K(A | {X}) &= \E^S  \E \left[ K(A | {X}) | S \right] \nonumber  \\
    &\geq \E^S e^{-S \epsilon} \E \left[ K(A | {X}) | S = 0 \right] - \E \delta S. \label{eq:privacy_kernel_iteration_lb}
    \end{align}
    Combining the two inequalities with \eqref{eq:ch1_if_Dis0_tildeX_is_X}, we obtain that
    \begin{equation}
    P_1 K(A |\tilde{X}) \leq \frac{\E^S e^{S \epsilon}}{  \E^S e^{-S \epsilon} } \left( P_2 K(A | {X}) + \E^S \delta S\right) + \delta \E S e^{S \epsilon}.
    \end{equation}
    %Since $S$ is binomially-$(n,p_{})$ distributed under $\P_{f,g}^D$, in v
    In view of the moment generating function of the binomial distribution,
    \begin{align*}
    \frac{\E^S e^{S \epsilon}}{  \E^S e^{-S \epsilon} } = \left( \frac{1 + p_{} (e^{\epsilon} -1) }{1 + p_{} (e^{-\epsilon} -1)} \right)^n \leq e^{ 4n p_{} \epsilon},
    \end{align*}
    where the inequality follows from $0\leq \epsilon,p_{} \leq 1$, the inequality $ e^x - e^{-x} \leq 3x$ for $0 \leq x \leq 1$ and Taylor expanding $\log(1+x) = x - {x^2}/{2} + \ldots$. By Chebyshev's association inequality (e.g. Theorem 2.14 in \cite{boucheron_concentration_2013}), $\E^S  S \E^S e^{S \epsilon} \leq \E^S  S e^{S \epsilon}$. Consequently, using the nonnegativity of $S$,
    \begin{align*}
    \delta \left( \frac{\E^S e^{S \epsilon}}{  \E^S e^{-S \epsilon} } \E^S  S + \E S e^{S \epsilon} \right) &\leq 2\delta \E S e^{S \epsilon}.   
    \end{align*}
    Lemma \ref{lem:DepowerD_binomial_expectation} (a straightforward calculation) now finishes the proof. 
    \end{proof}
    
    Next, we construct the coupling that will be used in the proof of Lemma \ref{lem:product_of_likelihoods_privacy_bound_impure_DP}, in conjunction with Lemma \ref{lem:coupling_general} above. The couplings use similar ideas to the one constructed in \cite{narayanan2022private}.
    
    \begin{lemma}\label{lem:grand_coupling}
        Let $K^j$ satisfy an $(\epsilon,\delta)$-DP constraint for $0 < \epsilon \leq 1$. 

        Consider $\pi = N(0,c_\alpha^{1/2} d^{-1}_L \rho^2 \bar{\Gamma})$, with $\rho^2$ satisfying \eqref{eq:nonasymptotic_testing_lower_bound_local_randomness_dL} or \eqref{eq:nonasymptotic_testing_lower_bound_shared_randomness_dL}, with $\epsilon \leq 1/\sqrt{n}$ and $\delta \leq c_\alpha (m^{-1} \wedge \epsilon)$.
        
        For all measurable sets $A$ it holds that
        \begin{equation}\label{eq:coupling_every_set_ub}
        P_\pi K^j\left(A | X^{(j)},u\right) \leq \left(1+c_\alpha^{1/4} m^{-1/2}\right) P_0 K^j\left(A | X^{(j)},u\right) + 2\delta + \frac{c_\alpha}{m^{3/2}}
        \end{equation}
        and
        \begin{equation}\label{eq:coupling_every_set_lb}
        P_\pi K^j\left(A | X^{(j)},u\right) \geq \left(1-c_\alpha^{1/4} m^{-1/2}\right) P_0 K^j\left(A | X^{(j)},u\right) - 2\delta - \frac{c_\alpha}{m^{3/2}}
        \end{equation}
        for all $c_\alpha > 0$ small enough.
        \end{lemma}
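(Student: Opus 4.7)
The plan is to combine Lemma~\ref{lem:coupling_general} with a conditional coupling of $P_f^n$ and $P_0^n$ for each $f$ in the support of $\pi$, and then average over $f\sim\pi$. Writing
\[
P_\pi K^j(A\mid X^{(j)},u) = \int P_f K^j(A\mid X^{(j)},u)\,d\pi\circ\Psi_L^{-1}(f),
\]
for each fixed $f$ I would couple the product measures $X\sim P_f^n$ and $\tilde X\sim P_0^n$ via the coordinate-wise maximal coupling of the Gaussian measures $N(f,\sigma^2 I_{d_L})$ and $N(0,\sigma^2 I_{d_L})$. This makes $D_i := \mathbbm{1}\{X^{(j)}_i\ne\tilde X^{(j)}_i\}$ i.i.d.\ $\mathrm{Ber}(p(f))$ with $p(f)\le \|f\|_2/(2\sigma)$, so Lemma~\ref{lem:coupling_general} yields
\[
P_f K^j(A\mid X^{(j)},u) \le e^{4\epsilon n p(f)}\, P_0 K^j(A\mid X^{(j)},u) + 2\delta\, e^{4 n\epsilon p(f)},
\]
together with the corresponding lower bound coming from inequality~\eqref{eq:privacy_kernel_iteration_lb} in that proof.

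The next step is to integrate against $\pi$, splitting at a threshold $t$ via $E_t := \{f : \|f\|_2 \le t\}$. On $E_t^c$ I would use the trivial bound $P_fK^j \le 1$ and control $\pi(E_t^c) \le c_\alpha/(2m^{3/2})$ through Gaussian concentration of $\|f\|_2^2 = f^\top \bar\Gamma f$ under $\pi$, noting that $\mathbb{E}_\pi\|f\|_2^2 = c_\alpha^{1/2} d_L^{-1} \rho^2 \mathrm{Tr}(\bar\Gamma) \lesssim c_\alpha^{1/2}\rho^2$. On $E_t$ the exponent $4\epsilon n p(f) \le 2\epsilon n t/\sigma$ is uniformly controlled, so integration gives
\[
\int_{E_t} P_f K^j(A\mid X^{(j)},u)\,d\pi(f) \le \bigl(1+c_\alpha^{1/4}/\sqrt m\bigr)\, P_0 K^j(A\mid X^{(j)},u) + 2\delta\bigl(1+c_\alpha^{1/4}/\sqrt m\bigr).
\]
Since $\delta\le c_\alpha/m$, the extra $\delta c_\alpha^{1/4}/\sqrt m$ term together with $\pi(E_t^c)$ is absorbed into the stated additive slack $c_\alpha/m^{3/2}$, yielding \eqref{eq:coupling_every_set_ub}. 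The lower bound \eqref{eq:coupling_every_set_lb} follows by the symmetric argument based on the reverse inequality in the proof of Lemma~\ref{lem:coupling_general}.

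The hardest part is arranging $t$ so that $2\epsilon n t/\sigma \le c_\alpha^{1/4}/\sqrt m$ and $\pi(E_t^c) \le c_\alpha/(2m^{3/2})$ simultaneously, under both branches of the $\vee$ in the $\rho^2$-hypothesis. In the branch $\rho^2 \lesssim c_\alpha\sigma^2/(mn^2\epsilon^2)$, choosing $t$ of order $\sqrt{c_\alpha^{1/2}\rho^2 \log m}$ yields $\epsilon n t/\sigma \lesssim c_\alpha^{3/4}\sqrt{\log m}/\sqrt m$, which suffices. In the branch $\rho^2 \lesssim c_\alpha \sigma^2 d_L^{1/2}/(\sqrt m\, n^{3/2}\epsilon)$, the same choice gives only $\epsilon n t/\sigma \lesssim c_\alpha^{3/4}(d_L\log m/m)^{1/4}(n\epsilon^2)^{1/4}$; the factor $(n\epsilon^2)^{1/4}\le 1$ is absorbed by $\epsilon\le 1/\sqrt n$, but $(d_L/m)^{1/4}$ is not automatically small. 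A regime-dependent refinement of the coupling, grouping observations or coordinates to reduce the effective $np(f)$ entering Lemma~\ref{lem:coupling_general}, is then required; carrying out this refined construction while respecting the marginals of $P_f^n$ and $P_0^n$ is the principal technical obstacle.
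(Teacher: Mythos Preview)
Your approach for the branch $\rho^2 \lesssim c_\alpha\sigma^2/(mn^2\epsilon^2)$ is essentially the paper's ``Case~2'' argument (applied when $\epsilon \le 1/\sqrt{mnd_L}$, where this branch of the $\vee$ automatically dominates): the observation-wise maximal coupling gives $p(f)\lesssim \|f\|_2/\sigma$, and integrating over $\pi$ works. So that half of your outline is correct and matches the paper.

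The genuine gap is the other regime, $1/\sqrt{mnd_L}\le \epsilon\le 1/\sqrt{n}$, which is exactly where your bound $\epsilon n t/\sigma \lesssim c_\alpha^{3/4}(d_L/m)^{1/4}$ fails. You correctly diagnose that the naive coupling loses a factor $(d_L/m)^{1/4}$, but ``grouping observations or coordinates'' is not what the paper does, and it is not clear such a scheme can respect the marginals $P_0^n$ and $P_\pi^n$ while still yielding i.i.d.\ Bernoulli Hamming-distance indicators as required by Lemma~\ref{lem:coupling_general}. The paper's construction is quite different: one writes the sample average as $\overline{X^{(j)}_L} \overset{d}{=} n^{-1/2}\|Z\|_2\, V$ with $V$ uniform on the sphere, reconstructs each observation as $X^{(j)}_{L;i} = V\zeta_i + G_i$ where $\zeta_i$ is a one-dimensional Gaussian and $G_i$ is noise orthogonal to $V$, and does the same for $\tilde X^{(j)}_L$ using the \emph{same} $V$ and $G_i$. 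Then only the scalar variables $\zeta_i,\tilde\zeta_i$ need to be coupled, and their conditional TV distance is of order $\sqrt{n}\rho^2/\sqrt{d_L}$ (because the two radii $\|Z\|_2$ and $\|(I+n c_\alpha^{1/2}\rho^2 d_L^{-1}\bar\Gamma)^{1/2}Z\|_2$ differ by $O(\sqrt{n}\rho^2\|Z\|_2/d_L)$), rather than $\rho$. This gives $np\lesssim n^{3/2}\rho^2/\sqrt{d_L}$, so $4\epsilon np \lesssim \epsilon n^{3/2}\rho^2/\sqrt{d_L} \le c_\alpha^{1/2}/\sqrt{m}$ exactly under the hypothesis $\rho^2\lesssim c_\alpha \sqrt{d_L}/(\sqrt{m}\,n^{3/2}\epsilon)$. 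The key idea you are missing is this reduction to a one-dimensional coupling via the shared spherical direction; without it the argument does not close in Case~1.
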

    
    \begin{proof}[Proof of Lemma \ref{lem:grand_coupling}]
    
    We suppress the dependence of the Markov kernels on the draw of the shared randomness $u$, as it is of no relevance to the proof. Consider $\tilde{X}^{(j)} \sim P_\pi$ and ${X}^{(j)} \sim P_0$. Since ${X}^{(j)}_{L'k} \overset{d}{=} \tilde{X}^{(j)}_{L'k}$ for all $L' > L$, it suffices to construct couplings for $(\tilde{X}^{(j)}_{L}, {X}^{(j)}_{L})$. We construct two couplings, one for each of the different regimes of $\epsilon$. That is, for each of the regimes, we derive a joint distribution of $(\tilde{X}^{(j)}_{L}, {X}^{(j)}_{L})$ called $\P_{\pi,0}$ such that $\tilde{X}^{(j)}_{L} \sim \P_{\pi,0}^{\tilde{X}^{(j)}_{L}} =  P_\pi$ and ${X}^{(j)}_{L} \sim \P_{\pi,0}^{X^{(j)}_{L}} =  P_0$. The specific couplings that we construct aim at assuring $\mathrm{d}_H(\tilde{X}^{(j)}_{L},{X}^{(j)}_{L})$ is small with high probability. When constructing the coupling below, it suffices to consider $\sigma = 1$ without loss of generality (i.e. by rescaling $X^{(j)}$ and $\tilde{X}^{(j)}$). After the construction of both the couplings, the lemma follows by an application of Lemma \ref{lem:coupling_general}. 
    
     %Write $\varrho = c_\alpha^{1/4} \rho$.
    \textbf{Case 1:} Consider $ 1/\sqrt{n} \geq  \epsilon \geq {1}/{\sqrt{m{nd_L}}}$. The construction we follow is similar to that used in Theorem D.6 of \cite{pmlr-v178-narayanan22a}, whose dependencies are favorable for our purpose. 
    
    If $n=1$, Pinsker's inequality followed Lemma \ref{lem:chi_sq_div_bounds_KL} and Lemma \ref{lem:Bu_bound_bandwidth_constraints} applied with $m=1$ yield that
    \begin{align*}
    \| P_0 - P_\pi \|_{\mathrm{TV}} &\leq \sqrt{\frac{1}{2} D_{\chi^2}( P_0; P_\pi)} \leq C \frac{\sqrt{c_\alpha} \rho^2}{\sqrt{d_L}}
    \end{align*}
    for a universal constant $C>0$ contingent only on $\|\bar{\Gamma}\|$. By Lemma \ref{lem:coupling_and_TV}, there exists a coupling $\P_{\pi,0}$ such that $\tilde{X}^{(j)} \sim \P_{\pi,0}^{\tilde{X}^{(j)}_L} =  P_\pi$ and ${X}^{(j)} \sim \P_{\pi,0}^{X^{(j)}_L} =  P_0$ and
    \begin{equation*}
    p:=\P\left( \tilde{X}^{(j)}_L \neq {X}^{(j)}_L \right) \leq \left(C \frac{\sqrt{c_\alpha} \rho^2}{\sqrt{d_L}}\right) \wedge 1.
    \end{equation*}
    Applying Lemma \ref{lem:coupling_general}, it follows that
    \begin{align*}
    P_\pi^n K^j(A| \tilde{X}^{(j)}_L) &= \E_{\pi,0} K^j(A| \tilde{X}^{(j)}_L)  \\
    &\leq  e^{4 \epsilon n p} P_0 K^j(A| {X}^{(j)}_L) + 2 \delta n p e^{\epsilon n p}.
    \end{align*}
    By applying condition \eqref{eq:nonasymptotic_testing_lower_bound_local_randomness_dL} or \eqref{eq:nonasymptotic_testing_lower_bound_shared_randomness_dL} and the bound on $p$, we obtain that
    \begin{equation*}
    e^{C\frac{ \sqrt{c_\alpha} \epsilon \rho^{2} }{d^{1/2}_L}} \leq 1 +  C \sqrt{c_\alpha} / \sqrt{m} 
    \end{equation*}
    Similarly, using that $\delta \leq \epsilon/\sqrt{m}$,
    \begin{align*}
    \delta  p e^{\epsilon p} \leq \delta + C' c_\alpha / m^{3/2}.
    \end{align*}
    The first identity we wish to show, i.e. \eqref{eq:coupling_every_set_ub}, now follows for $n =1$ and a sufficiently small enough choice of $c_\alpha > 0$.
    
    For what follows, take $n > 1$. Consider $V$ a uniform draw from the unit sphere in $\R^{d_L}$ and $Z \sim N(0,I_{d_L})$, both independent of the other random variables considered. We have 
    \begin{equation*}
    \overline{X^{(j)}_L} := \frac{1}{n} \underset{i=1}{\overset{n}{\sum}} X^{(j)}_{L;i}  \overset{d}{=} \frac{\|Z\|_2}{\sqrt{n}} V
    \end{equation*}
    for ${X}^{(j)}_L \sim \P_0^{X^{(j)}_L}$ (see e.g. \cite{vershynin_high-dimensional_2018} Exercise 3.3.7). Similarly, 
    \begin{equation*}
    \overline{\tilde{X}^{(j)}_L} \overset{d}{=} \frac{\left\| (I_{d_L} + \frac{nc_\alpha^{1/2} \rho^2}{d_L}\bar{\Gamma})^{1/2} Z \right\|_2}{\sqrt{n}} V.
    \end{equation*}
    Next, we note that for $\eta_1,\dots,\eta_n \sim N(0,I_{d_L})$ independent of $X^{(j)} = ({X}^{(j)}_1,\dots,{X}^{(j)}_n)$, we have
    \begin{equation}\label{eq:average_minus_etas}
     X^{(j)}_L \overset{d}{=} \left(\overline{X^{(j)}} + \eta_i - \frac{1}{n} \underset{i=1}{\overset{n}{\sum}} \eta_i\right)_{1 \leq i \leq n}.
    \end{equation}
    To see this, note that both the left- and right-hand side are mean zero Gaussian and
    \begin{align}\label{eq:average_minus_etas_covariances}
    \E \left(\overline{X^{(j)}_L} + \eta_i - \frac{1}{n} \underset{i=1}{\overset{n}{\sum}} \eta_i\right) \left(\overline{X^{(j)}_L} + \eta_k - \frac{1}{n} \underset{i=1}{\overset{n}{\sum}} \eta_i\right)^\top &= \\
    \frac{1}{n}I_{d_L} + \mathbbm{1}_{i = k} I_d - \frac{2}{n} I_d + \frac{1}{n} I_{d_L} & = \mathbbm{1}_{i = k} I_{d_L}, \nonumber
    \end{align}
    which means that the covariances of the left-hand side and right-hand side of \eqref{eq:average_minus_etas} are equal too. Noting that $\tilde{X}^{(j)}_L \overset{d}{=} (F + X^{(j)}_{L;i})_{i \in [n]}$ and $\overline{\tilde{X}^{(j)}_L} \overset{d}{=} F + \overline{X^{(j)}_L}$, where $F \sim N(0, \sqrt{c_\alpha} d^{-1}_L \rho^2 \bar{\Gamma} )$ is independent of $\overline{X^{(j)}_L}$, it follows that
    \begin{align*}
    \tilde{X}^{(j)}_L \overset{d}{=} \left(\overline{\tilde{X}^{(j)}_L} + \eta_i - \frac{1}{n} \underset{i=1}{\overset{n}{\sum}} \eta_i\right)_{1 \leq i \leq n}
    \end{align*}
    by similar reasoning. Since the matrix $(I - V V^\top)$ is idempotent, we have that
    \begin{equation*}
    \eta_i = V V^\top \eta_i + (I - V V^\top) \eta_i
    \end{equation*}
    where $V V^\top \eta_i$ is independent from $(I - V V^\top) \eta_i$ and $V^\top \eta_i$ is standard normally distributed, both conditionally and unconditionally on $V$. We can write
    \begin{equation*}
    \eta_i - \frac{1}{n} \underset{i=1}{\overset{n}{\sum}} \eta_i = V V^\top \eta_i - \frac{1}{n} \underset{i=1}{\overset{n}{\sum}} V V^\top \eta_i + G_i,
    \end{equation*}
    where 
    \begin{equation*}
    G_i \equiv G_i(\eta_i) := (I - V V^\top) {\eta}_i - \frac{1}{n} \underset{i=1}{\overset{n}{\sum}} (I - V V^\top) {\eta}_i
    \end{equation*}
    and $G_i$ independent of $V V^\top \eta_i - \frac{1}{n} \underset{i=1}{\overset{n}{\sum}} V V^\top \eta_i$. Let $\tilde{\eta}_i$ be identically distributed to $\eta_i$, for $i=1,\dots,n$. Combining the above, we have that%Furthermore, $ V^\top \eta_i$ is $N(0,1)$ and independent of $\nu_i := (I - V V^\top) \eta_i$ also. 
    \begin{align}\label{eq:coupling_decompostion}
    X^{(j)}_L &\overset{d}{=} \left\{ V\left( \frac{\|Z\|_2}{\sqrt{n}} + V^\top \eta_i - \frac{1}{n} \underset{i=1}{\overset{n}{\sum}} V^\top \eta_i \right) + G_i \right\}_{i \in [n]} =: ({C}_i)_{i \in [n]}, \\
    \tilde{X}^{(j)}_L &\overset{d}{=} \left\{ {V} \left( \frac{\left\| (I_d + \frac{nc_\alpha^{1/2} \rho^2}{d}\bar{\Gamma})^{1/2} Z \right\|_2}{\sqrt{n}} + {V}^\top \tilde{\eta}_i - \frac{1}{n} \underset{i=1}{\overset{n}{\sum}} V^\top \tilde{\eta}_i \right) + G_i \right\}_{i \in [ n ]} =: (\tilde{C}_i)_{i \in [n]}. 
    \end{align}
    As further notations, we introduce
    \begin{align*}
    \zeta_i &:= {\|Z\|_2}/{\sqrt{n}} + V^\top \eta_i- \frac{1}{n} \underset{i=1}{\overset{n}{\sum}} V^\top {\eta}_i \\ 
    \tilde{\zeta_i} &:= {\| (I_{d_L} + \frac{nc_\alpha^{1/2} \rho^2}{d_L}\bar{\Gamma})^{1/2} Z \|_2}/{\sqrt{n}} + {V}^\top \tilde{\eta}_i - \frac{1}{n} \underset{i=1}{\overset{n}{\sum}} V^\top \tilde{\eta}_i.
    \end{align*}
    We have that $\zeta_i | Z \sim N\left(\frac{\|Z\|_2}{\sqrt{n}}, \left(1-\frac{1}{n}\right)\right)$ and
    \begin{equation*}
      \tilde{\zeta}_i|Z \sim N\left(\frac{\left\| (I_{d_L} + \frac{nc_\alpha^{1/2}\rho^2}{d}\bar{\Gamma})^{1/2} Z \right\|_2}{\sqrt{n}},\left(1-\frac{1}{n}\right)\right).
    \end{equation*}
    By e.g. Lemma \ref{lem:TV_distance_bound_many-normal-means}, we find that their respective push forward measures $\P^{\zeta_i|Z}$ and $\P^{\tilde{\zeta}_i|Z}$ satisfy 
    \begin{align*}
    \| \P^{\zeta_i|Z} - \P^{\tilde{\zeta}_i|Z} \|_{\mathrm{TV}} &\leq \frac{1}{2 (1-1/n)\sqrt{n}} \left| {\|Z\|_2} - {\left\| (I_{d_L} + \frac{nc_\alpha^{1/2}\rho^2}{d_L}\bar{\Gamma})^{1/2} Z \right\|_2} \right|  \\ 
    &\leq \frac{\sqrt{n} c_\alpha^{1/2}\rho^2}{d_L} \left| \frac{ Z^\top \bar{\Gamma} Z}{\sqrt{Z^\top I_{d_L} Z} + \sqrt{Z^\top (I_{d_L} + \frac{nc_\alpha^{1/2}\rho^2}{d_L} \bar{\Gamma}) Z}} \right| \\ 
    &\leq \| \bar{\Gamma} \| \frac{\sqrt{n} c_\alpha^{1/2}\rho^2}{d_L} \| Z \|_2,
    \end{align*}
    where the second inequality follows from $n>1$ in addition to the identity $(\sqrt{a} - \sqrt{b})(\sqrt{a} + \sqrt{b}) = a - b$ and the final inequality follows from the fact that $\bar{\Gamma}$ is positive semidefinite and $\bar{\Gamma} \leq \| \bar{\Gamma} \| I_{d_L}$. By Lemma \ref{lem:coupling_and_TV}, there exists a coupling of $\zeta_i|Z$ and $\tilde{\zeta}_i|Z$ such that
    \begin{equation}\label{eq:coupling_zetas}
    \P \left( \zeta_i \neq \tilde{\zeta}_i | Z \right) \leq \frac{\sqrt{n} c_\alpha^{1/2}\rho^2 \| \bar{\Gamma} \|  \| Z \|_2}{2d_L} \wedge 1.
    \end{equation}
    Take $\P^{\zeta_i,\tilde{\zeta}_i|Z}$ satisfying \eqref{eq:coupling_zetas}, $G_i = \tilde{G}_i$ and set $(X^{(j)}_L,\tilde{X}^{(j)}_L)=(C,\tilde{C})$ under $\P_{\pi,0}$. We obtain that $(\tilde{C}_i,C_i)$ is independent of $(\tilde{C}_k,C_k)$ for $k \neq i$ and $\tilde{\zeta}_i = \zeta_i$ implies $\tilde{C}_i = C_i$. By the independence structure, it holds for any joint distribution $\P$ of $V$, $Z$, $\zeta = (\zeta_i)_{i \in [n]}$, $\tilde{\zeta}_1,\dots,\tilde{\zeta}_n,(V,Z)$ and $G = (G_i)_{i \in [n]}$ that
    % \begin{equation*}
    %  d\P^{\zeta,\tilde{\zeta},G,(V,Z)} = d\P^{(V,Z)} d\P^{G|V} d \underset{i=1}{\overset{n}{\bigotimes}}  \P^{\zeta_i,\tilde{\zeta}_i|Z}.
    % \end{equation*}
    \begin{equation*}
        d\P^{C,\tilde{C}} =  d \underset{i=1}{\overset{n}{\bigotimes}}  \P^{C_i,\tilde{C}_i}.
       \end{equation*}
    Consequently,
    \begin{equation*}
         \underset{i=1}{\overset{n}{\sum}} \mathbbm{1}\{\tilde{C}_i \neq {C}_i \} \sim \text{Bin}(n,p),
    \end{equation*}
    with $p := \P \left( \tilde{C}_i \neq {C}_i  \right)$. By \eqref{eq:coupling_zetas}, $\| \bar{\Gamma} \| \asymp 1$ and the fact that $\|Z\|_2$ is $\sqrt{d}$-sub-exponential (using e.g. Proposition 2.7.1 in \cite{vershynin_high-dimensional_2018}), we obtain that 
    \begin{equation*}
    p = \E^Z \P \left( \zeta_i \neq \tilde{\zeta}_i | Z \right) \leq \frac{\tilde{C} n^{1/2} (c_\alpha^{1/4} \rho)^{2} }{d^{1/2}_L} \wedge 1,
    \end{equation*}
    for a universal constant $\tilde{C}>0$. Since $[\zeta_i = \tilde{\zeta}_i]|Z$ implies $[C_i = \tilde{C}_i]|Z$, we have that
    \begin{align*}
        p = \P( C_i = \tilde{C_i} ) \leq \E^Z \P \left( \zeta_i \neq \tilde{\zeta}_i | Z \right) \leq \frac{\tilde{C} n^{1/2} c_\alpha^{1/4} \rho^2}{d^{1/2}_L}.
    \end{align*}
    To summarize, we have now obtained that there exists a joint distribution $\P_{\pi,0}$ of ${(X^{(j)},\tilde{X}^{(j)})}$ such that $\left(X^{(j)},\tilde{X}^{(j)}\right)$ satisfy 
    \begin{equation*}
        S := \underset{i=1}{\overset{n}{\sum}} \mathbbm{1}\{\tilde{X}^{(j)}_i \neq {X}^{(j)}_i \} \sim \text{Bin}(n,p),
    \end{equation*}
    and
    \begin{equation*}
    p = \P \left( X_i^{(j)} \neq \tilde{X}_i^{(j)} \right) \leq \frac{\tilde{C} n^{1/2} (c_\alpha^{1/4} \rho)^{2} }{d^{1/2}_L} \wedge 1.
    \end{equation*}
    Let $\E_{\pi,0}$ denote its corresponding expectation. Consequently, by applying Lemma \ref{lem:coupling_general}, we have for any measurable $A$ that
    \begin{align*}
    P_\pi^n K^j(A| \tilde{X}^{(j)}) &= \E_{\pi,0} K^j(A| \tilde{X}^{(j)}) =  \E^{\tilde{X}^{(j)},X^{(j)}}_{\pi,0} K^j(A| \tilde{X}^{(j)})  \\
    &\leq  e^{4 \epsilon n p} P_0 K^j(A| {X}^{(j)}) + 2 \delta n p e^{2\epsilon n p}.
    \end{align*}
     It follows that 
    \begin{align*}
     e^{4 \epsilon n p} &\leq  1 + C' \frac{ \epsilon n^{3/2} \rho^{2} }{\sqrt{c_\alpha} d^{1/2}_L},
    \end{align*}
    for a universal constant $C'>0$, where the inequality follows from the fact that under the assumptions on $\rho^2$ (i.e. condition \eqref{eq:nonasymptotic_testing_lower_bound_local_randomness_dL} or \eqref{eq:nonasymptotic_testing_lower_bound_shared_randomness_dL}) that
    \begin{equation*}
    \frac{\epsilon n^{3/2} c_\alpha^{1/2} \rho^{2} }{d^{1/2}_L} \leq \sqrt{c_\alpha} / \sqrt{m}
    \end{equation*}
    and a sufficiently small enough choice of $c_\alpha > 0$. Similarly, using that $\delta \leq \epsilon/\sqrt{m}$,
    \begin{align*}
    \delta  n p e^{2\epsilon n p} &\leq \delta + C' c_\alpha / m^{3/2}.
    \end{align*}

    \textbf{Case 2:} Consider $\epsilon \leq 1/\sqrt{mnd_L}$. We will make use of the total variation coupling between $\tilde{X}^{(j)}_i \sim N(f,I_{d_L})$ and $X^{(j)}_i \sim N(0,I_{d_L})$, as given by Lemma \ref{lem:coupling_and_TV}. Since
    \begin{equation*}
    \| N(0,I_{d_L}) - N(f,I_{d_L}) \|_{\mathrm{TV}} \leq \left(\frac{1}{2} \|f\|_2 \right) \wedge 2
    \end{equation*}
    (see e.g. Lemma \ref{lem:TV_distance_bound_many-normal-means}), we can couple the two data sets observation wise independently (simply taking the product space) such that 
    \begin{equation*}
      \underset{i=1}{\overset{n}{\sum}} \mathbbm{1}\{\tilde{X}^{(j)}_i \neq X^{(j)}_i\}  | f \sim \text{Bin}(n,p_f)
    \end{equation*}
    where $p_f = (\|f\|_2/4) \wedge 1$. Given $k \in \N$, $\|f\|_2 \overset{d}{=} d^{-1/2}_Lc_\alpha^{1/4} \rho \| N(0,I_{d_L}) \|_2$ and $\| N(0,I_{d_L}) \|_2$ is $\sqrt{d_L}$-sub-exponential we obtain (using e.g. Proposition 2.7.1 in \cite{vershynin_high-dimensional_2018})
    \begin{align*}
    \int p^k_f d\pi(f) \leq \int (\|f\|_2/4)^k d\pi(f) \leq \tilde{C}^k k^k (c_\alpha^{1/4} \rho)^k,
    \end{align*}
    for a universal constant $\tilde{C} > 0$. The assumed condition on $\rho$ yields
    \begin{equation*}
    {\epsilon n c_\alpha^{1/4} \rho}  \leq c_\alpha^{1/4} / \sqrt{m},
    \end{equation*}
    which by similar arguments as before implies
    \begin{align*}
     e^{4 \epsilon n p} &\leq 1 + C' c_\alpha^{1/4} / \sqrt{m}, \\
    \delta  n p e^{2\epsilon n p} &\leq \delta + C' c_\alpha^{1/2} / m^{3/2},
    \end{align*}
    for a universal constant $C' > 0$. By applying the claim at the start of the lemma and using the assumptions on $\rho$, we obtain that
    \begin{align*}
    P_\pi^n K^j(A| \tilde{X}^{(j)}) &= \E_{\pi,0} K^j(A| \tilde{X}^{(j)}) = \int \E_{f,0} K^j(A| \tilde{X}^{(j)}) d\pi(f) \\
    &\leq (1 + C c_\alpha^{1/4} / \sqrt{m}) P_0 K^j(A| {X}^{(j)}) +  \delta + C c_\alpha^{1/2} / m^{3/2} 
    \end{align*}
    as desired. Again, \eqref{eq:coupling_every_set_lb} follows by similar steps.
    \end{proof}
    
    With the above two lemmas in hand, we are ready to prove Lemma \ref{lem:product_of_likelihoods_privacy_bound_impure_DP}.
    
    \begin{proof}[Proof of Lemma \ref{lem:product_of_likelihoods_privacy_bound_impure_DP}]
        Write $\cL_{\pi,u}^j(Y^{(j)}) \equiv \cL_{\pi}^j$ and let $V_\pi \equiv V_\pi^j := \cL_{\pi}^j - 1$. Using that $\E_0 \mathscr{L}_{\pi}(\tilde{X}^{(j)}) = 1$ and that by the law of total probability
        \begin{equation*}
        \E_0^{Y^{(j)}|U=u}  \E_0 \left[  \mathscr{L}_{\pi}(\tilde{X}^{(j)}) \bigg| Y^{(j)}, U=u \right] = 1,
        \end{equation*}
        it follows that $\E_0^{Y^{(j)}|U=u} V_\pi = 0$ and
        \begin{equation*}
        \E_0^{Y^{(j)}|U=u} \left(\cL_{\pi}^j\right)^2 = 1 + \E_0^{Y^{(j)}|U=u} \cL_{\pi}^j (\cL_{\pi}^j - 1) = 1 + \E_\pi^{Y^{(j)}|U=u} V_\pi.
        \end{equation*}
        Define $V_\pi^+ := 0 \vee V_\pi$ and let $V_\pi^- = - (0 \wedge V_\pi)$, which are both nonnegative random variables, with $V_\pi = V_\pi^+ - V_\pi^-$. We have
        \begin{align}
        \E_\pi^{Y^{(j)}|U=u} V_\pi^+ &= \int_0^\infty \P_\pi^{Y^{(j)}|U=u} \left(V_\pi^+ \geq t \right) dt \nonumber \\
        &= \int_0^T \P_\pi^{Y^{(j)}|U=u} \left( V_\pi^+ \geq t \right) dt + \int_T^\infty \P_\pi^{Y^{(j)}|U=u} \left( V_\pi^+ \geq t \right) dt. \label{eq:B_u_bound_privacy_integral_split}
        \end{align}
        Taking $T = \frac{5 m^{1/2}}{{\alpha}}$, the second term is equal to zero as %$V_\pi^+ \leq |L_{\pi}^j - 1|. %and $P_\pi \sim P_0$. 
        \begin{equation*}
        V_\pi^+ \leq | {\cL}_{\pi,u}^j(y) - 1| \leq \frac{5 m^{1/2}}{{\alpha}}  \;\;\;P_0 K^j(\cdot|X^{(j)},u)\text{-a.s.}
        \end{equation*}
        and $P_\pi \sim P_0$ (which in turn implies $\P_\pi^{Y^{(j)}|U=u} \sim \P_0^{Y^{(j)}|U=u}$). The integrand of the first term equals $P_\pi^n K^j( \{ V_\pi \geq t \} | X^{(j)},u)$. By Lemma \ref{lem:grand_coupling}, it holds that 
        \begin{equation*}
        P_\pi K^j(\{ V_\pi \geq t \} | X^{(j)},u) \leq \left(1+c_\alpha^{1/4} m^{-1/2}\right) P_0 K^j(\{ V_\pi \geq t \} | X^{(j)},u) + \delta + \frac{c_\alpha}{m^{3/2}}.
        \end{equation*}
        It follows that \eqref{eq:B_u_bound_privacy_integral_split} is bounded from above by
        \begin{align*}
        \left(1+c_\alpha^{1/4} m^{-1/2}\right) \int_0^T \P_0^{Y^{(j)}|U=u} \left( V_\pi^+ \geq t \right) dt + T \delta  + T \frac{c_\alpha}{m^{3/2}} &\leq \\
        \left(1+c_\alpha^{1/4} m^{-1/2}\right) \E_0^{Y^{(j)}|U=u}  V_\pi^+ + T \delta  + T \frac{c_\alpha}{m^{3/2}}.
        \end{align*}
        Similarly, we have
        \begin{align}
        \E_\pi^{Y^{(j)}|U=u} V_\pi^- &= \int_0^\infty \P_\pi^{Y^{(j)}|U=u} \left(V_\pi^- \geq t \right) dt \nonumber \\
        &= \int_0^T \P_\pi^{Y^{(j)}|U=u} \left( V_\pi^- \geq t \right) dt + \int_T^\infty \P_\pi^{Y^{(j)}|U=u} \left( V_\pi^- \geq t \right) dt. \label{eq:B_u_bound_privacy_integral_split_from_below}
        \end{align}
        Choosing $T \geq 1$ here results in the second term being zero, as $L_\pi^j \geq 0$. Applying Lemma \ref{lem:grand_coupling}, the right-hand side of the above display is further bounded from below by
        \begin{align*}
        \left(1 - c_\alpha^{1/4} m^{-1/2}\right) \int_0^T \P_0^{Y^{(j)}|U=u} \left( V_\pi^- \geq t \right) dt - T \delta  - T \frac{c_\alpha}{m^{3/2}} &\geq \\
        \left(1-c_\alpha^{1/4} m^{-1/2}\right) \E_0^{Y^{(j)}|U=u}  V_\pi^- - T \delta  - T \frac{c_\alpha}{m^{3/2}},
        \end{align*}
        where the inequality uses $V_\pi^- \leq 1$. 
        
        Combining the above bounds with the fact that $V_\pi^+ + V_\pi^- = |V_\pi|$ and $\E_0^{Y^{(j)}|U=u} V_\pi = 0$ yields that
        \begin{align*}
        \E_\pi^{Y^{(j)}|U=u} V_\pi &= \E_\pi^{Y^{(j)}|U=u} V_\pi^+ - \E_\pi^{Y^{(j)}|U=u} V_\pi^- \leq 
          & \frac{c_\alpha^{1/4} \E_0^{Y^{(j)}|U=u} |V_\pi|}{\sqrt{m}} + 2 T \delta  + 2 T \frac{c_\alpha}{m^{3/2}}.
        \end{align*}
        Plugging in the choice of $T = 5m^{1/2}/\alpha$ and using that $\delta \leq c_\alpha m^{-3/2}$, we obtain
        \begin{align*}
        \E_\pi^{Y^{(j)}|U=u} V_\pi \leq \frac{c_\alpha^{1/4} \E_0^{Y^{(j)}|U=u} |V_\pi|}{\sqrt{m}} + \frac{20 c_\alpha }{m\alpha}.
        \end{align*}
        If $\E_0^{Y^{(j)}|U=u} |V_\pi| \lesssim m^{-1/2}$, we obtain $\E_\pi^{Y^{(j)}|U=u} V_\pi \lesssim m^{-1}(c_\alpha^{1/4} + c_\alpha/\alpha)$. Assume next that $\E_0^{Y^{(j)}|U=u} |V_\pi| \gtrsim m^{-1/2}$. Then, 
        \begin{equation*}
        \E_\pi^{Y^{(j)}|U=u} V_\pi \lesssim \frac{c_\alpha^{1/4} \E_0^{Y^{(j)}|U=u} |V_\pi|}{\sqrt{m}}.
        \end{equation*}
        Since $\E_\pi^{Y^{(j)}|U=u} V_\pi = \E_0^{Y^{(j)}|U=u} V_\pi^2$ and using that by Cauchy--Schwarz $\E_0^{Y^{(j)}|U=u} |V_\pi|$ is bounded above by $\sqrt{\E_0^{Y^{(j)}|U=u} V_\pi^2}$, we obtain that
        \begin{equation*}
        \sqrt{\E_0^{Y^{(j)}|U=u} V_\pi^2} \lesssim  C' c_\alpha^{1/4} m^{-1/2}
        \end{equation*}
        for a universal constant $C' > 0$ depending only on $\alpha$. In both cases, we obtain that
        \begin{equation*}
        \mathrm{B}^\pi_u = \underset{j=1}{\overset{m}{\Pi}} \left( 1 + \E_\pi^{Y^{(j)}|U=u} V_\pi \right) = \underset{j=1}{\overset{m}{\Pi}} \left( 1 + \E_0^{Y^{(j)}|U=u} V_\pi^2 \right) \leq e^{C \sqrt{c_\alpha}}
        \end{equation*}
        for universal constant $C > 0$, finishing the proof of the lemma.
        \end{proof}
    
    % Trace data processing
    \subsubsection{Proof of Lemma \ref{lem:data_processing_impure_DP_Fisher_info}}
    \begin{proof}
       Consider without loss of generality $\sigma = 1$ (the general result follows by the $\sigma^{-1}$ rescaling). The bound $\text{Tr}(\Xi_u^j) \leq nd_L$ follows by the fact that conditional expectation contracts the $L_2$-norm; let $v \in \R^{d_L}$, then
        \begin{align*}
        v^\top \Xi_u^j v &=  \E_0^{Y^{(j)}}\E_0^{Y|U=u} \left[ v^\top \underset{i=1}{\overset{n}{\sum}}  {X}^{(j)}_i \bigg| Y^{(j)}, U=u \right] \E_0^{Y|U=u}\left[  \left(\underset{i=1}{\overset{n}{\sum}}  {X}^{(j)}_i \right)^\top v \bigg| Y^{(j)}, U=u \right] \\
        &=  \E_0^{Y^{(j)}}\E_0^{Y|U=u}\left[ v^\top \left( \underset{i=1}{\overset{n}{\sum}}  {X}^{(j)}_i \right) \bigg| Y^{(j)}, U=u \right]^2.
        \end{align*}
        Since the conditional expectation contracts the $L_2$-norm, we obtain that the latter is bounded by
        \begin{equation*}
             \E_0 v^\top \left( \underset{i=1}{\overset{n}{\sum}}  {X}^{(j)}_i \right) \left( \underset{i=1}{\overset{n}{\sum}}  {X}^{(j)}_i \right)^\top v= n \|v\|_2^2,
        \end{equation*}
        which completes the proof of the statement ``$\Xi_u^j \leq n I_{d_L}$'' and ``$\text{Tr}(\Xi_u^j) \leq n d_L$''.
        
     For second other bound on the trace, we start introducing the notations $\overline{X^{(j)}_L} = n^{-1} \sum_{i=1}^n X^{(j)}_{L;i}$ and
        \begin{equation*}
            G_i = \left \langle \E_0 \left[ n \overline{X^{(j)}}_L | Y^{(j)}, U=u \right], X^{(j)}_{L;i} \right \rangle.
        \end{equation*}
        For the remainder of the proof, consider versions of $X^{(j)}_L$ and $Y^{(j)}$ defined on the same probability given $U=u$, and we shall write as a shorthand
        \begin{equation*}
        \P^{j} \equiv \P^{(X^{(j)},Y^{(j)})|U=u}_0 \text{ and }\E^{j} \equiv \E^{(X^{(j)},Y^{(j)})|U=u}_0.
        \end{equation*}
        Consider random variables $V,W$ defined on the same probability space. It holds that
        \begin{equation*}
            \E W \E[ W | V] = \E \E[ W | V] \E[ W | V],
        \end{equation*}
        since $W - \E[ W | V]$ is orthogonal to $\E[ W | V]$. Combining this fact with the linearity of the inner product and conditional expectation, we see that
        \begin{equation}\label{eq:trace_to_innerproduct_projection}
            \text{Tr}(\Xi_u^j) =  \E^{Y^{(j)}|U=u}_0 \left\| \E_0[ n \overline{X^{(j)}} | Y^{(j)}, U=u ]\right\|_2^2 =  \underset{i=1}{\overset{n}{\sum}}  \E^{j} G_i.
        \end{equation}
        Define also
        \begin{equation*}
            \breve{G}_i = \left \langle \E_0[ n \overline{X^{(j)}} | Y^{(j)}, U=u ], \breve{X}_i^{(j)} \right \rangle,
        \end{equation*}
        where $\breve{X}_i^{(j)}$ is an independent copy of ${X}_i^{(j)}$ (defined on the same, possibly enlarged probability space) and note that $\E^j \breve{G}_i = 0$. Write $G_i^+ := 0 \vee G_i$ and $G_i^- = - (0 \wedge G_i)$. We have
        \begin{align*}
            \E^j G_i^+ &=  \int_0^\infty \P^j \left( G_i^+ \geq t \right) dt = \int_0^T \P^j \left( G_i^+ \geq t \right) + \int_T^\infty \P^j \left( G_i^+ \geq t \right)  \\
            &\leq e^{\epsilon} \int_0^T \P^j \left( \breve{G}_i^+ \geq t \right) dt +T \delta + \int_T^\infty \P^j \left( G_i^+ \geq t \right) \\ 
            &\leq  \int_0^T \P^j \left( \breve{G}_i^+ \geq t \right)dt + 2 \epsilon \int_0^T \P^j \left( \breve{G}_i^+ \geq t \right)dt +T \delta + \int_T^\infty \P^j \left( G_i^+ \geq t \right) \\ 
            &\leq  \int_0^\infty \P^j_0 \left( \breve{G}_i^+ \geq t \right)dt + 2 \epsilon \int_0^\infty \P^j \left( \breve{G}_i^+ \geq t \right)dt +T \delta + \int_T^\infty \P^j \left( G_i^+ \geq t \right),
        \end{align*}
        where in the second to last inequality follows by Taylor expansion and the fact that $\epsilon \leq 1$. Similarly, we obtain
        \begin{align*}
            \E^j G_i^-  &\geq  \int_0^T \P^j \left( G_i^- \geq t \right)   \\
            &\geq e^{-\epsilon} \int_0^T \P^j \left( \breve{G}_i^- \geq t \right) dt - T \delta  \\ 
            &\geq  \int_0^T \P^j \left( \breve{G}_i^- \geq t \right)dt - 2 \epsilon \int_0^\infty \P^j \left( \breve{G}_i^- \geq t \right)dt - T \delta  \\
            &\geq \int_0^\infty \P^j \left( \breve{G}_i^- \geq t \right)dt - 2 \epsilon \int_0^\infty \P^j \left( \breve{G}_i^- \geq t \right)dt - T \delta - \int_T^\infty \P^j \left( \breve{G}_i^- \geq t \right) dt.
        \end{align*}
        Putting these together with $G_i = G_i^+ - G_i^-$, we get
        \begin{align*}
            \E^j G_i &\leq \int_0^\infty \P^j \left( \breve{G}_i^+ \geq t \right)dt - \int_0^\infty \P^j \left( \breve{G}_i^- \geq t \right)dt + 2 \epsilon \int_0^\infty \P^j \left( |\breve{G}_i| \geq t \right)dt \\ &\;\;\;\;\;\;\;\;+ 2 T \delta + \int_T^\infty \P^j \left( G_i^+ \geq t \right) dt + \int_T^\infty \P^j \left( \breve{G}_i^- \geq t \right) dt \\
            &= \E^j \breve{G}_i + 2 \epsilon \E^j | \breve{G}_i | + 2 T \delta + \int_T^\infty \P^j \left( G_i^+ \geq t \right) dt + \int_T^\infty \P^j \left( \breve{G}_i^- \geq t \right) dt.
        \end{align*}
        The first term in the last display equals $0$. For the second term, observe that 
        \begin{equation*}
            \breve{G}_i \bigg| \left[Y^{(j)},X^{(j)},U=u\right] \sim N(0, \| \E_0[ n \overline{X^{(j)}} | Y^{(j)}, U=u ] \|_2^2),
        \end{equation*}
        so
        \begin{equation*}
            \E^j | \breve{G}_i | = \E^{X^{(j)},Y^{(j)}} \E^{\breve{X}^{(j)}} | \breve{G}_i | = \E \| \E[ n \overline{X^{(j)}} | Y^{(j)}, U= u ] \|_2 \leq \sqrt{\text{Tr}(\Xi_u^j)}
        \end{equation*}
        where the last inequality is Cauchy--Schwarz. To bound the terms
    \begin{equation*}
    \int_T^\infty \P^j \left( G_i^+ \geq t \right) dt + \int_T^\infty \P^j \left( \breve{G}_i^- \geq t \right) dt
    \end{equation*}
    we shall employ tail bounds, which follow after showing that $G_i$ is $\sqrt{d_L n}$-sub-exponential. To see this, note that by Jensen's inequality followed by the law of total probability,
        \begin{align*}
            \E^j e^{t |G_i|} &= \E^j e^{t |\langle \E_0[ n \overline{X^{(j)}_L} | Y^{(j)},U=u ], X_{L;i}^{(j)} \rangle|} \\ &= \int \int e^{t |\langle \E_0[ n \overline{X^{(j)}_L} | Y^{(j)} = y, U=u ], x \rangle|} d\P^{X^{(j)}|Y^{(j)}=y,U=u}_0(x) d\P^{Y^{(j)}|U=u}_0(y) \\
            &\leq \int \int \E_0 \left[ e^{t |\langle  n \overline{X^{(j)}_L} , x \rangle|} | Y^{(j)} = y,U=u \right] d\P^{X^{(j)}|Y^{(j)}=y,U=u}_0(x) d\P^{Y^{(j)}|U=u}_0(y) \\
            &\leq \E^{X^{(j)}}_0 e^{t |\langle  n \overline{X^{(j)}}_{L} , {X}_{L;i}^{(j)} \rangle|},
        \end{align*}
        where the last equality follows from the fact that conditional expectation contracts the $L_1$-norm and the fact that $U$ is independent of $X^{(j)}$. Next, we bound $\E^{X^{(j)}}_0  e^{t |\langle  n \overline{X^{(j)}_L} , X_{L;i}^{(j)} \rangle|}$. 
    
        By the triangle inequality and independence,
        \begin{align*}
        \E^{X^{(j)}}_0 e^{t |\langle  n \overline{X^{(j)}_L} , {X}_{L;i}^{(j)} \rangle|} &\leq \E^{X^{(j)}}_0 e^{t |\langle \sum_{k \neq i}^n X^{(j)}_{L;k} , {X}_{L;i}^{(j)} \rangle|} \E^{X^{(j)}_i}_0 e^{t |\langle  {X^{(j)}_{L;i}} , {X}_{L;i}^{(j)} \rangle|}.
        \end{align*}
        The random variable $\langle  {X^{(j)}_{L;i}} , {X}_{L;i}^{(j)} \rangle$ is $\chi_{d_L}^2$-distributed, so by standard computations (see e.g. Lemma 12 in \cite{szabo2022optimal_IEEE}) we obtain that
        \begin{align*}
        \E^{X^{(j)}_i}_0 e^{t |\langle  {X^{(j)}_{L;i}} , {X}_{L;i}^{(j)} \rangle|} = \left(\E e^{t N(0,1)^2 \rangle|} \right)^{d_L} \leq e^{td_L + 2t^2d_L},
        \end{align*}
        whenever $t \leq 1/4$. We also find that
        \begin{align*}
        \E^{X^{(j)}}_0 e^{t |\langle \sum_{k \neq i}^n X^{(j)}_{L;k} , {X}_{L;i}^{(j)} \rangle|} &= \E^{(X^{(j)}_k)_{k \neq i}}_0 \E^{X^{(j)}_i}_0 e^{t |\langle \sum_{k \neq i}^n X^{(j)}_{L;k} , {X}_{L;i}^{(j)} \rangle|} \\
        &= \E^{X^{(j)}} e^{\frac{t^2}{2} \left\| \sum_{k \neq i}^n X^{(j)}_{L;k} \right\|_2^2 } \\
        &= \left(\E e^{\frac{t^2 (n-1)}{2} N(0,1)^2 }\right)^{d_L} \\
        &\leq e^{\frac{1}{2}({t^2(n-1)}d_L + t^4(n-1)^2d_L)},
        \end{align*} 
        where the inequality follows again by e.g. Lemma 12 in \cite{szabo2022optimal_IEEE} if ${t^2 (n-1)^2} \leq 1/2$. By the fact that $G_i^+ \leq |G_i|$ and Markov's inequality, 
    \begin{align*}
        \P^j(G_i^+ > T) \leq \P^j(|G_i| > T) \leq e^{-tT}\E^j e^{t|G_i|}, \text{ for all } T,t>0.
    \end{align*}
    Combined with the above bounds for the moment generating function means that for $\delta = 0$, the result follows from letting $T \to \infty$. If $\delta > 0$, take $T = 8 (d+L \vee \sqrt{nd_L})  \log(1/\delta)$ to obtain that
        \begin{align*}
            \int_T^\infty \P^j \left( G_i^+ \geq t \right) dt \leq  e^{ -  \log(1/\delta)}.
        \end{align*} 
        It is easy to see that the same bound applies to $\int_T^\infty \P^j_0 \left( \breve{G}_i^- \geq t \right) dt$. We obtain that
        \begin{align*}
            \underset{i=1}{\overset{n}{\sum}}  \E^{j} G_i &\leq 2 n \epsilon\sqrt{\text{Tr}(\Xi_u^j)} + 16 \delta (d_L \vee \sqrt{nd_L}) \log(1/\delta) + 2 n \delta.
        \end{align*}
        If $\sqrt{\text{Tr}(\Xi_u^j)} \leq n \epsilon$, the lemma holds (there is nothing to prove). So assume instead that $\sqrt{\text{Tr}(\Xi_u^j)} \geq n \epsilon$. Combining the above display with \eqref{eq:trace_to_innerproduct_projection}, we get
        \begin{align*}
            \sqrt{\text{Tr}(\Xi_u^j)}  &\leq {2 n \epsilon} +   16 \delta \frac{d_L \vee \sqrt{nd_L}}{n\epsilon} \log(1/\delta) +  \frac{2}{\epsilon}  \delta.
        \end{align*}
        Since $x^p \log(1/x)$ tends to $0$ as $x \to 0$ for any $p>0$, the result follows for $\delta \leq  \left( \left(\frac{n}{d_L} \wedge \frac{n^{1/2}}{\sqrt{d_L}}  \right) \epsilon^2 \right)^{1+p}$ for some $p>0$ as this implies that the last two terms are $O(n\epsilon)$.
    \end{proof}

    \section{Proofs related to the optimal testing strategies}\label{sec:supp:proofs_optimal_testing_strategies}
    
    The proofs concerning the three sections \ref{ssec:procedure_I}, \ref{ssec:procedure_II} and \ref{ssec:procedure_III} are given in this section, divided across the subsections \ref{ssec:supp:procedure_I}, \ref{ssec:supp:procedure_II} and \ref{ssec:supp:procedure_III} respectively.
    
    We recall the notations $d_L = \sum_{l=1}^L 2^l$ for $L \in \N$, $f_L = \Pi_L f$ for $f \in \ell_2(\N)$ and $\overline{X^{(j)}_L} = n^{-1} \sum_{i=1}^n X^{(j)}_{L;i}$ for $j=1,\dots,m$.
    
    \subsection{Procedure I}\label{ssec:supp:procedure_I}
    
    We briefly recall the testing procedure outlined in Section \ref{ssec:procedure_I}. Let $\tau > 0$, $L \in \N$, $d_L := \sum^L_{l=1} 2^l$ and $V^{(j)}_{L;\tau} \sim \chi^2_{d_L}$ independent of $X^{(j)}$ the random map from $(\R^{d_L})^n$ to $\R$ defined by
    \begin{equation}\label{eq:supp:clipped_Lips_stat_rescaled}
    \tilde{S}_{L;\tau}^{(j)}(x) = \left[\frac{1}{\sqrt{d_L}} \left(\left\| \sigma^{-1} \sqrt{n} \overline{ x} \right\|^2_2 - {V^{(j)}_{L;\tau}} \right)\right]_{-\tau}^{\tau}.
    \end{equation} 
    Define furthermore 
    \begin{equation}\label{eq:supp:lips_constant_defined}
        D_\tau = \frac{\tilde{\kappa}_\alpha \log (N) \left(  \sqrt{n \sqrt{d_L} \tau} \vee \sqrt{nd_L}  \right)}{n \sqrt{d_L}}.
    \end{equation}
    
    Set $K_\tau = \lceil 2\tau D^{-1}_\tau \rceil$ and consider the set $\cC_{L;\tau} = \cA_{L;\tau} \cap \cB_{L;\tau}$, where
    \begin{align}\label{eq:def_C1_C2}
    \cA_{L;\tau} &= \left\{ (x_i) \in (\R^\infty)^n :  \bigg|\| \sigma^{-1} \textstyle \sum_{i \in \cJ} \Pi_L x_i \|_2^2 - kd_L \bigg| \leq \frac{1}{8} k D_\tau n \sqrt{d_L} \;\; \forall \cJ \subset [n], |\cJ| = k \leq K_\tau \right\}, \\
    \cB_{L;\tau} &= \left\{ (x_i) \in (\R^\infty)^n : \left| \langle \sigma^{-1} \Pi_L x_i, \sigma^{-1} \textstyle \sum_{k \neq i} \Pi_L x_k \rangle \right| \leq  \frac{1}{8} k D_\tau n \sqrt{d_L},  \;\; \forall i=1,\dots,n \right\}. \nonumber
    \end{align}
    Lemma \ref{lem:concentration_on_C} below shows that $X^{(j)}$ concentrates on $\cC_{L;\tau}$ under the null hypothesis. 
    
    \begin{lemma}\label{lem:concentration_on_C}
        Whenever $\sigma^{-2} n \| \Pi_L f\|_2^2 d^{-1/2}_L \leq \tau/2$, $\tau \leq n R^2 / \sqrt{d_L}$ and $\tilde{\kappa}_\alpha$ is taken large enough, it holds that 
        \begin{equation*}
        \P_f \left( X^{(j)} \notin \cC_{L;\tau} \right) \leq \frac{\alpha}{2N}.
        \end{equation*}
        \end{lemma}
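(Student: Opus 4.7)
The plan is to establish the concentration result by separately controlling $\P_f(X^{(j)} \notin \cA_{L;\tau})$ and $\P_f(X^{(j)} \notin \cB_{L;\tau})$ via union bounds over the underlying index sets, and then combine. Under $\P_f$, write $Z_i := \sigma^{-1}(\Pi_L X^{(j)}_i - \Pi_L f) \sim N(0,I_{d_L})$ i.i.d., so for any $\cJ \subset [n]$ with $|\cJ|=k$,
\begin{equation*}
\Big\| \sigma^{-1} \sum_{i \in \cJ} \Pi_L X^{(j)}_i \Big\|_2^2 - k d_L \;=\; k^2 \sigma^{-2}\|\Pi_L f\|_2^2 \;+\; 2k \sigma^{-1}\big\langle \Pi_L f, \textstyle\sum_{i \in \cJ} Z_i \big\rangle \;+\; \Big(\Big\|\sum_{i \in \cJ} Z_i\Big\|_2^2 - k d_L\Big).
\end{equation*}
I would bound each of the three summands by $\frac{1}{24} k D_\tau n \sqrt{d_L}$ with probability at least $1 - \alpha/(4N \binom{n}{k} K_\tau)$ after fixing $\cJ$, and then union-bound.

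For the deterministic signal term, the hypothesis $\sigma^{-2}\|\Pi_L f\|_2^2 \leq \tau \sqrt{d_L}/(2n)$ together with $k \leq K_\tau \leq 2\tau/D_\tau$ yields $k^2 \sigma^{-2}\|\Pi_L f\|_2^2 \leq k \tau^2 \sqrt{d_L}/(n D_\tau)$. The specific form of $D_\tau$ in \eqref{eq:supp:lips_constant_defined} is chosen precisely so that $D_\tau \geq 4\tau/n$ (separating into the two regimes $\tau \leq \sqrt{d_L}/n$ and $\tau \geq \sqrt{d_L}/n$, and invoking $\tau \leq nR^2/\sqrt{d_L}$), which makes the signal contribution negligible relative to $k D_\tau n \sqrt{d_L}$ once $\tilde{\kappa}_\alpha$ is large. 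The stochastic linear term is a mean-zero Gaussian with variance $k\|\Pi_L f\|_2^2$, handled by the standard tail bound. The quadratic term $\|\sum_{i \in \cJ} Z_i\|_2^2 - k d_L$ is distributed as $k(\chi^2_{d_L}- d_L)$, which is sub-exponential and controlled by Laurent--Massart: $\P(|\chi^2_{d_L}-d_L| \geq 2\sqrt{d_L t} + 2t) \leq 2e^{-t}$.

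The union bound uses $\sum_{k=1}^{K_\tau} \binom{n}{k} \leq (en)^{K_\tau}$, so I would take the tail parameter $t \asymp \tilde{\kappa}_\alpha^2 \log^2(N) (K_\tau \vee 1)$ (which the $\log(N)$ factor built into $D_\tau$ is designed to absorb) and verify that both Gaussian and chi-square deviations stay below $\tfrac{1}{24} k D_\tau n \sqrt{d_L}$ uniformly in $\cJ$. For $\cB_{L;\tau}$, I would fix $i$, condition on $(X^{(j)}_k)_{k \neq i}$, and observe that $\langle \sigma^{-1}\Pi_L X^{(j)}_i, \sigma^{-1}\sum_{k\neq i}\Pi_L X^{(j)}_k\rangle$ is Gaussian in $Z_i$ with conditional variance $\sigma^{-4}\|\sum_{k\neq i}\Pi_L X^{(j)}_k\|_2^2$ plus the drift from $\langle \sigma^{-1}\Pi_L f, \sigma^{-1}\sum_{k\neq i}\Pi_L X^{(j)}_k\rangle$. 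On the event $\cA_{L;\tau}$ applied to $\cJ = [n]\setminus\{i\}$, this variance is controlled, so a Gaussian tail combined with a union over $i \in [n]$ delivers the bound.

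The main obstacle is carefully calibrating $\tilde{\kappa}_\alpha$ so that the $\log N$ factors in $D_\tau$ simultaneously (i) dominate the signal term in both regimes $\tau \lessgtr \sqrt{d_L}/n$, (ii) beat the $(en)^{K_\tau}$ union-bound cost against the chi-square and Gaussian tails, and (iii) survive the additional union over $i$ in $\cB_{L;\tau}$; in particular one must verify that the chain $K_\tau \log N \lesssim D_\tau^2 n^2 \wedge D_\tau n \sqrt{d_L}$ holds for every $\tau$ in the admissible range $[0, nR^2/\sqrt{d_L}]$, which reduces after substitution to an inequality in $\tilde{\kappa}_\alpha$ depending only on $R$ and $\alpha$.
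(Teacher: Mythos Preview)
Your treatment of $\cA_{L;\tau}$ is essentially the paper's: the same three-term decomposition, the same union bound $\sum_{k\leq K_\tau}\binom{n}{k}\leq e^{K_\tau\log n}$, and the same observation that $K_\tau\log N \lesssim \eta_\tau/\tilde{\kappa}_\alpha$ (with $\eta_\tau:=\tfrac18 D_\tau n\sqrt{d_L}$) is what makes the chi-square and Gaussian tails beat the combinatorial cost. Your route to the signal term via $D_\tau\gtrsim\tau/n$ is slightly different from the paper's (which simply combines both hypotheses to get $\sigma^{-2}\|f_L\|_2^2\leq R^2/2$ and then uses $k\leq K_\tau\lesssim \eta_\tau/(\tilde{\kappa}_\alpha\log N)$), but both work once $\tilde{\kappa}_\alpha$ is allowed to depend on $R$. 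One small slip: the two regimes for the maximum in $D_\tau$ are $\tau\lessgtr\sqrt{d_L}$, not $\tau\lessgtr\sqrt{d_L}/n$.

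The genuine gap is in your handling of $\cB_{L;\tau}$. You propose to control the conditional variance $\sigma^{-2}\|\sum_{k\neq i}\Pi_L X^{(j)}_k\|_2^2$ by invoking ``$\cA_{L;\tau}$ applied to $\cJ=[n]\setminus\{i\}$''. But by definition $\cA_{L;\tau}$ only constrains subsets of size $k\leq K_\tau$, and since $K_\tau\leq 2\sqrt{n\sqrt{d_L}\tau}/(\tilde{\kappa}_\alpha\log N)$ we have $K_\tau\ll n-1$ in general; the set $[n]\setminus\{i\}$ is simply not covered. Your conditioning argument therefore has no input to bound the conditional variance, and the drift term $\langle\sigma^{-1}\Pi_L f,\sigma^{-1}\sum_{k\neq i}\Pi_L X^{(j)}_k\rangle$ is likewise uncontrolled.

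The paper avoids conditioning altogether. It writes $\langle\sigma^{-1}\Pi_L X_i^{(j)},\sigma^{-1}\sum_{k\neq i}\Pi_L X_k^{(j)}\rangle\overset{d}{=}\langle\sigma^{-1}f_L+Z,\,(n-1)\sigma^{-1}f_L+\sqrt{n-1}\,Z'\rangle$ with $Z,Z'\sim N(0,I_{d_L})$ independent, expands the product, and bounds the four pieces separately: the deterministic term $(n-1)\sigma^{-2}\|f_L\|_2^2$ via $n\sigma^{-2}\|f_L\|_2^2\leq\tau\sqrt{d_L}/2\lesssim\eta_\tau/(\tilde{\kappa}_\alpha\log N)$, the two linear-in-$f$ Gaussian terms by the standard tail, and the bilinear term $\sqrt{n-1}\langle Z,Z'\rangle$ by its sub-exponential tail (Lemma~\ref{lem:inner_product_ind_gaussians}), using $\eta_\tau\geq\tilde{\kappa}_\alpha\log(N)\sqrt{nd_L}/8$. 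Your conditioning idea is salvageable if you add a separate high-probability bound on $\|\sum_{k\neq i}\sigma^{-1}\Pi_L X_k^{(j)}\|_2^2$ for each $i$ (only $n$ such events, so the union bound is cheap), but as written the appeal to $\cA_{L;\tau}$ does not furnish it.
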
 
        \begin{proof}%[Proof of Lemma \ref{lem:concentration_on_C}]
        Since $\cC_{L;\tau} = \cA_{L;\tau} \cap \cB_{L;\tau}$, it suffices to show that $\cA_{L;\tau}^c$ and $\cB_{L;\tau}^c$ as defined in \eqref{eq:def_C1_C2} are small in $\P_f$-probability for a large enough choice of $\tilde{\kappa}_\alpha > 0$. 
        
        Define $\eta_\tau := D_\tau n \sqrt{d_L}/8$. For both sets, we proceed via a union bound:
        \begin{align}
        \P_f \left( X^{(j)} \notin \cA_{L;\tau} \right) &= \P_f \left( \exists \cJ \subset [n], \, |\cJ| \leq K_\tau : \left| \| \textstyle \underset{i \in \cJ}{\overset{}{\sum}} \sigma^{-1} X_{L;i}^{(j)} \|_2^2 - |\cJ| d_L \right|  > |\cJ| \eta_\tau \right) \nonumber \\ 
        &\leq \underset{k=1}{\overset{K_\tau}{\sum}} {n \choose k}  \text{Pr} \left( \left| \| \sigma^{-1} \sqrt{k} f_L - Z \|_2^2 -  d_L \right|  >    \eta_\tau \right) \label{eq:combinatorial_union_bound}
        \end{align}
        where we recall that $f_L$ is the projection of $f$ onto its first $d_L$ coordinates and $Z \sim N(0,I_{d_L})$. We have
        \begin{equation*}
         \| \sigma^{-1} \sqrt{k}f_L - Z \|_2^2 = \sigma^{-2} k \|f_L\|_2^2 - 2 \sigma^{-1} \sqrt{k} f_L^\top Z + \|Z\|_2^2. %\leq {k \sqrt{d n}}  + kd
        \end{equation*}
        Recalling that $K_\tau = \lceil 2\tau D^{-1}_\tau \rceil$, we obtain that
        \begin{equation}\label{eq:K_bounded_by_gamma}
        K_\tau  \leq \frac{2 \tau n \sqrt{d_L} }{\tilde{\kappa}_\alpha \log (N) (\sqrt{ n \sqrt{d_L} \tau} \vee \sqrt{nd_L})} \leq \frac{\sqrt{n \sqrt{d_L} \tau}}{\tilde{\kappa_\alpha} \log(N)} \lesssim   \frac{\eta_\tau}{ \tilde{\kappa}_\alpha \log(N)}.
        \end{equation}
        By the assumptions of the lemma ($\sigma^{-2} n \|f_L\|_2^2 \leq \tau \sqrt{d_L}/2$ and $\tau \leq n R^2/ \sqrt{d_L}$), we obtain that $\sigma^{-2} k \|f_L\|_2^2 \leq K_\tau R^2$. Consequently, we have that for $\tilde{\kappa}_\alpha > 0$ large enough $\sigma^{-2} K_\tau \|f_L\|_2^2 < \eta_\tau/2$, so it holds that
        \begin{equation*}
        \text{Pr} \left(  \| \sigma^{-1} \sqrt{k} f - Z \|_2^2 -  d_L   >    \eta_\tau \right) \leq \text{Pr} \left(  \|Z \|_2^2 -  d_L - 2  \sigma^{-1} \sqrt{k} f^\top_L Z    >    \eta_\tau/2 \right).
        \end{equation*}
        Using that $\text{Pr}(A \cap B) + \text{Pr}(A \cap B^c) \leq \text{Pr}(A') + \text{Pr}(A \cap B^c)$ for $  A \cap B \subset A'$, it follows that the latter display is bounded above by
        \begin{equation*}
        \text{Pr} \left(  \| Z \|_2^2 -  d_L   >    \eta_\tau/4 \right) + \text{Pr} \left(  - 2 \sigma^{-1} \sqrt{k} f^\top_L Z    >    \eta_\tau/4 \right).
        \end{equation*}
        By e.g. Lemma \ref{lem : Chernoff-Hoeffding bound chisq}, the first probability is bounded by $e^{- d\eta_\tau / 8}$. Again using $K_\tau \|f_L\|_2^2 < \eta_\tau/2$, the second term is bounded by $e^{-\eta_\tau / 32}$, where we note that the second term equals zero in the case that $f=0$. The bound
        \begin{align*}
        \text{Pr} \left(  \| \sigma^{-1} \sqrt{k}f_L - Z \|_2^2 -  d_L  <   -  \eta_\tau \right) \leq  e^{- d_L \eta_\tau / 4} + e^{-\eta_\tau / 8}
        \end{align*}
        follows by similar reasoning. Combining the above with the elementary bound $\sum^{K_\tau}_{k=1} {n \choose k} \leq e^{K_\tau \log(n)}$ and \eqref{eq:K_bounded_by_gamma} means that
        \begin{align*}
        \P_f \left( X^{(j)} \notin \cA_{L;\tau} \right) &\leq  2\exp \left(  K_\tau \log(N) - \frac{ (1+d_L/2)\eta_\tau }{4} \right) \leq  \alpha/(4mn).
        \end{align*}
        % Can improve here by a lot if needed!
        % TODO: change \cC_2, \cC_1 notation
        Turning our attention to $\cB_{L;\tau}$, we find that $\P_f \left( X^{(j)} \notin \cB_{L;\tau} \right)$ is equal to
        \begin{align*}
          \P_f \left( \underset{i \in [n]}{\overset{}{\max}} \sigma^{-2} \left| \underset{k \in [n]\setminus \{i\}}{\overset{}{\sum}}  \langle X_i^{(j)}, X_k^{(j)} \rangle  \right| >  \eta_\tau \right) \leq  n \text{Pr} \bigg(   \left|\langle \sigma^{-1}f + Z, (n-1)\sigma^{-1} f+\sqrt{n-1}Z'  \rangle  \right| > & \eta_\tau \bigg),
        \end{align*}
        where $Z$ and $Z'$ are independent $N(0,I_d)$ random vectors. Using another union bound, the above is further bounded by
        \begin{align}\label{eq:1st_mention_once_asdasd} 
        \text{Pr} \bigg( &\sqrt{n-1} \langle Z, Z'  \rangle > \eta_\tau/2 - (n-1)\|\sigma^{-1} f_L\|_2^2 \bigg) + \text{Pr} \bigg( ({n-1}) \langle\sigma^{-1} f, Z'  \rangle + \sqrt{n-1}\langle \sigma^{-1} f, Z  \rangle > \eta_\tau/2  \bigg). 
        \end{align}
        Using that $n \|\sigma^{-1} f_L\|_2^2 \leq \tau \sqrt{d_L}/2$ and $\tau \leq n R^2/ \sqrt{d_L}$ by assumption of the lemma, we see that
        \begin{equation}\label{eq:n-times-square-norm-f-bound}
        n \| \sigma^{-1} f_L \|_2^2 \leq \tau \sqrt{d_L}/2 \leq \sqrt{n \sqrt{d} \tau} \frac{\sqrt{\sqrt{d} \tau}}{2\sqrt{n}} \leq \frac{R}{\log(N)\tilde{\kappa}_\alpha} \eta_\tau.
        \end{equation}
        For $\tilde{\kappa}_\alpha > 0$, the latter can be seen to be larger than $\eta_\tau / 4$. Consequently, the first term in \eqref{eq:1st_mention_once_asdasd} can be seen to be bounded by
        \begin{equation*}
        \text{Pr} \bigg( {\sqrt{n-1}} \langle Z, Z'  \rangle >  \eta_\tau/4 \bigg) \leq e^{- \frac{\eta_\tau}{4\sqrt{nd_L}}} \leq e^{- \kappa_\alpha \log(N)},
        \end{equation*}
        where the inequality follows from Lemma \ref{lem:inner_product_ind_gaussians}. Since $Z$ and $Z'$ are independent standard Gaussian vectors, the second term in \eqref{eq:1st_mention_once_asdasd} is bounded above by
        \begin{equation*}
        \text{Pr} \bigg( {\sqrt{2}({n-1})} \langle \sigma^{-1} f, Z  \rangle > \eta_\tau/2  \bigg) \leq e^{- \frac{ \eta_\tau^2}{8 n^2 \|\sigma^{-2} f_L\|_2^2}}.
        \end{equation*}
        By $n \| \sigma^{-1} f_L \|_2^2 \leq \tau \sqrt{d_L}/2$,
        \begin{equation*}
            n^2 \| \sigma^{-1} f_L \|_2^2 \leq n \tau \sqrt{d_L}/2 \leq \frac{\eta_\tau^2}{\log^2(N) \tilde{\kappa}_\alpha^2}.
        \end{equation*}
        Hence, we have obtained that
        \begin{align*}
        \P_f \left( X^{(j)} \notin \cB_{L;\tau} \right) &\leq \frac{\alpha}{4mn}
        \end{align*}
        for $\tilde{\kappa}_\alpha > 0$ large enough. This concludes the proof of the lemma.
        \end{proof}
    
    In Lemma \ref{lem:Lipschitz_constant_on_C}, it is shown that $x \mapsto S^{(j)}(x)$ is $D_\tau$-Lipschitz with respect to the Hamming distance on $\cC_{L;\tau}$, with $D_\tau$ as defined in \eqref{eq:supp:lips_constant_defined}.

    \begin{lemma}\label{lem:Lipschitz_constant_on_C} 
    The map $x \mapsto S^{(j)}_\tau(x)$ defined in \eqref{eq:clipped_Lips_stat_rescaled} is $D_\tau$-Lipschitz with respect to $(\R^{d})^n$-Hamming distance on $\cC_{L;\tau}$.
    \end{lemma}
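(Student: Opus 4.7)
The plan is to reduce the claim to two cases depending on how $\mathrm{d}_H(x,x')$ compares with $K_\tau$. Since the clipping operation $z \mapsto [z]_{-\tau}^\tau$ is $1$-Lipschitz and the image lies in $[-\tau,\tau]$, we always have the trivial bound $|\tilde S_{L;\tau}^{(j)}(x) - \tilde S_{L;\tau}^{(j)}(x')| \leq 2\tau$. By the definition $K_\tau = \lceil 2\tau/D_\tau \rceil$, this already yields $|\tilde S_{L;\tau}^{(j)}(x) - \tilde S_{L;\tau}^{(j)}(x')| \leq K_\tau D_\tau \leq k D_\tau$ whenever $k := \mathrm{d}_H(x,x') \geq K_\tau$, which disposes of the large-distance case and explains why $\cA_{L;\tau}$ only needs to control subsets of cardinality at most $K_\tau$.

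For $k < K_\tau$, the $1$-Lipschitz property of the clipping further reduces the claim to the bound $\frac{1}{n\sigma^2\sqrt{d_L}} \bigl| \|S+U\|_2^2 - \|S+V\|_2^2 \bigr| \lesssim k D_\tau$, where $\cI = \{i : x_i \neq x_i'\}$ (with $|\cI| = k$), $U = \sum_{i \in \cI}\Pi_L x_i$, $V = \sum_{i \in \cI}\Pi_L x_i'$, and $S = \sum_{i \notin \cI}\Pi_L x_i = \sum_{i \notin \cI}\Pi_L x_i'$. Expanding the squared norms gives
\begin{equation*}
\|S+U\|_2^2 - \|S+V\|_2^2 = 2\langle S, U - V\rangle + \bigl(\|U\|_2^2 - \|V\|_2^2\bigr),
\end{equation*}
and the second term is directly bounded by applying $\cA_{L;\tau}$ with $\cJ = \cI$ to both $x$ and $x'$, giving $|\|U\|_2^2 - \|V\|_2^2| \lesssim k\sigma^2 D_\tau n\sqrt{d_L}$.

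The main obstacle is bounding $\langle S, U\rangle$ (and analogously $\langle S, V\rangle$), because $\cB_{L;\tau}$ only controls the leave-\emph{one}-out inner products $\langle \Pi_L x_i, \sum_{j \neq i}\Pi_L x_j\rangle$, whereas $\langle S, U\rangle$ involves the leave-$\cI$-out sum. The key algebraic identity to bridge this gap is
\begin{equation*}
\langle S, U\rangle \,=\, \underset{i \in \cI}{\sum} \bigl\langle \Pi_L x_i, \, \underset{j \neq i}{\sum} \Pi_L x_j\bigr\rangle \;-\; \Bigl(\|U\|_2^2 - \underset{i \in \cI}{\sum} \|\Pi_L x_i\|_2^2\Bigr),
\end{equation*}
obtained by splitting $\sum_{j \neq i} = \sum_{j \notin \cI} + \sum_{j \in \cI, j \neq i}$ and using $\sum_{i \in \cI}\langle \Pi_L x_i, \sum_{j \in \cI, j \neq i}\Pi_L x_j\rangle = \|U\|_2^2 - \sum_{i \in \cI}\|\Pi_L x_i\|_2^2$. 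The first piece is bounded by $\frac{k}{8}\sigma^2 D_\tau n\sqrt{d_L}$ via $\cB_{L;\tau}$, and the second piece is controlled using $\cA_{L;\tau}$ applied with $\cJ = \cI$ and with all singletons $\cJ = \{i\}$, $i \in \cI$, which together yield $|\|U\|_2^2 - \sum_{i \in \cI}\|\Pi_L x_i\|_2^2| \leq \frac{k}{4}\sigma^2 D_\tau n\sqrt{d_L}$.

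Assembling the pieces yields $|\|S+U\|_2^2 - \|S+V\|_2^2| \leq C k \sigma^2 D_\tau n\sqrt{d_L}$ for an absolute constant $C$, hence $|\tilde S_{L;\tau}^{(j)}(x) - \tilde S_{L;\tau}^{(j)}(x')| \leq C k D_\tau$; any loss in the constant can be absorbed into $\tilde\kappa_\alpha$ in the definition \eqref{eq:supp:lips_constant_defined} of $D_\tau$, so that the Lipschitz constant is $D_\tau$ as claimed. The subtle point throughout is that we genuinely need both $x,x' \in \cC_{L;\tau}$ in order to apply the two-sided bounds of $\cA_{L;\tau}$ and $\cB_{L;\tau}$ to the unchanged background sum $S$ via the leave-one-out identity; a more naive approach that replaces coordinates one at a time would fail because the intermediate configurations need not belong to $\cC_{L;\tau}$.
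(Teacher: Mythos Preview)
Your proof is correct and follows essentially the same strategy as the paper's: the same case split on $k$ versus $K_\tau$, the same expansion $\|S+U\|_2^2 - \|S+V\|_2^2 = 2\langle S,U-V\rangle + (\|U\|_2^2-\|V\|_2^2)$, and the same key move of rewriting $\langle S,U\rangle$ via the leave-one-out identity so that $\cB_{L;\tau}$ controls the inner-product terms while $\cA_{L;\tau}$ (applied both to $\cJ=\cI$ and to singletons) controls the remaining norm terms. The only difference is cosmetic: the paper tracks the $\tfrac{1}{8}$ constants so as to land exactly on $D_\tau$, whereas you arrive at $C D_\tau$ and absorb $C$; note that this absorption really goes through the $\tfrac{1}{8}$ factor in the definitions of $\cA_{L;\tau},\cB_{L;\tau}$ (with a compensating increase in $\tilde\kappa_\alpha$ to preserve Lemma~\ref{lem:concentration_on_C}), rather than through $\tilde\kappa_\alpha$ alone, but this is harmless for the downstream rates.
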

    \begin{proof}
    Consider $x=(x_i)_{i \in [n]},\breve{x} = (\breve{x})_{i \in [n]} \in \cC_{L;\tau}$ with $k:=\mathrm{d}_H(x,\breve{x})$. If $k > \lceil 2\tau D^{-1}_\tau \rceil$, we have $|S^{(j)}_\tau(x) - S^{(j)}_\tau(\breve{x})| \leq 2\tau \leq  D_\tau k$. If $k \leq  \lceil 2\tau D^{-1}_\tau \rceil$, let $\cJ \subset [n]$ denote the indexes of columns in which $x$ and $\breve{x}$ differ. Define the sum of the elements that $x$ and $\breve{x}$ have in common as 
    \begin{equation*}
    v = \sigma^{-1} \sum_{i \in [n] \setminus \cJ} x_i, \text{ such that } \sigma^{-1} \underset{i=1}{\overset{n}{\sum}}  x_i = v + w \text{ and } \sigma^{-1} \underset{i=1}{\overset{n}{\sum}} \breve{x}_i = v + \breve{w}.
    \end{equation*}
    We have
    \begin{align*}
    S^{(j)}_\tau(x) - S^{(j)}_\tau(\breve{x}) &= \frac{n}{\sqrt{d_L}} \left(\left\|  n^{-1} (v+w) \right\|^2- \frac{V^{(j)}}{n} \right) - \frac{n}{\sqrt{d_L}} \left(\left\| n^{-1} (v+\breve{w})\right\|^2 - \frac{V^{(j)}}{n} \right)  \\ &= \frac{1}{n\sqrt{d_L}} \left( 2 \left\langle w , v \right\rangle - 2 \left\langle \breve{w} , v \right\rangle + \left\| w \right\|_2^2 - \left\| \breve{w} \right\|_2^2 \right).
    \end{align*}
    The last two terms are bounded by $kD_\tau /4$ since $x,\breve{x} \in \cA_{L;\tau}$. The first two terms equal
    \begin{align*}
    \frac{2}{n\sqrt{d_L}}   \left(   \left\langle w , v + w \right\rangle -  \left\langle  \breve{w} , v + \breve{w} \right\rangle  + \left\| \breve{w} \right\|_2^2  - \left\| w \right\|_2^2 \right),
    \end{align*}
    where the last two terms are bounded by $kD_\tau/2$. It holds that
    \begin{align*}
    \left\langle w , v + w \right\rangle -  \left\langle  \breve{w} , v + \breve{w} \right\rangle  = \sigma^{-2} \underset{i \in \cJ}{\overset{}{\sum}} \left(  \left\langle x_i , \underset{i \in [n] \setminus \cJ}{\overset{}{\sum}} x_i \right\rangle -  \left\langle \breve{x}_i , \underset{i \in [n] \setminus \cJ}{\overset{}{\sum}} \breve{x}_i \right\rangle + \| x_i \|_2^2 - \| \breve{x}_i \|_2^2 \right),
    \end{align*}
    which is bounded by $k D_\tau / 4$ for $x \in \cA_{L;\tau} \cap \cB_{L;\tau}$. Putting it all together and by symmetry of the argument, we obtain that
    \begin{equation*}
    \left|S^{(j)}_\tau(x) - S^{(j)}_\tau(\breve{x})\right| \leq D_\tau k.
    \end{equation*}
    \end{proof}
    
    Lemma \ref{lem:Lipschitz_extension_theorem} below show that there exists a measurable function ${S}_{L;\tau}^{(j)} : \R^{d_L} \to \R$, $D_\tau$-Lipschitz with respect to the Hamming distance, such that ${S}_{L;\tau}^{(j)}(X^{(j)}_L) = \tilde{S}_{L;\tau}^{(j)}(X^{(j)}_L)$ whenever $X^{(j)} \in \cC_{L;\tau}$. That is, letting $\Psi_L : \R^{\infty} \to \R^{d_L}$ be the coordinate projection for the first $d_L$ coordinates, the lemma allows a Lipschitz extension of the test statistic defined in \eqref{eq:supp:clipped_Lips_stat_rescaled} on $\Psi_L \cC_{L;\tau}$ to all of $\R^{d_L}$. The proof follows essentially the construction of McShane \cite{mcshane1934extension} for obtaining a Lipschitz extension with respect to the Hamming distance, but our lemma verifies in addition the Borel-measurability of the resulting map.
    
     We follow a construction that is in essence that of McShane \cite{mcshane1934extension}, whilst also verifying that such an extension is Borel measurable.
    
    \begin{lemma}\label{lem:Lipschitz_extension_theorem}%[McShane--Whitney--Extension]
        % TODO: fix \ell_2(\N)^n mentions
        Let $\cC \subset (\R^{d_L})^n$ and $S : \cC \to \R$ be a (Borel) measurable $D$-Lipschitz map with respect to the Hamming distance on $ (\R^{d_L})^n$. Then, there exists a map $\tilde{S}:  (\R^{d_L})^n \to \R$ measurable with respect to the Borel sigma algebra such that it is $D$-Lipschitz with respect to the Hamming distance on $ (\R^{d_L})^n $ and $\tilde{S} = S$ on $\cC$.
    \end{lemma}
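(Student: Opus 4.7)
The plan is to define $\tilde S$ by the McShane-style formula
\begin{equation*}
\tilde S(x) := \inf_{y\in\cC}\bigl[S(y) + D\,\mathrm{d}_H(x,y)\bigr],
\end{equation*}
with the convention that the infimum over an empty set is $0$. The agreement $\tilde S = S$ on $\cC$ follows from two routine checks: taking $y=x$ gives $\tilde S(x)\leq S(x)$, and the $D$-Lipschitz property of $S$ on $\cC$ with respect to $\mathrm{d}_H$ gives the reverse inequality. Lipschitz continuity of $\tilde S$ on $(\R^{d_L})^n$ is then the standard McShane argument: for any $x,x'$ and any $y\in\cC$, the triangle inequality $\mathrm{d}_H(x,y)\leq \mathrm{d}_H(x',y)+\mathrm{d}_H(x,x')$ yields $\tilde S(x)\leq \tilde S(x') + D\,\mathrm{d}_H(x,x')$, and symmetry closes it.

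The real content is Borel measurability, which is where the bulk of the work lies. I would exploit the discreteness of $\mathrm{d}_H$: since it takes only the values $0,1,\dots,n$, the extension decomposes as
\begin{equation*}
\tilde S(x) = \min_{0\le k\le n}\Bigl(Dk + h_k(x)\Bigr),\qquad h_k(x) := \inf\{S(y):\,y\in\cC,\ \mathrm{d}_H(x,y)\leq k\},
\end{equation*}
and further, partitioning according to the set $I\subset[n]$ of coordinates on which $x$ and $y$ agree,
\begin{equation*}
h_k(x) = \min_{|I|\geq n-k}\psi_I(x),\qquad \psi_I(x) := \inf\{S(y):\,y\in\cC,\ y_i=x_i\ \forall i\in I\}.
\end{equation*}
Each $\psi_I$ depends only on the coordinates $x|_I$, so it suffices to show that for each $I$ the sublevel set $\{x:\psi_I(x)<t\}$ is Borel, which is equivalent to showing that the coordinate projection onto the $I$-factor of the set $\cC\cap\{S<t\}$ is Borel.

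The hard step, and the one that deserves most attention, is the last one: in general the projection of a Borel set is only analytic, not Borel. My plan is to restrict to the setting actually needed in this paper, where $\cC$ is closed (the set $\cC_{L;\tau}$ from \eqref{eq:def_C1_C2} is an intersection of preimages of closed sets under continuous maps) and $S$ is continuous on the ambient space (as in \eqref{eq:clipped_Lips_stat_rescaled}). Under these conditions $\cC\cap\{S<t\}$ is a relatively open subset of a closed set, hence $F_\sigma$ in $(\R^{d_L})^n$; writing it as a countable union of closed sets, and exhausting the orthogonal factor by compacts, each piece projects to a compact (hence closed) set, so the projection is itself $F_\sigma$ and in particular Borel. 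Taking the finite minima in $I$ and $k$ preserves Borel measurability, yielding the desired property. I would state the lemma accordingly and note that the hypothesis is satisfied in every invocation of Lemma \ref{lem:Lipschitz_extension_theorem} elsewhere in the paper; the alternative of invoking Jankov--von Neumann to obtain only universal measurability is unnecessary given this additional regularity.
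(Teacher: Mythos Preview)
Your proposal is correct in the restricted setting you describe, but it takes a genuinely different route from the paper for the measurability part.

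Both you and the paper begin with the same McShane infimum formula $\breve S(x)=\inf_{c\in\cC}[S(c)+D\,\mathrm{d}_H(c,x)]$ and verify the $D$-Lipschitz property and agreement on $\cC$ by the standard argument. The divergence is in how measurability is obtained. You exploit the finiteness of the Hamming range to decompose $\breve S$ as a finite minimum over coordinate subsets $I$, reduce to showing that projections of $\cC\cap\{S<t\}$ onto the $I$-factors are Borel, and then use the $\sigma$-compactness of the complementary factor together with closedness of $\cC$ and continuity of $S$ to write these projections as $F_\sigma$ sets. This is elementary and transparent, but it proves a strictly weaker lemma than the one stated: you must add the hypotheses that $\cC$ is closed and $S$ is continuous (or at least upper semicontinuous), and you correctly note that these hold in every invocation in the paper (the set $\cC_{L;\tau}$ in \eqref{eq:def_C1_C2} is closed and the map in \eqref{eq:clipped_Lips_stat_rescaled} is continuous for each realization of $V^{(j)}_{L;\tau}$).

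The paper instead keeps the general hypotheses and handles measurability abstractly: it takes a minimal Borel-measurable majorant $\breve S^*$ of $\breve S$ with respect to $\P_0$ (existence from Lemma~1.2.1 of van der Vaart--Wellner), sets $\tilde S=\breve S^*$ off $\cC$ and $\tilde S=S$ on $\cC$, and then uses Lemma~1.2.2 of the same reference to transfer the Lipschitz bound from $\breve S$ to $\tilde S$. This buys the lemma as stated (arbitrary Borel $\cC$, measurable $S$) at the cost of invoking the measurable-envelope machinery, and with the caveat that the resulting Lipschitz inequality is in principle only asserted $\P_0$-almost surely through the majorant construction. Your approach trades that generality for a self-contained argument that yields the pointwise Lipschitz bound directly; since the paper only ever applies the lemma with $\cC$ closed and $S$ continuous, your restriction is harmless for the downstream results.
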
   
    
    \begin{proof}%[Proof of Lemma \ref{lem:Lipschitz_extension_theorem}]
    The case where $\cC$ is empty is trivial, so we shall assume $\cC$ to be nonempty. Consider the map $\breve{S}: (\R^{d_L})^n \to [-\infty,\infty)$ given by
    \begin{equation*}
    \breve{S}(x) = \begin{cases}
    S(x) \; \text{ if } x \in \cC, \\
    \inf \left\{ S(c) + D \mathrm{d}_H(c,x) \, : \, c \in \cC \right\} \; \text{ otherwise.}
    \end{cases} 
    \end{equation*}
    Fix any $c' \in \cC$. Since $S$ is $D$-Lipschitz with respect to the Hamming distance, we have for all $c \in \cC$ that
    \begin{equation*}
    S(c) + D \mathrm{d}_H(c,x) \geq S(c') - D \mathrm{d}_H(c',c) + D \mathrm{d}_H(c,x) \geq S(c') - D \mathrm{d}_H(c',x) > -\infty
    \end{equation*}
    where the last step follows from the triangle inequality. So, $\breve{S}$ is real valued. For all $x \in (\R^{d_L})^n$ and $\gamma > 0$, there exists $c_\gamma \in \cC$ such that
    \begin{equation*}
    \breve{S}(x) \geq S(c_\gamma) + D \mathrm{d}_H(c_\gamma,x) - \gamma.
    \end{equation*}
    So for $x,x' \in  \ell_2(\N)^n$,
    \begin{equation*}
    \breve{S}(x') - \breve{S}(x)  \leq \breve{S}(c_\gamma) + D \mathrm{d}_H(c_\gamma,x') - \breve{S}(c_\gamma) - D \mathrm{d}_H(c_\gamma,x) + \gamma \leq D \mathrm{d}_H(x,x') + \gamma.
    \end{equation*}
    By symmetry of the argument and since $\gamma > 0$ is given arbitrarily, we conclude that $\breve{S}$ is $D$-Lipschitz with respect to the Hamming distance. Note however, that this construction does not guarantee that $\breve{S}$ is measurable.
    
    % TODO: update this reference
    For any map $H:(\R^{d_L})^n \to [-\infty,\infty]$, let ${H}^*$ denote its minimal Borel-measurable majorant. That is, a measurable map ${H}^*: (\R^{d_L})^n \to [-\infty,\infty]$ such that
    \begin{enumerate}
        \item $H \leq H^*$ and
        \item $H^* \leq T$ $\P_0$-a.s. for every measurable $T: (\R^{d_L})^n \to [-\infty,\infty]$ with $T \geq H$.
    \end{enumerate}
    Such a map exists by e.g. Lemma 1.2.1 in \cite{van1996weak}. The map $\tilde{S}:(\R^{d_L})^n \to \R$ defined by
    \begin{equation*}
    \tilde{S}(x) = \breve{S}^*(x) \mathbbm{1}_{x \notin \cC} + S(x) \mathbbm{1}_{x \in \cC}
    \end{equation*}
    is measurable and can be seen to be a Borel-measurable majorant of $\breve{S}$; following from the fact that sums and products of measurable functions are measurable, $\breve{S} \leq \breve{S}^*$ and $\breve{S}(x) = S(x)$ for $x \in \cC$.
    
    Furthermore, by combining the fact that $\tilde{S}$ is measurable with e.g. Lemma 1.2.2 in \cite{van1996weak}, we get %combined with Lemma 1.2.5 in \cite{van1996weak},
    \begin{align}\label{eq:bound_on_tilde_S_measurability_lemma}
    |\tilde{S}(x) - \tilde{S}(x')| = |(\tilde{S}(x) - \tilde{S}(x'))^*| \leq  |\breve{S}(x) - \breve{S}(x')|^*,
    \end{align}
    where $(x,x') \mapsto |\breve{S}(x) - \breve{S}(x')|^*$ is minimal Borel-measurable majorant of $(x,x') \mapsto |\breve{S}(x) - \breve{S}(x')|$. Since $\breve{S}$ is $D$-Lipschitz with respect to the Hamming distance $(x,x')\mapsto \mathrm{d}_H(x,x')$, which is a measurable map,
    \begin{equation*}
    |\breve{S}(x) - \breve{S}(x')|^* \leq D \mathrm{d}_H(x,x').
    \end{equation*}
    From \eqref{eq:bound_on_tilde_S_measurability_lemma} it follows that for all $x, x' \in  (\R^{d_L})^n$,
    \begin{align*}
    |\tilde{S}(x) - \tilde{S}(x')| \leq D \mathrm{d}_H(x,x').
    \end{align*}
    We have obtained a map $\tilde{S}$ that is $D$-Lipschitz with respect to the Hamming distance, measurable and $\tilde{S} = S$ on $\cC$, concluding the proof.
    \end{proof}

    \subsubsection{Proof of Lemma \ref{lem:typeI_typeII_error_control_Lipschitz_ext_test_privacy}}
    
    Consider the transcript of Section \ref{ssec:procedure_I}, which we recall here as
    \begin{equation}\label{eq:supp:transcript_private_Euclidean_norm_lips_ext_for_one_clipping}
    Y^{(j)}_{L;\tau} = \gamma_\tau \breve{S}_{L;\tau}^{(j)}(X^{(j)}) + W^{(j)}_\tau, \quad \text{ with } \gamma_\tau := \frac{\epsilon}{D_\tau  \sqrt{ 2 \mathfrak{c} \log (2 / \delta)}},
    \end{equation}
    $\mathfrak{c}>0$, $W^{(j)}_\tau \sim N(0,1)$ independent for $j=1,\dots,m$ and $\tau > 0$. These transcripts are $(\epsilon/{\mathfrak{c}},\delta)$-differentially private for any $\epsilon > 0$ (see e.g. \cite{dwork2014algorithmic}). Define the test
    \begin{equation}\label{eq:supp:Lipschitz_ext_test_privacy_for_one_clipping}
    \varphi_\tau := \mathbbm{1} \left\{ \frac{1}{\sqrt{m}} \underset{j=1}{\overset{m}{\sum}} Y^{(j)}_{L;\tau} \geq  \left( \gamma_\tau \vee 1 \right) \kappa  \right\}.
    \end{equation}
    We will prove the following more general version of Lemma \ref{lem:typeI_typeII_error_control_Lipschitz_ext_test_privacy}.
    
    \begin{lemma}\label{lem:typeI_typeII_error_control_Lipschitz_ext_test_privacy_more_general}
    Consider the test $\varphi_\tau$ as defined by \eqref{eq:Lipschitz_ext_test_privacy_for_one_clipping}. Whenever $\tau/4 \leq \frac{n\| f_L \|_2^2}{ \sigma^2 \sqrt{d_L}}  \leq \tau/2$ and % taken out: \log(N) \sqrt{2 \mathfrak{c} \log(2/\delta)}, no difference rate wise when taking logs
    \begin{equation}\label{eq:supp:condition_on_n_f_L_procedure_I}
      \|f_L \|_2^2 \geq C_\alpha \kappa \sigma^{2} \sqrt{   \mathfrak{c} \log(1/\delta) } \log^{} (N) \left( \frac{\sqrt{d_L}}{\sqrt{N} \sqrt{n} (\sqrt{n} \epsilon \wedge 1)} \right) \bigvee \left( \frac{1}{N n\epsilon^2} \right),
    \end{equation}
    for $C_\alpha > 0$ large enough, it holds that $\P_f(1-\varphi_\tau) \leq \alpha$. %TODO: bovenstaande in termen van $\kappa$
    \end{lemma}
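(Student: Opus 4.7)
The plan is to control the lower tail of the test statistic $T_\tau := m^{-1/2} \sum_j Y^{(j)}_{L;\tau}$ under $\P_f$ by decomposing it as
\[
T_\tau = \frac{\gamma_\tau}{\sqrt{m}} \sum_{j=1}^m S_{L;\tau}^{(j)}(X^{(j)}_L) + \frac{1}{\sqrt{m}} \sum_{j=1}^m W^{(j)}_\tau,
\]
working on the ``good event'' $\mathcal{G} = \bigcap_{j=1}^m \{ X^{(j)} \in \cC_{L;\tau}\}$. The condition $n\|f_L\|_2^2/(\sigma^2 \sqrt{d_L}) \leq \tau/2$ allows us to invoke Lemma \ref{lem:concentration_on_C} and a union bound to obtain $\P_f(\mathcal{G}^c) \leq m \cdot \alpha/(2N) \leq \alpha/4$. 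On $\mathcal{G}$, Lemma \ref{lem:Lipschitz_extension_theorem} gives $S_{L;\tau}^{(j)}(X^{(j)}_L) = \tilde{S}_{L;\tau}^{(j)}(X^{(j)}_L)$, so the test statistic reduces to a sum of i.i.d.\ bounded summands plus independent Gaussian noise.

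Next I would compute the mean and variance of $\tilde{S}_{L;\tau}^{(j)}$ under $\P_f$. Writing $W_j := d_L^{-1/2}(\|\sqrt{n}\sigma^{-1}\overline{X^{(j)}_L}\|_2^2 - V^{(j)}_{L;\tau})$, the random variable $W_j$ has mean $n\|f_L\|_2^2/(\sigma^2\sqrt{d_L}) \in [\tau/4,\tau/2]$ (by the non-central $\chi^2$ mean formula) and variance bounded by $4 + 4n\|f_L\|_2^2/(\sigma^2 d_L) \leq 4 + 2\tau/\sqrt{d_L}$. Since $(A-V)/\sqrt{d_L}$ is sub-exponential with parameter of constant order (away from the signal), standard sub-exponential tail bounds show that the clipping bias $|\E_f W_j - \E_f \tilde{S}^{(j)}_{L;\tau}|$ is exponentially small in $\tau$, and hence negligible relative to $\E_f W_j \geq \tau/4$ provided the signal condition \eqref{eq:supp:condition_on_n_f_L_procedure_I} is in force. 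Consequently $\E_f \tilde{S}^{(j)}_{L;\tau} \geq \tau/8$ and $\mathrm{Var}_f \tilde{S}^{(j)}_{L;\tau} \leq C$ for an absolute constant $C$. Applying Bernstein's inequality to the i.i.d.\ centered, $\tau$-bounded summands $\tilde{S}^{(j)}_{L;\tau} - \E_f \tilde{S}^{(j)}_{L;\tau}$, together with a Gaussian tail bound for $m^{-1/2} \sum_j W^{(j)}_\tau \sim N(0,1)$, yields that, with probability at least $1-3\alpha/4$,
\[
T_\tau \;\geq\; \gamma_\tau \sqrt{m}\cdot \tau/8 \;-\; C_\alpha\bigl(\gamma_\tau + 1 + \gamma_\tau \tau/\sqrt{m}\bigr).
\]

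To finish, I would verify that this lower bound exceeds $\kappa(\gamma_\tau \vee 1)$ under the signal condition \eqref{eq:supp:condition_on_n_f_L_procedure_I}, split into the regimes $\gamma_\tau \geq 1$ and $\gamma_\tau < 1$ and into the two cases $\tau \lessgtr \sqrt{d_L}$ that dictate the scaling of $D_\tau$ (and hence of $\gamma_\tau$). Substituting $\tau \asymp n\|f_L\|_2^2/(\sigma^2 \sqrt{d_L})$ and the definition of $\gamma_\tau$ turns the required inequality into an inequality of the form $\|f_L\|_2^2 \gtrsim \kappa\sigma^2 \sqrt{\mathfrak{c}\log(1/\delta)}\log(N)\cdot\bigl(\sqrt{d_L}/[\sqrt{N}\sqrt{n}(\sqrt{n}\epsilon\wedge 1)] \,\vee\, 1/(Nn\epsilon^2)\bigr)$, matching \eqref{eq:supp:condition_on_n_f_L_procedure_I} for $C_\alpha$ large enough. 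The main obstacle is this concluding case analysis: because $\gamma_\tau$ depends on $\tau$ through $D_\tau = \tilde{\kappa}_\alpha \log(N)(\sqrt{n\sqrt{d_L}\tau} \vee \sqrt{nd_L})/(n\sqrt{d_L})$, and because the critical value itself contains the factor $\gamma_\tau \vee 1$, each combination of ``$\sqrt{n}\epsilon \lessgtr 1$'' and ``$\tau \lessgtr \sqrt{d_L}$'' must be checked separately, and in each case the corresponding term of the $\max$ in \eqref{eq:supp:condition_on_n_f_L_procedure_I} must be shown to be exactly what dominates. The mean/variance computation and Bernstein step are routine; the care lies in making sure the constants and the logarithmic factors in the signal condition line up across all regimes.
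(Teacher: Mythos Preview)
Your overall architecture---restrict to the good event $\mathcal G$ via Lemma~\ref{lem:concentration_on_C}, replace the Lipschitz extension by $\tilde S_{L;\tau}^{(j)}$ there, then control mean and variance of the clipped statistic and conclude by a concentration inequality---is exactly the paper's. The paper uses Chebyshev rather than Bernstein, but that is immaterial. There are, however, two genuine gaps in your execution.

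\textbf{The clipping bias is not ``exponentially small in $\tau$'' in general.} Under $\P_f$ the unclipped statistic decomposes as $\mu + G + C$ with $\mu = n\|f_L\|_2^2/(\sigma^2\sqrt{d_L})\in[\tau/4,\tau/2]$, $G$ Gaussian of variance $4\mu/\sqrt{d_L}$, and $C$ sub-exponential of constant order. Your tail-bound argument gives a bias of order $(1+\sqrt{\mu/\sqrt{d_L}})\,e^{-c\tau}$, but the lemma must cover arbitrarily small $\tau$ (indeed $\tau$ can be of order $\mathrm{logs}/\sqrt{m}$ under the hypotheses), and for such $\tau$ the bound $e^{-c\tau}\approx 1$ does not beat $\tau/8$. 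The paper avoids this by invoking Lemma~\ref{lem:clipping_preserves_mean_under_alt}: since $G+C$ is symmetric about~$0$ with a uniformly bounded density and $\mu\in[\tau/4,\tau/2]$, the clipped mean satisfies $\E_f\tilde S^{(j)}_{L;\tau}\ge c\mu$ for an absolute constant $c>0$, with no smallness assumption on~$\tau$.

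\textbf{The post-clipping variance is not an absolute constant.} You correctly compute $\mathrm{Var}_f(\tilde S^{(j)}_{L;\tau})\le 4 + 4n\|f_L\|_2^2/(\sigma^2 d_L)$, but then assert this is~$O(1)$; it is not, since $n\|f_L\|_2^2/(\sigma^2 d_L)=\mu/\sqrt{d_L}$ can be large. The paper keeps this term and, after Chebyshev, obtains three pieces
\[
\Bigl(\tfrac{\sqrt m\,n\|f_L\|_2^2\gamma_\tau}{\sigma^2\sqrt{d_L}}\Bigr)^{-2}
+\Bigl(\tfrac{m\,n\|f_L\|_2^2}{\sigma^2}\Bigr)^{-1}
+\Bigl(\tfrac{\sqrt m\,n\|f_L\|_2^2}{\sigma^2\sqrt{d_L}}\Bigr)^{-2},
\]
each of which is shown to be small under~\eqref{eq:supp:condition_on_n_f_L_procedure_I}. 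The middle term is precisely the contribution of the signal-dependent variance you dropped. Your case analysis at the end (splitting on $\gamma_\tau\gtrless 1$ and on which branch of $D_\tau$ is active) is the right idea and matches the paper's conclusion, but it must be carried out with the correct variance to recover both branches of the maximum in~\eqref{eq:supp:condition_on_n_f_L_procedure_I}.
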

    
    \begin{proof}
    On the event that $X^{(j)} \in \cC_\tau$ for all $j \in [m]$, we have that
    \begin{equation}\label{eq:terms_here_mentioned_later}
    \sum_{j=1}^m  Y^{(j)}_{\tau} = \sum_{j=1}^m \left(\gamma_\tau \breve{S}_\tau(X^{(j)}) + W^{(j)}_\tau\right) = \sum_{j=1}^m \left(\gamma_\tau S_\tau(X^{(j)}) + W^{(j)}_\tau\right).
    \end{equation}
    Consequently, $\P_f(1 - \varphi_\tau)$ is bounded above by
    \begin{align}\label{eq:procedure_I_concentration_union_bound}
     \P_f \left(\frac{1}{\sqrt{m}} \sum_{j=1}^m  \left(\gamma_\tau S_\tau(X^{(j)}) + W^{(j)}_\tau\right) \geq \left( \gamma_\tau \vee 1 \right) \kappa \right)  + \P_f \left( \exists j : X^{(j)} \notin \cC_\tau \right).
    \end{align}
    By Lemma \ref{lem:concentration_on_C} and a union bound, the second term is bounded above by $\alpha/2$.

    Under $\P_f$, it holds that
    \begin{equation}\label{eq:symmetrized_chi-square_under_alt}
    \frac{n}{\sqrt{d_L}} \left(\left\| \sigma^{-1} \overline{X^{(j)}_L} \right\|^2_2-\frac{V^{(j)}}{n}\right) \overset{d}{=} \frac{n \sigma^{-2} \|f_L\|_2^2}{\sqrt{d_L}} + 2 \frac{\sqrt{n}}{\sqrt{d_L}} \langle Z, \sigma^{-1} f\rangle + \frac{\|Z \|_2^2 - V^{(j)}}{\sqrt{d_L}},
    \end{equation}
    where $Z \sim N(0,I_{d_L})$. By assumption, $\frac{ n \|f_L\|_2^2}{\sqrt{d_L} \sigma^{2}} \leq \tau/2$, $\text{Var}(\frac{\sqrt{n}}{\sqrt{d_L}} \langle Z,\sigma^{-1} f_L \rangle) = n \sigma^{-2} \|f_L\|_2^2/d_L \leq \tau/2$ and $({\|Z \|_2^2 - V^{(j)}})/{\sqrt{d_L}}$ tends to a Gaussian with variance $4$ for large $d_L$. The second and third term in \eqref{eq:symmetrized_chi-square_under_alt} are symmetric in distribution about $0$, have uniformly bounded densities (since the Chi-square and normal densities are bounded, and the third term tends weakly to a Gaussian in $d_L$) and $\sigma^{-2}d^{-1/2}_L n \|f_L\|_2^2 \leq \tau/2$, which means that the conditions of Lemma \ref{lem:clipping_preserves_mean_under_alt} are satisfied. Applying said lemma (with $\mu = \sigma^{-2} d^{-1/2}_L n \|f_L\|_2^2$), we get that there exists a uniform constant $c > 0$ such that
    \begin{align*}
    \E_f \frac{1}{\sqrt{m}} \sum_{j=1}^m  \left( \gamma_\tau S_\tau(X^{(j)}) + W^{(j)}_\tau\right) \geq c\frac{\sqrt{m} n^{}  \| f_L \|_2^2 \gamma_\tau }{\sigma^2 \sqrt{d_L}}.
    \end{align*}
    Under $\P_f$, by independence of the data and the Gaussian noise,
    \begin{align*}
    \text{Var}_f\left(\frac{1}{\sqrt{m}} \sum_{j=1}^m \gamma_\tau S_\tau(X^{(j)}) + W^{(j)}_\tau\right) &= 1 + \text{Var}_f\left(\gamma_\tau S_\tau(X^{(1)})\right).
    \end{align*}
    Since 
    \begin{equation*}
    \E_f \frac{n}{\sqrt{d_L}} \left(\left\| \sigma^{-1} \overline{X^{(j)}}_L \right\|^2_2-\frac{V^{(j)}}{n}\right) = \frac{n\sigma^{-2} \|f_L\|_2^2}{\sqrt{d_L}} \leq \tau/2,
    \end{equation*}
    Lemma \ref{lem:folklore_clipping_reduces_variance} yields
    \begin{align*}
    \text{Var}_f\left(\gamma_\tau S_\tau(X^{(1)})\right) &\leq \gamma_\tau^2 \text{Var}_f\left(\frac{n}{\sqrt{d_L}} \left(\left\| \sigma^{-1} \overline{X^{(j)}} \right\|^2_2-\frac{V^{(j)}}{n} \right)\right) \\
    &\leq \gamma_\tau^2\left(\frac{4n\|f_L\|_2^2}{\sigma^2 d_L} + 4\right).
    \end{align*}
    Assume now that for all $C_\alpha > 0$ large enough,
    \begin{equation}\label{eq:mention_this_holds_later_lips_ext_proof}
    \left( \gamma_\tau \vee 1 \right) \kappa \leq  c \frac{1}{2}{\sqrt{m} n^{} \sigma^{-2}  \| f_L \|_2^2 \gamma_\tau} / \sqrt{d_L},
    \end{equation}
    which is a claim we shall prove later on. Then, the first term in \eqref{eq:procedure_I_concentration_union_bound} is bounded above by
    \begin{equation}\label{eq:type_II_error_after_claim_lips_ext_test_proof}
    \P_f \left(\frac{1}{\sqrt{m}} \sum_{j=1}^m  \left( \gamma_\tau S_\tau(X^{(j)}) + W^{(j)}_\tau - \E_f(S_\tau(X^{(j)}) + W^{(j)}_\tau) \right) < - c\frac{\sqrt{m} n^{} \sigma^{-2}  \| f_L \|_2^2 \gamma_\tau}{2\sqrt{d_L}} \right),
    \end{equation}
    which, by Chebyshev's inequality is bounded by
    \begin{align*}
    \left( c \frac{\sqrt{m} n^{} \sigma^{-2}  \| f_L \|_2^2 \gamma_\tau}{2\sqrt{d_L}} \right)^{-2} \left({1 + \gamma_\tau^2 \left(\frac{4 \sigma^{-2} n\|f_L \|_2^2}{d_L} + 4\right)} \right) &\lesssim \\
    \left(\frac{\sqrt{m} n^{} \sigma^{-2} \| f_L \|_2^2 \gamma_\tau }{\sqrt{d_L} }\right)^{-2} + \left({{m} n^{} \sigma^{-2} \| f_L \|_2^2 }\right)^{-1} + \left(\frac{\sqrt{m} n^{} \sigma^{-2} \| f_L \|_2^2}{\sqrt{d_L}}\right)^{-2}. \;\; & 
    \end{align*}
    For $f$ satisfying \eqref{eq:lem:tau_specific_minimal_signal_size}, the last two terms are easily seen to be smaller than $\alpha/6$ for a large enough to choice for $C_\alpha$. To see that this is also true for the first term, recall that $\gamma_\tau := \frac{\epsilon}{D_\tau \sqrt{2 \mathfrak{c} \log(2/\delta)}}$, with
    \begin{equation*}
        D_\tau = \frac{\tilde{\kappa}_\alpha \log(N) ( \sqrt{n \sqrt{d_L} \tau} \vee \sqrt{nd_L})}{n \sqrt{d_L}},
    \end{equation*}
    which yields that the square root of the first term equals
    \begin{align*}
    \frac{\sqrt{m} n \sigma^{-2}  \| f_L \|_2^2 \epsilon }{ \sqrt{d_L} D_\tau \sqrt{2 \mathfrak{c} \log(2/\delta)} } = \frac{\sqrt{m} n^{2} \sigma^{-2}  \| f_L \|_2^2 \epsilon }{\tilde{\kappa}_\alpha \log(N) ( \sqrt{n \sqrt{d_L} \tau} \vee \sqrt{nd_L}) \sqrt{2 \mathfrak{c} \log(2/\delta)} }.
    \end{align*}
    When the maximum is taken in $\sqrt{nd_L}$, \eqref{eq:supp:condition_on_n_f_L_procedure_I} leads to the latter being larger than $C_\alpha$. When the maximum is taken in $\sqrt{n \sqrt{d_L} \tau}$, using that $4\sigma^{-2} n\|f\|_2^2/\sqrt{d_L} \geq \tau$ yields that the above display is bounded from below by
    \begin{equation*}
    \frac{\sqrt{m} \sigma^{-1} n^{} \| f_L \|_2 \epsilon }{ \tilde{\kappa}_\alpha \sqrt{\log(N) \sqrt{2 \mathfrak{c} \log(2/\delta)}}} \geq C_\alpha.
    \end{equation*}
    In either case, it follows that the Type II error (i.e. \eqref{eq:type_II_error_after_claim_lips_ext_test_proof}) can be made arbitrarily small per large enough choice of $C_\alpha > 0$.
    
    We return to the claim of \eqref{eq:mention_this_holds_later_lips_ext_proof}. When $\gamma_\tau \geq 1$, the claim is satisfied whenever $\kappa \lesssim \log(N)\sqrt{\mathfrak{c}\log(1/\delta)} $ for $C_\alpha > 0$ large enough. When $\gamma_\tau < 1$, it is required that
    \begin{equation*}
        \frac{\sqrt{m} \sigma^{-2} n^{} \gamma_\tau  \| f_L \|_2^2 \epsilon}{\sqrt{d_L}} = \frac{\sqrt{m} \sigma^{-2} n^{2} \| f_L \|_2^2 \epsilon}{ \tilde{\kappa}_\alpha \log(N) \sqrt{2 \mathfrak{c} \log(2/\delta)} (\sqrt{ n \sqrt{d_L} \tau} \vee \sqrt{nd_L})} \gtrsim \kappa,
    \end{equation*}
    which is satisfied whenever \eqref{eq:supp:condition_on_n_f_L_procedure_I} holds.
    \end{proof}
    
    \subsubsection{Proof of Lemma \ref{lem:lips_test_up_rate}}\label{sssec:supp:proof_test_I}
    
    We start by recalling the notation, before proving a slightly more general result. Consider for a set $\cS \subset \N$ the test
    \begin{equation}\label{eq:supp:definition_full_test_lips_ext_privacy}
    T_{\text{I}} := \underset{L \in \cS,\, \tau \in \mathrm{T}_L}{\max} \mathbbm{1} \left\{  \frac{1}{\sqrt{m}} \underset{j=1}{\overset{m}{\sum}} Y^{(j)}_{L;\tau} \geq \kappa_\alpha \left( \frac{\epsilon}{D_\tau \sqrt{2 |\mathrm{T}_L| |\cS| \log (2 / \delta)}} \vee 1 \right) \sqrt{\log |\mathrm{T}_L||\cS|}   \right\},
    \end{equation}
    where
    \begin{equation}\label{eq:supp:collection_of_threshold_lips_test}
    \mathrm{T}_L := \left\{ 2^{-k+2} \frac{n R^2}{\sqrt{2^{L+1}}}  : k=1,\dots,\lceil1+2\log_2(N R / \sigma )\rceil \right\},
    \end{equation}
    and $Y^{(j)}_L = \{ Y^{(j)}_{L;\tau} : \tau \in \mathrm{T}_L \}$ is generated according to \eqref{eq:supp:transcript_private_Euclidean_norm_lips_ext_for_one_clipping} with
    \begin{equation*}
    \gamma_\tau = \frac{\epsilon}{D_\tau  \sqrt{2 |\mathrm{T}_L| |\cS| \log (2 / \delta)}}.
    \end{equation*}
    By the same reasoning as in the proof of Lemma \ref{lem:typeI_typeII_error_control_Lipschitz_ext_test_privacy_more_general}, the test $T_{\text{I}}$ is $(\epsilon,\delta)$-differentially private. We will prove the following more general version of Lemma \ref{lem:lips_test_up_rate}.
    
    \begin{lemma}\label{lem:supp:lips_test_up_rate}
    For all $M>0$, $\alpha \in (0,1)$ there exists $\kappa_\alpha > 0$ and $C_\alpha > 0$ such that the test defined by \eqref{eq:supp:definition_full_test_lips_ext_privacy} satisfies $\P_0 T_{\text{I}}  \leq \alpha$.
    
    Furthermore, for any $f \in \cB^{{s},R}_{p,q}$ such that for some $L^* \in \cC$ it holds that
    \begin{equation}\label{eq:sup:tau_specific_minimal_signal_size}
     \|f_{L^*} \|_2^2 \geq C_\alpha M_N \left( \frac{\sqrt{2^{L^*}}}{\sqrt{m}n (\sqrt{n} \epsilon \wedge 1)} \right) \bigvee \left( \frac{1}{mn^2\epsilon^2} \right),
    \end{equation}
    where 
    \begin{equation*}
        M_N \gtrsim \log(N)  \sqrt{2 \log \log(N R/\sigma) \log(N R/\sigma) |\cS|  \log (2  / \delta)},
    \end{equation*}
    it holds that $\P_f (1 - T_{\text{I}}) \leq \alpha$ for $C_\alpha > 0$ large enough depending only on $\alpha$.
    \end{lemma}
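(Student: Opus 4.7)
My plan is to handle the two assertions separately, leveraging Lemma \ref{lem:typeI_typeII_error_control_Lipschitz_ext_test_privacy_more_general} as the main workhorse and paying only a logarithmic price for the union bound over the grid $\mathrm{T}_L \times \cS$.

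\emph{Type I error control.} For the null probability, I would first observe that under $\P_0$ each transcript $Y^{(j)}_{L;\tau}$ is, conditionally on the independent $\chi^2_{d_L}$ randomizer, the sum of a clipped (hence bounded) term $\gamma_\tau S^{(j)}_{L;\tau}(X^{(j)}_L)$ and a standard Gaussian noise variable $W^{(j)}_\tau$, with all sources of randomness independent across $j$. By construction of the Lipschitz extension, $|S^{(j)}_{L;\tau}| \leq \tau$ pointwise, so $m^{-1/2}\sum_j \gamma_\tau S^{(j)}_{L;\tau}$ has bounded increments and a variance that is $O(\gamma_\tau^2)$ (combining Lemma \ref{lem:folklore_clipping_reduces_variance} with the $\chi^2$ moment calculation already used in the proof of Lemma \ref{lem:typeI_typeII_error_control_Lipschitz_ext_test_privacy_more_general}, and noting the statistic has zero mean under the null because of the symmetrization by $V^{(j)}_{L;\tau}$ up to the clipping bias, which vanishes under $H_0$ by symmetry). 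A sub-exponential Bernstein bound then yields
\begin{equation*}
\P_0\!\left( \frac{1}{\sqrt m} \sum_{j=1}^m Y^{(j)}_{L;\tau} \geq t (\gamma_\tau \vee 1) \right) \leq 2 e^{-c t^2}
\end{equation*}
for all $t \lesssim \sqrt m$. Taking $t = \kappa_\alpha \sqrt{\log |\mathrm{T}_L| |\cS|}$ and applying a union bound over the at most $|\cS|\cdot\max_L |\mathrm{T}_L| \lesssim |\cS| \log(NR/\sigma)$ pairs $(L,\tau)$ gives $\P_0 T_{\text{I}} \leq \alpha$ for $\kappa_\alpha$ large enough.

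\emph{Type II error control.} Suppose $f \in \cB^{s,R}_{p,q}$ satisfies \eqref{eq:sup:tau_specific_minimal_signal_size} at some $L^* \in \cS$. Since $\|f_{L^*}\|_2 \leq \|f\|_2 \lesssim R$, the quantity $\mu := n \|f_{L^*}\|_2^2/(\sigma^2\sqrt{d_{L^*}})$ lies in the interval $[0, 2 n R^2/\sqrt{d_{L^*}}]$, which is exactly the range covered (up to the factor of $4$) by the dyadic grid $\mathrm{T}_{L^*}$ defined in \eqref{eq:supp:collection_of_threshold_lips_test} provided the lower end $2^{-\lceil 1+2\log_2(NR/\sigma)\rceil+2} nR^2/\sqrt{d_{L^*}}$ is below the minimal signal size from \eqref{eq:sup:tau_specific_minimal_signal_size}; a direct computation (using $d_{L^*}\leq N^2$) shows this holds. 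Therefore there exists $\tau^* \in \mathrm{T}_{L^*}$ with $\tau^*/4 \leq \mu \leq \tau^*/2$, matching the sandwich hypothesis of Lemma \ref{lem:typeI_typeII_error_control_Lipschitz_ext_test_privacy_more_general}. The adaptive test $T_{\text{I}}$ differs from the single-$(\tau^*,L^*)$ test of \eqref{eq:supp:Lipschitz_ext_test_privacy_for_one_clipping} only through (i) the enlarged critical value by a factor $\sqrt{\log |\mathrm{T}_{L^*}||\cS|}$ and (ii) the rescaling $\gamma_\tau$ containing an extra factor $\sqrt{|\mathrm{T}_{L^*}||\cS|}$ inside the denominator. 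Both effects inflate the effective $\kappa$ threshold by at most a logarithmic factor, and both are absorbed into the assumed multiplier $M_N$ in \eqref{eq:sup:tau_specific_minimal_signal_size}. Applying Lemma \ref{lem:typeI_typeII_error_control_Lipschitz_ext_test_privacy_more_general} with $\mathfrak{c} = |\mathrm{T}_L||\cS|$ and $\kappa = \kappa_\alpha\sqrt{\log|\mathrm{T}_{L^*}||\cS|}$ yields $\P_f(1 - \varphi_{\tau^*}) \leq \alpha$, and since $T_{\text{I}} \geq \varphi_{\tau^*}$, also $\P_f(1-T_{\text{I}}) \leq \alpha$.

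\emph{Main obstacle.} The delicate part is verifying that the dyadic grid $\mathrm{T}_L$ both covers the relevant range of $\mu$ uniformly over $f \in \cB^{s,R}_{p,q}$ satisfying \eqref{eq:sup:tau_specific_minimal_signal_size} \emph{and} is short enough that $|\mathrm{T}_L| \lesssim \log(NR/\sigma)$, so that the multiple testing correction contributes only a $\sqrt{\log}$-factor to the rate. This is the reason for the particular geometric spacing and cardinality choice in \eqref{eq:supp:collection_of_threshold_lips_test}; the bookkeeping requires a case split on whether the lower or upper bound in \eqref{eq:sup:tau_specific_minimal_signal_size} is binding, together with the Besov-ball bound $\|f_{L^*}\|_2 \lesssim R$ to ensure the upper end of $\mathrm{T}_{L^*}$ is always large enough.
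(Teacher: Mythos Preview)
Your overall strategy matches the paper's, and the Type II argument is essentially identical: use the Besov bound $\|f_{L^*}\|_2 \lesssim R$ (Lemma~\ref{lem:L_2_to_besov_bound_on_f}) to place $n\|f_{L^*}\|_2^2/(\sigma^2\sqrt{d_{L^*}})$ inside the dyadic grid $\mathrm{T}_{L^*}$, pick $\tau^*$ satisfying the sandwich, and invoke Lemma~\ref{lem:typeI_typeII_error_control_Lipschitz_ext_test_privacy_more_general} with $\mathfrak{c}=|\mathrm{T}_{L^*}||\cS|$ and the inflated threshold.

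There is, however, a genuine gap in your Type~I argument. You write that ``by construction of the Lipschitz extension, $|S^{(j)}_{L;\tau}|\le\tau$ pointwise'', and build the Bernstein bound on that. But the McShane--Whitney extension $\breve{S}^{(j)}_{L;\tau}$ of Lemma~\ref{lem:Lipschitz_extension_theorem} is \emph{not} bounded by $\tau$ outside $\cC_{L;\tau}$: the construction $\breve{S}(x)=\inf_{c\in\cC}\{\tilde{S}(c)+D_\tau\mathrm{d}_H(c,x)\}$ only guarantees $\breve{S}(x)\in[-\tau,\ \tau+D_\tau n]$ in general, and you have no direct sub-exponential control on $\breve{S}^{(j)}_{L;\tau}(X^{(j)}_L)$ when $X^{(j)}\notin\cC_{L;\tau}$. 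The paper handles this by an explicit split: it bounds $\P_0 T_{\text{I}}$ by the probability of the max exceeding the threshold \emph{with $\tilde{S}$ in place of $\breve{S}$} plus $\P_0(\exists j,L,\tau:\ X^{(j)}\notin\cC_{L;\tau})$. The second term is controlled by Lemma~\ref{lem:concentration_on_C} together with a union bound over $m|\cS|\max_L|\mathrm{T}_L|$ events; for the first term one now legitimately has the clipped, symmetric statistic $\tilde{S}^{(j)}_{L;\tau}$, which is sub-exponential by Lemma~\ref{lem:clipping_preserves_tails_exponential}, and Bernstein plus the union bound over $(L,\tau)$ finishes. Your argument is missing precisely this $\cC_{L;\tau}$ conditioning step, without which the boundedness claim fails and the tail bound is not justified.
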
  
    \begin{proof} 
    We start by proving that the level of the test is controlled. Using a union bound, it holds that
    \begin{align}
    \P_0 T_{\text{I}} &= \P_0 \left( \underset{L \in \cS,\;\tau \in \mathrm{T}_L}{\max} \, \frac{1}{\left( \gamma_\tau \vee 1 \right) \sqrt{\log |\mathrm{T}||\cS|} \sqrt{m}} \underset{j=1}{\overset{m}{\sum}} [\gamma_L \breve{S}_{L;\tau}^{(j)}(X^{(j)}) + W^{(j)}_\tau] \geq \kappa_\alpha \left( \gamma_\tau \vee 1 \right) \sqrt{\log |\mathrm{T}_L||\cS|} \right) \nonumber \\
    &\leq \P_0 \left( \underset{L \in \cS,\,\tau \in \mathrm{T}_L}{\max} \;\frac{1}{\left( \gamma_\tau \vee 1 \right) \sqrt{\log |\mathrm{T}||\cS|} \sqrt{m}} \underset{j=1}{\overset{m}{\sum}} [\gamma_L \tilde{S}_{L;\tau}^{(j)}(X^{(j)}) + W^{(j)}_\tau] \geq \kappa_\alpha  \right) \label{eq:typeI_error_lips_test_bound} \\ & \quad \quad \quad + \P_0 \left( \exists j \in [m], \, L \in \cS, \, \tau \in \mathrm{T}_L \, : \,  X^{(j)} \notin \cC_{L;\tau}   \right) \nonumber
    \end{align}
    where it is used that the Lipschitz extension $\breve{S}_{L;\tau}^{(j)}$ of $\tilde{S}_{L;\tau}^{(j)}$ equals the latter function on $\cC_{L;\tau}$. By Lemma \ref{lem:concentration_on_C} and a union bound, the second probability on the right-hand side is bounded above by $\alpha m |\cS| |\mathrm{T}_L| / (2N)$, for a large enough choice of $\tilde{\kappa}_\alpha > 0$. 
    
    Considering the first probability on the right-hand side of the inequality displayed above, we first note that by Lemma \ref{lem:clipping_preserves_tails_exponential},
    \begin{align*}
    \tilde{S}_{L;\tau}^{(j)}(X^{(j)})  &= \left[\frac{1}{\sqrt{d_L}} \left(\left\| \sqrt{n} \overline{X^{(j)}_{L}} \right\|^2_2 - {V^{(j)}_{L;\tau}} \right)\right]_{-\tau}^{\tau}  
    \end{align*}
    is $1/(4\sqrt{d})$-sub-exponential, where the sub-exponentiality parameter follows from a straightforward calculation (see e.g. Lemma \cite{szabo2022optimal_IEEE}). Since $W^{(j)}_{L;\tau}$ is independent $N(0,1)$, it follows by Bernstein's inequality (see e.g. Theorem 2.8.1 in \cite{vershynin_high-dimensional_2018}) and a union bound (e.g. Lemma \ref{lem:iterated_logarithm_max_bound}) that the first probability in \eqref{eq:typeI_error_lips_test_bound} is bounded as follows
    \begin{align*}
    \P_0 \left(  \underset{L \in \cS,\,\tau \in \mathrm{T}_L}{\max} \;\frac{1}{\left( \gamma_\tau \vee 1 \right) \sqrt{\log |\mathrm{T}||\cS|} \sqrt{m}} \underset{j=1}{\overset{m}{\sum}} [\gamma_L \tilde{S}_{L;\tau}^{(j)}(X^{(j)}) + W^{(j)}_\tau] \geq \kappa_\alpha \right) &\leq \sum_{L \in \cS,\,\tau \in \mathrm{T}_L}   \frac{1}{( |\mathrm{T}_L||\cS|)^{\kappa_\alpha^2 /2}},
    \end{align*}
    which is less than $\alpha/2$ for a large enough choice of $\kappa_\alpha > 0$. For $f \in \cB^{{s},R}_{p,q}$, Lemma \ref{lem:L_2_to_besov_bound_on_f} yields that
    \begin{align}\label{eq:supp:bound_on_f_L_2_norm}
    \|f_L\|_2 \leq \|f\|_2 \leq  \frac{1}{(1-2^{-s})^{1-1/q}} \|f\|_{s,p,q} \leq \frac{R}{(1-2^{-s})^{1-1/q}}.
    \end{align}
    Consequently, when $f$ satisfies \eqref{eq:lem:tau_specific_minimal_signal_size}, it holds in particular that $\|f_L\|_2^2 \geq N^{-1}$. Thus, there exists $\tau^* \in \mathrm{T}_{L^*}$ such that the condition of Lemma \ref{lem:typeI_typeII_error_control_Lipschitz_ext_test_privacy_more_general} is satisfied, which now yields that
    \begin{equation*}
    \P_f (1 - T_{\text{I}}) \leq \P_f (1 - \varphi_{\tau^*}) \leq \alpha/2.
    \end{equation*}  
    \end{proof}

    \subsection{Procedure II}\label{eq:procedure_I_priv_coin}\label{ssec:supp:procedure_II}
    
    Let $\epsilon,\delta > 0$ and $\cS \subset \N$ be given. Consider for $L \in \cS$, $K_L =  \lceil n \epsilon^2 \wedge d_L \rceil$ and take sets $\cJ_{lk;L} \subset [m]$ such that $| \cJ_{lk;L} | = \lceil \frac{m K_L}{d_L} \rceil$ and each $j \in \{1,\dots,m\}$ is in $\cJ_{lk;L}$ for $K_L$ different indexes $k \in \{1,\dots,d_L\}$. For $(l,k) \in \{ l=1,\dots, L, k = 1,\dots,2^l \} =:I_L$, $j \in \cJ_{lk;L}$, generate the transcripts according to
    \begin{equation}\label{eq:transcript_impure_DP_coordinate_wise}
    Y^{(j)}_{L;lk}|X^{(j)} = \gamma_L \, \underset{i=1}{\overset{n}{\sum}}  \sigma^{-1} \left[  (X_{L;i}^{(j)})_{lk} \right]_{-\tau}^\tau + W^{(j)}_{L;lk}
    \end{equation}
    with $\gamma_L = \frac{\epsilon}{2\sqrt{ |\cS| K_L \log(2/\delta)\tau}}$, $\tau = \tilde{\kappa}_\alpha \sqrt{\log(N)}$ and $(W^{(j)}_{L;lk})_{j,k}$ i.i.d. standard Gaussian noise. 
    
    Define $Y^{(j)}_L : = (Y^{(j)}_{L;lk})_{(l,k) \in I_L:j \in \cJ_{lk;L}}$ and consider the transcripts $Y^{(j)} : = (Y^{(j)}_L)_{L \in \cS}$, for $j=1,\ldots,m$. The lemma below shows that $Y^{(j)}$ satisfies DP.   
    
    \begin{lemma}\label{lem:L_2-sensitivity_private_coin_bound}
    The transcript defined $Y^{(j)}$ is $(\epsilon,\delta)$-differentially private.
    \end{lemma}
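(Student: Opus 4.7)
The plan is to apply the Gaussian mechanism after a direct $L_2$-sensitivity calculation for the vector-valued statistic underlying $Y^{(j)}$, then use the independence of the Gaussian noise across resolution levels to stack everything into a single application of the mechanism.

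First, I would observe that for any single $L \in \cS$ and any pair $(l,k) \in I_L$, the coordinate map $x \mapsto \sigma^{-1}\sum_{i=1}^n [(x_{L;i})_{lk}]_{-\tau}^{\tau}$ has sensitivity at most $2\tau$ with respect to the Hamming distance on $\cX^n$: changing a single $x_i$ perturbs exactly one of the $n$ summands, and each summand lies in $[-\tau,\tau]$, so the sum changes by at most $2\tau$.

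Second, I would assemble these coordinate maps into the full vector-valued statistic. For fixed $L \in \cS$, the deterministic part of $Y^{(j)}_L = (Y^{(j)}_{L;lk})_{(l,k)\in I_L,\, j \in \cJ_{lk;L}}$ is a vector in $\R^{K_L}$, since by construction each $j \in [m]$ belongs to exactly $K_L$ of the sets $\cJ_{lk;L}$. Changing a single datum $x^{(j)}_i$ thus shifts this vector by at most $\gamma_L \cdot 2\tau$ in each of its $K_L$ coordinates, so its $L_2$-sensitivity is bounded by $2\gamma_L \tau \sqrt{K_L}$. Stacking across $L \in \cS$, the full transcript $Y^{(j)} = (Y^{(j)}_L)_{L \in \cS}$ has squared $L_2$-sensitivity at most
\[
\sum_{L \in \cS} \left(2 \gamma_L \tau \sqrt{K_L}\right)^2 = \sum_{L \in \cS} 4 \gamma_L^2 \tau^2 K_L,
\]
which, after substituting the definition of $\gamma_L$, telescopes into a quantity of order $\epsilon^2 / \log(2/\delta)$.

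Third, because the noise terms $W^{(j)}_{L;lk}$ are i.i.d.\ standard Gaussians, independent in particular across $L \in \cS$, the transcript $Y^{(j)}$ is exactly a Gaussian mechanism applied to a statistic with the $L_2$-sensitivity computed above and noise $N(0,I)$ of matching dimension. Applying the Gaussian mechanism (e.g.\ Theorem A.1 in \cite{dwork2014algorithmic}) then yields the $(\epsilon,\delta)$-DP guarantee. The main obstacle is purely bookkeeping: one must verify (i) that each server is assigned to exactly $K_L$ coordinates per level $L$, so that the per-$L$ sensitivity is controlled by $\sqrt{K_L}$ rather than by the much larger $\sqrt{d_L}$, and (ii) that the normalisation in $\gamma_L$ is tuned so that the sum of per-$L$ squared sensitivities delivers the exact $\epsilon$ (and not a constant multiple) required by the Gaussian mechanism. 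Both points follow directly from the construction of the sets $\cJ_{lk;L}$ and the explicit form of $\gamma_L$, so no arguments beyond standard Gaussian mechanism composition are needed.
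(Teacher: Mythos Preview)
Your proposal is correct and follows essentially the same approach as the paper: both compute the $L_2$-sensitivity of the clipped, rescaled statistic by bounding each coordinate's change by $2\tau$, aggregating over the $K_L$ coordinates assigned to server $j$ at each level, summing across $L\in\cS$, and then invoking the Gaussian mechanism. Your write-up is in fact slightly more careful than the paper's displayed bound (which has a notational slip with $\gamma_L$ pulled outside a sum over $L$), and you correctly flag the key bookkeeping point that each $j$ lies in $K_L$ of the sets $\cJ_{lk;L}$, which is what keeps the per-level squared sensitivity proportional to $K_L$ rather than $d_L$.
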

    \begin{proof}
        The rescaled and clipped sums have at most $L_2$-sensitivity less than or equal to one:
        \begin{align}\label{eq:L_2-sensitivity_private_coin_bound}
            \underset{\breve{x} \in (\R^\N)^n : \mathrm{d}_H(x,\breve{x}) \leq 1}{\sup} \, \gamma_L \left\|  \left( \underset{i=1}{\overset{n}{\sum}} \sigma^{-1} [  (x_i)_{lk} ]_{-\tau}^\tau - \underset{i=1}{\overset{n}{\sum}} \sigma^{-1} [  (\breve{x}_i)_{lk} ]_{-\tau}^\tau\right)_{(l,k) \in I_L, L \in \cS} \right\|_2 \leq \\ \gamma_L \sqrt{ \sum_{L \in \cS} \underset{(l,k) \in I_L}{\overset{}{\sum}} \left( \underset{\breve{x} \in (\R^\N)^n : \mathrm{d}_H(x,\breve{x}) \leq 1}{\sup}\sigma^{-1} [  (x_i)_{lk} ]_{-\tau}^\tau - \sigma^{-1} [  (\breve{x}_i)_{lk} ]_{-\tau}^\tau \right)^2  } \leq 1. \nonumber
            \end{align}
            Consequently, the addition of the Gaussian noise assures that the transcript $Y^{(j)}$ is $(\epsilon,\delta)$-differentially private (see e.g. Appendix A in \cite{dwork2014algorithmic}). 
    \end{proof}
    
    Consider the test given by
    \begin{equation}\label{eq:supp:test_impure_DP_coordinate_wise_strat}
    T_{\text{II}} := \mathbbm{1}\left\{ \underset{L \in \cS}{\max} \, \frac{1}{\sqrt{d_L} \left( n \gamma_L^2 \vee 1\right)} \underset{(l,k) \in I_L}{\overset{}{\sum}} \left[ \left( \frac{1}{\sqrt{|\cJ_{lk;L}|}}  \underset{j \in \cJ_{lk;L}}{\overset{}{\sum}} Y^{(j)}_{L;lk}  \right)^2 - \nu_{\epsilon,L}  \right] \geq \kappa_\alpha \log \log (e\vee|\cS|) \right\},
    \end{equation}
    with $\nu_{\epsilon,L} := \E_0 ( |\cJ_{lk;L}|^{-1/2} \sum_{j \in \cJ_{lk;L}} Y^{(j)}_{L;lk}  )^2$.
    % \nu =  \frac{n\epsilon^2}{4{L}\tau^2} + 1
    
    % Lemma giving a type I and type II error bound
    % nonasymptotic is mogelijk   
    % single lemma is enough.   
    % alle condities af van f
    \begin{lemma}\label{lem:protocol_II_priv_coin_bound}
    For all $M>0$, $\alpha \in (0,1)$ there exists $\kappa_\alpha > 0$ and $C_\alpha > 0$ such that the test defined by \eqref{eq:supp:test_impure_DP_coordinate_wise_strat} satisfies $\P_0 T_{\text{II}}  \leq \alpha$. Furthermore, for any $f \in \cB^{{s},R}_{p,q}$ such that
    \begin{equation}\label{eq:sup:lem:tau_specific_minimal_signal_size}
     \|f_{L^*} \|_2^2 \geq C_\alpha \log \log (e\vee|\cS|) \log(N) |\cS| \log(1/\delta) \left( \frac{\sqrt{2^{(3/2)L^*}}}{\sqrt{m}n ({n} \epsilon^2 \wedge d_{L^*})} \right) \bigvee \left( \frac{1}{mn^2\epsilon^2} \right),
    \end{equation}
    for some $L^* \in \cS$ and $C_\alpha > 0$ large enough depending only on $\alpha$, it holds that $\P_f (1 - T_{\text{II}}) \leq \alpha$.
    \end{lemma}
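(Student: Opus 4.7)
The privacy guarantee is already supplied by Lemma \ref{lem:L_2-sensitivity_private_coin_bound}, so the task reduces to controlling Type I and Type II error. My plan is to write $Z_{L;lk} := |\cJ_{lk;L}|^{-1/2} \sum_{j \in \cJ_{lk;L}} Y^{(j)}_{L;lk}$ for the inner normalized sum and $S_L$ for the full inner statistic of \eqref{eq:supp:test_impure_DP_coordinate_wise_strat} at level $L$. By the very construction of the server partitions $\{\cJ_{lk;L}\}_{(l,k) \in I_L}$ and of the independent local Gaussian noise, the variables $\{Z_{L;lk}\}_{(l,k)\in I_L}$ are mutually independent for each fixed $L$. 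I will treat Type I and Type II error in turn, both reducing to standard sub-exponential concentration after a careful bias computation.

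For Type I, under $\P_0$ each $\sigma^{-1}(X^{(j)}_{L;i})_{lk}$ is $N(0,1)$, so (by symmetry and a Hoeffding bound on clipped Gaussians) the term $\sum_{i=1}^n \sigma^{-1}[(X^{(j)}_{L;i})_{lk}]^\tau_{-\tau}$ is centered sub-Gaussian with proxy of order $n$. Hence $Y^{(j)}_{L;lk}$ is centered sub-Gaussian with proxy $n\gamma_L^2 + 1$, and so is $Z_{L;lk}$. Consequently $Z_{L;lk}^2 - \nu_{\epsilon,L}$ is centered sub-exponential with parameter of order $(n\gamma_L^2 \vee 1)$. Summing the $d_L$ independent copies and normalizing by $\sqrt{d_L}(n\gamma_L^2 \vee 1)$, Bernstein's inequality yields exponential tails in the threshold $\kappa_\alpha \log\log(e \vee |\cS|)$, after which a union bound over $L \in \cS$ (with $|\cS|$ at most polylogarithmic in $N$) gives $\P_0 T_{\text{II}} \leq \alpha$ for $\kappa_\alpha$ large enough.

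For Type II, fix $L = L^*$ satisfying \eqref{eq:sup:lem:tau_specific_minimal_signal_size}. Under $\P_f$ the clipped mean $\mu_{lk} := \E_f\bigl[\sigma^{-1}[(X^{(j)}_{L^*;i})_{lk}]^\tau_{-\tau}\bigr]$ of a $N(\sigma^{-1}f_{lk},1)$ variable truncated at $\pm\tau = \pm\tilde\kappa_\alpha \sqrt{\log(N/\sigma)}$ satisfies $|\mu_{lk} - \sigma^{-1}f_{lk}| = O(N^{-c\tilde\kappa_\alpha^2})$ when $|\sigma^{-1}f_{lk}| \leq \tau/2$, and $|\mu_{lk}| \geq \tau/2$ with the sign of $f_{lk}$ otherwise, so in either regime $\mu_{lk}^2 \gtrsim \sigma^{-2}f_{lk}^2 \wedge \tau^2$. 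Expanding $\E_f Z_{L^*;lk}^2 = \mathrm{Var}_f Z_{L^*;lk} + |\cJ_{lk;L^*}| n^2 \gamma_{L^*}^2 \mu_{lk}^2$ and subtracting $\nu_{\epsilon,L^*}$ (whose deviation from $\mathrm{Var}_f Z_{L^*;lk}$ is absorbed by the small truncation-bias term), summing over $(l,k) \in I_{L^*}$ with $|\cJ_{lk;L^*}| \geq mK_{L^*}/d_{L^*}$, the bias of $S_{L^*}$ satisfies
\begin{equation*}
    \E_f S_{L^*} \,\gtrsim\, \frac{m K_{L^*} n^2 \gamma_{L^*}^2 \sigma^{-2} \|f_{L^*}\|_2^2}{d_{L^*}^{3/2}(n\gamma_{L^*}^2 \vee 1)}.
\end{equation*}
Substituting $\gamma_{L^*}^2 \asymp \epsilon^2 /(|\cS| K_{L^*} \log(1/\delta)\tau^2)$ with $\tau^2 \asymp \log N$, and $K_{L^*} = n\epsilon^2 \wedge d_{L^*}$, a case analysis on whether $n\gamma_{L^*}^2 \geq 1$ or $\leq 1$ shows that \eqref{eq:sup:lem:tau_specific_minimal_signal_size} makes this bias at least $2\kappa_\alpha \log\log(e\vee|\cS|)$. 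A variance bound for $S_{L^*}$ under $\P_f$ (identical to the Type I analysis for the pure-noise part, plus a signal--noise cross-term of order no larger than the bias itself) together with Chebyshev then yields $\P_f(1 - T_{\text{II}}) \leq \alpha$.

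The main obstacle is the bookkeeping in the bias computation: the test must be rate optimal across the regimes of Table \ref{tab:rate_table}, which correspond to different scalings of $K_{L^*}$ and of $n\gamma_{L^*}^2$. One has to verify separately that the two minima on the right-hand side of \eqref{eq:sup:lem:tau_specific_minimal_signal_size} land on the appropriate side of the $\max(n\gamma_{L^*}^2,1)$ in the denominator, i.e., that a large-signal condition in one regime is equivalent to a large-signal condition in the other after cancellations in $K_{L^*}$ and $\gamma_{L^*}$. A secondary obstacle is the truncation analysis under the alternative: for a Besov ball signal some coordinates $f_{lk}$ can be comparatively large, so we cannot simply assert that clipping at $\tau$ is innocuous, hence the $\sigma^{-2}f_{lk}^2 \wedge \tau^2$ bound above (whose worst case already suffices for the claimed signal-strength condition since each clipped coordinate then contributes the maximal $\tau^2$ rather than less). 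The rest of the proof is bookkeeping that follows the same lines as Lemma \ref{lem:rate_priv_coin_privacy_pure_DP}, with the union bound over $\cS$ absorbed via the $\sqrt{|\cS|}$-shrinkage of $\gamma_L$ in the transcript noise and the $\log\log(e\vee|\cS|)$ enlargement of the critical value.
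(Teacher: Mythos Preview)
Your Type I argument matches the paper's: sub-Gaussianity of $Y^{(j)}_{L;lk}$ under $\P_0$, hence sub-exponentiality of the centered squares, then Bernstein plus a union bound over $\cS$.

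For Type II, however, you take a harder route than necessary. The paper does \emph{not} analyze the clipped mean $\mu_{lk}$ directly. Instead it uses the Besov-ball hypothesis $f\in\cB^{s,R}_{p,q}$ to observe (via Lemma~\ref{lem:L_2_to_besov_bound_on_f}) that $\max_{(l,k)}|f_{lk}|\le\|f\|_2\le C_{s,q}R$, a fixed constant; since $\tau=\tilde\kappa_\alpha\sqrt{\log N}$, taking $\tilde\kappa_\alpha$ large forces $\max_{(l,k)}|f_{lk}|\le\tau/2$ uniformly. A Gaussian-maximum bound (Lemma~\ref{lem:gaussian_maximum}) then gives $\max_{i,j,l,k}|(X^{(j)}_i)_{lk}|\le\tau$ with probability at least $1-\alpha/4$, so on this event clipping is \emph{inactive} and the analysis reduces to exact Gaussian mean and variance computations for $V_{lk}$, followed by Chebyshev. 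Your ``secondary obstacle'' --- large coordinates $f_{lk}$ forcing a $\sigma^{-2}f_{lk}^2\wedge\tau^2$ bound --- therefore never arises, and your concern that one ``cannot simply assert that clipping at $\tau$ is innocuous'' is in fact exactly what the paper asserts and proves.

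Your approach is not wrong, but it leaves a loose end: you claim the discrepancy $|\mathrm{Var}_f Z_{L^*;lk}-\nu_{\epsilon,L^*}|$ is ``absorbed by the small truncation-bias term,'' yet this requires separate justification in the large-$|f_{lk}|$ regime (where the variance of a clipped $N(\mu,\sigma^2)$ can differ from that of a clipped $N(0,\sigma^2)$ by order one, not $O(N^{-c})$). The paper's clipping-event argument sidesteps this entirely, yielding exact equality of variances on the good event and making the subsequent Chebyshev and case analysis on $n\gamma_{L^*}^2\lessgtr 1$ cleaner.
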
  
    \begin{proof}
    Under the null hypothesis, $(X_{L;i}^{(j)})_{lk}$ are independent standard Gaussian for $(l,k) \in I_L$, $j \in [m]$. Hence, by Lemma \ref{lem:clipping_preserves_tails_exponential}, the random variables $Y^{(j)}_{L;lk}$ are sub-gaussian, mean zero and i.i.d. for $k \in [d_L]$, $j \in [m]$ under the null hypothesis. More specifically, we have that
    \begin{equation*}
     \frac{1}{\sqrt{|\cJ_{lk;L}|}}  \underset{j \in \cJ_{lk;L}}{\overset{}{\sum}} Y^{(j)}_{L;lk} \quad \text{ is } \quad \left(\frac{\sqrt{n}\epsilon}{2\sqrt{ |\cS| K_L \log(2/\delta) \tau}} + 1\right)\text{-sub-gaussian},
    \end{equation*}
    which in turn implies that the random variables 
    \begin{equation*}
    \frac{1}{\sqrt{d_L} \left( n\gamma_L^2 \vee 1\right)} \underset{(l,k) \in I_L}{\overset{}{\sum}} \left[ \left( \frac{1}{\sqrt{|\cJ_{lk;L}|}}  \underset{j \in \cJ_{lk;L}}{\overset{}{\sum}} Y^{(j)}_{L;lk}  \right)^2 - \nu_{\epsilon,L}  \right]
    \end{equation*}
    are $C$-sub-exponential for $L \in \cS$, for some constant $C > 0$ (see e.g. Section 2.7 in \cite{vershynin_high-dimensional_2018}). It now follows from Lemma \ref{lem:iterated_logarithm_max_bound} that, for $\kappa_\alpha > 0$ large enough, $\P_0 T_{\text{II}}\leq \alpha / 2$.
    
    We now turn our attention to the Type II error. Assume now that, for $L \in \cS$, $f$ satisfies \eqref{eq:sup:lem:tau_specific_minimal_signal_size}. We have that
    \begin{equation*}\label{eq:test_impure_DP_coordinate_wise_strat}
       1 -  T_{\text{II}} \leq \mathbbm{1}\left\{ \frac{1}{\sqrt{d_L} \left( n \gamma_L^2 \vee 1\right)} \underset{(l,k) \in I_L}{\overset{}{\sum}} \left[ \left( \frac{1}{\sqrt{|\cJ_{lk;L}|}}  \underset{j \in \cJ_{lk;L}}{\overset{}{\sum}} Y^{(j)}_{L;lk}  \right)^2 - \nu_{\epsilon,L}  \right] < \kappa_\alpha \log \log (e\vee|\cS| \right\}),
        \end{equation*}
    so it suffices to bound the $\E_f$-expectation of the right-hand side. 
    %\end{proof}

    %\begin{proof}
    Under $\P_f$, $(X_i^{(j)})_{lk} \overset{d}{=} f_{lk} + \sigma^{-1} Z_{lk;i}^{(j)}$ with i.i.d. $Z_{lk;i}^{(j)} \sim N(0,1)$ and is independent of the centered i.i.d. $W^{(j)}_{lk}$. Hence, the quantity
    \begin{equation*}
    V_{lk} := \left( \frac{1}{\sqrt{|\cJ_{lk;L}|}}  \underset{j \in \cJ_{lk;L}}{\overset{}{\sum}} \left[ \gamma_L \underset{i=1}{\overset{n}{\sum}}  \sigma^{-1} (X_i^{(j)})_{lk}  + W^{(j)}_{lk} \right] \right)^2
    \end{equation*}
    is in distribution equal to
    \begin{equation*}
    \left( \gamma_L \sqrt{|\cJ_{lk;L}|} n \sigma^{-1}  f_{lk} + \gamma_L \sqrt{n} \eta +\frac{1}{\sqrt{|\cJ_{lk;L}|}}  \underset{j \in \cJ_{lk;L}}{\overset{}{\sum}}  W^{(j)}_{lk} \right)^2
    \end{equation*}
    under $\P_f$, with $\eta \sim N(0,1)$ independent. Therefore, a straightforward calculation shows that $V_{lk}$ has mean $ \sigma^{-1} \gamma_L^2 n^2 |\cJ_{lk;L}| f_{lk}^2 +  n \gamma_L^2 + \E (W^{(j)}_{lk})^2$ under $\P_f$. Since $\E (Z_{lk;i}^{(j)})^4 = 3$ and $\E (W^{(j)}_{lk})^4 \asymp 1$, its variance equals
    \begin{align}%\label{eq:variance_of_coordinate_wise_privacy_transcript_under_Pf}
     n^2 \gamma_L^4  \text{Var}\left( \eta^2 \right)  + \text{Var}\left( \left(\frac{1}{\sqrt{|\cJ_{lk;L}|}}  \underset{j \in \cJ_{lk;L}}{\overset{}{\sum}}  W^{(j)}_{lk} \right)^2 \right) + \gamma_L^4 |\cJ_{lk;L}| n^{3} \sigma^{-2}  f_{lk}^2 \E \eta^2 \\
     + \gamma_L^2 |\cJ_{lk;L}| n^2 \sigma^{-2} f_{lk}^2 \E (W^{(j)}_l)^2 + n \gamma_L^2 \E (W^{(j)}_{lk})^2  \E \eta^2, \nonumber
    \end{align}
    which is of the order 
    \begin{equation}\label{eq:variance_of_coordinate_wise_privacy_transcript_under_Pf}
     (\gamma_L^4 |\cJ_{lk;L}| n^{3} \sigma^{-2} f_{lk}^2) \vee (\gamma_L^2 |\cJ_{lk;L}| n^2 \sigma^{-2} f_{lk}^2) \vee \gamma_L^4 n^2 \vee 1.
     \end{equation}
    Since \eqref{eq:supp:bound_on_f_L_2_norm} holds, we have that for $\tilde{\kappa}_\alpha$ large enough, 
    \begin{equation}\label{eq:privacy_clipping_general_coordinate_based_signal_under_clipping}
    \underset{((l,k) \in I_L)}{\max} |f_{lk}| \leq \tau/2.
    \end{equation}
    Consequently, an application of the triangle inequality and Lemma \ref{lem:gaussian_maximum} yield that, for $\tilde{\kappa}_\alpha > 0$ large enough, we have with probability at least $1 - 2 N d_l e^{-\tau^2/4} \geq 1 - (1+N)^{2 - \tilde{\kappa}_\alpha^2/4} \geq 1 - \alpha/4$ that
    \begin{equation*}
    \underset{i \in [n], j \in [m], (l,k) \in I_L}{\max} |(X_i^{(j)})_{lk}| \leq \tau.
    \end{equation*}
    Consequently, under the null hypothesis ($f=0$), using that $|\cJ_{lk;L}| = |\cJ_1|$, $\P_0 \varphi$ is bounded above by
    \begin{equation*}
    \P_0 \left( \frac{1}{\sqrt{d_L}|\cJ_{1d_L;L}|} \underset{(l,k) \in I_L}{\overset{}{\sum}} \left[ \left(   \underset{j \in \cJ_{lk;L}}{\overset{}{\sum}}  \left(\gamma_L \sigma^{-1} (n\bar{X^{(j)}})_{lk} + W^{(j)}_{lk}\right)  \right)^2 - \E_0 V_l \right] \geq \kappa (\gamma_L^2 n \vee 1) \right) + \frac{ \alpha}{4},
    \end{equation*}
    where we write $\kappa = \kappa_\alpha \log \log (e\vee|\cS|)$. Chebyshev's inequality yields that the first term on the left-hand side is bounded $\alpha/4$ for $\kappa_\alpha > 0$ large enough. In the case that \eqref{eq:privacy_clipping_general_coordinate_based_signal_under_clipping} holds, we also have that $\P_f (1 - T_{\text{II}})$ is bounded above by
    \begin{equation*}
    \text{Pr} \left( \frac{1}{\sqrt{d_L}|\cJ_{1d_L;L}|} \underset{(l,k) \in I_L}{\overset{}{\sum}} \left[ \left(   \underset{j \in \cJ_{lk;L}}{\overset{}{\sum}}  \left(\gamma_L (n f_{lk} + \sqrt{n} Z) + W^{(j)}_{lk}\right)  \right)^2 - \E_0 V_l \right] < \kappa (\gamma_L^2 n \vee 1) \right) + \frac{ \alpha}{4}.
    \end{equation*}
    Subtracting $d^{-1/2}_L \underset{(l,k) \in I_L}{\overset{}{\sum}} \gamma_L^2 n^2 |\cJ_{lk;L}| \sigma^{-1} f_{lk}^2$ on both sides, the first term is bounded above by
    \begin{align*}
    \text{Pr} \bigg( \frac{1}{\sqrt{d_L}|\cJ_{1d_L;L}|} \underset{(l,k) \in I_L}{\overset{}{\sum}} \left[ \left(   \underset{j \in \cJ_{lk;L}}{\overset{}{\sum}}  \left(\gamma_L (n \sigma^{-1} f_{lk} + \sqrt{n} Z) + W^{(j)}_l\right)  \right)^2 - \E_f V_l \right] &< \\ 
    - \frac{ \gamma_L^2 n^2 |\cJ_{1d_L;L}| \sigma^{-2} \| f \|_2^2}{2\sqrt{d_L}} \bigg)&
    \end{align*}
    whenever
    \begin{equation*}%\label{eq:lemma_privacy_coord_priv_type_II_bias_shift_condition}
    \sigma^{-2} \frac{\gamma_L^2 n^2 |\cJ_{lk;L}|}{\sqrt{d_L}} \|f_L\|_2^2 \geq 2 \kappa (\gamma_L^2 n \vee 1) \iff \frac{ 2 \sigma^2 \kappa_\alpha \log \log (e\vee|\cS| (\gamma_L^2 n \vee 1) d_L  \sqrt{d_L}}{\gamma_L^2) n^2 m K_L \|f_L\|_2^2} \leq 1.
    \end{equation*}
    This follows from the assumed inequality \eqref{eq:sup:lem:tau_specific_minimal_signal_size} in the lemma's statement.
    
    An application of Chebyshev's inequality and the variance bound computed in \eqref{eq:variance_of_coordinate_wise_privacy_transcript_under_Pf}, now yields that the probability in the second last display is of the order
    \begin{equation*}
     \frac{(d^{-1}_L \gamma_L^4 |\cJ_{lk;L}| n^3 \sigma^{-2} \|f_L\|_2^2) \vee (d^{-1}_L \gamma_L^2 |\cJ_{lk;L}| n^2 \sigma^{-2} \|f_L\|_2^2) \vee \gamma_L^4 n^2 \vee 1}{ \left( \frac{\gamma_L^2 n^2 |\cJ_{lk;L}|}{\sqrt{d_L}} \sigma^{-2} \|f_L\|_2^2 \right)^2}.
    \end{equation*}
    Plugging in $|\cJ_{lk;L}| \asymp mK_L/d_L$, $\gamma_L = \frac{\epsilon}{2\sqrt{ |\cS| K_L \log(2/\delta)\tau}}$ and $\tau = \tilde{\kappa}_\alpha \sqrt{\log(N)}$, we obtain that the latter display equals
    \begin{equation}\label{eq:supp:protocol_III_4max_condition}
     \frac{( \frac{m\epsilon^4}{16 K_L d_L \Lambda_{N,\delta,|\cS|}^2} n^3 \sigma^{-2} \|f_L\|_2^2) \vee ( \frac{m\epsilon^2}{4  K_L d_L \Lambda_{N,\delta,|\cS|}} n^2 \sigma^{-2} \|f_L\|_2^2) \vee ( \frac{\epsilon^4}{16 K_L \Lambda_{N,\delta,|\cS|}^2} ) n^2 \vee 1}{ \left( \frac{m\epsilon^2 n^2}{4  K_L d_L \Lambda_{N,\delta,|\cS|} \sqrt{d_L}} \sigma^{-2} \|f_L\|_2^2 \right)^2}
    \end{equation}
    with $\Lambda_{N,\delta,|\cS|} = |\cS| \log(2/\delta) \tilde{\kappa}_\alpha^2 {\log(N)}$. For $1/\sqrt{n} \lesssim \epsilon$ and $n \epsilon^2 \lesssim d_L$, this expression is of the order
    \begin{equation*}
        \frac{\Lambda_{N,\delta,|\cS|}^2 d^3_L}{ m^2 n^4 \epsilon^4 \| f_L \|_2^4 },
    \end{equation*}
    which is bounded by ${1}/{C_\alpha^2}$ when $f$ satisfies \eqref{eq:sup:lem:tau_specific_minimal_signal_size}. This can be seen to be arbitrarily small whenever $f$ is such that \eqref{eq:sup:lem:tau_specific_minimal_signal_size} by taking $C_\alpha > 0$ large enough. Lastly, when $n \epsilon^2 \gtrsim d_L$, \eqref{eq:supp:protocol_III_4max_condition} is of the order
    \begin{equation}
        \frac{ \kappa_\alpha^2 \Lambda_{N,\delta,|\cS|} d_L}{m^2 n^2  \|f_L\|_2^4},
    \end{equation}
    which also holds for $C_\alpha > 0$ large enough for $f$ satisfying \eqref{eq:sup:lem:tau_specific_minimal_signal_size}. Consequently, we obtain that $\P_f (1 - T_{\text{II}}^{\epsilon,\delta}) \leq \alpha/2$ for $C_\alpha$ large enough, as desired. 
    
    \end{proof}

    \subsection{Procedure III}\label{ssec:supp:procedure_III}
    
    Let $\cS \subset \N$ and consider for $L \in \N$, $d_L := \sum_{l=1}^L 2^l$, $K_L := \lceil n \epsilon^2 \wedge 2^L \rceil$, $I_L:= \{ (l,k) : l=1,\dots, \lceil \log_2(K_L) \rceil, \, k=1,\dots,2^l \}$ and $j=1,\dots,m$ the transcripts 
    \begin{equation}\label{eq:transcripts_pub_coin_privacy_impure_DP}
     Y^{(j)}_{lk} | (X^{(j)},U_L) =  \gamma_L \, \underset{i=1}{\overset{n}{\sum}}  [ (U_L X_{L;i}^{(j)})_{lk} ]^\tau_{-\tau} + W^{(j)}_{lk},
     \end{equation}
    with $\gamma_L = \frac{\epsilon}{2\sqrt{K_L |\cS| \log(2 / \delta)\tau}}$, $\tau = \tilde{\kappa}_\alpha \sqrt{\log(N)}$, $U_L$ a $d_L$-by-$d_L$ random rotation drawn uniformly (i.e. from the Haar measure on the special orthogonal group in $\R^{d_L \times d_L}$) and $(W^{(j)}_{lk})_{j,l,k}$ i.i.d. centered standard Gaussian noise. The following lemma shows that the transcript is $(\epsilon,\delta)$-differentially private.
    
    \begin{lemma}\label{lem:transcript_privacy_impure_DP_shared_randomness}
    The transcript defined in \eqref{eq:transcripts_pub_coin_privacy_impure_DP} is $(\epsilon,\delta)$-differentially private.
    \end{lemma}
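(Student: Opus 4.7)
The proof will follow the standard Gaussian mechanism template, applied conditionally on the shared randomness $U_L$. Since Definition \ref{def:differential_privacy} requires the privacy guarantee to hold for each realization $u \in \cU$ of the shared randomness, it suffices to fix an arbitrary $u$ in the support of $U_L$ and verify the differential privacy bound for the data-to-transcript Markov kernel conditionally on $U_L = u$.

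The plan is to first bound the $L_2$-sensitivity of the deterministic map $x \mapsto S(x,u) := \gamma_L \bigl( \sum_{i=1}^n [(u\, x_{L;i})_{lk}]^\tau_{-\tau} \bigr)_{(l,k) \in I_L}$ for fixed $u$. If $x, \breve{x} \in \cX^n$ are neighboring datasets differing only at index $i_0$, then $S(x,u) - S(\breve{x},u)$ has at most $|I_L|$ nonzero components, each one being a difference of two scalars lying in $[-\tau,\tau]$ and therefore bounded by $2\tau$ in absolute value. Using the cardinality $|I_L| = \sum_{l=1}^{\lceil \log_2 K_L\rceil} 2^l \leq 4 K_L$, this yields
\begin{equation*}
\|S(x,u) - S(\breve{x},u)\|_2 \;\leq\; 2\tau \gamma_L \sqrt{|I_L|} \;\leq\; 4\tau \gamma_L \sqrt{K_L} \;\leq\; \frac{\epsilon}{\sqrt{2 \log(2/\delta)}},
\end{equation*}
after plugging in the definition $\gamma_L = \epsilon / (2\sqrt{K_L |\cS| \log(2/\delta)}\,\tau)$ and using $|\cS|$ to absorb the remaining constants (this is the same algebra used to calibrate $\gamma_L$ in the closely analogous proof of Lemma \ref{lem:L_2-sensitivity_private_coin_bound} for Procedure II).

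With the sensitivity bound in hand, I would invoke the classical Gaussian mechanism result (e.g., Theorem A.1 and the subsequent analysis in \cite{dwork2014algorithmic}): adding independent $N(0,1)$ noise $W^{(j)}_{lk}$ to each coordinate of the vector statistic $S(x,u)$ yields an $(\epsilon,\delta)$-DP release of $S(X^{(j)},u)$. Since the noise variables are drawn independently of $(X^{(j)}, U_L)$ and the above sensitivity bound is uniform in $u$, the kernel $x \mapsto \P^{Y^{(j)} \mid X^{(j)} = x, U_L = u}$ satisfies \eqref{eq:defining_DP} for every $u$ and every pair of neighboring $x, x' \in \cX^n$. This is exactly the required $(\epsilon,\delta)$-DP guarantee under Definition \ref{def:differential_privacy}.

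There is no substantive obstacle here, only bookkeeping. The only point requiring care is verifying that the random rotation $u$ does not inflate the sensitivity; this is immediate because a fixed orthogonal matrix acts linearly and the clipping $[\cdot]^\tau_{-\tau}$ is applied coordinatewise after the rotation, so the per-coordinate change from swapping a single datum remains bounded by $2\tau$ regardless of $u$. Consequently, the shared randomness contributes nothing to the privacy budget — it only serves to coordinate the transcripts — which is precisely the conceptual point emphasized in Section \ref{ssec:federated_testing}.
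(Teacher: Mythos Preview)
Your proposal is correct and follows essentially the same approach as the paper's own proof: fix the realization $u$ of the shared rotation, bound the $L_2$-sensitivity of the clipped, rescaled sum by $2\tau\gamma_L\sqrt{|I_L|}$ (using that each clipped coordinate can change by at most $2\tau$ when one datum is swapped, regardless of $u$), and then invoke the Gaussian mechanism. The paper's proof additionally sums the sensitivity over all $L\in\cS$ to cover the adaptive transcript $(Y^{(j)}_L)_{L\in\cS}$, but this is exactly the ``absorb with $|\cS|$'' step you already flagged, so the two arguments coincide.
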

    \begin{proof}
    The argument is similar to that of Lemma \ref{lem:L_2-sensitivity_private_coin_bound}, noting that rescaled and clipped sums have at most $L_2$-sensitivity less than or equal to one for any rotation $U_L$ in $ \R^{d_L \times d_L}$;
    \begin{align*}
        \underset{\breve{x} \in (\R^\N)^n : \mathrm{d}_H(x,\breve{x}) \leq 1}{\sup} \, \gamma_L \left\|  \left( \underset{i=1}{\overset{n}{\sum}} \sigma^{-1} [ U_L (x_i)_{lk} ]_{-\tau}^\tau - \underset{i=1}{\overset{n}{\sum}} \sigma^{-1} [ U_L (\breve{x}_i)_{lk} ]_{-\tau}^\tau\right)_{(l,k) \in I_L, L \in \cS} \right\|_2 \leq \\ \gamma_L \sqrt{ \sum_{L \in \cS} \underset{(l,k) \in I_L}{\overset{}{\sum}} \left( \underset{\breve{x} \in (\R^\N)^n : \mathrm{d}_H(x,\breve{x}) \leq 1}{\sup}\sigma^{-1} [ U_L (x_i)_{lk} ]_{-\tau}^\tau - \sigma^{-1} [ U_L (\breve{x}_i)_{lk} ]_{-\tau}^\tau \right)^2  } \leq 1. \nonumber
        \end{align*}
        so by the same reasoning as in Lemma \ref{lem:L_2-sensitivity_private_coin_bound}, the transcript $Y^{(j)}$ is $(\epsilon,\delta)$-differentially private. 
    \end{proof}
    
    Based on these transcripts, one can compute
    \begin{equation}\label{eq:test_pub_coin_privacy_impure_DP}
    T_{\text{III}} = \mathbbm{1}\left\{ \underset{L \in \cL}{\max} \; \frac{1}{\sqrt{K_L}\left(n \gamma_L^2 \vee 1\right)} \underset{(l,k) \in I_L}{\overset{}{\sum}} \left[ \left( \frac{1}{\sqrt{m}}  \underset{j =1}{\overset{m}{\sum}} Y^{(j)}_{lk}  \right)^2 -  \nu_{L} \right] \geq \kappa_\alpha  \log \log (e\vee|\cS|) \right\},
    \end{equation}
    where $ \nu_{L} = n\gamma_L^2 + 1$. The following lemma establishes the Type I and Type II error probability guarantees for the above test.
    
    \begin{lemma}\label{lem:procedure_III_test_up_rate}
    For all $s,R>0$, $\alpha \in (0,1)$ there exists $\kappa_\alpha, \tilde{\kappa}_\alpha > 0$ and $C_\alpha > 0$ such that the test $T_{\text{III}}$ defined by \eqref{eq:test_pub_coin_privacy_impure_DP} satisfies $\P_0 T_{\text{III}}  \leq \alpha$. Furthermore, for any $f \in \cB^{{s},R}_{p,q}$ such that
    \begin{equation}\label{eq:lem:sup:procedure_III_test_up_rate}
     \|f_{L^*} \|_2^2 \geq C_\alpha \log \log (e\vee|\cS|) \log^{} (N) |\cS| \log(1/\delta) \left( \frac{2^{L^*}}{\sqrt{m}n (\sqrt{n} \epsilon \wedge 1)} \right) \bigvee \left( \frac{1}{mn^2\epsilon^2} \right),
    \end{equation}
    for some $L^* \in \cS$ and $C_\alpha > 0$ large enough depending only on $\alpha$, it holds that $\P_f (1 - T_{\text{II}}^{\epsilon,\delta}) \leq \alpha$.
    \end{lemma}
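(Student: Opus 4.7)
I would mirror the structure of the proof of Lemma~\ref{lem:protocol_II_priv_coin_bound}, decomposing the argument into (i) a Type~I bound uniform over $L\in\cS$ and (ii) a Type~II bound conditional on the rotation $U_L$ concentrating the signal into the retained coordinates. For Type~I, note that under $\P_0$ the data $X^{(j)}_L$ is rotationally invariant standard Gaussian, so $(U_L X^{(j)}_{L;i})_{lk}$ is i.i.d.\ $N(0,1)$ even after conditioning on $U_L$. By Lemma~\ref{lem:clipping_preserves_tails_exponential}, the clipped sums $\sum_i [\cdot]_{-\tau}^\tau$ are sub-Gaussian, hence $Y^{(j)}_{lk}$ is sub-Gaussian with parameter $\sqrt{n}\gamma_L+1$ and $\nu_L = n\gamma_L^2+1$ is its exact variance. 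The inner sum over $(l,k)\in I_L$ is then a centered sum of $|I_L|\asymp K_L$ i.i.d.\ sub-exponential squares, normalized so the deviation of $\sqrt{K_L}(n\gamma_L^2\vee 1)$ is one sub-exponential unit. The maximum over $L\in\cS$ is handled via Lemma~\ref{lem:iterated_logarithm_max_bound}, giving $\P_0 T_{\text{III}}\leq\alpha/2$ for $\kappa_\alpha$ large enough.

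For Type~II, fix an $L^*\in\cS$ for which~\eqref{eq:lem:sup:procedure_III_test_up_rate} holds, and restrict attention to that level in the maximum. The \emph{rotation step} is the heart of the argument: for $f$ fixed and $U_{L^*}$ Haar-distributed, the vector $U_{L^*} f_{L^*}$ is uniform on the sphere of radius $\|f_{L^*}\|_2$ in $\R^{d_{L^*}}$, so a standard concentration inequality for uniform vectors on the sphere (e.g.\ Levy's inequality, or Lemma 2.2 type bounds for projections of Haar-distributed rotations) gives
\begin{equation*}
\P\!\left(\sum_{(l,k)\in I_{L^*}} (U_{L^*} f_{L^*})_{lk}^2 \geq c\,\tfrac{|I_{L^*}|}{d_{L^*}}\|f_{L^*}\|_2^2 \right)\geq 1-\alpha/8
\end{equation*}
for a universal $c>0$, with $|I_{L^*}|\asymp K_{L^*}$. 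On this event the ``effective signal'' captured by the transcripts is of order $K_{L^*} \|f_{L^*}\|_2^2/d_{L^*}$, which is the key improvement over Procedure~II (no $\sqrt{d_L}$ penalty appears since all $m$ servers pool into every coordinate of interest). Simultaneously, a Gaussian maximum bound combined with the Besov-to-$\ell_\infty$ inequality~\eqref{eq:supp:bound_on_f_L_2_norm} yields, for $\tilde\kappa_\alpha$ large enough, that $\max_{i,j,(l,k)}|(U_{L^*}X^{(j)}_{L^*;i})_{lk}|/\sigma\leq\tau$ with probability at least $1-\alpha/8$, so the clipping is inactive and the mean of $(m^{-1/2}\sum_j Y^{(j)}_{lk})^2$ equals $n^2\gamma_{L^*}^2 \sigma^{-2}(U_{L^*}f_{L^*})_{lk}^2 +\nu_{L^*}$.

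On the conjunction of these two high-probability events I would compute
\begin{equation*}
\E_f \frac{1}{\sqrt{K_{L^*}}(n\gamma_{L^*}^2\vee 1)}\sum_{(l,k)\in I_{L^*}}\bigl[(m^{-1/2}{\textstyle\sum}_j Y^{(j)}_{lk})^2-\nu_{L^*}\bigr] \;\gtrsim\; \frac{n^2\gamma_{L^*}^2 K_{L^*}\|f_{L^*}\|_2^2}{\sqrt{K_{L^*}}(n\gamma_{L^*}^2\vee 1)\,\sigma^2 d_{L^*}},
\end{equation*}
plug in $\gamma_{L^*}^2\asymp \epsilon^2/(K_{L^*}|\cS|\log(1/\delta)\log N)$, and verify that the right-hand side dominates $\kappa_\alpha\log\log(e\vee|\cS|)$ precisely under assumption~\eqref{eq:lem:sup:procedure_III_test_up_rate}, case-splitting on whether $n\gamma_{L^*}^2\lessgtr 1$ (equivalently, on whether $\sqrt{n}\epsilon\lessgtr 1$). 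The variance of this normalized statistic is $O(1)$ by the same bookkeeping as in \eqref{eq:variance_of_coordinate_wise_privacy_transcript_under_Pf}, with $|\cJ_{lk;L}|$ replaced by $m$ and $d_L$ replaced by $K_L$, so a final Chebyshev bound gives $\P_f(1-T_{\text{III}})\leq\alpha/2$.

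The main obstacle is the random rotation concentration: one must show that a Haar-random orthogonal image of $f_{L^*}$ places a constant fraction of its $\ell_2$ mass into the first $\asymp K_{L^*}$ coordinates, with quantitative failure probability independent of $\|f_{L^*}\|_2$. This is morally a Johnson--Lindenstrauss type statement for projections of Haar measure, and the probability bound must be cheap enough not to consume the $\alpha/2$ budget after a union bound with the clipping event. Once this geometric step is in hand, the remaining algebra is a direct transcription of the Procedure~II proof with $(d_L,|\cJ_{lk;L}|)\mapsto(K_L,m)$.
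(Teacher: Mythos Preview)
Your proposal is correct and tracks the paper's proof closely for both the Type~I bound (rotational invariance under $\P_0$, sub-Gaussianity of clipped sums via Lemma~\ref{lem:clipping_preserves_tails_exponential}, sub-exponential maximum via Lemma~\ref{lem:iterated_logarithm_max_bound}) and the Type~II scaffolding (restrict to $L^*$, high-probability clipping-inactive event, mean/variance bookkeeping, Chebyshev).

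The one substantive difference is your treatment of the random rotation. You propose conditioning on a high-probability Johnson--Lindenstrauss-type event $\{\sum_{(l,k)\in I_{L^*}}(U_{L^*}f_{L^*})_{lk}^2 \geq c\,K_{L^*}\|f_{L^*}\|_2^2/d_{L^*}\}$ and flag this as the ``main obstacle''. The paper avoids this step entirely: since $U_{L^*}f_{L^*}\overset{d}{=}\|f_{L^*}\|_2\, Z/\|Z\|_2$ for $Z\sim N(0,I_{d_{L^*}})$, symmetry gives $\E^U(U_{L^*}f_{L^*})_{lk}^2=\|f_{L^*}\|_2^2/d_{L^*}$ directly, and the paper simply computes the \emph{unconditional} mean $\E_f\frac{1}{\sqrt{K_{L^*}}}\sum_{(l,k)}[V_{lk}-\nu_{L^*}]=\gamma_{L^*}^2 m n^2\sqrt{K_{L^*}}\|f_{L^*}\|_2^2/d_{L^*}$ and applies Chebyshev with the total variance, folding the $U_{L^*}$-randomness into the variance bound (the additional contribution is of the same order as the terms you already identify). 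This is shorter and removes the need for a separate sphere-concentration lemma; your approach also works, but the obstacle you highlight is not actually there.
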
  
    \begin{proof}
    Under $\P_f$, $(U_L  X_{L;i}^{(j)})_{lk} \overset{d}{=} (U_L f)_{lk} + (U_L  Z_{L;i}^{(j)})_{lk}$, where $Z_{L;i}^{(j)} \sim N(0,I_{d_L})$ independent of $U_L$ and the centered i.i.d. $W^{(j)}_{lk}$. Furthermore, $U_L Z_{L;i}^{(j)} \overset{d}{=} Z_{L;i}^{(j)}$, $(U_L  Z_{L;i}^{(j)})_{lk} \sim N(0,1)$, still independent of $(W^{(j)}_{lk})_{l,k,j}$. We obtain that
    \begin{equation*}
    \gamma_L \underset{i=1}{\overset{n}{\sum}}  (U_L  X_{L;i}^{(j)})_{lk}  + W^{(j)}_{lk} \overset{d}{=} \gamma_L n (U_L f_L)_{lk} + \gamma_L \sqrt{n} \eta + W^{(j)}_{lk},
    \end{equation*}
    with $\eta \sim N(0,1)$ and all three terms independent. 
    
    A first consequence of the relationship above is that under the null hypothesis, the random variables $Y^{(j)}_{L;k}$ are sub-gaussian, mean zero and i.i.d. for $(l,k) \in I_L$, $j \in [m]$ under the null hypothesis (see Lemma \ref{lem:clipping_preserves_tails_exponential}). More specifically, we have that
    \begin{equation*}
     \frac{1}{\sqrt{m}}  \underset{j = 1}{\overset{m}{\sum}} Y^{(j)}_{lk} \quad \text{ is } \quad \left(\frac{\sqrt{n}\epsilon}{2\sqrt{ |\cS| {K_L} \log(2/\delta)\tau}} + 1\right)\text{-sub-gaussian},
    \end{equation*}
    which in turn implies that the random variables 
    \begin{equation*}
        \frac{1}{\sqrt{K_L}} \underset{(l,k) \in I_L}{\overset{}{\sum}} \left[ \left( \frac{1}{\sqrt{m}}  \underset{j = 1}{\overset{m}{\sum}} Y^{(j)}_{L;k}  \right)^2 - \nu_{L}  \right]
    \end{equation*}
    are $C$-sub-exponential for $L \in \cS$, for some constant $C > 0$ (see e.g. Section 2.7 in \cite{vershynin_high-dimensional_2018}). It now follows from Lemma \ref{lem:iterated_logarithm_max_bound} that, for $\kappa_\alpha > 0$ large enough, $\P_0 T_{\text{III}} \leq \alpha / 2$.
    
    Next, we turn to the Type II error probability. Assume now that, for $L \in \cS$, $f$ satisfies \eqref{eq:sup:lem:tau_specific_minimal_signal_size}. We have that
    \begin{equation*}\label{eq:test_impure_DP_coordinate_wise_strat}
       1 -  T_{\text{III}} \leq \mathbbm{1}\left\{  \frac{1}{\sqrt{K_L}} \underset{(l,k) \in I_L}{\overset{}{\sum}} \left[ \left( \frac{1}{\sqrt{m}}  \underset{j =1}{\overset{m}{\sum}} Y^{(j)}_{lk}  \right)^2 -  \nu_{L} \right] < \kappa_\alpha \left(n \gamma_L^2 \vee 1\right) \log \log (e\vee|\cS|) \right\},
        \end{equation*}
    so it suffices to bound the $\E_f$-expectation of the right-hand side. Since \eqref{eq:supp:bound_on_f_L_2_norm} holds, we have that for $\tilde{\kappa}_\alpha$ large enough, 
    \begin{equation}\label{eq:privacy_clipping_general_coordinate_based_signal_under_clipping}
    \underset{((l,k) \in I_L)}{\max} |f_{lk}| \leq \tau/2.
    \end{equation}
    Consequently, an application of the triangle inequality and Lemma \ref{lem:gaussian_maximum} yield that, for $\tilde{\kappa}_\alpha > 0$ large enough, we have with probability at least $1 - 2 N d_l e^{-\tau^2/4} \geq 1 - (1+N)^{2 - \tilde{\kappa}_\alpha^2/4} \geq 1 - \alpha/4$ that
    \begin{equation*}
    \underset{i \in [n], j \in [m], (l,k) \in I_L}{\max} |(X_i^{(j)})_{lk}| \leq \tau.
    \end{equation*}
    Thus, it suffices to show that
    \begin{equation}\label{eq:supp:type_II_error_bound_impure_DP}
        \P_f\left(  \frac{1}{\sqrt{K_L}} \underset{(l,k) \in I_L}{\overset{}{\sum}} \left[ \left( \frac{1}{\sqrt{m}}  \underset{j =1}{\overset{m}{\sum}} Y^{(j)}_{lk}  \right)^2 -  \nu_{L} \right] < \kappa_\alpha \left(n \gamma_L^2 \vee 1\right) \log \log (e\vee|\cS|) \right) \leq \alpha/4,
    \end{equation}
    for $C_\alpha > 0$ large enough. Define
    \begin{equation*}
        V_{lk} := \left( \frac{1}{\sqrt{m}}  \underset{j = 1}{\overset{m}{\sum}} \left[ \gamma_L \underset{i=1}{\overset{n}{\sum}}  (U_L X_i^{(j)})_{lk}  + W^{(j)}_{lk} \right] \right)^2.
        \end{equation*}
    Under $\P^U$, we have that
    \begin{equation*}
    (U_Lf_L)_{lk} \overset{d}{=} \|f_L\|_2 \frac{Z_{lk}}{\|Z \|_2},
    \end{equation*}
    for $Z=(Z_{11},\dots,Z_{1d_L}) \sim N(0,I_{d_L})$. As $\sum_{(l,k) \in I_L} \E {Z_{lk}^2}/{\|Z \|_2^2} = 1$, $\E {Z_{lk}^2}/{\|Z \|_2^2} = 1/d_L$ by symmetry. Hence,
    \begin{equation*}
    \E_f  V_{lk} = \gamma_L^2 m n^2 \E^U  (U_Lf_L)_{lk}^2 + n \gamma_L^2 + \E (W^{(j)}_{lk})^2 = \frac{\gamma_L^2 m n^2 \sqrt{K_L} \|f_L\|_2^2}{d_L} + n \gamma_L^2 + \E (W^{(j)}_{lk})^2.
    \end{equation*}
    Subtracting $d^{-1} \gamma_L^2 m n^2 \sqrt{K_L} \|f_L\|_2^2$ on both sides, the first term in \eqref{eq:supp:type_II_error_bound_impure_DP} is bounded above by
    \begin{equation*}
    \text{Pr} \left( \frac{1}{\sqrt{K_L}} \underset{(l,k) \in I_L}{\overset{}{\sum}} \left[ V_{lk} - \E_f V_{lk} \right] < - \frac{ \gamma_L^2 n^2 m \sqrt{K_L}  \| f_L \|_2^2}{2{d_L}} \right)
    \end{equation*}
    whenever
    \begin{equation}\label{eq:also_verified_pub_coin_privacy_general_lem}
    \frac{\gamma_L^2 m n^2 \sqrt{K_L} \|f_L\|_2^2}{{d_L}}  \geq 2 \kappa_\alpha (\gamma_L^2 n \vee 1).
    \end{equation}
    The latter is indeed satisfied whenever \eqref{eq:lem:sup:procedure_III_test_up_rate}. Under the alternative hypothesis, a straightforward calculation shows that $V_l$ has expectation conditionally on $U_L$ equal to $\gamma_L^2 m n^2 (U_L f)_{lk}^2 + n \gamma_L^2 + \E (W^{(j)}_{lk})^2$. Since $\E (Z_{L;i})_{lk}^4 = 3$ and $\E (W^{(j)}_{lk})^4 \asymp 1$, its variance conditionally on $U_L$ equals
    \begin{align*}
    n^2 \gamma_L^4 &\text{Var}\left( \eta^2 \big| U_L \right) + \text{Var}\left( \left(\frac{1}{\sqrt{m}}  \underset{j = 1}{\overset{m}{\sum}}W^{(j)}_{lk} \right)^2 \bigg|U_L \right) + 2 \gamma_L^4 n^3 m (U_L f)_{lk}^2 \E \eta^2 \\ &+ 2 \gamma_L^2 m n^2 (U_Lf)_{lk}^2 \E (W^{(j)}_{lk})^2 + 2 \gamma_L^2 n \E (W^{(j)}_{lk})^2 \E \eta^2, \nonumber 
    \end{align*}
    which is of the order 
    \begin{equation*}\label{eq:variance_of_public_coin_privacy_transcript_under_Pf_pub_coin}
    (\gamma_L^4 m n^3 (U_L f)_{lk}^2) \vee (\gamma_L^2 m n^2 (U_L f)_{lk}^2) \vee \gamma_L^4 n^2 \vee 1. 
    \end{equation*}
    Consequently, by applying Chebyshev's inequality, the probability on the left-hand side of \eqref{eq:supp:type_II_error_bound_impure_DP} is of the order
     \begin{equation}\label{eq:supp:protocol_III_4max_condition}
     {\left(\frac{d_L}{ m n \sqrt{K_L} \|f_L\|_2^2}\right) \vee \left(\frac{d_L  \Lambda_{N,\cS,\delta} \tilde{\kappa}_\alpha^2}{  m n^2 \epsilon^2 \|f_L\|_2^2}\right)  \vee \left(\frac{ \kappa_\alpha^2 d^2_L}{m^2 n^2 K_L \|f_L\|_2^4}\right) \vee \left(\frac{ \kappa_\alpha^2 \tilde{\kappa}_\alpha^4 \Lambda_{N,\cS,\delta}^2 d^2_L  }{  m^2 n^4 \epsilon^4 K_L \|f_L\|_2^4}\right)},
     \end{equation}
     where $\Lambda_{N,\cS,\delta} := \log(N) |\cS| \log(1/\delta) \log$, and we have plugged in $\gamma_L = \frac{\epsilon}{2\sqrt{K_L |\cS| \log(2 / \delta)\tau}}$, $\tau = \tilde{\kappa}_\alpha \sqrt{\log(N)}$. 
    The latter can be made arbitrarily small per choice of $C_\alpha > 0$ in \eqref{eq:lem:sup:procedure_III_test_up_rate}. 
    
    To see this, first consider the case where $\epsilon \leq n^{-1/2}$, such that $K_L \asymp 1$ and the above display is of the order
    \begin{equation*}%\label{eq:condition_priv_coin_1_coordinate_strat_privacy_gamma}
         \left(\frac{d_L  \Lambda_{N,\cS,\delta} \tilde{\kappa}_\alpha^2}{  m n^2 \epsilon^2 \|f_L\|_2^2}\right) \vee \left(\frac{ \kappa_\alpha^2 \tilde{\kappa}_\alpha^4 \Lambda_{N,\cS,\delta}^2 d^2_L  }{ K_L m^2 n^4 \epsilon^4 \|f_L\|_2^4}\right)
    \end{equation*}
    Whenever $f$ satisfies \eqref{eq:lem:sup:procedure_III_test_up_rate}, the first term is of the order $1/C_\alpha$. 
    
    For $1/\sqrt{n} \leq \epsilon \leq d_L $, $K_L \asymp n \epsilon^2 \wedge d_L \gtrsim 1$ and \eqref{eq:supp:protocol_III_4max_condition} reduces to
     \begin{equation*}
     \frac{ \kappa_\alpha^2 \tilde{\kappa}_\alpha^4 \Lambda_{N,\cS,\delta}^2 d^2_L  }{ m^2 n^3 \epsilon^2 \|f_L\|_2^4},
     \end{equation*}
    When \eqref{eq:lem:sup:procedure_III_test_up_rate} holds, the above display is bounded by $1/C_\alpha^2$. 
    
    Lastly, when $n \epsilon^2 \gtrsim d_L$, it holds that $K_L \asymp d_L$, so \eqref{eq:supp:protocol_III_4max_condition} is of the order ${ \kappa_\alpha^2 d_L}/({m^2 n^2 \|f_L\|_2^4})$. When \eqref{eq:lem:sup:procedure_III_test_up_rate} holds, the above display is bounded by $1/C_\alpha^2$. We conclude that $\P_f (1 - T_{\text{III}}) \leq \alpha/4$ for $C_\alpha$ large enough, as desired.
    \end{proof}

    \subsection{Proofs of the theorems in Section \ref{sec:main_results} and Theorem \ref{thm:attainment_nonadaptive}}\label{sec:proofs_main_results}
    
    The proofs of Theorems \ref{thm:rate_theorem_local_randomness} and \ref{thm:rate_theorem_shared_randomness} in Section \ref{sec:main_results} are direct consequences of Theorem \ref{thm:attainment_nonadaptive} and \ref{thm:nonasymptotic_testing_lower_bound}. To see this, note that for any sequences $m_N \equiv m$, $n_N \equiv N/m$, $\sigma_N \equiv \sigma$, $\epsilon_N \equiv \epsilon$ and $\delta_N \equiv \delta$, Theorem \ref{thm:nonasymptotic_testing_lower_bound} and Theorem \ref{thm:attainment_nonadaptive} can be applied with an arbitrarily slow decreasing sequence $\alpha_N \to 0$ or $\alpha_N \to 1$ in order to obtain the desired convergence of the minimax risk.
    
    Theorem \ref{thm:attainment_nonadaptive} and Theorem \ref{thm:adaptation_rate} are consequences the lemmas proven earlier in the section. Below, we tie together the results of these lemmas to obtain both theorems.
    
    \begin{proof}[Proof of Theorem \ref{thm:attainment_nonadaptive}]
    
    Consider $\cS = \{L\}$, for a given $L \in \N$ which is to be determined. In the case of local randomness only protocols, let $T = T_{\text{I}} \vee T_{\text{II}}$, where $T_{\text{I}}$ and $T_{\text{II}}$ are the tests defined in \eqref{eq:supp:definition_full_test_lips_ext_privacy} and \eqref{eq:supp:test_impure_DP_coordinate_wise_strat}, respectively, with their critical regions such that $\P_0 T_{\text{I}} \leq \alpha /4$ and $\P_0 T_{\text{II}} \leq \alpha / 4$ (see the first statements of Lemma \ref{lem:supp:lips_test_up_rate} and Lemma \ref{lem:protocol_II_priv_coin_bound}). If, for some $L \in \N$, $f$ satisfies 
    \begin{equation}\label{eq:supp:rate_condition_main-results-local-rand}
        \left\| {f}_L \right\|_2^2 \geq C_\alpha M_N \sigma^2 \left( \frac{2^{(3/2)L}}{ m n  (n\epsilon^2 \wedge 2^L)} \bigwedge \left( \frac{\sqrt{2^L}}{\sqrt{m}n \sqrt{n \epsilon^2 \wedge 1}} \bigvee \frac{1}{mn^2 \epsilon^2} \right) \right),
        \end{equation}
    where
    \begin{equation}\label{eq:supp:rate_condition_M_N}
        M_N \gtrsim \log(N)  \sqrt{2 \log \log(N ) \log(N) |\cS|  \log (2  / \delta)},
    \end{equation}
    Lemma \ref{lem:supp:lips_test_up_rate} and Lemma \ref{lem:protocol_II_priv_coin_bound} yield that 
    \begin{equation}
        \P_f (1 - T) \leq \P_f (1 - T_{\text{I}}) + \P_f (1 - T_{\text{II}}) \leq \alpha / 2.
    \end{equation}
    In view of $(a+b)^2/2-b^2\leq a^2$,
    \begin{equation*}
    \| f_L \|_{2}^2 \geq \frac{\| f \|_{2}^2}{2} - \| f - f_L \|_{2}^2.
    \end{equation*}
    Since $f \in \cB^{{s},R}_{p,q}$, we have that $\| f - f_L \|_{2}^2 \leq 2^{-2Ls} R^2$ (see e.g. Lemma \ref{lemma:proof-of-bound-on-tail-sum}).  Consequently, taking $L = 1 \vee \lceil - \frac{1}{s} \log_2 ( \rho^{}) \rceil$,
    \begin{equation*}%\label{eq: help:triangle}
     \| f_L \|_{2}^2 \geq   \rho^2 M_N  C_\alpha^2/2- R^2 2^{-2Ls} \geq \rho M_N^2 ( C_\alpha^2/2 - {R^2}).
    \end{equation*}
    We note here that the latter bound could be sharper by choosing $L = 1 \vee \lceil - \frac{1}{s} \log_2 ( \rho^{}) \rceil$, but we choose the simpler bound for the sake of clarity. Given the above display, this choice of $L$ means that \eqref{eq:supp:rate_condition_main-results-local-rand} is satisfied for 
    \begin{equation}\label{eq : priv coin nonparametric testability condition privacy}
        \rho^2 \gtrsim \sigma^2 \left( \frac{(1 \vee \rho^{-1/s})^{3/2}}{ m n  (n\epsilon^2 \wedge (1 \vee \rho^{-1/s}))} \bigwedge \left( \frac{\sqrt{1 \vee \rho^{-1/s}}}{\sqrt{m}n \sqrt{n \epsilon^2 \wedge 1}} \bigvee \frac{1}{mn^2 \epsilon^2} \right) \right).
    \end{equation}
    Solving this for $\rho$, we obtain the rate in Theorem \ref{thm:attainment_nonadaptive}, \eqref{eq:attainment_nonadaptive_local_randomness}. 
    
    In case of shared randomness, the same argument applies, with the only difference that the test $T = T_{\text{I}} \vee T_{\text{III}}$ is used. Lemmas \ref{lem:supp:lips_test_up_rate} and \ref{lem:procedure_III_test_up_rate} yield that this test satisfies $\P_0 T \leq \alpha / 2$ and also $\P_f (1 - T) \leq \alpha / 2$ whenever 
    \begin{equation*}%\label{eq : pub coin nonparametric testability condition privacy}
        \rho^2 \gtrsim  \sigma^2  \left( \frac{1 \vee \rho^{-1/s}}{ m n \sqrt{n \epsilon^2 \wedge 1} \sqrt{n\epsilon^2 \wedge (1 \vee \rho^{-1/s})}} \bigwedge \left( \frac{\sqrt{1 \vee \rho^{-1/s}}}{\sqrt{m}n \sqrt{n \epsilon^2 \wedge 1} } \bigvee \frac{1}{mn^2 \epsilon^2} \right) \right)
    \end{equation*}
    and $M_N$ satisfying \eqref{eq:supp:rate_condition_M_N}. Solving the above display for $\rho$, we obtain the rate in Theorem \ref{thm:attainment_nonadaptive}, \eqref{eq:attainment_nonadaptive_shared_randomness}.
    \end{proof}
    
    \begin{proof}[Proof of Theorem \ref{thm:adaptation_rate}]
    The proof is very similar to that of Theorem \ref{thm:attainment_nonadaptive} in the sense that it follows from the lemmas proven earlier in the section. 
    
    In the case of local randomness, consider $T$ as defined in \eqref{eq:final_adaptive_test_local_randomness}, with $T_{\text{I}}$ and $T_{\text{II}}$ as defined in \eqref{eq:definition_full_test_lips_ext_privacy_adaptive} and \eqref{eq:test_impure_DP_coordinate_wise_strat_adaptive}, respectively. Applying Lemma \ref{lem:supp:lips_test_up_rate} with $\cS = S^{\texttt{LOW}}$ and Lemma \ref{lem:protocol_II_priv_coin_bound} with $\cS = S^{\texttt{HIGH}}$ yields that $\P_0 T_{\text{I}} \leq \alpha / 4$ and $\P_0 T_{\text{II}} \leq \alpha / 4$, so $T$ has the correct level. 
    
    If $f \in \cB^{{s},R}_{p,q}$, with $s \in [s_{\min}, s_{\max}]$, we have that $L_s$ is either in $S^{\texttt{LOW}}$ or $S^{\texttt{HIGH}}$. By applying Lemma \ref{lem:supp:lips_test_up_rate} and Lemma \ref{lem:protocol_II_priv_coin_bound}, a similar argument as given in the proof of Theorem \ref{thm:attainment_nonadaptive} above yields that  $\P_f (1 - T) \leq \alpha / 2$. The shared randomness case is analogous, with $T$ defined as in \eqref{eq:final_adaptive_test_shared_randomness}.
    \end{proof}
    
    \section{Addendum to the main results of Section \ref{sec:main_results}}\label{sec:supp:additional_rates}
    
    In this section, we provide additional discussion on the minimax testing rate under privacy constraints, presented in Section \ref{sec:main_results}.

    We first consider the case of local randomness protocols, where the minimax rate is derived in Theorem \ref{thm:rate_theorem_local_randomness}. The minimax separation rate satisfies (up to logarithmic factors) the relationship:
    \begin{equation*}%\label{eq:local_randomness_rate_single_line}
        \rho^2 \asymp  \left(\frac{\sigma^{2}}{mn}\right)^{\frac{2{s}}{2{s}+1/2}} + \left(\frac{\sigma^2}{m n^{2} \epsilon^2} \right)^{\frac{2{s}}{2{s}+3/2}} \wedge \left( \left(\frac{\sigma^2}{\sqrt{m} n^{} \sqrt{1 \wedge n \epsilon^2}} \right)^{\frac{2{s}}{2{s}+1/2}} + \left( \frac{\sigma^2}{mn^2 \epsilon^2} \right) \right).
        \end{equation*}
    
    We can find the dominant term in the above relationship for different values of $\epsilon$. For $s > 1/4$, this yields that the minimax rate is given by
    
    \begin{align}\label{eq:nonparametric_local_randomness_rate_privacy}
        \rho^2 \asymp   \begin{cases} 
        (mn / \sigma^2)^{-\frac{2s}{2s+1/2}} &\text{ if } \epsilon \geq \sigma^{-\frac{2}{4s+1}} m^{\frac{1}{4s+1}} n^{\frac{1/2-2s}{4s+1}}, \\
        (m n^{2} \epsilon^2 / \sigma^2 )^{-\frac{2s}{2s+3/2}} &\text{ if } \sigma^{-\frac{2}{4s+1}} m^{\frac{1/4-s}{4s+1}} n^{\frac{1/2-2s}{4s+1}} \leq \epsilon < \sigma^{-\frac{2}{4s+1}} m^{\frac{1}{4s+1}} n^{\frac{1/2-2s}{4s+1}} \text{ and }  \epsilon \geq n^{-1/2}, \\
        (m n^{2} \epsilon^2 / \sigma^2 )^{-\frac{2s}{2s+3/2}} &\text{ if } \sigma^{-\frac{4}{4s-1}} m^{-\frac{1}{2}} n^{\frac{5/2-2s}{4s-1}} \leq  \epsilon < n^{-1/2}, \\
        (\sqrt{m} n^{} / \sigma^2 )^{-\frac{2s}{2s+1/2}} &\text{ if } n^{-1/2} \leq \epsilon < \sigma^{-\frac{2}{4s+1}} m^{\frac{1/4-s}{4s+1}} n^{\frac{1/2-2s}{4s+1}}, \\
         (\sqrt{m} n^{3/2} \epsilon / \sigma^2 )^{-\frac{2s}{2s+1/2}} &\text{ if } \sigma^{ \frac{1}{2s+1}} m^{-\frac{1}{2}} n^{-\frac{1+s}{2s+1}} \leq \epsilon < \sigma^{-\frac{4}{4s-1}} m^{-\frac{1}{2}} n^{\frac{5/2-2s}{4s-1}}  \text{ and } \epsilon < n^{-1/2}, \\
        (mn^2 \epsilon^2 / \sigma^2)^{-1} &\text{ if }  \epsilon <  \sigma^{\frac{1}{2s+1}} m^{-\frac{1}{2}} n^{-\frac{1+s}{2s+1}}. 
         %\\ \frac{1}{n} &\text{ if } \rho > 1,  
        \end{cases}
        \end{align}
    
    Whenever $s \leq 1/4$, the first two cases in the above display become vacuous for $\epsilon \lesssim 1$, since $mn \to \infty$. Furthermore, the condition
    \begin{equation*}
        \left(\frac{\sigma^2}{m n^{2} \epsilon^2} \right)^{\frac{2{s}}{2{s}+3/2}} \asymp \left(\frac{\sigma^2}{\sqrt{m} n^{3/2} \epsilon} \right)^{\frac{2{s}}{2{s}+1/2}} \iff \epsilon^{\frac{2s-1/2}{2s+3/2}} \asymp \sigma^{-\frac{1}{2s+3/2}} m^{\frac{1}{2s+3/2}} n^{\frac{5/4-s}{2s+3/2}},
    \end{equation*}
    cannot be satisfied when $s \leq 1/4$, $\epsilon \gtrsim (mn)^{-1}$ and $\sigma^2 = O(1)$. Consequently, whenever $s \leq 1/4$, the minimax rate simplifies to 
    \begin{align}\label{eq:nonparametric_local_randomness_rate_privacy_small_s}
    \rho^2 \asymp   \begin{cases} 
    (\sqrt{m} n^{} / \sigma^2 )^{-\frac{2s}{2s+1/2}} &\text{ if } \epsilon \geq n^{-1/2}, \\
     (\sqrt{m} n^{3/2} \epsilon / \sigma^2  )^{-\frac{2s}{2s+1/2}} &\text{ if } \sigma^{ \frac{2}{4s+1}} m^{-\frac{1}{2}} n^{-\frac{1+s}{2s+1}} \leq  \epsilon < n^{-1/2}, \\
    ( mn^2 \epsilon^2 / \sigma^2 )^{-1} &\text{ if }  \epsilon < \sigma^{ \frac{2}{4s+1}} m^{-\frac{1}{2}} n^{-\frac{1+s}{2s+1}}. 
     %\\ \frac{1}{n} &\text{ if } \rho > 1,  
    \end{cases}
    \end{align}
    That is, the minimax rates corresponding to the high privacy-budget regimes cannot be attained for any value of $\epsilon$ when $s \leq 1/4$.
    
    Next, we consider the case of shared randomness protocols, where the minimax rate is derived in Theorem \ref{thm:rate_theorem_shared_randomness}. The minimax separation rate satisfies (up to logarithmic factors) the relationship:
    \begin{equation*}%\label{eq:shared_randomness_rate_single_line}
        \rho^2 \asymp \left(\frac{\sigma^2}{mn}\right)^{\frac{2{s}}{2{s}+1/2}} + \left(\frac{\sigma^2}{m n^{3/2} \epsilon \sqrt{1 \wedge n \epsilon^2} } \right)^{\frac{2{s}}{2{s}+1}} \wedge \left( \left(\frac{\sigma^2}{\sqrt{m} n^{} \sqrt{1 \wedge n \epsilon^2}} \right)^{\frac{2{s}}{2{s}+1/2}} + \frac{\sigma^2}{mn^2 \epsilon^2} \right).
    \end{equation*}
    
    Finding the dominant term in the above relationship, we obtain that the minimax rate for shared randomness protocols is given by
    
    \begin{align}\label{eq:nonparametric_shared_randomness_rate_privacy}
    \rho^2 \asymp   \begin{cases} 
    (mn/\sigma^2)^{-\frac{2{s}}{2{s}+1/2}} &\text{ if } \epsilon \geq \sigma^{-\frac{2}{4s+1}} m^{\frac{1}{4{s}+1}} n^{\frac{1/2-2{s}}{4{s}+1}}, \\
    (m n^{3/2} \epsilon /\sigma^2 )^{-\frac{2{s}}{2{s}+1}} &\text{ if } \sigma^{-\frac{2}{4s+1}} m^{-\frac{2{s}}{4{s}+1}} n^{\frac{1/2-2{s}}{4{s}+1}} \leq \epsilon < \sigma^{-\frac{2}{4s+1}} m^{\frac{1}{4{s}+1}} n^{\frac{1/2-2{s}}{4{s}+1}}, \text{ and }  \epsilon \geq n^{-1/2},\\
    (m n^{2} \epsilon^2 /\sigma^2 )^{-\frac{2{s}}{2{s}+1}} &\text{ if } \sigma^{-\frac{1}{2s}} m^{-\frac{1}{2}} n^{\frac{1-2{s}}{4{s}}} \leq \epsilon < n^{-1/2}, \\
    (\sqrt{m} n^{} /\sigma^2 )^{-\frac{2{s}}{2{s}+1/2}} &\text{ if } n^{-1/2} \leq \epsilon < \sigma^{-\frac{2}{4s+1}} m^{-\frac{2{s}}{4{s}+1}} n^{\frac{1/2-2{s}}{4{s}+1}}, \\
     (\sqrt{m} n^{3/2} \epsilon /\sigma^2 )^{-\frac{2{s}}{2{s}+1/2}} &\text{ if } \sigma^{\frac{1}{2s+1}}  m^{-\frac{1}{2}} n^{-\frac{1+s}{2{s}+1}} \leq \epsilon < \sigma^{-\frac{1}{2s}} m^{-\frac{1}{2}} n^{\frac{1-2{s}}{4{s}}} \text{ and } \epsilon < n^{-1/2}, \\
    (mn^2 \epsilon^2 /\sigma^2 )^{-1} &\text{ if }  \epsilon < \sigma^{\frac{1}{2s+1}} m^{-\frac{1}{2}} n^{-\frac{1+s}{2{s}+1}}.
    \end{cases}
    \end{align}
    
    Interestingly, even when $s \leq 1/4$, the high privacy-budget regimes occur for shared randomness protocols for some large enough value of $\epsilon$. This is in contrast to the case of local randomness protocols, where the minimax rates corresponding to the high privacy-budget regimes cannot be attained for any value of $\epsilon$ when $s \leq 1/4$.
    
    In Figure \ref{fig:nonpar_testing_large_m} below, we illustrate how the cost of privacy effectively increases the more data is distributed across multiple servers. In particular, we consider the case of $N=16$ observations, $m=8$ servers with each $n=2$ observations. Even though the total number of observations is the same, comparison with Figure \ref{fig:nonpar_testing_large_n} shows that the separation rate in the more distributed setting ($m=8$) is much larger for any value of $\epsilon$.

    \section{Auxiliary lemmas} 
    
    \subsection{General privacy related lemmas}
    
    A random variable $V$ is called $\nu$-\emph{sub-gaussian} if
    \begin{equation}\label{eq:def:subgaus}
    \P( |V| \geq t ) \leq 2 e^{- t^2/\nu^2}, \; \text{ for all } t>0.
    \end{equation}
    It is well known (see e.g. \cite{vershynin_high-dimensional_2018}) that if $\E V = 0$, the above inequality holds if and only if
    \begin{equation*}
    \E e^{t V} \leq e^{C \nu^2 t^2}, \; \text{ for all } t\geq 0,
    \end{equation*}
    for a constant $C> 0$. A random variable $V$ is called $\nu$-\emph{sub-exponential} if
    \begin{equation}\label{eq:def:subexp}
    \P( |V| \geq t ) \leq 2 e^{- t/\nu}, \; \text{ for all } t\geq0.
    \end{equation}
    If $\E V = 0$, the above inequality holds if and only if $\E e^{t V} \leq e^{C \nu^2 t^2}$ for all $0 \leq t \leq 1/(c\nu)$, with constants $c,C > 0$.
    
    The following lemma shows that clipping symmetric, mean zero random variables preserves sub-gaussianity and sub-exponentiality.
    
    \begin{lemma}\label{lem:clipping_preserves_tails_exponential}
    Let $V_1,\dots,V_m$ denote independent random variables, each symmetrically distributed around zero. If $\sum_{j=1}^m V_j$ are sub-gaussian (resp. sub-exponential) with parameter $\nu$, then so are the random variables $\sum_{j=1}^m [V_j]_{-\tau}^\tau$, for any $\tau > 0$. 
    \end{lemma}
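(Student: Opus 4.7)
The plan is to reduce the claim to a one-line Rademacher contraction argument at the level of moment generating functions, using the symmetry of the $V_j$. The key identity is that for any symmetric random variable $W$, one has $W \overset{d}{=} \epsilon |W|$ with $\epsilon$ a Rademacher variable independent of $|W|$; and since clipping $[\,\cdot\,]_{-\tau}^\tau$ commutes with the sign, also $[V_j]_{-\tau}^\tau \overset{d}{=} \epsilon_j [|V_j|]_{0}^{\tau}$. I would introduce Rademacher variables $\epsilon_1,\dots,\epsilon_m$ jointly independent of $(|V_1|,\dots,|V_m|)$ and work with these representations throughout.

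The central contraction step is the following observation: for fixed nonnegative reals $a_j \geq b_j \geq 0$ and any $t \in \R$,
\begin{equation*}
\E_\epsilon \, e^{t \sum_{j=1}^m \epsilon_j b_j} \;=\; \prod_{j=1}^m \cosh(t b_j) \;\leq\; \prod_{j=1}^m \cosh(t a_j) \;=\; \E_\epsilon \, e^{t \sum_{j=1}^m \epsilon_j a_j},
\end{equation*}
since $\cosh$ is even and nondecreasing on $[0,\infty)$. Applying this pointwise with $a_j = |V_j|$ and $b_j = [|V_j|]_0^{\tau} \leq |V_j|$, then taking expectation over $(|V_j|)_{j=1}^m$ and using independence, I get
\begin{equation*}
\E\, e^{t \sum_{j=1}^m [V_j]_{-\tau}^{\tau}} \;\leq\; \E\, e^{t \sum_{j=1}^m V_j},
\end{equation*}
valid for all $t \in \R$.

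From here, in the sub-gaussian case, the hypothesis gives $\E e^{t \sum_j V_j} \le e^{C \nu^2 t^2}$ for all $t \in \R$, and the display above transfers the same bound to $\sum_j [V_j]_{-\tau}^\tau$; since the clipped sum is mean zero and symmetric, the equivalence between MGF control and sub-gaussian tail bounds yields \eqref{eq:def:subgaus} with the same $\nu$. The sub-exponential case is identical, the only change being that the MGF comparison is used in the strip $|t| \leq 1/(c\nu)$, which again transfers to the clipped sum and yields \eqref{eq:def:subexp} with the same parameter.

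There is no real obstacle here — the only subtlety is keeping careful track that the Rademacher contraction is applied \emph{after} conditioning on $(|V_j|)_{j=1}^m$, which is legitimate because the $\epsilon_j$'s are independent of them by construction; and that symmetry of the $V_j$ is what allows the representation $V_j \overset{d}{=} \epsilon_j |V_j|$ in the first place (without symmetry, clipping would bias the mean and the argument would break).
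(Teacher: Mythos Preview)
Your proposal is correct and takes essentially the same approach as the paper: both use the symmetry of each $V_j$ to pass to a Rademacher representation, compute the MGF as a product of $\cosh$ factors, use that $\cosh$ is even and nondecreasing on $[0,\infty)$ to show $\E\cosh(t[V_j]_{-\tau}^\tau)\le\E\cosh(tV_j)$, and conclude via the MGF characterization of sub-gaussianity/sub-exponentiality. Your presentation via the explicit Rademacher contraction inequality $\prod_j\cosh(tb_j)\le\prod_j\cosh(ta_j)$ for $0\le b_j\le a_j$ is just a slightly more unpacked version of the paper's one-line application of $g([x]_{-\tau}^\tau)\le g(x)$ with $g=\cosh(t\,\cdot\,)$.
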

    \begin{proof}
    For any symmetric about $0$ function $g : \R \to \R$ such that $x \mapsto g(x)$ is increasing on $[0,\infty)$, it holds that
    \begin{equation}\label{eq:fact_monotonicity_and_clipping}
    g\left( [x]_{-\tau}^\tau \right) \leq g\left( x \right), \quad \text{ for all } \tau > 0.
    \end{equation}
    Since $V_j$ is symmetric about zero, so is $[V_j]_{-\tau}^\tau$. For an independent Rademacher random variable $R_j$, we have by the afformentioned symmetry about zero that $[V_j]_{-\tau}^\tau \overset{d}{=} R_j [V_j]_{-\tau}^\tau$ and consequently
    \begin{equation*}
    \E e^{t [V_j]_{-\tau}^\tau } = \E e^{t R_j [V_j]_{-\tau}^\tau } = \E \cosh\left(t [V_j]_{-\tau}^\tau \right).
    \end{equation*}
    Using the fact that $V_1,\dots,V_m$ are independent, we obtain that
    \begin{align*}
    \E e^{t \underset{j=1}{\overset{m}{\sum}} V_j } &= \underset{j=1}{\overset{m}{\prod}}   \E \cosh \left(t [V_j]_{-\tau}^\tau \right) \leq \underset{j=1}{\overset{m}{\prod}}   \E \cosh\left(t V_j\right),
    \end{align*}
    where the inequality follows from \eqref{eq:fact_monotonicity_and_clipping}. The conclusion can now be drawn from the moment generating function characterization of sub-gaussianity (resp. sub-exponentiality).
    \end{proof}
    
    The next lemma gives a lower bound on the expectation of a clipped random variable that is symmetric around a real number $\mu > 0$.
    
    \begin{lemma}\label{lem:clipping_preserves_mean_under_alt}
    Let $\tau,\mu > 0$ satisfy $\tau/4 \leq \mu \leq \tau/2$, let $V$ be a random variable symmetric about zero ($V \overset{d}{=} -V$) with Lebesgue density bounded by $M > 0$ and
    \begin{equation*}
    \text{Pr} \left( |V| \leq \frac{1}{12M} \vee (\tau/2) \right) \geq c
    \end{equation*}
    for some constant $c > 0$. It then holds that
    \begin{equation}\label{eq:to_show_mean_of_clipping}
    \E \left[ \mu + V \right]_{-\tau}^\tau \geq (c \wedge 1/2) \mu.
    \end{equation}
    \end{lemma}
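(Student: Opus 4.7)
The plan is to exploit the symmetry $V \overset{d}{=} -V$ via a symmetrization argument, writing
\begin{equation*}
\E [\mu + V]_{-\tau}^\tau \;=\; \tfrac{1}{2}\,\E\bigl( [\mu + V]_{-\tau}^\tau + [\mu - V]_{-\tau}^\tau \bigr) \;=\; \tfrac{1}{2}\,\E f(V),
\end{equation*}
with $f(v) := [\mu + v]_{-\tau}^\tau + [\mu - v]_{-\tau}^\tau$. The function $f$ is even and nonnegative, which reduces matters to a deterministic calculation of $f$ followed by a probabilistic estimate.

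The first step is a piecewise description of $f$. Since $\tau/4 \leq \mu \leq \tau/2$, the thresholds satisfy $\tau/2 \leq \tau - \mu \leq 3\tau/4$ and $5\tau/4 \leq \tau + \mu \leq 3\tau/2$, and a short case analysis on the position of $|v|$ relative to these thresholds yields
\begin{equation*}
f(v) \;=\; 2\mu\cdot\mathbbm{1}_{\{|v|\leq \tau - \mu\}} \;+\; \bigl(\tau + \mu - |v|\bigr)\cdot\mathbbm{1}_{\{\tau - \mu < |v|\leq \tau + \mu\}}.
\end{equation*}
The central region contributes $2\mu$ because neither $\mu + v$ nor $\mu - v$ is clipped; in the corner region only one of the two summands is clipped, leaving the nonnegative remainder $\tau + \mu - |v|$; outside $|v|\leq \tau + \mu$ the two clippings cancel.

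The second step is to discard the nonnegative corner contribution and to use $\tau - \mu \geq \tau/2$ to obtain
\begin{equation*}
\E \bigl[ \mu + V \bigr]_{-\tau}^\tau \;\geq\; \mu\cdot\Pr\bigl(|V| \leq \tau - \mu\bigr) \;\geq\; \mu\cdot\Pr\bigl(|V| \leq \tau/2\bigr).
\end{equation*}
The conclusion is then obtained from the hypothesis by a short case distinction on whether $\tfrac{1}{12M} \vee (\tau/2)$ equals $\tau/2$ (so the assumed mass bound is directly a statement about $\Pr(|V|\leq \tau/2)$), or equals $\tfrac{1}{12M}$, in which case one subtracts the density-bound estimate
\begin{equation*}
\Pr\bigl(\tau/2 < |V| \leq \tfrac{1}{12M}\bigr) \;\leq\; 2M\bigl(\tfrac{1}{12M} - \tau/2\bigr) \;\leq\; \tfrac{1}{6}
\end{equation*}
from the hypothesized mass on the larger interval to recover a lower bound on $\Pr(|V|\leq \tau/2)$. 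The core engine of the argument is the symmetrization, which replaces the clipped affine quantity $\mu + V$ by an explicit even function; the only mildly delicate step is the final one, where the uniform density bound $M$ must be balanced against the assumed tail control to extract the constant $c \wedge 1/2$.
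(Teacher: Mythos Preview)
Your symmetrization and the piecewise computation of $f$ are correct, and in the case $\tau/2\geq 1/(12M)$ the hypothesis reads $\Pr(|V|\leq\tau/2)\geq c$, so the bound $\E[\mu+V]_{-\tau}^\tau\geq\mu\Pr(|V|\leq\tau/2)\geq c\mu\geq(c\wedge\tfrac12)\mu$ closes that case cleanly.

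The gap is in the other case, $\tau/2<1/(12M)$. Your subtraction step only yields
\[
\Pr(|V|\leq\tau/2)\;\geq\;c-2M\Bigl(\tfrac{1}{12M}-\tfrac{\tau}{2}\Bigr)\;\geq\;c-\tfrac16,
\]
hence $\E[\mu+V]_{-\tau}^\tau\geq(c-\tfrac16)\mu$. For $c<2/3$ this is strictly weaker than the claimed $(c\wedge\tfrac12)\mu$, and for $c<1/6$ it is even negative. The difficulty is structural: once you pass to the lower bound $\mu\Pr(|V|\leq\tau/2)$ you have discarded all information about $V$ outside $[-\tau/2,\tau/2]$, and nothing in the hypotheses forces that probability to be at least $c\wedge\tfrac12$ in this regime---one can put essentially all of the mass of $V$ just beyond $\pm\tau/2$ while still meeting the assumed mass bound on the wider interval $[-1/(12M),1/(12M)]$. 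The paper's argument here is different in kind: it writes $\E[\mu+V]_{-\tau}^\tau=\mu+\E[V]_{-\tau-\mu}^{\tau-\mu}$ and bounds the asymmetric clipping correction directly from the density bound $M$ together with $3M\tau<1/2$, targeting the constant $1/2$ without invoking the probabilistic hypothesis at all. Your symmetrized inequality cannot reach this, since $f\geq 0$ gives at best zero when $V$ has no mass in $[-\tau-\mu,\tau+\mu]$; to imitate the paper's route you would need to retain the identity $\tfrac12\E f(V)=\mu-\E[\mu-\tfrac12 f(V)]$ and control the loss term, but that requires a bound on $\Pr(|V|>\tau+\mu)$, which is not available.
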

    \begin{proof}
    By definition of clipping,
    \begin{align*}
    \E \left[ \mu + V \right]_{-\tau}^\tau &= \E \left[ V \right]_{-\tau-\mu}^{\tau-\mu} + \mu.
    \end{align*}
    The first term equals
    \begin{align*}
    \E  \left[ V \right]_{-(\tau-\mu)}^{\tau-\mu} + \E \mathbbm{1}_{\{ V \in [-\tau-\mu,{-\tau+\mu}] \}}\left(\left[ V \right]_{-\tau-\mu}^{-\tau+\mu} + (\tau-\mu)  \right) &\geq \\
    \E \left[ V \right]_{-(\tau-\mu)}^{\tau-\mu} - (\tau+\mu) \text{Pr} \left( -\tau - \mu \leq V \leq -\tau +\mu \right) &=\\ 
     - (\tau+\mu) \text{Pr} \left( -\tau - \mu \leq V \leq -\tau +\mu \right),
    \end{align*}
    where the last equality follows from the symmetry of $V$. By the condition on the Lebesgue density of $V$, the second term in the above display can be further bounded from below by $- 2 M (\tau+\mu) \mu$. When $3M \tau < 1/2$, we obtain \eqref{eq:to_show_mean_of_clipping} with the constant $1/2$. Assume $3M \tau \geq 1/2$. Then, since $\mu > 0$ and $V$ is symmetric about zero,
    \begin{align*}
    \E \left[ \mu + V \right]_{-\tau}^\tau 
    &= \E \left( \mu + V \right) \mathbbm{1}\{ |\mu + V| \leq \tau  \} + \tau \left( \text{Pr} \left( \mu + V > \tau \right) - \text{Pr} \left( \mu + V < \tau \right) \right) \\
    &\geq \E \left( \mu + V \right) \mathbbm{1}\{ |V| \leq \tau - \mu \} \\
    &\geq \E \left( \mu + V \right) \mathbbm{1}\{ |V| \leq \tau/2 \} \\
    &= \mu \text{Pr} \left( |V| \leq (1/12M) \vee (\tau/2) \right)
    \end{align*}
    where the last inequality follows from $\mu \leq \tau/2$ and the last equality follows from the symmetry of $V$ about zero. 
    \end{proof}
    
    \begin{lemma}\label{lem:folklore_clipping_reduces_variance}
    For any $\tau > 0$ and random variable $V$ with $|\E V| \leq \tau$,
    \begin{equation*}
     \text{Var}\left( \left[ V \right]_{-\tau}^{\tau}\right) \leq \text{Var}\left( V \right).
    \end{equation*}
    If both $V$ and $\left[ V \right]_{-\tau}^{\tau}$ are mean $0$, $\E \left(\left[ V \right]_{-\tau}^{\tau} \right)^4 \leq \E V^4$.
    \end{lemma}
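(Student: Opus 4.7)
The plan is to exploit two simple properties of the clipping map $T_\tau(x) := [x]_{-\tau}^\tau$: it is $1$-Lipschitz on $\R$, i.e.\ $|T_\tau(x)-T_\tau(y)| \leq |x-y|$, and it contracts absolute values pointwise, i.e.\ $|T_\tau(x)| \leq |x|$. Both follow directly from the piecewise definition in \eqref{eq:def_clipping}, and together they give the two conclusions immediately.

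For the variance inequality, I would start from the extremal characterization $\text{Var}(W) = \min_{c \in \R} \E(W-c)^2$, applied to $W = T_\tau(V)$, which yields
\begin{equation*}
\text{Var}(T_\tau(V)) \leq \E \left( T_\tau(V) - \E V \right)^2.
\end{equation*}
The hypothesis $|\E V| \leq \tau$ is used precisely here: it ensures $\E V \in [-\tau,\tau]$, so $T_\tau(\E V) = \E V$, and the right-hand side equals $\E(T_\tau(V) - T_\tau(\E V))^2$. Applying the $1$-Lipschitz property inside the expectation and taking expectations then gives
\begin{equation*}
\E \left( T_\tau(V) - T_\tau(\E V) \right)^2 \leq \E (V-\E V)^2 = \text{Var}(V),
\end{equation*}
which is the first claim.

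For the fourth-moment bound, the pointwise contraction $|T_\tau(V)| \leq |V|$ gives $T_\tau(V)^4 \leq V^4$ almost surely, and monotonicity of expectation yields $\E T_\tau(V)^4 \leq \E V^4$; the mean-zero hypothesis plays no role in the argument and is presumably listed only to emphasise that both sides equal the corresponding fourth central moments in the intended application.

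There is no real obstacle in this lemma; the only conceptually non-trivial point is noticing that the hypothesis $|\E V| \leq \tau$ is exactly what is needed to make $\E V$ a fixed point of $T_\tau$, so that the $1$-Lipschitz estimate can be applied without losing a constant. If $|\E V|$ were allowed to exceed $\tau$, the variance inequality could fail (e.g.\ a constant $V \equiv 2\tau$ gives $\text{Var}(V) = 0$ but a shifted version would not).
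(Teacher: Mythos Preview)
Your proof is correct and shares its starting point with the paper: both use the extremal characterization $\text{Var}(W)\le \E(W-\mu)^2$ with $\mu=\E V$. The paper then argues by case analysis on $\{V\in[-\tau,\tau]\}$, $\{V>\tau\}$, $\{V<-\tau\}$, checking in each case that $|[V]_{-\tau}^\tau-\mu|\le |V-\mu|$ under the assumption $|\mu|\le\tau$. Your route packages this case check into the single observation that $T_\tau$ is $1$-Lipschitz and fixes $\mu$, which is slightly cleaner and makes the role of the hypothesis $|\E V|\le\tau$ more transparent. For the fourth-moment part the paper just says ``by a similar argument''; your pointwise bound $|T_\tau(V)|\le|V|$ is the natural way to fill this in, and you are right that the mean-zero assumption is not actually used in bounding the raw fourth moment.
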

    \begin{proof}
    Let $\mu = \E V$. Since the expectation of a random variable is the constant minimizing the $L_2$-distance to that random variable,
    \begin{equation*}
    \text{Var}\left( \left[ V \right]_{-\tau}^{\tau}\right) \leq \E  \left( \left[ V  \right]_{-\tau}^{\tau} - \mu \right)^2 .
    \end{equation*}
    The latter expectation can be written as
    \begin{align*}
    \E \mathbbm{1}_{V \in [-\tau,\tau]} \left( V  - \mu \right)^2 + \E \mathbbm{1}_{V > \tau} \left( \tau - \mu \right)^2 + \E \mathbbm{1}_{V < -\tau} \left( - \tau  - \mu \right)^2.
    \end{align*}
    Assuming $\mu \geq 0$, $V < -\tau$ implies that $|- \tau  - \mu| \leq |V  - \mu|$. Since $\mu \leq \tau$, $V > \tau$ implies that $|\tau - \mu| \leq |V  - \mu|$. Consequently, the above display is bounded from above by
    \begin{align*}
     \E \mathbbm{1}_{V \in [-\tau,\tau]} \left( V  - \mu \right)^2 + \E \mathbbm{1}_{V > \tau} \left( V - \mu \right)^2 + \E \mathbbm{1}_{V < -\tau} \left( V  - \mu \right)^2 = \E  \left(  V   - \mu \right)^2.
    \end{align*}
    The case where $\mu < 0$ follows by the same reasoning. The last statement follows by a similar argument.
    \end{proof}

    \subsection{General auxiliary lemmas}
    
    The following lemmas are well known but included for completeness.
    
    \begin{lemma}\label{lem:petrov_subgaussianity}
       Consider for $L \in \N$ and a nonnegative positive integer sequence $K_n$,
    \begin{equation*}
    S_n := \frac{1}{\sqrt{K_n}} \underset{i=1}{\overset{K_n}{\sum}} \zeta_{i}
    \end{equation*}
    where $(\zeta_{1},\dots,\zeta_{K_n})$ independent random variables with mean zero and unit variance.
    
    Assume that the random variables satisfy Cram\'er's condition, i.e. for some $\epsilon > 0$ and all $t \in (-\epsilon,\epsilon)$, $i=1,\dots,K_n$,
    \begin{equation*}
    \E e^{t \zeta_{i}} < \infty.
    \end{equation*} 
    Then, for any sequence of positive numbers $t_n$ such that $K_n \gtrsim t_n^3$, it holds that
    \begin{equation*}
    {\text{Pr} \left(S_n \geq t_n  \right)} \leq O(1) \cdot e^{-t_n^2/2}.
    \end{equation*}
    \end{lemma}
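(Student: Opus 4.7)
The natural plan is to combine the exponential Markov (Chernoff) bound with a Taylor expansion of the cumulant generating functions $\psi_i(t) := \log \E e^{t \zeta_i}$. For any $\lambda > 0$,
$$\text{Pr}(S_n \geq t_n) \leq \exp\Bigl(-\lambda t_n + \sum_{i=1}^{K_n}\psi_i(\lambda/\sqrt{K_n})\Bigr),$$
and Cram\'er's condition makes each $\psi_i$ analytic on a neighborhood $(-\epsilon,\epsilon)$ of zero, with $\psi_i(0)=\psi_i'(0)=0$ and $\psi_i''(0)=1$ by the mean-zero, unit-variance assumptions. The strategy is to optimize the resulting upper bound at $\lambda = t_n$ after controlling the cubic remainder in $\psi_i$ uniformly in $i$.

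The first technical step is to establish a uniform third-derivative bound $|\psi_i'''(s)| \leq M$ on some compact subinterval $[-\epsilon',\epsilon'] \subset (-\epsilon,\epsilon)$, valid for every $i$. This can be done by extending each $\psi_i$ analytically to a complex disk inside the domain of holomorphy guaranteed by Cram\'er's condition, and applying Cauchy's integral formula together with a uniform MGF bound. Taylor's theorem then yields $\psi_i(s) \leq \tfrac{1}{2}s^2 + \tfrac{M}{6}|s|^3$ on $[-\epsilon',\epsilon']$. Summing with $s = t_n/\sqrt{K_n}$---which lies in the required neighborhood under the stated growth condition on $K_n$ for $n$ large---and inserting the result into the Chernoff bound produces an estimate of the form
$$\text{Pr}(S_n \geq t_n) \leq \exp\!\left(-\frac{t_n^2}{2} + \frac{M t_n^3}{6\sqrt{K_n}}\right),$$
at which point the hypothesis $K_n \gtrsim t_n^3$ is used to dominate the cubic correction and absorb the excess into the multiplicative $O(1)$ prefactor claimed by the lemma.

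The main technical obstacle is the uniformity of the third-derivative bound $M$ across the (possibly triangular-array) sequence $\{\zeta_i\}$. Reading the lemma's Cram\'er condition as holding with a common $\epsilon$ and with MGFs uniformly bounded on $(-\epsilon,\epsilon)$, uniformity comes essentially for free via analyticity and dominated convergence applied to the Cauchy-formula representation of $\psi_i'''$. If instead the condition were only pointwise with $i$-dependent radii, the standard remedy is to first truncate each $\zeta_i$ at a slowly growing threshold, absorb the discarded mass via the exponential Markov inequality applied to the MGF bound, and apply the cumulant expansion to the bounded truncated variables---a classical device in Petrov-style moderate-deviation arguments, which would also be my fallback if the analytic extension route runs into trouble.
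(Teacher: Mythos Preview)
Your approach is genuinely different from the paper's. The paper does not carry out any Chernoff/Taylor computation at all; it simply invokes Cram\'er's large-deviation theorem (Theorem~7 in Section~8.2 of Petrov, \emph{Sums of Independent Random Variables}) as a black box to get
\[
\frac{\text{Pr}(S_n \geq t_n)}{1-\Phi(t_n)} \;=\; \exp\!\Bigl(O(1)\cdot \tfrac{t_n^3}{\sqrt{K_n}}\Bigr)\Bigl(1+O\bigl(\tfrac{t_n}{\sqrt{K_n}}\bigr)\Bigr),
\]
then concludes via $1-\Phi(x)\le e^{-x^2/2}$. Your direct route via the exponential Markov bound plus a third-order expansion of the cumulant generating functions is more elementary and self-contained, and (as you note) makes explicit the uniformity hypotheses on the $\psi_i$ that Petrov's theorem also implicitly needs for a triangular array. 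Both routes land on exactly the same correction term $\exp\bigl(O(1)\cdot t_n^3/\sqrt{K_n}\bigr)$.

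There is, however, a genuine gap in your last step, and it is worth flagging because the paper's own proof glosses over the very same point. You write that ``the hypothesis $K_n \gtrsim t_n^3$ is used to dominate the cubic correction and absorb the excess into the multiplicative $O(1)$ prefactor.'' But $K_n \gtrsim t_n^3$ only gives $t_n^3/\sqrt{K_n} \lesssim t_n^{3/2}$, which is unbounded whenever $t_n\to\infty$; to force $\exp\bigl(M t_n^3/(6\sqrt{K_n})\bigr)=O(1)$ you need $t_n^3/\sqrt{K_n}=O(1)$, i.e.\ $K_n \gtrsim t_n^6$ (equivalently $\sqrt{K_n}\gtrsim t_n^3$). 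The condition in the lemma statement appears to be a typo for the latter, and you should say so rather than assert that the stated hypothesis suffices---your own displayed bound $\exp(-t_n^2/2 + M t_n^3/(6\sqrt{K_n}))$ makes the required condition transparent.
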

    \begin{proof}
    By Cram\'er's theorem (see e.g. Theorem 7 in Section 8.2 of \cite{petrov2022sums}),
    \begin{equation*}\label{eq : law of iterated log lem condition to verify}
    \frac{\text{Pr} \left(S_n \geq t_n  \right)}{1 - \Phi( t_n) } = \exp \left( O(1) \cdot \frac{t_n^3}{\sqrt{K_n}} \right) \left(1 + O \left(\frac{t_n }{ \sqrt{ K_n}} \right) \right)  =O(1).
    \end{equation*}  
    Note that the above statement holds for $-S_n$ also. The statement now follows by using $1 - \Phi(x) \leq e^{-x^2/2}$. 
    \end{proof}
    
    % TODOOOO
    The following lemma is a standard tail bound for the maximum of sub-gaussian (resp. sub-exponential) random variables that is used to control the type I error of the adaptive tests.
    \begin{lemma}\label{lem:iterated_logarithm_max_bound}
    Consider a sequence of subsets $\cS_n \subset \N$ and let $S_n(L)$ be $\nu$-sub-gaussian. Then, 
    \begin{equation*}
    \text{Pr} \left( \underset{L \in \cS_n}{\max} \; |S_n(L)| \geq  t_n  \right) \leq |\cS_n| e^{-t_n^2/(2\nu^2)}.
    \end{equation*}
    If $S_n(L)$ is $\nu$-sub-exponential with sub-exponentiality constant equal to one, the statement is true for any positive sequence $C_n$ and integer sequence $K_n$.
    \end{lemma}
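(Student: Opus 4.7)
\vspace{1em}

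\noindent\textbf{Proof plan for Lemma \ref{lem:iterated_logarithm_max_bound}.} The plan is to handle both statements by a single union bound argument, invoking the tail characterizations for sub-gaussian and sub-exponential variables that were recorded just before the lemma (\eqref{eq:def:subgaus} and \eqref{eq:def:subexp}). No sophisticated concentration machinery is needed, as each of the $|\cS_n|$ random variables carries a genuinely exponential tail and $|\cS_n|$ will be dominated by the exponential decay in $t_n$ in every use we make of the lemma in the paper.

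First, I would start from the elementary set inclusion
\begin{equation*}
\left\{ \max_{L \in \cS_n} |S_n(L)| \geq t_n \right\} = \bigcup_{L \in \cS_n} \left\{ |S_n(L)| \geq t_n \right\},
\end{equation*}
and apply the union bound to obtain
\begin{equation*}
\Pr\!\left( \max_{L \in \cS_n} |S_n(L)| \geq t_n \right) \leq \sum_{L \in \cS_n} \Pr\!\left( |S_n(L)| \geq t_n \right).
\end{equation*}
In the sub-gaussian case, I would plug in the bound from \eqref{eq:def:subgaus}, namely $\Pr(|S_n(L)| \geq t_n) \leq 2 e^{-t_n^2/\nu^2}$, and absorb the constant factor of $2$ by rewriting the exponent (equivalently, applying the Chernoff-style bound $\Pr(|V| \geq t) \leq e^{-t^2/(2\nu^2)}$ coming from the moment generating function characterization following the definition of sub-gaussianity, which is the version matching the constants in the statement). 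This yields the displayed bound $|\cS_n| e^{-t_n^2/(2\nu^2)}$ directly.

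For the sub-exponential half of the statement, the identical argument goes through with the sub-exponential tail \eqref{eq:def:subexp} in place of the sub-gaussian one, producing a bound of the form $|\cS_n| e^{-t_n/(c\nu)}$ for an absolute constant $c>0$. Since this reduction is entirely routine, I anticipate no real obstacle — the only care needed is matching the constants in the exponent to whichever normalization of sub-gaussianity/sub-exponentiality is being used (the paper's Orlicz-style definition versus the moment-generating-function version), after which the inequality follows immediately from the union bound.
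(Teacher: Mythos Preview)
Your proposal is correct and follows essentially the same approach as the paper: a union bound over $L \in \cS_n$ followed by the sub-gaussian (resp.\ sub-exponential) tail inequality \eqref{eq:def:subgaus} (resp.\ \eqref{eq:def:subexp}). The paper's proof is equally brief and does exactly this, so there is nothing to add.
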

    \begin{proof}
    The proof follows immediately from union bounds and the properties of sub-gaussian (resp. sub-exponential random variables), see e.g. \eqref{eq:def:subgaus} (resp. \eqref{eq:def:subexp}).
    \begin{align*}
    \text{Pr} &\left( \underset{L \in \cS_n}{\max} \, S_n(L) \geq t_n  \right) \leq  \underset{L \in \cS_n}{\overset{}{\sum}} \text{Pr} \left(|S_n(L)| \geq t_n  \right) \leq  2|\cS_n| e^{-t_n^2/(2\nu^2)}.
    \end{align*}
    The proof follows by showing that $S_n(L)$ and $-S_n(L)$ are or tend to sub-Gaussian variables with sub-gaussianity constant less than or equal to one, since this allows for bounding the above display by
    \begin{align*}
     2\underset{L \in \cS_n}{\overset{}{\sum}} e^{- \frac{c^2}{2} \log C_n } \leq \frac{2 |\cS_n|}{C_n^{c^2/2}}
    \end{align*}
    and the result follows.

    \end{proof}
    
    Consider the following formal definition of coupling.
    
    \begin{definition}\label{def:coupling}
    Consider probability measures $P$ and $Q$ on a measurable space $(\cX, \mathscr{X})$. A \emph{coupling} of $P$ and $Q$ is any probability measure $\P$ on $(\cX \times \cX, \mathscr{X} \otimes \mathscr{X})$ such that $\P$ has marginals $P$ and $Q$:
    \begin{equation*}
    P = \P \circ \pi^{-1}_1, \; Q = \P \circ \pi^{-1}_2
    \end{equation*}
    where $\pi_i : \cX \times \cX \to \cX$ is the projection onto the $i$-th coordinate (i.e. $\pi_i(x_1,x_2) = x_i$ for $i=1,2$).
    \end{definition}
    
    Lemma \ref{lem:coupling_and_TV} below is a well known result showing that, for random variables $X$ and $\tilde{X}$ defined on a Polish space, small total variation distance between their corresponding laws guarantees the existence of a coupling such that they are equal with high probability. 
    
    \begin{lemma}\label{lem:coupling_and_TV}
    For any two probability measures $P$ and $Q$ on a measurable space $(\cX, \mathscr{X})$ with $\cX$ a Polish space and $\mathscr{X}$ its Borel sigma-algebra. There exists a coupling $\P^{X,\tilde{X}}$ such that
    \begin{equation*}
    \| P - Q \|_{TV} = 2 \P^{X,\tilde{X}} \left( X \neq \tilde{X} \right).
    \end{equation*}
    \end{lemma}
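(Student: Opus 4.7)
The plan is to construct the standard maximal coupling via the Radon–Nikodym decomposition of $P$ and $Q$ with respect to a common dominating measure, and then verify that the resulting joint law has the required marginals and coupling probability. The Polish assumption is only needed to guarantee that we can sample from conditional laws in a jointly measurable fashion; once we have densities and a product construction, everything reduces to elementary manipulations with nonnegative integrands.

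First, I would take $\mu = P + Q$, which dominates both measures, and let $p = dP/d\mu$ and $q = dQ/d\mu$ denote versions of the Radon–Nikodym derivatives. Set $r = p \wedge q$ and $c = \int r \, d\mu \in [0,1]$. A direct computation with $|p-q| = (p-q)_+ + (p-q)_-$ and $p + q = (p \wedge q) + (p \vee q)$ gives
\[
\tfrac{1}{2}\int |p - q|\, d\mu \;=\; 1 - \int r\, d\mu \;=\; 1 - c,
\]
so $\|P-Q\|_{\mathrm{TV}} = 2(1-c)$, consistent with the normalization in the statement. If $c = 1$ then $P=Q$ and the diagonal coupling trivially works, so assume $c \in [0,1)$.

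Next, I would define three (sub)probability measures on $\cX$: $R$ with density $r/c$ w.r.t.\ $\mu$ (when $c>0$), and $P' ,Q'$ with densities $(p-r)/(1-c)$ and $(q-r)/(1-c)$, respectively. All three are Borel probability measures on the Polish space $\cX$. Build the coupling $\P^{X,\tilde X}$ on $\cX \times \cX$ as the mixture: with probability $c$, draw $Z \sim R$ and set $X = \tilde X = Z$; with probability $1-c$, draw $X \sim P'$ and $\tilde X \sim Q'$ independently. By construction, the marginal of $X$ has density $c\cdot(r/c) + (1-c)\cdot(p-r)/(1-c) = p$ w.r.t.\ $\mu$, i.e.\ $X \sim P$; symmetrically $\tilde X \sim Q$. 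Since $P'$ and $Q'$ have disjoint supports (their densities are $(p-q)_+/(1-c)$ and $(p-q)_-/(1-c)$ up to $\mu$-null sets), the event $\{X=\tilde X\}$ under the second branch has probability zero, yielding
\[
\P^{X,\tilde X}(X \neq \tilde X) = 1 - c = \tfrac{1}{2}\|P-Q\|_{\mathrm{TV}},
\]
which is the desired identity.

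The only mildly delicate step is the measurability of the mixture construction, which is where the Polish assumption enters. Existence of jointly measurable densities $p,q$ w.r.t.\ $\mu$ is standard on any sigma-finite measure space, so the real point is to realize $P'$, $Q'$, $R$ as laws of genuine random variables on a common product probability space. Since $\cX$ is Polish and $\mathscr{X}$ its Borel $\sigma$-algebra, each of these Borel probability measures is the pushforward of the Lebesgue measure on $[0,1]$ under a measurable map (e.g.\ by the Borel isomorphism theorem or by standard regular-conditional-probability arguments), so the product construction on $[0,1]^3$ equipped with an auxiliary $\mathrm{Unif}([0,1])$ mixing variable yields the desired coupling. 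I do not expect any obstacle beyond this routine measurability check.
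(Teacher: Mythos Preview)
Your proposal is correct; the construction you give is the standard maximal (or ``gluing'') coupling, and your verification of the marginals and of $\P(X\neq\tilde X)=1-c$ via the disjointness of the supports of $(p-q)_+$ and $(p-q)_-$ is sound. The paper itself does not prove this lemma at all: it simply refers the reader to Section~8.3 of Thorisson's monograph, where precisely this construction appears. So your write-up in fact supplies more detail than the paper does, and there is no methodological divergence to discuss.
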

    For a proof, see e.g. Section 8.3 in \cite{thorisson2000coupling}.
    
    The following lemma is well known and included for completeness.
    
    \begin{lemma}\label{lem:TV_distance_bound_many-normal-means}
    Let $P_f$ denote the distribution of a $N(f,\sigma I_d)$ distributed random vector for $f \in \R^d$ and let $P_f^n$ denote the distribution of $n$ i.i.d. draws (i.e. $P_f^n = \bigotimes_{i=1}^n P_f$).
    
    It holds that
    \begin{equation*}
    \left\| P_f^n - P_g^n \right\|_{\mathrm{TV}} \leq \frac{n}{2\sigma} \left\| f - g \right\|_2.
    \end{equation*}
    \end{lemma}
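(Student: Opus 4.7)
The plan is to combine two standard ingredients: a tensorization (subadditivity) inequality for total variation over product measures, and Pinsker's inequality applied to the KL divergence between two Gaussians with common covariance. Concretely, I would first show the tensorization bound
\begin{equation*}
\left\| P_f^n - P_g^n \right\|_{\mathrm{TV}} \leq n \left\| P_f - P_g \right\|_{\mathrm{TV}},
\end{equation*}
which is a well-known hybrid-argument inequality for product measures: interpolate between the two product measures one coordinate at a time, replacing $P_f$ with $P_g$ in one coordinate while keeping the others fixed, and apply the triangle inequality together with the fact that the total variation distance between product measures sharing $n-1$ marginals equals the total variation distance between the differing marginals. Equivalently, one can invoke the coupling characterization of total variation (Lemma \ref{lem:coupling_and_TV}) coordinate-wise, coupling each coordinate optimally and independently.

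Next, I would compute the KL divergence between the two $d$-dimensional Gaussians with common covariance $\sigma^2 I_d$:
\begin{equation*}
D_{\mathrm{KL}}\!\left( N(f,\sigma^2 I_d) \,\|\, N(g,\sigma^2 I_d) \right) = \frac{1}{2\sigma^2} \| f - g \|_2^2.
\end{equation*}
This is a direct computation using the Gaussian density; the log-ratio is linear-plus-quadratic in $x$, and the cross terms vanish upon integration against $N(f,\sigma^2 I_d)$, leaving only the $\|f-g\|_2^2/(2\sigma^2)$ term.

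Applying Pinsker's inequality then yields
\begin{equation*}
\| P_f - P_g \|_{\mathrm{TV}} \leq \sqrt{\tfrac{1}{2} D_{\mathrm{KL}}\!\left( P_f \,\|\, P_g \right)} = \frac{\| f - g \|_2}{2\sigma},
\end{equation*}
and combining with the tensorization bound gives the claimed
$\|P_f^n - P_g^n\|_{\mathrm{TV}} \leq \tfrac{n}{2\sigma}\|f-g\|_2$. There is no genuine obstacle here; the only thing to be careful about is the normalization convention for $\|\cdot\|_{\mathrm{TV}}$ used throughout the paper, so that the Pinsker constant is consistent with the stated bound. Everything else is a direct application of classical inequalities.
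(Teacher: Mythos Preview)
Your proof is correct and reaches the stated bound. The paper takes a slightly different, and in fact sharper, route: it applies Pinsker's inequality directly to the $n$-fold products, using the additivity of KL divergence under products, i.e.
\[
\| P_f^n - P_g^n \|_{\mathrm{TV}} \leq \sqrt{\tfrac{1}{2} D_{\mathrm{KL}}(P_f^n \,\|\, P_g^n)} = \sqrt{\tfrac{n}{2} D_{\mathrm{KL}}(P_f \,\|\, P_g)} = \frac{\sqrt{n}}{2\sigma}\|f-g\|_2,
\]
which is already stronger than the claimed $\tfrac{n}{2\sigma}\|f-g\|_2$. You instead tensorize at the level of total variation (which is only subadditive, costing a factor $n$ rather than $\sqrt{n}$) and then apply Pinsker on a single copy. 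Both arguments are elementary and valid; the paper's ordering of the two steps exploits that KL tensorizes exactly while TV does not, buying the extra $\sqrt{n}$ improvement for free.
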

    \begin{proof}
    By Pinsker's inequality,
    \begin{equation*}
    \| P_{f}^n - P_{g}^n\|_{\mathrm{TV}}  \leq \sqrt{\frac{n}{2} D_{\mathrm{KL}}(P_{f} ; {P}_{g})}.
    \end{equation*}
    A straightforward calculation gives that the latter is bounded by $\frac{\sqrt{n}}{2\sigma}\left\| f - g \right\|_2$.
    \end{proof}
    
    The following lemma bounds the maximum of a possibly correlated Gaussian random vector.
    
    \begin{lemma}\label{lem:gaussian_maximum}
    Let $K\in \N$ and $M \in \R^{K \times K}$ be symmetric and positive definite. Consider the random vector $G=(G_1,\dots,G_K) \sim N(0,M)$. It holds that $\E \underset{1 \leq i \leq K}{\max} |G_i| \leq 3 \| M \| \sqrt{\log(K) \vee \log(2)}$ and
    \begin{equation*}
    \text{Pr} \left(\underset{1 \leq i \leq K}{\max} G_i^2 \geq \|M\|^2 x \right) \leq \frac{2K}{e^{x/4}},
    \end{equation*}
    for all $ x > 0$.
    \end{lemma}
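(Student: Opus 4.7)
The plan is to reduce everything to a union bound over the coordinates after controlling each marginal by the spectral norm $\|M\|$. Since $M$ is symmetric and positive semidefinite, its diagonal entries satisfy $M_{ii} = e_i^\top M e_i \leq \|M\|$ for each $i$, so the marginal laws are $G_i \sim N(0,M_{ii})$ with $M_{ii} \leq \|M\|$. This is the only input about $M$ that one actually needs; no joint bound on the covariance enters.

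First I would establish the tail bound. For each $i$, the scalar $G_i / \sqrt{M_{ii}}$ is standard normal, so by the standard Gaussian tail estimate $\mathrm{Pr}(Z^2 \geq t) \leq 2 e^{-t/2}$ one obtains
\begin{equation*}
\mathrm{Pr}\bigl(G_i^2 \geq \|M\|^2 x\bigr) \;\leq\; \mathrm{Pr}\!\left(Z^2 \geq \tfrac{\|M\|^2 x}{M_{ii}}\right) \;\leq\; 2 e^{-\|M\| x / 2}
\end{equation*}
(using $M_{ii}\leq\|M\|$); a union bound over $i=1,\dots,K$ then yields the claimed inequality (with room to spare, since the exponent $x/4$ in the statement is more conservative than what this argument gives whenever $\|M\|$ is bounded away from zero).

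Next I would turn to the expectation bound by the standard layer-cake integration
\begin{equation*}
\E \max_{1 \leq i \leq K} |G_i| \;=\; \int_0^\infty \mathrm{Pr}\!\left(\max_{1\leq i\leq K}|G_i| > t\right)\, dt.
\end{equation*}
Choosing the threshold $t_0 = \|M\|\sqrt{2(\log K \vee \log 2)}$, I would bound the integral on $[0,t_0]$ trivially by $t_0$ and the tail on $[t_0,\infty)$ by $2K \int_{t_0}^\infty e^{-t^2/(2\|M\|)}\, dt$ using the marginal Gaussian tail bound together with a union bound. A change of variables $u = t^2/(2\|M\|)$ and the inequality $\int_{u_0}^\infty e^{-u}\, du = e^{-u_0}$ controls the second piece by a constant multiple of $\|M\|/\sqrt{\log K \vee \log 2}$, which is of smaller order than $t_0$. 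Collecting the two pieces and absorbing constants gives the $3\|M\|\sqrt{\log K \vee \log 2}$ bound.

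I do not anticipate a substantive obstacle here: the only nontrivial point is recognizing that $M_{ii}\leq\|M\|$ for symmetric positive semidefinite $M$, which lets one treat the coordinates one at a time regardless of their correlations. Everything else is the standard Gaussian tail plus union bound calculation.
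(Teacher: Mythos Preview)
Your reduction via $M_{ii} = e_i^\top M e_i \leq \|M\|$ is the right idea and is in fact cleaner than the paper's route (which diagonalizes $M$ and along the way asserts the incorrect distributional identity $\max_k |G_k| \overset{d}{=} \max_k |\lambda_k Z_k|$). However, the bounds you write down do not deliver the inequalities as stated: the scaling in $\|M\|$ is off, and you acknowledge this (``whenever $\|M\|$ is bounded away from zero'') rather than resolve it.

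For the tail bound, from $G_i \sim N(0, M_{ii})$ you obtain $\Pr(G_i^2 \geq \|M\|^2 x) \leq 2\,e^{-\|M\|^2 x/(2M_{ii})} \leq 2\,e^{-\|M\| x/2}$, hence $2K e^{-\|M\| x/2}$ after the union bound. This is \emph{not} bounded by $2K e^{-x/4}$ unless $\|M\| \geq 1/2$; for small $\|M\|$ the stated inequality is simply false (take $K=1$, $M=\sigma^2$ with $\sigma^2$ small: $\Pr(G_1^2 \geq \sigma^4 x) = \Pr(\chi_1^2 \geq \sigma^2 x)$ is much larger than $2e^{-x/4}$). For the expectation, your threshold $t_0 = \|M\|\sqrt{2(\log K \vee \log 2)}$ gives a tail of size $2K\exp\bigl(-t_0^2/(2\|M\|)\bigr) = 2K^{\,1-\|M\|}$ for $K\geq 2$, which does not kill the factor $K$ unless $\|M\| \geq 1$; the correct threshold is $t_0 \asymp \sqrt{\|M\|(\log K \vee \log 2)}$, yielding a bound of order $\sqrt{\|M\|}\sqrt{\log K \vee \log 2}$ rather than $\|M\|\sqrt{\log K \vee \log 2}$. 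In short, the lemma as written carries the wrong power of $\|M\|$ in both statements; your argument, with the right threshold, proves the correct version. The paper only ever invokes the lemma for standardized Gaussians ($\|M\|=1$), where the discrepancy disappears.
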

    \begin{proof}
    It holds that
    \begin{equation*}
    G \overset{d}{=} \sqrt{M} Z, \quad \text{ with } \quad Z \sim N(0,I_K).
    \end{equation*}
    Since $M$ is symmetric, positive definite, it has SVD $M = V \text{Diag}(\lambda_1,\dots,\lambda_K) V^\top$. Since $V$ is orthonormal, 
    \begin{equation*}
    \sqrt{M} Z = V \sqrt{\text{Diag}(\lambda_1,\dots,\lambda_K)}(V^\top Z) \overset{d}{=}  V \sqrt{\text{Diag}(\lambda_1,\dots,\lambda_K)} Z.
    \end{equation*}
    Writing $V = [v_1 \; \dots \; v_K]$ where $v_k$ are orthogonal unit vectors, the latter display equals
    \begin{equation*}
    \underset{k=1}{\overset{K}{\sum}} \sqrt{\lambda_k} v_k Z_k \; \sim \; N\left(0, \text{Diag}(\lambda_1,\dots,\lambda_K) \right).
    \end{equation*} 
    Consequently,
    \begin{equation*}
    \underset{k \in [K]}{\max} |G_k| \overset{d}{=} \underset{k \in [K]}{\max} |\lambda_k Z_k| \leq \| M \| \underset{k \in [K]}{\max} | Z_k|.
    \end{equation*}
    Hence, it suffices to show that
    \begin{equation*}
    \text{Pr} \left(\underset{1 \leq i \leq K}{\max} Z_i^2 \geq x \right) \leq \frac{2K}{e^{x/4}}
    \end{equation*}
    The case where $K = 1$ follows by standard Gaussian concentration properties. Assume $K\geq 2$. For $ 0 \leq t \leq 1/4$,
    \begin{align*}
     \E e^{t \max_i (Z_i)^2} = e^{t}\E \max_i e^{t (Z_i^2 - 1)} \leq K  e^{{2t^2} + t}.
     \end{align*}
    Taking $t=1/4$ and applying Markov's inequality yields the second statement of the lemma. Furthermore, in view of Jensen's inequality
    \begin{equation*}
    \E \max_i (Z_i)^2 \leq \frac{\log(K)}{t} + 2t + 1,
    \end{equation*}
    which in turn yields $\max_i  |Z_i| \leq 3 \sqrt{\log(K)}$.
    \end{proof}
    
    The next lemma is a standard bound for the KL-divergence, see for instance Lemma 2.7 of \cite{tsybakov:2009}.
    \begin{lemma}\label{lem:chi_sq_div_bounds_KL}
    Let $P,Q$ probability measures on some measure space such that $Q\ll P$. Then,
    \begin{equation*}
    D_{KL}(P\|Q) \leq \int \left(\frac{dP}{dQ} - 1 \right)^2 dQ.
    \end{equation*}
    \end{lemma}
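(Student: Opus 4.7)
The plan is to reduce the inequality to the elementary scalar bound $\log t \leq t - 1$ for $t > 0$ applied to the density ratio $r := dP/dQ$. Assuming $P \ll Q$ (if $P$ is not absolutely continuous with respect to $Q$, then the chi-square divergence on the right-hand side is infinite, making the inequality trivial; so I read the hypothesis in the statement as such), the Radon--Nikodym derivative $r$ is well-defined and nonnegative $Q$-a.s., and $D_{KL}(P\|Q) = \int r \log r \, dQ$ by the change-of-variables formula.

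The main computation then proceeds in two short steps. First, applying $\log r \leq r - 1$ pointwise and multiplying both sides by the nonnegative quantity $r$ gives $r \log r \leq r(r-1) = r^2 - r$. Integrating this bound against $dQ$ and using $\int r \, dQ = 1$ yields
\begin{equation*}
D_{KL}(P\|Q) \;=\; \int r \log r \, dQ \;\leq\; \int r^2 \, dQ - 1.
\end{equation*}
Second, expanding the square in the chi-square divergence gives $\int (r-1)^2 dQ = \int r^2 dQ - 2\int r \, dQ + 1 = \int r^2 \, dQ - 1$, which exactly matches the upper bound obtained above. Combining the two displays yields the claimed inequality.

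There is no real obstacle here; the only thing to be careful about is the convention on $0 \log 0 = 0$ (handled by continuity since $t \log t \to 0$ as $t \downarrow 0$) and the degenerate case where $P \not\ll Q$, in which the right-hand side is $+\infty$ and the claim holds vacuously. No measure-theoretic subtleties beyond these arise, since $r \geq 0$ and the inequality $\log t \leq t - 1$ holds on all of $(0, \infty)$ with equality only at $t = 1$.
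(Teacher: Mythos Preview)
Your proof is correct and follows the standard argument. The paper does not actually supply its own proof for this lemma; it simply records the statement as a well-known bound and cites Lemma~2.7 of Tsybakov~\cite{tsybakov:2009}, so there is nothing further to compare against.
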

    
    The following lemma is a standard concentration result for chi-square random variables, and is included for completeness.
    \begin{lemma}\label{lem : Chernoff-Hoeffding bound chisq}
        Let $X_d$ be chi-square random variable with $d$-degrees of freedom. For $0<c<1$ it holds that
        \begin{equation*}
        \text{Pr}\left( X_d \leq cd \right) \leq e^{-d \frac{c-1 - \log (c)}{2}}.
        \end{equation*}
        Similarly, for $c > 1$ it holds that
        \begin{equation*}
        \text{Pr}\left( X_d \geq cd \right) \leq e^{-d \frac{c-1 - \log (c)}{2}}.
        \end{equation*}
        \end{lemma}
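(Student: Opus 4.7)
The plan is to prove both tails via the standard Chernoff argument applied to the chi-square moment generating function, followed by optimization of the exponential parameter. The rate function $c \mapsto (c - 1 - \log c)/2$ appearing on the right-hand side is precisely the Legendre-type exponent one obtains after optimizing, so the proof reduces to a direct computation. Throughout I would use the fact that a $\chi^2_d$ variable has Gamma$(d/2, 1/2)$ distribution and hence admits the closed-form MGF $\E e^{tX_d} = (1-2t)^{-d/2}$ on $t < 1/2$, and equivalently $\E e^{-tX_d} = (1+2t)^{-d/2}$ on $t > -1/2$.

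For the lower tail $(0 < c < 1)$, I would start from Markov's inequality in the form
\begin{equation*}
\Pr(X_d \leq cd) \;=\; \Pr\!\left(e^{-tX_d} \geq e^{-tcd}\right) \;\leq\; e^{tcd}\,\E e^{-tX_d} \;=\; e^{tcd}(1+2t)^{-d/2}
\end{equation*}
for any $t > 0$. Taking logarithms, the exponent $tcd - (d/2)\log(1+2t)$ is a concave function of $t$ on $(-1/2,\infty)$ whose unique maximizer is $t_\star = (1/c - 1)/2$; this is positive precisely because $c < 1$, so the optimizer is interior to the admissible half-line. Substituting $t_\star$ yields the exponent $d(1-c)/2 + (d/2)\log c = -d(c - 1 - \log c)/2$, which gives the claimed bound. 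The upper tail $(c > 1)$ is entirely symmetric: apply Markov to $e^{tX_d}$ with the MGF above, obtaining
\begin{equation*}
\Pr(X_d \geq cd) \;\leq\; e^{-tcd}(1-2t)^{-d/2},
\end{equation*}
and optimize the exponent $-tcd - (d/2)\log(1-2t)$ over $t \in (0, 1/2)$. The minimizer is $t_\star = (1 - 1/c)/2$, which lies in $(0, 1/2)$ since $c > 1$, and substitution produces exactly the same functional form $-d(c - 1 - \log c)/2$.

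No real obstacle is anticipated. The only care needed is to record the admissibility intervals for $t$ in each MGF so that the two optimizers are interior critical points of strictly concave/convex objectives. The nonnegativity of the exponent $c - 1 - \log c$ (which is what makes each inequality nontrivial) follows from convexity of $-\log$ or from the fact that $t_\star$ is a bona fide optimum.
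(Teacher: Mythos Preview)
Your proposal is correct and follows essentially the same Chernoff--MGF--optimize route as the paper (the paper parameterizes with $t<0$ and $\E e^{tX_d}=(1-2t)^{-d/2}$, which is just your substitution $t\to -t$). One small slip: the exponent $tcd-(d/2)\log(1+2t)$ is convex, and you are minimizing it to tighten the bound, not maximizing a concave function; your critical point and final expression are nonetheless correct.
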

        \begin{proof}
        Let $t < 0$. We have
        \begin{align*}
        \text{Pr}\left( X_d \leq cd \right) &= \text{Pr}\left( e^{tX_d} \geq e^{tcd} \right) \leq \frac{\E e^{tX_d} }{e^{tcd}}.
        \end{align*}
        Using that $\E e^{tX_d} = (1-2t)^{-d/2}$, the latter display equals
        \begin{equation*}
        \exp \Big( - d \big( tc + \frac{1}{2} \log(1-2t) \big) \Big).
        \end{equation*}
        The expression $tc + \frac{1}{2} \log(1-2t)$ is maximized when $t = \frac{1}{2}(1 - \frac{1}{c}) < 0$ which leads to the result. The second statement follows by similar steps.
        \end{proof}
    
    \begin{lemma}\label{lem:inner_product_ind_gaussians}
    Let $a \in \R$ and let $Z,Z' \overset{\text{i.i.d.}}{\sim} N(0,I_d)$ for $d \in \N$. 
    
    Then, $ a \langle  Z , Z' \rangle$ is $C a^2$-sub-exponential for a universal constant $C>0$ and
    \begin{equation*}
    \E  e^{t |a \langle  Z , Z' \rangle|} \leq 2 e^{ {t^2 a^2}},
    \end{equation*}
    whenever $|t| \leq (2a^2)^{-1}$.
    \end{lemma}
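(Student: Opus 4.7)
The argument is a direct moment generating function computation, exploiting the Gaussianity of $Z$ conditionally on $Z'$. The plan is to first reduce the absolute value via symmetry, then compute $\E e^{ta\langle Z, Z'\rangle}$ by iterated expectation, and finally deduce the sub-exponential tail by a Chernoff bound.

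First, I would note that since $Z \overset{d}{=} -Z$ and is independent of $Z'$, the inner product $V := a\langle Z, Z'\rangle$ is symmetric about zero, so
\begin{equation*}
\E e^{t|V|} \leq \E e^{tV} + \E e^{-tV} = 2\,\E e^{tV}.
\end{equation*}
It therefore suffices to bound $\E e^{tV}$. Conditioning on $Z'$, the quantity $a\langle Z, Z'\rangle$ is a linear combination of i.i.d.\ standard Gaussians with coefficient vector $aZ'$, hence $V\mid Z' \sim N(0, a^2\|Z'\|_2^2)$, which gives
\begin{equation*}
\E\bigl[e^{tV}\bigm|Z'\bigr] = \exp\!\left(\tfrac{1}{2}t^2 a^2 \|Z'\|_2^2\right).
\end{equation*}

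Taking the outer expectation and invoking the moment generating function of the $\chi^2_d$ distribution, $\E e^{s\|Z'\|_2^2} = (1-2s)^{-d/2}$ for $s<1/2$, yields $\E e^{tV} = (1-t^2a^2)^{-d/2}$ whenever $t^2 a^2 < 1$, which is ensured by the hypothesis $|t|\leq (2a^2)^{-1}$. The elementary inequality $-\log(1-x)\leq 2x$ valid for $x\in[0,1/2]$ then converts this into an exponential form of the claimed shape, and combining with the factor of $2$ from the symmetrization step delivers the stated MGF bound.

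The sub-exponential statement is then an immediate consequence via a standard Chernoff argument: for $u>0$,
\begin{equation*}
\P(|V|\geq u) \leq e^{-tu}\,\E e^{t|V|} \leq 2\exp(-tu + t^2 a^2),
\end{equation*}
and optimizing $t$ within the admissible range $|t|\leq (2a^2)^{-1}$ produces a tail of the form $2\exp(-u/(Ca^2))$ for a universal constant $C$. The computation is essentially routine; the only delicacy is tracking the admissible range of $t$ so that both the chi-square MGF is finite and the elementary logarithmic bound applies, and ensuring the Chernoff optimum lies in the valid range (switching to the boundary value of $t$ for large $u$ if necessary).
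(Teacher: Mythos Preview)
Your argument follows essentially the same route as the paper: condition on $Z'$ to obtain the Gaussian moment generating function, integrate out $Z'$ via the chi-square MGF, and exploit the symmetry of $\langle Z,Z'\rangle$ to pass from $V$ to $|V|$. The only cosmetic differences are that you perform the symmetrization step first (the paper does it last, via an indicator decomposition) and that you derive the sub-exponential tail through an explicit Chernoff optimization, whereas the paper simply cites Proposition~2.7.1 in Vershynin.
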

    \begin{proof}
    Since $\langle Z, Z' \rangle |Z' \sim N(0,\|Z'\|_2)$,
    \begin{align*}
    \E e^{t a \langle  Z , Z' \rangle} &= \E^{Z'} \E^{Z|Z'} e^{t a \langle  Z , Z' \rangle} = \E^{Z'} e^{ \frac{t^2 a^2}{2} \|Z'\|_2^2}.
    \end{align*}
    By standard arguments, the latter is further bounded by
    \begin{equation*}
    e^{ \frac{t^2 a^2}{2} + \frac{t^4 a^4}{2}} \leq e^{{t^2 a^2}},
    \end{equation*}
    whenever $t^2 a^2 \leq 1/2$. The conclusion then follows by e.g. Proposition 2.7.1 in \cite{vershynin_high-dimensional_2018}, since $\langle Z, Z' \rangle$ is mean zero. For the last statement,
    \begin{equation*}
    \langle Z, Z' \rangle |Z' \overset{d}{=} - \langle Z, Z' \rangle |Z'.
    \end{equation*}
    Consequently,
    \begin{align*}
    \E^{Z|Z'} e^{t |a \langle  Z , Z' \rangle|} &= \E^{Z|Z'} \mathbbm{1}_{\{ \langle  Z , Z' \rangle > 0 \}} e^{t a \langle  Z , Z' \rangle} + \E^{Z|Z'} \mathbbm{1}_{\{ \langle  Z , Z' \rangle \leq 0 \}} e^{- t a \langle  Z , Z' \rangle} \\
    &\leq 2 \E^{Z|Z'} e^{t a \langle  Z , Z' \rangle},
    \end{align*}
    and the proof follows by what was shown above.
    \end{proof}

    \begin{lemma}\label{lem:DepowerD_binomial_expectation}
    Let $S \sim \text{Bin}(p,n)$ and $0 \leq \epsilon \leq 1$. It holds that
    \begin{equation*}
    \E S e^{\epsilon S} \leq  e^{4\epsilon np}.
    \end{equation*}
    \end{lemma}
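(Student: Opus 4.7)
The plan is to use the moment generating function of the binomial distribution directly. Writing $M(t) := \E e^{tS} = (1-p+pe^t)^n$, differentiation gives the closed form
\[
\E S e^{\epsilon S} \;=\; M'(\epsilon) \;=\; np\, e^{\epsilon}\bigl(1-p+pe^{\epsilon}\bigr)^{n-1}.
\]
So the task reduces to bounding this explicit expression by $e^{4\epsilon np}$.

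Next I would control the right-hand factor using two elementary inequalities: the standard $1+x \leq e^x$ applied with $x = p(e^{\epsilon}-1)$, and the bound $e^{\epsilon}-1 \leq 2\epsilon$ for $\epsilon \in [0,1]$ (which follows from convexity of $x \mapsto e^x - 1$ together with $e^1 - 1 < 2$). Chaining these yields
\[
\bigl(1-p+pe^{\epsilon}\bigr)^{n-1} \;=\; \bigl(1+p(e^{\epsilon}-1)\bigr)^{n-1} \;\leq\; e^{(n-1)p(e^{\epsilon}-1)} \;\leq\; e^{2(n-1)p\epsilon} \;\leq\; e^{2np\epsilon}.
\]

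For the prefactor $np\, e^{\epsilon}$ I would use $x \leq e^{x-1} \leq e^{x}$ applied to $x = np$, together with $e^{\epsilon} \leq e^{2np\epsilon}$ in the regime where $np \geq 1/2$, absorbing $np\, e^{\epsilon}$ into an additional factor of $e^{2np\epsilon}$. Combining with the previous step then produces the overall bound $e^{4np\epsilon}$.

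The main obstacle is the small-$np$ or small-$\epsilon$ regime, where the prefactor $np\, e^{\epsilon}$ does not sit comfortably inside an exponential of $np\epsilon$. I would handle this by a case split: when $np\epsilon$ is bounded below, the $e^{2np\epsilon}$ factor absorbs $np\, e^{\epsilon}$ directly; when $np\epsilon$ is small, one uses instead the crude bounds $\E S \leq np$ together with $e^{\epsilon S} \leq e^{\epsilon n}$ on the tail, and Taylor-expands $e^{4np\epsilon} \geq 1 + 4np\epsilon$ to dominate the small-value contribution. The rest is routine algebra.
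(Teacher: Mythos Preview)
Your derivation matches the paper's essentially line for line: the paper reaches the same closed form $np\,e^{\epsilon}(1-p+pe^{\epsilon})^{n-1}$ by writing $S=\sum_i B_i$ and peeling off one Bernoulli factor, which is exactly the MGF-derivative computation you wrote, and then invokes $e^{\epsilon}-1\le 2\epsilon$ for the $(1+p(e^\epsilon-1))^{n-1}$ factor just as you do.

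The gap you flag in the final step is genuine, but your proposed case split cannot close it, because the inequality as stated is simply false. Take $\epsilon=0$: the left side is $\E S = np$ and the right side is $1$, so the claim fails whenever $np>1$. More generally, for fixed $np>1$ and $\epsilon\to 0$ the left side stays near $np$ while the right side tends to $1$; your small-$np\epsilon$ patch via $e^{4np\epsilon}\ge 1+4np\epsilon$ cannot help, since the left side already exceeds $np$. (You also misidentify the problematic regime as ``small $np$''; it is \emph{large} $np$ with small $\epsilon$ that breaks the bound.) The paper's proof glosses over precisely this step. What your first two steps do establish cleanly is $\E S e^{\epsilon S}\le np\,e^{\epsilon}e^{2np\epsilon}$, and this version---with the $np$ prefactor retained---is what the paper actually uses downstream in the coupling argument (cf.\ the appearance of $\delta\,np\,e^{2\epsilon np}$ in the proof of the grand-coupling lemma). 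So the right fix is to correct the statement, not to patch the proof.
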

    \begin{proof}
    Write $S = \sum_{i=1}^n B_i$, with $B_1,\dots,B_n \overset{\text{i.i.d.}}{\sim} \text{Ber}(p)$. 
    \begin{align*}
    \E S e^{\epsilon S} &= \underset{i=1}{\overset{n}{\sum}} \E B_i e^{\epsilon S}  \\
    &= \underset{i=1}{\overset{n}{\sum}} p e^{\epsilon} \E e^{ \epsilon \sum_{k \neq i} B_k} \\
    &= n p e^{\epsilon} \left( \E e^{ \epsilon B_1} \right)^{n-1} \\
    &= n p e^{\epsilon} \left( 1 + p(e^{\epsilon} - 1)  \right)^{n-1} \\
    &\leq e^{4 n p \epsilon},
    \end{align*}
    where the inequality follows from the fact that $e^x - 1 \leq 2x$ for $0 \leq x \leq 1$.
    \end{proof}
    
    \begin{lemma}\label{lem:bracamp_densities_exist}
        Consider a sample space $\cX$ and a distributed protocol with Markov kernels $K^j: \mathscr{Y}^{(j)} \times \cX \times \cU \to [0,1]$ for $j=1,\dots,m$ and shared randomness distribution $\P^U$. Writing $\mathscr{Y} = \bigotimes^m_{j=1} \mathscr{Y}^{(j)}$ for the product sigma-algebra, consider $K = \bigotimes^m_{j=1} K^j : \mathscr{Y} \times \cX^m \times \cU^m \to [0,1]$. It holds that
        \begin{equation*}
        K(\cdot | x, u) \ll \P^{Y|U=u}_f(\cdot), \; \; \P^{(X,U)}_f-\text{almost surely,}
        \end{equation*}
        for all $f \in \R^d$.
        \end{lemma}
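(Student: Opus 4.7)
The plan is to derive the absolute-continuity claim from the mixture identity $\P^{Y|U=u}_f(B)=\int K(B\mid x,u)\,d\P^X_f(x)$, $B\in\mathscr{Y}$, which holds for $\P^U$-a.e.\ $u$ by Tonelli and the construction that places $U$ on an auxiliary space independent of $X$ while $\P^{Y|(X,U)=(x,u)}_f = K(\cdot\mid x,u)$. Reading this identity for a fixed null set $B$ gives $K(B\mid x,u)=0$ for $\P^X_f$-a.e.\ $x$; the task is to promote this pointwise-in-$B$ statement to a uniform-over-null-sets statement simultaneously for $\P^X_f$-a.e.\ $x$.

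Since $(\cY,\mathscr{Y})$ is standard Borel (as required for the regular conditional distributions appearing throughout the paper), I would apply the measurable Lebesgue decomposition to obtain jointly measurable $\rho(x,u,\cdot)\geq 0$ and a kernel $K_s$ with
\[
K(\cdot\mid x,u) \;=\; \rho(x,u,\cdot)\cdot\P^{Y|U=u}_f \;+\; K_s(\cdot\mid x,u), \qquad K_s(\cdot\mid x,u)\perp \P^{Y|U=u}_f,
\]
for every $(x,u)$. Integrating against $\P^X_f$ and subtracting $\left[\int\rho(x,u,\cdot)\,d\P^X_f(x)\right]\cdot\P^{Y|U=u}_f$ from the mixture identity shows that $\mu_u:=\int K_s(\cdot\mid x,u)\,d\P^X_f(x)$ is a nonnegative measure dominated by $\P^{Y|U=u}_f$ and hence absolutely continuous with respect to it.

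The main obstacle is that a mixture of measures each singular with respect to $\P^{Y|U=u}_f$ need not itself be singular, so absolute continuity of $\mu_u$ alone does not automatically force the pointwise summands $K_s(\cdot\mid x,u)$ to vanish. This is closed by exploiting the concrete structure of the kernels constructed in Sections~\ref{ssec:procedure_I}--\ref{ssec:procedure_III}: every local kernel is of the form ``measurable function of the data plus independent Gaussian noise,'' so $K^j(\cdot\mid x^{(j)},u)$ admits a strictly positive density $k^j(y\mid x^{(j)},u)$ with respect to Lebesgue measure $\nu^{(j)}$; taking $\nu=\bigotimes_j\nu^{(j)}$ as a common reference measure, both $K(\cdot\mid x,u)$ and $\P^{Y|U=u}_f$ are $\nu$-absolutely continuous with densities $k(y\mid x,u)=\prod_j k^j$ and $p(y\mid u)=\int k(y\mid x,u)\,d\P^X_f(x)$, respectively. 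By Tonelli, $k(\cdot\mid x,u)$ vanishes on $\{p(\cdot\mid u)=0\}$ for $\P^X_f$-a.e.\ $x$, and any other $\P^{Y|U=u}_f$-null set intersects $\{p(\cdot\mid u)>0\}$ in a $\nu$-null set and is therefore $K$-null as well; this delivers the uniform-in-$B$ conclusion $K(B\mid x,u)=0$ for $\P^X_f$-a.e.\ $x$. Integrating over $u$ against $\P^U$ yields the stated $\P^{(X,U)}_f$-a.s.\ absolute continuity.
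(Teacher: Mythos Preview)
Your approach is more careful than the paper's, and your diagnosis of the difficulty is correct. The paper's proof is the elementary pointwise-in-$A$ argument you describe at the outset: for a fixed $A\in\mathscr{Y}^{(j)}$ it shows that $K^j(A\mid x,u)>0$ on a set of positive $\P^{(X,U)}_f$-measure forces $\P^{Y^{(j)}|U=u}_f(A)>0$, via the mixture identity and a Markov-type bound. It does not address the promotion from ``for each null set $A$, $K(A\mid x,u)=0$ for a.e.\ $x$'' to ``for a.e.\ $x$, $K(A\mid x,u)=0$ for every null set $A$,'' and as you implicitly note, the map $x\mapsto \delta_x$ on $[0,1]$ with $\P^X$ Lebesgue shows that this promotion fails in general. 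So your identification of the obstacle is genuinely sharper than what the paper writes.

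However, your resolution has a real gap relative to how the lemma is used. You close the obstacle by invoking the Gaussian-noise structure of the kernels constructed in Sections~\ref{ssec:procedure_I}--\ref{ssec:procedure_III}, which supplies a common Lebesgue dominating measure and makes your final Tonelli argument go through. But the lemma is invoked in the \emph{lower-bound} proof (Step~3 of Section~\ref{sec:proofs_lower_bound}, and inside Lemma~\ref{lem:obtaining_bounded_kernel_densities}) for an \emph{arbitrary} $(\epsilon,\delta)$-DP distributed protocol, not for the three explicit mechanisms. Your argument does not cover that case, so it does not prove the lemma at the generality at which it is stated and applied. To repair this you would need either to assume (and it is standard to do so) that the local kernels $K^j(\cdot\mid x,u)$ are dominated by a common $\sigma$-finite reference measure on $(\cY^{(j)},\mathscr{Y}^{(j)})$---in which case your last paragraph applies verbatim with $\nu^{(j)}$ replaced by that measure---or to extract such domination from the privacy constraint itself (straightforward when $\delta=0$ by iterating the DP inequality across all $n$ coordinates, but not when $\delta>0$).
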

        \begin{proof}
        Let $A \in \mathscr{Y}^{(j)}$. We have that
        \begin{equation*}
        B:=\left\{ (x,u) \, : \, K^{j}( A |x,u) > 0 \right\} = \underset{l \in \N}{\bigcup} \left\{ (x,u) \, : \, K^{j}( A |x,u) > \frac{1}{l} \right\},
        \end{equation*}
        so if $\P^{(X,U)}_f(B)>0$ for some $L \in \N$ it holds that $\P^{(X,U)}_f \left( (x,u) \, : \, K^{j}( A |x,u) > \frac{1}{L} \right) > 0$. Since $K^j$ is nonnegative, by Markov's inequality,
        \begin{align*}
        \P^{Y^{(j)}|U=u}_f(A) &= \int K^{j}( A |x,u) d\P^{X^{(j)}}_f \times \P^U (x,u) \\ &\geq \int_B K^{j}( A |x,u) d\P^{X^{(j)}}_f\times \P^U(x,u) \\ &\geq \frac{1}{L}\P^{(X,U)}_f \left( (x,u) \, : \, K^{j}( A |x,u) > \frac{1}{L} \right) > 0.
        \end{align*}
        Since given $U$, $Y^{(1)},\dots,Y^{(m)}$, the statement for $K$ follows as $K(\cdot|x^1,\dots,x^m,u) := \bigotimes_{j=1}^m K^j(\cdot|x^j,u)$ and $\P^{Y|U=u}_f = \bigotimes_{j=1}^m \P^{Y^{(j)}|U=u}_f$.
        \end{proof}
    
        \begin{lemma}\label{lem:TV_distance_product_measures}
            Let $P = \bigotimes_{j=1}^m P_j$ and $Q = \bigotimes_{j=1}^m Q_j$ for probability measures $P_j,Q_j$ defined on a common measurable space $(\cX,\mathscr{X})$, with probability densities $p_j,q_j$ for $j=1,\dots m$. It holds that
            \begin{equation*}
            \| P - Q \|_{\mathrm{TV}} \leq \underset{j=1}{\overset{m}{\sum}} \|P_j - Q_j\|_{\mathrm{TV}}. 
            \end{equation*}
            \end{lemma}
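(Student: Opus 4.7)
The plan is to prove the bound via a standard telescoping (hybrid) argument over the coordinates. Define the intermediate product measures
\begin{equation*}
R_k := P_1 \otimes \cdots \otimes P_k \otimes Q_{k+1} \otimes \cdots \otimes Q_m, \qquad k = 0, 1, \ldots, m,
\end{equation*}
so that $R_0 = Q$ and $R_m = P$. By the triangle inequality for the total variation norm,
\begin{equation*}
\| P - Q \|_{\mathrm{TV}} = \| R_m - R_0 \|_{\mathrm{TV}} \leq \underset{k=1}{\overset{m}{\sum}} \| R_k - R_{k-1} \|_{\mathrm{TV}}.
\end{equation*}

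The remaining step is to show that $\| R_k - R_{k-1} \|_{\mathrm{TV}} = \| P_k - Q_k \|_{\mathrm{TV}}$, which amounts to the auxiliary fact that if two product measures on $\cX^m$ agree in all but one factor, then their total variation distance equals that of the differing marginals. I would prove this by writing, with respect to the product of any dominating $\sigma$-finite measures (for instance $\mu_1 \otimes \cdots \otimes \mu_m$ with $P_j, Q_j \ll \mu_j$), the density of $R_k - R_{k-1}$ as $(p_k - q_k) \prod_{j < k} p_j \prod_{j > k} q_j$, and then applying Fubini together with $\int p_j \, d\mu_j = \int q_j \, d\mu_j = 1$:
\begin{equation*}
\| R_k - R_{k-1} \|_{\mathrm{TV}} = \tfrac{1}{2} \int |p_k - q_k| \, d\mu_k \cdot \underset{j\neq k}{\prod} \int p_j \, d\mu_j \cdot \underset{j \neq k}{\prod} \int q_j \, d\mu_j = \| P_k - Q_k \|_{\mathrm{TV}}.
\end{equation*}
(One could equivalently argue via the coupling characterization of Lemma \ref{lem:coupling_and_TV}: an optimal coupling of $P_k$ and $Q_k$ combined with the identity coupling on the remaining factors yields a coupling of $R_{k-1}$ and $R_k$ with disagreement probability $\| P_k - Q_k \|_{\mathrm{TV}} / 2 \cdot 2 = \| P_k - Q_k \|_{\mathrm{TV}}$ in the appropriate normalization; for Polish $\cX$ this is clean, and the density argument handles the general measurable setting.)

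There is essentially no obstacle here, as both steps are routine. The only minor subtlety is to pick a presentation (densities vs. coupling) that works under whatever regularity assumptions on $(\cX, \mathscr{X})$ are implicit in the paper; since probability densities always exist with respect to $P_j + Q_j$, the density-based telescoping argument is the cleanest and requires no additional hypotheses. Combining the telescoping inequality with the one-coordinate equality yields the stated bound.
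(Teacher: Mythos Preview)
Your proposal is correct and follows essentially the same route as the paper: both use the telescoping identity $a_1\cdots a_m - b_1\cdots b_m = \sum_k (a_k - b_k)\prod_{j<k}a_j\prod_{j>k}b_j$ applied to the product densities (with respect to $\bigotimes_j (P_j+Q_j)$), followed by Fubini and the fact that each $p_j,q_j$ integrates to one. The only cosmetic difference is that you phrase the telescoping at the level of hybrid measures $R_k$ and then pass to densities, whereas the paper works directly with the algebraic identity on densities; also note a small slip in your final display, where the two products should read $\prod_{j<k}\int p_j\,d\mu_j$ and $\prod_{j>k}\int q_j\,d\mu_j$ rather than both $\prod_{j\neq k}$.
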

            \begin{proof}
            The measures $P_j$ and $Q_j$ admit densities with respect to $P_j + Q_j$, which we shall denote by $p_j$ and $q_j$, respectively, with
            \begin{align*}
            p:= \underset{j=1}{\overset{m}{\Pi}} p_j = \frac{d \bigotimes_{j=1}^m P_j}{d\bigotimes_{j=1}^m (P_j+Q_j)} \; \text{ and } \; q:=  \underset{j=1}{\overset{m}{\Pi}} q_j = \frac{d \bigotimes_{j=1}^m Q_j}{d\bigotimes_{j=1}^m (P_j+Q_j)}.
            \end{align*}
            Writing $\mu = \bigotimes_{j=1}^m (P_j+Q_j)$, we obtain 
            \begin{equation}\label{eq:lem_bounding_total_var}
            \| P - Q \|_{\mathrm{TV}} =  \frac{1}{2}\int| \underset{j=1}{\overset{m}{\Pi}} p_j(x_j) - \underset{j=1}{\overset{m}{\Pi}} q_j(x_j) | d\mu(x_1,\dots,x_m).
            \end{equation}
            By the telescoping product identity
            \begin{equation}
            a_1 \cdot a_2 \cdots a_m - b_1 \cdot b_2 \cdots b_m = \underset{j=1}{\overset{m}{\sum}} (a_j - b_j) \underset{k=1}{\overset{j-1}{\Pi}} a_k \underset{k=j+1}{\overset{m}{\Pi}} b_k 
            \end{equation}
            and Fubini's Theorem, the right-hand side of \eqref{eq:lem_bounding_total_var} is bounded by
            \begin{equation*}
            \underset{j=1}{\overset{m}{\sum}} \frac{1}{2} \int|  p_j(x_j) -  q_j(x_j) | d(P_j+Q_j)(x_j) = \underset{j=1}{\overset{m}{\sum}} \| P_j - Q_j \|_{\mathrm{TV}}.
            \end{equation*}
            \end{proof}

    \section{Appendix on Besov norms in sequence space}\label{sec:appendix_wavelet_properties}
    
    In this section we briefly introduce Besov spaces as subspaces of $\ell_2(\N)$ and collect some properties used in the paper.
    
    For $s \in \R$, $1 \leq p \leq \infty$ and $1 \leq q \leq \infty$, we define the Besov norm for $f \in \ell_2(\N)$ as
    \begin{equation*}
       \| f\|_{\cB^{s}_{p,q}} :=  \begin{cases}
          \left( \underset{l=1}{\overset{\infty}{\sum}} \left(2^{l({s}+1/2-1/p)} \left\| (f_{lk})_{k=1}^{2^l} \right\|_p\right)^{q}\right)^{1/q} &\text{ for } 1 \leq q < \infty, \\
          \; \underset{l\geq 1}{\sup} \; 2^{l({s}+1/2-1/p)} \left\| (f_{lk})_{k=1}^{2^l} \right\|_p   &\text{ for } q = \infty.
       \end{cases}
    \end{equation*}
    This definition of the Besov norm is equivalent to the Besov norm as typically defined on a function space when considering the wavelet transform of a function in $L_2[0,1]$, 
    \begin{align*}
        f=\sum_{j=j_0}^{\infty}\sum_{k=0}^{2^{j}-1}f_{jk}\psi_{jk},
    \end{align*}
    where
    \begin{align*}
        \big\{ \phi_{j_0m},\psi_{jk}:\, m\in\{0,\ldots,2^{j_0}-1\},\quad j> j_0,\quad k\in\{0,\ldots,2^{j}-1\} \big\},
    \end{align*}
    are the orthogonal wavelet basis functions, for father $\phi(.)$ and mother $\psi(.)$ wavelets with $N$ vanishing moments and bounded support on $[0,2N-1]$ and $[-N+1,N]$, respectively, following e.g. the construction of Cohen, Daubechies and Vial \cite{cohen1993wavelets, daubechies1992ten}, with $N > s$. The Besov norm on the function space is then equivalent to the one defined above for the wavelet coefficients $(f_{jk})_{j\geq j_0, k \in \{0,...,2^{j}-1\} }$. See e.g. Chapter 4 in \cite{gine_mathematical_2016} for details.
    
    Let $\cB^s_{p,q}(R) \subset \ell_2(\N)$ denote the Besov ball of radius $R$. The following lemmas are standard results, for which we provide a proof for completeness.
    
    \begin{lemma}\label{lem:L_2_to_besov_bound_on_f}
    There exists a constant $C_{s,q} > 0$ such that $\|f\|_2 \leq C_{s,q} R$ for all $f \in \cB^s_{p,q}(R)$ with $1 \leq q \leq \infty$ and $2 \leq p \leq \infty$.
    \end{lemma}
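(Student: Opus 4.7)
The plan is to reduce the claim to a summable geometric-type series in the level index $l$, exploiting two facts: (i) for $p \geq 2$, the embedding $\ell^p(2^l) \hookrightarrow \ell^2(2^l)$ costs at most a factor $2^{l(1/2-1/p)}$, and (ii) the Besov norm contains an extra $2^{l(s+1/2-1/p)}$ weight that more than compensates this loss when $s > 0$. This will let me bound $\|(f_{lk})_{k=1}^{2^l}\|_2$ by $2^{-ls} a_l$, where $(a_l)_l$ is an $\ell^q$ sequence with norm controlled by $R$, and then sum over $l$.

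Concretely, first I would apply H\"older's inequality on each dyadic block to obtain
\begin{equation*}
\left\| (f_{lk})_{k=1}^{2^l} \right\|_2 \leq 2^{l(1/2-1/p)} \left\| (f_{lk})_{k=1}^{2^l} \right\|_p,
\end{equation*}
which is valid precisely because $p \geq 2$. Setting $a_l := 2^{l(s+1/2-1/p)} \|(f_{lk})_{k=1}^{2^l}\|_p$, the displayed inequality rewrites as $\|(f_{lk})_{k=1}^{2^l}\|_2 \leq 2^{-ls} a_l$, so
\begin{equation*}
\|f\|_2^2 \;=\; \sum_{l=1}^{\infty} \left\| (f_{lk})_{k=1}^{2^l} \right\|_2^2 \;\leq\; \sum_{l=1}^{\infty} 2^{-2ls} a_l^2.
\end{equation*}
By the definition of the Besov norm, $(a_l)_l$ lies in $\ell^q$ with $\|(a_l)\|_{\ell^q} \leq R$ (with the usual $\sup_l a_l \leq R$ interpretation when $q = \infty$).

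It then remains to estimate $\sum_l 2^{-2ls} a_l^2$ in terms of $\|(a_l)\|_{\ell^q}^2$, which I would split by cases. For $q = \infty$, directly $\sum_l 2^{-2ls} a_l^2 \leq R^2 \sum_l 2^{-2ls} = R^2/(2^{2s}-1)$. For $1 \leq q \leq 2$, the embedding $\ell^q \hookrightarrow \ell^2$ gives $\sum_l a_l^2 \leq \|(a_l)\|_{\ell^q}^2 \leq R^2$, and since $2^{-2ls} \leq 1$, I get $\|f\|_2^2 \leq R^2$. For $2 < q < \infty$, I apply H\"older with conjugate exponents $q/2$ and $q/(q-2)$:
\begin{equation*}
\sum_{l=1}^{\infty} 2^{-2ls} a_l^2 \;\leq\; \Bigl(\sum_{l=1}^{\infty} a_l^q\Bigr)^{2/q} \Bigl(\sum_{l=1}^{\infty} 2^{-2lsq/(q-2)}\Bigr)^{(q-2)/q} \;\leq\; R^2 \, C_{s,q},
\end{equation*}
where the geometric sum converges because $s > 0$. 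Combining the three cases yields $\|f\|_2 \leq C_{s,q} R$ with a constant depending only on $s$ and $q$.

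There is no real obstacle here; the only subtlety is the case split on $q$, but all three subcases are one-line applications of standard inequalities, so the lemma should follow cleanly in a few lines.
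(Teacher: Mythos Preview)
Your proof is correct and follows essentially the same approach as the paper: both apply H\"older on each block to get $\|(f_{lk})_k\|_2 \le 2^{-ls} a_l$ with $a_l := 2^{l(s+1/2-1/p)}\|(f_{lk})_k\|_p$, and then exploit the geometric decay $2^{-ls}$ against the $\ell^q$ bound on $(a_l)$. The only difference is cosmetic: instead of squaring and splitting into three cases on $q$, the paper bounds $\|f\|_2 \le \sum_l 2^{-ls} a_l$ directly and applies a single H\"older with exponents $(q,q')$, giving $\|f\|_2 \le \bigl(\sum_l 2^{-lsq'}\bigr)^{1/q'} \|(a_l)\|_{\ell^q} \le (1-2^{-s})^{-(1-1/q)} R$ uniformly in $1 \le q \le \infty$.
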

    \begin{proof}
    Using that $p\geq 2$, H\"older's inequality yields that
    \begin{align*}
    \| (f_{lk})_{k=1}^{2^l} \|_2 \leq 2^{1/2 - 1/p} \|(f_{lk})_{k=1}^{2^l}\|_p.
    \end{align*}
    Define $c_s := 1/(1-2^{-s})$. By another application of H\"older's inequality, we obtain that
    \begin{equation*}
    \underset{l=1}{\overset{\infty}{\sum}} 2^{1/2 - 1/p} \|(f_{lk})_{k=1}^{2^l}\|_p \leq c_s^{1 - 1/q} \left\| \left(2^{l(s+1/2 - 1/p)} \|(f_{lk})_{k=1}^{2^l}\|_p \right)_{l=1}^\infty \right\|_q.
    \end{equation*}
    \end{proof}
    
    Using a similar proof, one obtains also the following lemma, see e.g. Chapter 9 in \cite{johnstone2019manuscript}.
    
    \begin{lemma}\label{lemma:proof-of-bound-on-tail-sum}
        Let $f_{lk}$ are the wavelet coefficients of the function $f \in B^\alpha_{p,q}(R)$. For any $1\leq q \leq \infty$, $2 \leq p \leq \infty$, $L >0$, we have
        \[
        \sum_{l > L} \sum_{k=0}^{2^l-1} f_{lk}^2 \leq c_\alpha 2^{-2L\alpha} R^{2},
        \]
        where $c_\alpha > 0$ is a universal constant depending only on $\alpha$.
    \end{lemma}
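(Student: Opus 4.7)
The plan is to convert the Besov constraint into a levelwise bound on the $\ell_2$ norms $\|(f_{lk})_{k=0}^{2^l-1}\|_2$, multiplied by the appropriate geometric decay in $l$, and then to sum the tail $l > L$ using Hölder's inequality. The key preliminary step, which mirrors the argument of Lemma~\ref{lem:L_2_to_besov_bound_on_f}, is the elementary embedding $\|v\|_2 \leq n^{1/2 - 1/p}\|v\|_p$ for $v \in \R^n$ and $p \geq 2$, which follows from Hölder's inequality.

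Applying this with $n = 2^l$, we obtain $\|(f_{lk})_{k=0}^{2^l-1}\|_2 \leq 2^{l(1/2 - 1/p)} \|(f_{lk})_{k=0}^{2^l-1}\|_p$. Introducing the shorthand $b_l := 2^{l(\alpha + 1/2 - 1/p)}\|(f_{lk})_{k=0}^{2^l-1}\|_p$, this rearranges to $\|(f_{lk})_{k=0}^{2^l-1}\|_2^2 \leq 2^{-2l\alpha} b_l^2$. By the definition of the Besov norm, the sequence $(b_l)_{l\geq 1}$ satisfies $\|(b_l)\|_{\ell_q} \leq R$ (interpreted as the sup for $q = \infty$). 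The tail sum of interest is therefore bounded by $\sum_{l > L} 2^{-2l\alpha} b_l^2$, and the remainder of the proof consists in showing that this is at most $c_\alpha 2^{-2L\alpha} R^2$.

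This last step splits into three routine cases depending on $q$. For $q = \infty$, we simply factor out $\sup_l b_l^2 \leq R^2$ and evaluate the geometric series $\sum_{l > L} 2^{-2l\alpha} = 2^{-2L\alpha}/(2^{2\alpha} - 1)$. For $1 \leq q \leq 2$, the embedding $\ell^q \subset \ell^2$ gives $\sum_l b_l^2 \leq (\sum_l b_l^q)^{2/q} \leq R^2$, so we may pull out the factor $\sup_{l > L} 2^{-2l\alpha} = 2^{-2(L+1)\alpha}$. For $2 < q < \infty$, we apply Hölder's inequality with conjugate exponents $q/2$ and $q/(q-2)$ to obtain
\begin{equation*}
\sum_{l > L} 2^{-2l\alpha} b_l^2 \leq \Bigl(\sum_{l > L} 2^{-2l\alpha q/(q-2)}\Bigr)^{(q-2)/q}\Bigl(\sum_{l > L} b_l^q\Bigr)^{2/q},
\end{equation*}
and the geometric sum in the first factor is bounded by $c_{\alpha,q} 2^{-2L\alpha q/(q-2)}$, yielding the desired $c_\alpha 2^{-2L\alpha} R^2$ bound.

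There is no real obstacle here; the argument is standard and uses only Hölder's inequality together with geometric series bounds. The only thing to watch is that the constant $c_\alpha$ depends implicitly on $q$ through the case distinction when $q > 2$, but since $\alpha > 0$ is fixed and the dependence on $q$ is continuous, one can absorb everything into a single universal constant $c_\alpha$ (or, more conservatively, a constant $c_{\alpha,q}$) as stated.
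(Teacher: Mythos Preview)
Your proof is correct and follows essentially the same route the paper indicates: the paper does not spell out a proof but says ``using a similar proof'' to Lemma~\ref{lem:L_2_to_besov_bound_on_f} (H\"older to pass from $\ell_p$ to $\ell_2$ at each level, then H\"older in $l$), which is exactly your argument. One small remark: your final comment that the $q$-dependence of the constant can be ``absorbed'' into $c_\alpha$ by continuity is a bit loose; the cleanest way to get a constant depending only on $\alpha$ is to observe that $\|(b_l)\|_{\ell_\infty} \leq \|(b_l)\|_{\ell_q}$ for all $q \geq 1$, i.e.\ $\cB^{\alpha}_{p,q}(R) \subset \cB^{\alpha}_{p,\infty}(R)$, so it suffices to treat the case $q=\infty$, giving $c_\alpha = 1/(2^{2\alpha}-1)$ uniformly in $q$.
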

    
    \begin{lemma}\label{lem:uniform_bound_on_f}
    There exists a constant $C_{\alpha,R} > 0$ such that $\|f\|_\infty \leq C_{\alpha,R}$ for all $f \in B^\alpha_{p,q}(R)$ with $\alpha - 1/2 - 1/p > 0$, $1 \leq q \leq \infty$ and $2 \leq p \leq \infty$.
    \end{lemma}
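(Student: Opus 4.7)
The plan is to prove the uniform bound by exploiting the wavelet representation of $f$ and reducing the claim to convergence of a geometric series in the resolution index $l$. Concretely, write $f$ via its wavelet expansion as in Section \ref{sec:appendix_wavelet_properties},
\begin{equation*}
f = \sum_{l \geq 1} \sum_{k=1}^{2^l} f_{lk} \psi_{lk},
\end{equation*}
using the Cohen--Daubechies--Vial basis with compactly supported father and mother wavelets (so that the basis functions $\psi_{lk}$ are supported on intervals of length proportional to $2^{-l}$, with $\|\psi_{lk}\|_\infty \lesssim 2^{l/2}$). Pointwise, $|f(x)| \leq \sum_{l \geq 1} \sum_k |f_{lk}| |\psi_{lk}(x)|$.

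The first key step is to exploit the compact support of the wavelets: for every $x \in [0,1]$ and every resolution level $l$, at most $O(1)$ of the indices $k \in \{1,\dots,2^l\}$ satisfy $x \in \mathrm{supp}(\psi_{lk})$, with the implicit constant depending only on the fixed wavelet basis. Combining this with the sup-norm bound $\|\psi_{lk}\|_\infty \lesssim 2^{l/2}$ gives
\begin{equation*}
\|f\|_\infty \leq C \sum_{l \geq 1} 2^{l/2}\, \max_{k \in \{1,\dots,2^l\}} |f_{lk}|,
\end{equation*}
and using $\max_k |f_{lk}| \leq \|(f_{lk})_k\|_p$ (valid for every $p \geq 1$) together with the definition of $\|\cdot\|_{\cB^s_{p,q}}$ in \eqref{eq:besov_norm} rewrites the bound as $\|f\|_\infty \leq C \sum_l 2^{-l(\alpha - 1/p)} a_l$, where $a_l := 2^{l(\alpha+1/2-1/p)} \|(f_{lk})_k\|_p$ satisfies $\|(a_l)_l\|_q \leq R$.

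The second step is to close the sum. For $q=\infty$, one has $a_l \leq R$ for every $l$, so the bound reduces to $\|f\|_\infty \leq C R \sum_{l \geq 1} 2^{-l(\alpha - 1/p)}$, which converges under the hypothesis $\alpha > 1/2 + 1/p \geq 1/p$. For $1 \leq q < \infty$, apply H\"older's inequality with conjugate exponent $q'$ to the product $2^{-l(\alpha-1/p)} a_l$, yielding
\begin{equation*}
\|f\|_\infty \leq C \Bigl( \sum_{l \geq 1} 2^{-l q'(\alpha - 1/p)} \Bigr)^{1/q'} \Bigl( \sum_{l \geq 1} a_l^q \Bigr)^{1/q} \leq C_{\alpha,q} R,
\end{equation*}
again with the geometric series finite because $\alpha > 1/p$, a fortiori under the stated hypothesis $\alpha - 1/2 - 1/p > 0$.

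This is an essentially routine Besov embedding argument; the only mild bookkeeping issue is tracking the dependence of the constant on the basis (which is absorbed into $C$ by fixing the basis once and for all) and handling the boundary-adapted wavelets of the Cohen--Daubechies--Vial construction, which preserve both the $O(1)$-overlap property and the sup-norm scaling $\lesssim 2^{l/2}$. The father wavelet contribution at the coarsest level $l = j_0$ contributes only finitely many bounded terms and is handled identically.
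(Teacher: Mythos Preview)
The paper states this lemma without proof, treating it as a standard Besov embedding (it sits among a block of auxiliary facts attributed to or provable along the lines of the preceding lemma and the reference to Johnstone's monograph). Your argument is the standard wavelet proof of $B^\alpha_{p,q} \hookrightarrow L^\infty$: finite overlap of the compactly supported $\psi_{lk}$ at each scale, the scaling $\|\psi_{lk}\|_\infty \lesssim 2^{l/2}$, the bound $\max_k |f_{lk}| \leq \|(f_{lk})_k\|_p$, and H\"older in the resolution index. It is correct, and you rightly note that the geometric series already converges under the weaker condition $\alpha > 1/p$, so the stated hypothesis $\alpha - 1/2 - 1/p > 0$ is more than sufficient.
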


    \begin{lemma}\label{lemma:proof-of-bound-on-tail-sum}
       Let $f_{lk}$ are the wavelet coefficients of the function $f \in \cB^s_{p,q}(R)$. For any $1\leq q \leq \infty$, $2 \leq p \leq \infty$, $L >0$, we have
       \[
       \sum_{l > L} \sum_{k=0}^{2^l-1} f_{lk}^2 \leq C_{s,q} 2^{-2Ls} R^{2},
       \]
       where $C_{s,q} > 0$ is a universal constant depending only on $s$ and $q$.
    \end{lemma}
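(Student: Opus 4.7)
The plan is to reduce the tail sum of squared wavelet coefficients to a sum involving Besov norm coefficients via two standard inequalities, and then close out with a geometric series.

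First I would use the fact that $p \geq 2$ and the standard finite-dimensional embedding $\ell^p \hookrightarrow \ell^2$: for a vector of length $2^l$,
\[
\left\| (f_{lk})_{k=1}^{2^l} \right\|_2 \leq 2^{l(1/2 - 1/p)} \left\| (f_{lk})_{k=1}^{2^l} \right\|_p.
\]
Squaring and substituting the identity $a_l := 2^{l(s + 1/2 - 1/p)} \|(f_{lk})_{k=1}^{2^l}\|_p$, the exponents of $2^l$ telescope neatly:
\[
\left\| (f_{lk})_{k=1}^{2^l} \right\|_2^2 \leq 2^{l(1 - 2/p)} 2^{-2l(s + 1/2 - 1/p)} a_l^2 = 2^{-2ls} a_l^2.
\]
So it remains to bound $\sum_{l > L} 2^{-2ls} a_l^2$ under the constraint $\|a\|_q \leq R$ (where $\|\cdot\|_q$ is the $\ell^q$-norm, or the sup for $q=\infty$).

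Next I would split into cases on $q$. For $q = \infty$, or more generally $1 \leq q \leq 2$, the constraint yields $|a_l| \leq R$ for every $l$, and the geometric series gives
\[
\sum_{l > L} 2^{-2ls} a_l^2 \leq R^2 \sum_{l > L} 2^{-2ls} \leq \frac{2^{-2Ls}}{2^{2s} - 1} R^2.
\]
For $2 < q < \infty$, I would apply Hölder's inequality with conjugate exponents $q/(q-2)$ and $q/2$:
\[
\sum_{l > L} 2^{-2ls} a_l^2 \leq \left( \sum_{l > L} 2^{-2ls \cdot q/(q-2)} \right)^{(q-2)/q} \left( \sum_{l > L} a_l^q \right)^{2/q},
\]
and the first factor is $C_{s,q} 2^{-2Ls}$ by summing the geometric series, while the second factor is bounded by $R^2$ using $\|a\|_q \leq R$.

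There is no real obstacle here; the computation is purely bookkeeping. The only subtlety is organizing the constants so that the final constant $C_{s,q}$ depends only on $s$ and $q$ (and is independent of $p$). This works out because the exponent of $2^l$ coming from the $\ell^p$-to-$\ell^2$ embedding cancels exactly with the $1/p$-dependent part of the Besov norm weight, leaving only the pure $2^{-2ls}$ factor inside the sum. Combining the two cases yields the desired inequality with constant $C_{s,q}$ depending only on $s$ and $q$.
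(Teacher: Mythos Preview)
Your proposal is correct and follows essentially the same approach as the paper: the paper does not spell out the proof of this lemma but says it follows ``using a similar proof'' to Lemma~\ref{lem:L_2_to_besov_bound_on_f}, namely the $\ell^p\hookrightarrow\ell^2$ embedding at each level followed by H\"older in the level index $l$, which is exactly what you do. One cosmetic remark: your case split reads oddly (``$q=\infty$, or more generally $1\le q\le 2$''), but since $|a_l|\le\|a\|_q\le R$ for every $q\ge 1$, the first argument already covers all $q$, and the H\"older refinement for $2<q<\infty$ is optional.
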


\end{document}